\documentclass[a4paper,10pt,reqno]{amsart}
          \usepackage{amssymb}
	  \usepackage{amsmath}
          \usepackage{amsfonts}
          \usepackage[english]{babel}
          \usepackage[utf8]{inputenc}
          \usepackage{enumitem}

\usepackage[margin=2.5cm]{geometry}

 \usepackage[unicode,colorlinks,plainpages=false,hyperindex=true,bookmarksnumbered=true,bookmarksopen=false,pdfpagelabels]{hyperref}
 \hypersetup{urlcolor=cyan,linkcolor=blue,citecolor=red,colorlinks=true,bookmarksdepth=2}
 \usepackage{xcolor}
 
 \usepackage{tikz}   
\usetikzlibrary{arrows}  

\makeatletter
\renewcommand*{\p@section}{\,}
\renewcommand*{\p@subsection}{\S\,}
\renewcommand*{\p@subsubsection}{\S\,}
\makeatother

\newtheorem{thm}{Theorem}[section]
\newtheorem{cor}[thm]{Corollary}
\newtheorem{lem}[thm]{Lemma}
\newtheorem{prop}[thm]{Proposition}
\newtheorem{exmp}[thm]{Example}
\newtheorem{rem}[thm]{Remark}
\newtheorem{defn}[thm]{Definition}

\numberwithin{equation}{section}

\newcommand{\CC}{\ensuremath{\mathbb{C}}}
\newcommand{\N}{\ensuremath{\mathbb{N}}}

\newcommand{\Z}{\ensuremath{\mathbb{Z}}}

\newcommand{\tr}{\operatorname{tr}}
\newcommand{\diag}{\operatorname{diag}}

\newcommand{\Hom}{\operatorname{Hom}}
\newcommand{\Der}{\operatorname{Der}}
\newcommand{\Mat}{\operatorname{Mat}}
\newcommand{\Id}{\operatorname{Id}}
\newcommand{\id}{\operatorname{id}}

\newcommand{\Gl}{\operatorname{GL}}
\newcommand{\Orm}{\operatorname{O}}
\newcommand{\Sp}{\operatorname{Sp}}
\newcommand{\Rep}{\operatorname{Rep}}

\newcommand{\Jac}{\operatorname{Jac}}
\newcommand{\op}{\operatorname{op}}
\newcommand{\gl}{\ensuremath{\mathfrak{gl}}}
\newcommand{\g}{\ensuremath{\mathfrak{g}}}
\newcommand{\fs}{\ensuremath{\mathfrak{t}}}
\newcommand{\og}{\ensuremath{\mathfrak{o}}}
\newcommand{\spg}{\ensuremath{\mathfrak{sp}}}
\newcommand{\OO}{\ensuremath{\mathcal{O}}}
\newcommand{\X}{\ensuremath{\mathtt{X}}}
\newcommand{\cF}{\ensuremath{\mathcal{F}}}
\newcommand{\cB}{\ensuremath{\mathcal{B}}}
\newcommand{\tA}{\ensuremath{\mathbf{A}}}
\newcommand{\cH}{\ensuremath{\mathcal{H}}}
\newcommand{\typA}{\ensuremath{\mathtt{A}}}
\newcommand{\typB}{\ensuremath{\mathtt{B}}}
\newcommand{\typC}{\ensuremath{\mathtt{C}}}
\newcommand{\typD}{\ensuremath{\mathtt{D}}}
\newcommand{\Ad}{\operatorname{Ad}}

\newcommand{\dd}{\ensuremath{\mathrm{d}}}
\newcommand{\DDer}{\ensuremath{\mathbb{D}\mathrm{er}}}
\newcommand{\Tnc}{\ensuremath{\mathbb{T}^\ast}}
\newcommand{\fus}{\ensuremath{\mathrm{fus}}}

\newcommand{\intern}{\ensuremath{\mathrm{int}}}
\newcommand\br[1]{\{ #1 \}}

\newcommand\dgal[1]{  \left\{\!\!\left\{#1\right\}\!\!\right\} }
\newcommand\ldb{\{\!\!\{}
\newcommand\rdb{\}\!\!\}}

\newcommand{\sgn}{\operatorname{sgn}}
\newcommand{\fsgn}{\ensuremath{\mathfrak{sgn}}}

\begin{document}

\title{Quasi-Poisson varieties from double quasi-Poisson algebras in types $B,C,D$}

\author{Semeon Arthamonov}
 \address[S.~Arthamonov]{Beijing Institute of Mathematical Sciences and Applications, China}
 \email{arthamonov@bimsa.cn}

\author{Maxime Fairon}
 \address[M.~Fairon]{Université Bourgogne Europe, CNRS, IMB UMR 5584, F-21000 Dijon, France}
 \email{Maxime.Fairon@u-bourgogne.fr}

 
 \begin{abstract}
Double (quasi-)Poisson brackets were introduced on associative algebras by Van den Bergh to induce a (quasi-)Poisson structure on their representation spaces naturally equipped with a $\mathrm{GL}$-action (type $\mathtt{A}$).
If there exists a compatible involutive anti-automorphism on the underlying associative algebras, Olshanski and Safonkin proved that this construction can be upgraded to induce a Poisson structure on twisted representation spaces (types $\mathtt{B},\mathtt{C},\mathtt{D}$).
We provide an analogous result for double quasi-Poisson brackets, and over an arbitrary semisimple base.
We also apply our theory to quivers in order to understand the Poisson structure on twisted (localised multiplicative) quiver varieties.
The formalism permits that different vertices are assigned different types.
As a first application, we recover the framework of Massuyeau and Turaev for Hopf algebras with a Fox pairing,
which induces in particular the Poisson structure of character varieties for the orthogonal or symplectic groups.
As a second application, we introduce a modified Kontsevich system.
 \end{abstract}

\maketitle

 \setcounter{tocdepth}{1} 

\tableofcontents


\section{Introduction}  \label{S:Intro}

Given a unital associative algebra $A$ over the complex field $\CC$ and an integer $N\geq 1$,
one can consider the $N$-th representation space $\Rep(A,N)$ which parametrises representations $\rho:A\to \Mat_N(\CC)$ over $\CC^N$.
Its coordinate ring $\CC[\Rep(A,N)]$ admits a simple presentation in terms of generators $a_{ij}$, with $a\in A$ and $1\leq i,j\leq N$, satisfying natural `matrix relations', and it is equipped with an action of the group $\Gl_N$ corresponding to conjugation of matrices.
It was observed by Van den Bergh \cite{VdB1} that by introducing a \emph{double Poisson bracket} on $A$ as a linear operation
\[
 \dgal{-,-}: A\otimes A \longrightarrow A\otimes A
\]
satisfying particular properties (see \ref{ss:Dbr}), one can induce for any $N$ a Poisson bracket on $\Rep(A,N)$ which is $\Gl_N$-invariant.
The Poisson bracket hence obtained is completely determined by its values on generators, on which it takes the elementary form
\begin{equation} \label{Eq:VdB}
  \br{a_{ij} , b_{kl}} = \dgal{a,b}_{kj,il}, \quad a,b\in A,\quad  1\leq i,j,k,l\leq N,
\end{equation}
where $(c\otimes d)_{kj,il}:=c_{kj}d_{il}\in \CC[\Rep(A,N)]$.
This is an instance of the Kontsevich-Rosenberg principle \cite{KoR}, which has paved the way for a new branch of \emph{noncommutative Poisson geometry}.
Let us emphasise  two important ramifications in this theory.
On the one hand, Van den Bergh had already introduced a generalisation of this formalism over an arbitrary ring (suitable for quivers) and for quasi-Poisson brackets \cite{VdB1,VdB2}.
On the other hand, Massuyeau and Turaev \cite{MT18,Tu}, or recently Olshanski and Safonkin \cite{OS}, have studied the case of representation spaces with a symmetry by another group than $\Gl_N$.
The present paper aims at bridging the gap between these two developments by presenting a uniform point of view encompassing many facets of these two directions.
In particular, our approach induces (quasi-)Poisson brackets on quiver representation varieties for orthogonal and symplectic groups.

\medskip

We start by explaining the easiest situation where one induces Poisson brackets on spaces with an action of the orthogonal group $\Orm_N$.
Fix a double Poisson bracket $\dgal{-,-}$ on $A$.
Following Olshanski and Safonkin \cite{OS}, assume that $A$ is equipped with an involutive anti-homomorphism $\phi:A\to A$
(that is $\phi(ab)=\phi(b)\phi(a)$ and $\phi^2(a)=a$ for all $a,b\in A$) such that $\dgal{-,-}$ is \emph{$\phi$-adapted}, i.e., the following compatibility relation holds:
\begin{equation} \label{Eq:BrComp}
 (\phi\otimes \phi)(\dgal{a,b})=\dgal{\phi(a),\phi(b)}^\circ\,, \qquad \forall a,b\in A.
\end{equation}
Here, we use the transposition of factors $(c\otimes d)^\circ=d\otimes c$ extended linearly to $A\otimes A$.
Then, for any fixed $N\geq 1$, one forms the $2$-sided ideal
\[
 \mathcal{I}^\phi := \langle \phi(a)_{ij} - a_{ji} \mid a\in A,\,\, 1\leq i,j\leq N \rangle \subset  \CC[\Rep(A,N)].
\]
The quotient $\CC[\Rep(A,N)]/ \mathcal{I}^\phi$ corresponds to taking representations $\rho:A\to \Mat_N(\CC)$ such that
$\rho(\phi(a))=\rho(a)^T$ for all $a\in A$.
The generators of the form $a_{ij}$ of $\CC[\Rep(A,N)]$ remain generators of the quotient.
\begin{thm}[\cite{OS}]
Given $(A,\dgal{-,-},\phi)$ as above,
the algebra $\CC[\Rep(A,N)]/ \mathcal{I}^\phi$ is naturally equipped with a Poisson bracket uniquely determined by
($a,b\in A$, $1\leq i,j,k,l\leq N$)
 \begin{equation} \label{Eq:Thm1}
\br{a_{ij} , b_{kl}} = \frac12\dgal{a,b}_{kj,il}+\frac12 \dgal{\phi(a),b}_{ki,jl}.
 \end{equation}
Furthermore, this Poisson bracket is $\Orm_N$-invariant for the action induced by the action of $\Gl_N$ on $\Rep(A,N)$.
\end{thm}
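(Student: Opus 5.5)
The plan is to define the bracket on $\CC[\Rep(A,N)]$ first, before passing to the quotient. Set
\[
\br{a_{ij},b_{kl}}' := \tfrac12\dgal{a,b}_{kj,il}+\tfrac12\dgal{\phi(a),b}_{ki,jl},
\]
and extend by the Leibniz rule in both arguments. We then show that the ideal $\mathcal{I}^\phi$ is a Poisson ideal, that the induced bracket on the quotient is antisymmetric and satisfies Jacobi, and that it is $\Orm_N$-invariant.

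\textbf{Well-definedness.} The algebra $\CC[\Rep(A,N)]$ is the free commutative algebra on the symbols $a_{ij}$, modulo the relations $(a+b)_{ij}=a_{ij}+b_{ij}$, $(ab)_{ij}=\sum_k a_{ik}b_{kj}$ and $1_{ij}=\delta_{ij}$. We must check that $\br{-,-}'$ respects these relations in each argument. For the right argument $b$, the term $\dgal{a,b}_{kj,il}$ is exactly Van den Bergh's expression \eqref{Eq:VdB}, so compatibility follows from the derivation property $\dgal{a,bc}=b\dgal{a,c}+\dgal{a,b}c$. The second term is handled in the same way, since it is just \eqref{Eq:VdB} applied to the pair $(\phi(a),b)$ with the indices $i,j$ transposed.

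For the left argument, the first term uses the left derivation rule of the double bracket. The second term is more delicate: since $\phi(ac)=\phi(c)\phi(a)$, the reversed order of the product exactly matches the transposition of indices $i\leftrightarrow j$.

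Next we check that $\mathcal{I}^\phi$ is a Poisson ideal, i.e. that $\br{\phi(a)_{ij}-a_{ji}, b_{kl}}'\in\mathcal{I}^\phi$. Expanding with $\phi^2=\id$ gives
\[
\tfrac12\dgal{\phi(a),b}_{kj,il}+\tfrac12\dgal{a,b}_{ki,jl}-\tfrac12\dgal{a,b}_{ki,jl}-\tfrac12\dgal{\phi(a),b}_{kj,il}=0
\]
on the nose. The corresponding check in the second argument, $\br{a_{ij},\phi(b)_{kl}-b_{lk}}'\in\mathcal{I}^\phi$, is where the $\phi$-adapted condition \eqref{Eq:BrComp} enters. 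One rewrites $\dgal{a,\phi(b)}=\bigl((\phi\otimes\phi)\dgal{\phi(a),b}\bigr)^\circ$, and then uses that modulo $\mathcal{I}^\phi$ one has $(\phi(c)\otimes\phi(d))_{kj,il}\equiv c_{jk}d_{li}$. The two terms then match after swapping the roles of $a$ and $\phi(a)$. It is therefore cleaner to define the bracket on the quotient directly and verify compatibility there.

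\textbf{Antisymmetry.} Cyclic antisymmetry of the double bracket, $\dgal{b,a}=-\dgal{a,b}^\circ$, gives antisymmetry of the first term. For the second term we must compare $\dgal{\phi(a),b}_{ki,jl}$ with $-\dgal{\phi(b),a}_{il,kj}$. Using antisymmetry and then \eqref{Eq:BrComp}, the quantity $\dgal{\phi(b),a}$ becomes $-\bigl((\phi\otimes\phi)\dgal{\phi(a),b}\bigr)$ up to the swap $^\circ$. Applying the quotient identities $\phi(c)_{pq}\equiv c_{qp}$ then yields equality.

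\textbf{Jacobi (main obstacle).} I would expand $\br{a_{ij},\br{b_{kl},c_{uv}}}$ using the double Jacobi identity, which relates $\dgal{a,\dgal{b,c}}_L$ and its cyclic permutations, i.e. the triple bracket $\dgal{a,b,c}$ vanishes. Each double bracket appearing comes in two versions, with or without $\phi$ on its first argument, so the double sum produces four families of terms. Terms in which $\phi$ occurs an even number of times should assemble into the triple bracket evaluated at $(a,b,c)$ with suitably permuted indices. Terms in which $\phi$ occurs an odd number of times should, after using \eqref{Eq:BrComp} to move $\phi$ onto one slot and reducing modulo $\mathcal{I}^\phi$, assemble into $\dgal{\phi(a),b,c}$ and its analogues. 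All of these vanish by the double Jacobi identity.

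The bookkeeping of indices and of the transpositions $\tau_{(123)}$ is the hardest part. To keep it manageable I would use the notation $\dgal{a,b}=\dgal{a,b}'\otimes\dgal{a,b}''$ and the identity $\br{a_{ij},b_{kl}}=\tfrac12\sum_{\epsilon}\dots$, so that each case is handled by one uniform computation.

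\textbf{$\Orm_N$-invariance.} For $g\in\Orm_N$ acting by $a_{ij}\mapsto (g\,a\,g^{-1})_{ij}$, the ideal $\mathcal{I}^\phi$ is preserved because $g^{-1}=g^T$. Invariance of the bracket then follows by observing that \eqref{Eq:Thm1} is a sum of index contractions. The first term is $\Gl_N$-equivariant by Van den Bergh's result. The second term pairs the indices $i$ with $k$ and $j$ with $l$ in a way that is equivariant precisely for transformations satisfying $g^T=g^{-1}$.

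Finally, uniqueness of the bracket is immediate, because the elements $a_{ij}$ generate the quotient.
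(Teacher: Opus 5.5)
Your route is the paper's (Proposition~\ref{Pr:BrType} and Lemma~\ref{Lem:ActType}, specialised to one vertex of type $\Orm$): you check the defining relations, get antisymmetry from \eqref{Eq:cycanti} plus \eqref{Eq:BrComp}, express the Jacobiator through double Jacobiators, and check invariance by index contraction. Your antisymmetry and $\Orm_N$-invariance arguments are fine, but two steps fail as stated.

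\textbf{Well-definedness.} The formula for $\br{-,-}'$ does \emph{not} define a biderivation on $\CC[\Rep(A,N)]$. So there is no upstairs bracket for which $\mathcal{I}^\phi$ could be a Poisson ideal. The right slot is fine. In the left slot, however, the derivation rule \eqref{Eq:inder} gives
\[
\dgal{\phi(c)\phi(a),b}_{ki,jl}=\sum_u \dgal{\phi(a),b}'_{ki}\,\phi(c)_{ju}\,\dgal{\phi(a),b}''_{ul}+\sum_u \dgal{\phi(c),b}'_{ku}\,\phi(a)_{ui}\,\dgal{\phi(c),b}''_{jl}.
\]
The Leibniz rule applied to $\sum_u a_{iu}c_{uj}$ produces the same sums with $c_{uj}$ and $a_{iu}$ in place of $\phi(c)_{ju}$ and $\phi(a)_{ui}$. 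These differ in $\CC[\Rep(A,N)]$ in general and agree only modulo $\mathcal{I}^\phi$. Your ``reversed order matches the transposition'' is precisely the relation $\phi(x)_{pq}\equiv x_{qp}$. You must therefore define the bracket on the quotient from the outset, as your closing remark suggests and as the paper does. Note that the twisting relations are already needed for the product relation, not only for the $\phi$-relation in the second slot.

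\textbf{Jacobi identity.} Your sorting of terms would not close. Of the eight terms of $\br{a_{ij},\br{b_{kl},c_{uv}}}$, those with a single $\phi$ (on $a$ or on $b$) need no rewriting. It is the two terms built from $\dgal{\phi(a),\dgal{\phi(b),c}}$ that require \eqref{Eq:BrComp}, applied first to the inner and then to the outer bracket, together with $\phi(x)_{pq}\equiv x_{qp}$. They become
\[
-\tfrac14\bigl(\dgal{a,\dgal{\phi(c),b}}_L\bigr)_{kj,iu,vl}
\quad\text{and}\quad
\tfrac14\bigl(\dgal{a,\dgal{b,\phi(c)}}_L\bigr)_{vj,il,ku},
\]
i.e.\ single-$\phi$ terms with $\phi$ on $c$. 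They never become pieces of $\dgal{a,b,c}$. Accordingly, $\dgal{\phi(a),b,c}_{ui,jl,kv}$ is assembled from three pieces:
\begin{itemize}
\item a single-$\phi$ term of $\br{a_{ij},\br{b_{kl},c_{uv}}}$;
\item a single-$\phi$ term of $\br{c_{uv},\br{a_{ij},b_{kl}}}$;
\item a converted double-$\phi$ term of $\br{b_{kl},\br{c_{uv},a_{ij}}}$.
\end{itemize}
So grouping by the parity of the number of $\phi$'s fails. The correct outcome is \eqref{Eq:pfqPO1}. The Jacobiator is $\tfrac14$ times a signed sum of eight components of triple brackets: two with no $\phi$ ($\dgal{a,b,c}$, $\dgal{a,c,b}$) and six with exactly one $\phi$. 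All of these vanish when $\dgal{-,-}$ is Poisson. With these two corrections your argument coincides with the paper's.
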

Our first aim is to show that, when $A$ is defined over $A_0=\oplus_{s\in I}\CC e_s$ with multiplication  $e_se_t=\delta_{st} e_s$,
this result can be naturally adapted for an action of a product $\prod_s\Orm_{\alpha_s}$ of orthogonal groups. Furthermore, we show that when $A$ is equipped with a (noncommutative) moment map~\cite{VdB1},
that is an element  $\mu\in A$ decomposing as $\mu=\sum_{s\in I} \mu_s$, $\mu_s\in e_s A e_s$, and satisfying
\begin{equation}  \label{mum}
 \dgal{\mu_s,a}=a e_s \otimes e_s - e_s \otimes e_s a\,, \qquad \forall a\in A,\,s\in I,
\end{equation}
then (up to a translation so that $\mu+\phi(\mu)=0$) the mapping $\rho\mapsto \rho(\mu)$ is a moment map valued in the Lie algebra of the symmetry group $\prod_s\Orm_{\alpha_s}$, cf. Theorem~\ref{Thm:BrPO}.
Moreover, an analogue of the aforementioned statement over $\CC$ was derived in \cite{OS} for the symplectic group $\Sp_N$ (for $N$ even), and we also adapt it over the base $A_0$, see Theorem~\ref{Thm:BrSmp}.

The previous discussion should be seen as a teaser, since our interest lies mainly within double \emph{quasi-}Poisson brackets (cf. \ref{ss:Dbr} for the definition). They were again introduced by Van den Bergh so as to induce a $\Gl_N$-invariant antisymmetric biderivation on $\CC[\Rep(A,N)]$ uniquely determined by \eqref{Eq:VdB}, but the operation fails to satisfy Jacobi identity.
This failure is not arbitrary but it is governed by a trivector originating from the action of $\Gl_N$, see \cite{AKSM} for the original (smooth) theory of quasi-Poisson manifolds.
An explicit construction made by Van den Bergh \cite{VdB1} for quivers allows to understand the Poisson geometry of multiplicative quiver varieties \cite{CBS} using such a (double) quasi-Poisson bracket. Similarly, Massuyeau and Turaev \cite{MT14} induced the Poisson structure of $\Gl_N$-character varieties from double quasi-Poisson brackets on fundamental groups of surfaces.
Our second main aim is to upgrade these constructions for the orthogonal and symplectic groups.
In the easiest situation, we get the following new result.
\begin{thm}
Given $A$ a $\CC$-algebra equipped with a double \emph{quasi-}Poisson bracket $\dgal{-,-}$ and an involutive anti-homomorphism $\phi$ satisfying \eqref{Eq:BrComp},
the algebra $\CC[\Rep(A,N)]/ \mathcal{I}^\phi$ is naturally equipped with a \emph{quasi-}Poisson bracket uniquely determined by
\eqref{Eq:Thm1}, which is
associated with the action of $\Orm_N$ on $\CC[\Rep(A,N)]/ \mathcal{I}^\phi$.
\end{thm}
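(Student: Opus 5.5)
We need to prove that the bracket \eqref{Eq:Thm1} is well defined on $\CC[\Rep(A,N)]/\mathcal{I}^\phi$, is antisymmetric, and that its Jacobiator equals the quasi-Poisson trivector attached to the $\Orm_N$-action. The plan works entirely with generators $a_{ij}$, since a biderivation and its Jacobiator are determined by their values there.

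\textbf{Step 1 (well-definedness and biderivation).} First define the operation $\br{-,-}'$ on $\CC[\Rep(A,N)]$ by the right-hand side of \eqref{Eq:Thm1}, extended as a biderivation. For this to exist we must check compatibility with the matrix relations $(ab)_{ij}=\sum_k a_{ik}b_{kj}$. Each of the two summands separately satisfies this: the first is Van den Bergh's bracket \eqref{Eq:VdB}, and the second is \eqref{Eq:VdB} precomposed with $\phi$ in the first slot. In the second summand, the anti-homomorphism property of $\phi$ reverses the product, and this reversal matches the transposed index pattern $ki,jl$. Next we show that $\br{-,-}'$ preserves $\mathcal{I}^\phi$, i.e.\ $\br{\phi(a)_{ij}-a_{ji},b_{kl}}'\in\mathcal{I}^\phi$. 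Using \eqref{Eq:BrComp} together with $\phi^2=\id$, one finds that the two summands for $\phi(a)_{ij}$ are exchanged with those for $a_{ji}$, modulo rewriting $c_{kj}$ as $\phi(c)_{jk}$. So the bracket descends to the quotient. Antisymmetry on generators then follows from antisymmetry of $\dgal{-,-}$, i.e.\ $\dgal{b,a}=-\dgal{a,b}^\circ$, combined with \eqref{Eq:BrComp}: this turns the $\phi$-term of $\br{b_{kl},a_{ij}}$ into minus the $\phi$-term of $\br{a_{ij},b_{kl}}$, with indices transposed via $\mathcal{I}^\phi$.

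\textbf{Step 2 (Jacobiator).} Recall that a double quasi-Poisson bracket satisfies
\[
\dgal{a,b,c}=\tfrac14\sum_{s}\bigl(\text{standard cubic terms in }a,b,c\bigr),
\]
and that Van den Bergh's computation expresses $\br{a_{ij},\br{b_{kl},c_{uv}}}+\text{cyclic}$ on $\Rep(A,N)$ through the triple bracket components $\dgal{a,b,c}$ and their cyclic permutations. Expanding the Jacobiator of \eqref{Eq:Thm1} gives eight terms, according to whether or not $\phi$ is applied at each nesting level. Each term is a triple bracket with some arguments replaced by $\phi(\cdot)$ and with indices permuted. Using \eqref{Eq:BrComp} to move $\phi$ through the inner bracket, these group into expressions of the form $\tfrac18\sum\dgal{\phi^{\epsilon_1}a,\phi^{\epsilon_2}b,c}$, each evaluated at appropriately transposed indices. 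Substituting the quasi-Poisson identity then turns everything into sums of products $x_{\cdot\cdot}y_{\cdot\cdot}z_{\cdot\cdot}$ built from $a$, $b$, $c$ and $\phi$ of them.

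\textbf{Step 3 (identification with the trivector).} The trivector for $\Orm_N$ is $\tfrac1{12}\sum C^{abc}\,\xi_a\wedge\xi_b\wedge\xi_c$, built from an orthonormal basis of $\og_N$, for instance $E_{ij}-E_{ji}$ with trace pairing normalised compatibly. Here $\xi$ denotes the infinitesimal action, $\xi_X(a_{ij})=(Xa-aX)_{ij}$. Restricted to $\og_N$, these vector fields are obtained by antisymmetrising the $\gl_N$ vector fields: on the quotient, $(E_{ij}-E_{ji})$ acts through both $a$ and $a^T=\phi(a)$. So the $\Orm_N$-trivector evaluated on $a_{ij},b_{kl},c_{uv}$ is the $\Gl_N$-trivector evaluated at the same indices and averaged over the transposition substitutions $x_{pq}\leftrightarrow\phi(x)_{qp}$. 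Matching this averaged expression with the output of Step 2 completes the proof. $\Orm_N$-invariance is immediate, since \eqref{Eq:Thm1} is built from $\Gl_N$-equivariant index contractions that are compatible with transposition.

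\textbf{Main obstacle.} The hard part is the bookkeeping in Steps 2–3: tracking signs and the factors $\tfrac12$, $\tfrac14$ and $\tfrac1{12}$ across the eight $\phi$-patterns. I would first carry it out for $A$ free on generators with a simple bracket, to fix the normalisations, and then argue in general via the $\phi$-averaging symmetry.
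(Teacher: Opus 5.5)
\textbf{Overall.} Your architecture matches the paper's: define the bracket, obtain an eight-term formula for the Jacobiator, and identify it with $\frac12\psi^{\og}_M$ by splitting each $F_{pq}=E_{pq}-E_{qp}$. However, two steps do not work as written.

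\textbf{Step~1 fails on $\CC[\Rep(A,N)]$.} The $\phi$-summand of \eqref{Eq:Thm1} is \emph{not} compatible with the matrix relations there. By \eqref{Eq:inder}, $\dgal{\phi(ac),b}_{ki,jl}=\dgal{\phi(c)\phi(a),b}_{ki,jl}$ contains $\phi(a)_{ui}$ and $\phi(c)_{ju}$ exactly where the Leibniz expansion of $\sum_u a_{iu}c_{uj}$ contains $a_{iu}$ and $c_{uj}$. So the two sides agree only modulo $\mathcal{I}^\phi$, and there is no biderivation $\br{-,-}'$ on $\CC[\Rep(A,N)]$ to push down. You must define the bracket on generators with values in the quotient. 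Then check every relation modulo $\mathcal{I}^\phi$: the matrix relations and $\phi(a)_{ij}-a_{ji}$. This is what Proposition~\ref{Pr:BrType} does. Your checks of the ideal generators and of antisymmetry are fine.

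\textbf{Steps~2--3: the identification is not carried out.} The bookkeeping you describe does not close on its own.
\begin{itemize}
\item The nested brackets come with $\frac14$, not $\frac18$.
\item The two terms with $\phi$ at both nesting levels must be rewritten with \eqref{Eq:BrComp}, which moves $\phi$ onto $c$. The Jacobiator is then the eight terms of \eqref{Eq:pfqPO1}, each with at most one $\phi$.
\item Your averaging on the trivector side is correct. With the restricted trace form, $\psi^{\og}=\pi^{\otimes 3}\psi^{\gl}$ for $\pi(X)=\frac12(X-X^T)$. Also $-(E_{qp})_M(a_{ij})\equiv (E_{pq})_M(\phi(a)_{ji})$ modulo $\mathcal{I}^\phi$.
\item The problem is what this averaging produces. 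Van den Bergh's identity is $\dgal{x,y,z}_{uj,il,kv}-\dgal{x,z,y}_{kj,iv,ul}=\frac12\psi^{\gl}_M(x_{ij},y_{kl},z_{uv})$. Averaging it over the eight substitutions $x_{pq}\mapsto\phi(x)_{qp}$ gives sixteen triple brackets with coefficient $\frac18$. Some of them are twisted two or three times.
\item Collapsing these onto \eqref{Eq:pfqPO1} needs a lemma you never state: $\phi^{\otimes 3}\dgal{a,b,c}=-\tau_{(13)}\dgal{\phi(b),\phi(a),\phi(c)}$. It follows from \eqref{Eq:BrComp} and \eqref{Eq:TripOp}. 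Together with \eqref{Eq:TriAnti}, it identifies the $\dgal{x,y,z}$-term of each substitution pattern with the $\dgal{x,z,y}$-term of the complementary pattern. The sixteen terms then merge in pairs into the eight terms of \eqref{Eq:pfqPO1}.
\end{itemize}

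\textbf{Comparison with the paper.} With that lemma added, your route is a valid alternative. It inherits the $\Orm_N$ statement from Van den Bergh's $\Gl_N$ statement. The paper instead substitutes \eqref{qPabc} into each of the eight pieces of $\frac1{16}\sum F_{ij}\otimes F_{uj}\otimes F_{ui}$ and matches them one by one.

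\textbf{A cleaner version of your idea.} It also repairs Step~1.
\begin{itemize}
\item Condition \eqref{Eq:BrComp} implies that $x_{pq}\mapsto\phi(x)_{qp}$ is an involutive automorphism of Van den Bergh's bracket on $\CC[\Rep(A,N)]$.
\item \eqref{Eq:Thm1} is Van den Bergh's bracket of the averaged lifts $\frac12(x_{pq}+\phi(x)_{qp})$, reduced modulo $\mathcal{I}^\phi$.
\item Invariant elements of $\mathcal{I}^\phi$ lie in $(\mathcal{I}^\phi)^2$, so this is independent of the chosen lifts.
\item The Jacobiator becomes Van den Bergh's Jacobiator of invariant lifts.
\item Symmetric matrices send invariant functions into $\mathcal{I}^\phi$, which turns $\frac12\psi^{\gl}_M$ into $\frac12\psi^{\og}_M$.
\end{itemize}
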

In full generalities over $A_0$, this (orthogonal) statement can be found in Theorem \ref{Thm:BrqPO}, where we also explain what happens to (multiplicative) moment maps. The corresponding symplectic statement is Theorem \ref{Thm:BrqPsp}.
As an application, we revisit the case of quivers and fundamental groups of surfaces. We also explain how our formalism applied to quivers allows to build holomorphic versions of the spaces recently introduced by Maiza \cite{Maiz26} to build a ``quasi-Poisson TQFT''.

\medskip

Returning to the Poisson setting for a moment, let us note that the application of our results to quivers yields, after performing Hamiltonian reduction, analogues of quiver varieties obtained by reduction from an orthogonal or symplectic symmetry group.
Thus, our work suggests an (yet incomplete) approach for understanding symmetric quiver representations \cite{Bo,DW02,Zu05} and symmetric quiver varieties \cite{Li19,Nak25} through noncommutative Poisson geometry.
Interestingly, the case of representations with actions of both orthogonal and symplectic groups are natural in view of quiver gauge theories \cite{GW09}. This led us to upgrade the previous constructions to the setting with \emph{mixed types}, see Theorems  \ref{Thm:PType} and \ref{Thm:qPType}.
The main difference is that the anti-homomorphism $\phi:A\to A$ is no longer involutive, but its square is a multiple of the identity on each block $e_s A e_t$, the multiple being $+1$ if both vertices $s,t$ are given the same type and $-1$ if they have different types.

Our point of view does not currently allow to consider arbitrary reductive groups acting at the different vertices, e.g., we are unable to mix actions of $\Gl_N$ with those of $\Orm_N,\Sp_N$, or work with $\operatorname{SL}_N$.
Nevertheless, we make a first step towards this aim by showing how our construction agrees with that of Massuyeau and Turaev \cite{MT18} in the case of double quasi-Poisson brackets obtained from Fox pairings on involutive Hopf algebras. In that case, $\phi$ is the antipode.
More precisely, we show in Section \ref{Sec:MT} that the quasi-Poisson bracket induced by our theory reproduces their (ungraded) formulas in the orthogonal and symplectic setting.
Since the work \cite{MT18} allows to take representations associated with the action of an arbitrary reductive group,
an important problem consists in merging their approach with ours. This could resolve the issue of mixing the actions of arbitrary reductive groups.

Finally, we revisit the Kontsevich system \cite{EW}, which is a system of differential equations on the free algebra
$\CC\langle u^{\pm 1},v^{\pm 1}\rangle$ on two (Laurent) generators. It was shown in \cite{Art15,Art17} how this system can be made Hamiltonian with respect to a double quasi-Poisson bracket. Then, going to representation spaces, it induces a Hamiltonian system on the quasi-Poisson double of $\Gl_N$.
We are in position to modify the Hamiltonian function to make it $\phi$-invariant for the (antipode) map $u\mapsto u^{-1}$, $v\mapsto v^{-1}$,
so that it induces a system of differential equations on the quasi-Poisson doubles of $\Orm_N$ and $\Sp_N$, see Theorem \ref{Thm:mKonts}.

\medskip

\textbf{Layout.}
In Section \ref{S:Basics}, the geometric and algebraic notions needed in the rest of the manuscript are reviewed.
In Section \ref{Sec:DPoi}, we explain how to go from a double Poisson bracket to a Poisson bracket with orthogonal or symplectic symmetry.
Furthermore, we apply this formalism to the case of noncommutative cotangent spaces and to quivers.
Analogous results for the quasi-Poisson case are presented in Section \ref{Sec:DqPoi} and constitute the core of our work.
In Section \ref{Sec:Mix}, we explain how to mix types to obtain twisted representation spaces with actions of orthogonal and symplectic groups.
Our formalism is compared with~\cite{MT18} in the case of Fox pairings on Hopf algebras in Section \ref{Sec:MT},
then it is applied to a modified Kontsevich system in Section \ref{Sec:Ksys}.
The paper closes with an appendix gathering technical details.

\medskip

\textbf{Acknowledgements.}
We are grateful to Oleg Chalykh, Taro Kimura, Gwénaël Massuyeau, Nikita Safonkin and Travis Schedler for helpful discussions and remarks.
We thank Gwyn Bellamy for bringing~\cite{Los} to our attention.
The work of S.A. was supported by the Beijing Natural Science Foundation grant IS25025.
The work of M.F. was carried out at IMB which receives support from the EIPHI Graduate School (ANR-17-EURE-0002).



\section{Basics}  \label{S:Basics}

 We work over the complex field $\CC$ to ease the presentation. The reader can take it to be any algebraically closed field of characteristic zero.
All algebras are finitely generated associative unital over $\CC$. Unadorned tensor products are over $\CC$.
In many places, we work relatively over the semi-simple algebra $A_0=\oplus_{s\in I} \CC e_s$, where $e_se_t=\delta_{st}e_s$ and  $\sum_{s\in I}e_s=1$. The basic $\CC$-linear case corresponds to $|I|=1$.


\subsection{Quasi-Poisson geometry}

Let $G$ be a complex reductive algebraic subgroup of $\Gl_N(\CC)$, $N\geq 1$.
The Lie algebra  $\g$ of $G$ is viewed as a subspace of $\gl_N(\CC)$, where it inherits the $\Ad$-invariant non-degenerate symmetric trace form, denoted $\langle -,-\rangle_{\g}$.
We fix a basis $(F_a)_{a}$ of $\g$ and take its dual $(\check{F}_a)_a$ so that $\langle F_a,\check{F}_b\rangle_{\g}=\delta_{ab}$.
We define the $\Ad$-invariant Cartan trivector
\begin{equation} \label{Eq:Cartan3}
 \psi^{\g} :=\frac{1}{12} \sum_{a,b,c} \langle \check{F}_a,[\check{F}_b,\check{F}_c]\rangle_{\g} \, F_a\wedge F_b \wedge F_c \,
 \in\,\wedge^3\g \,.
\end{equation}
For any $\xi \in \g$, the left- and right-invariant vector fields $\xi^L$ and $\xi^R$ on $G$ are given by
\begin{equation} \label{Eq:LRinvVF}
 \xi^L(\mathcal{H})(g)=\left.\frac{d}{dt}\right|_{t=0} \mathcal{H}(g \,e^{t \xi}), \qquad
\xi^R(\mathcal{H})(g)=\left.\frac{d}{dt}\right|_{t=0} \mathcal{H}( e^{t \xi}\, g),
\end{equation}
for any function $\mathcal{H}\in \OO_G(U)$ with $U$ an open neighborhood of $g\in G$.

Let $M$ be an affine complex variety with left action of $G$ denoted $(g,x)\mapsto g\cdot x$.
The infinitesimal action of $\g$ on $M$ assigns to each  $\xi\in \g$ the vector field $\xi_M \in \Der(\OO_M)$ such that
\begin{equation} \label{EqinfVectM}
 \xi_M(f)(x)=\left.\frac{d}{dt}\right|_{t=0} f(\exp(-t\xi)\cdot x)\,,
\end{equation}
for any function $f\in\OO_M(V)$ in an open neighborhood $V$ of $x$.
We extend the map $\xi\mapsto \xi_M$ equivariantly to  $\wedge^k\g \to \wedge^k \Der(\OO_M)$ for $k\geq 1$.
Denote by $\br{-,-}:\OO_M\times \OO_M\to \OO_M$ an antisymmetric biderivation on $M$ which is $G$-invariant, i.e.
for any open $V\subset M$,$f_1,f_2\in\OO_M(V)$, and $g\in G$,
$\br{g\cdot f_1,g\cdot f_2}=g\cdot \br{f_1,f_2}$.
Write $\Jac_{\br{-,-}} : \OO_M^{\times 3}\to \OO_M$ for the associated map (\emph{Jacobiator}) defined through
\begin{equation} \label{Eq:Jac}
 \Jac_{\br{-,-}}(f,g,h) = \br{f,\br{g,h}} + \br{g,\br{h,f}} + \br{h,\br{f,g}},
\end{equation}
for functions $f,g,h\in\OO_M(V)$ in an open neighborhood $V\subset M$.

\begin{defn}
The operation $\br{-,-}$ is a \emph{Poisson bracket} if $\Jac_{\br{-,-}}$ vanishes identically.
It is a \emph{quasi-Poisson bracket} if
\begin{equation} \label{Jac-qP}
 \Jac_{\br{-,-}} = \frac12 \psi^{\g}_M\,.
\end{equation}
We then say that $(M,\br{-,-})$ is an affine (quasi-)Poisson $G$-variety, or (quasi-)Poisson variety for short.
\end{defn}

We consider $G$ acting on itself by the adjoint action, and on the dual $\g^\ast$ of $\g$ by the coadjoint action.
We write $\langle -,-\rangle:\g^\ast\times \g\to \CC$ for the canonical pairing.
\begin{defn}
The affine Poisson variety $(M,\br{-,-})$ is \emph{Hamiltonian} if it admits a \emph{moment map}, that is a $G$-equivariant morphism $\mu:M\to \g^\ast$ satisfying for any $\xi\in \g$,
\begin{equation} \label{momap}
  \br{\langle \mu,\xi\rangle,-}=  \xi_M\,.
\end{equation}
The affine quasi-Poisson variety $(M,\br{-,-})$ is \emph{Hamiltonian} if it admits a \emph{moment map}, that is a $G$-equivariant morphism $\Xi:M\to G$ satisfying for any function $\mathcal{H}$ on $G$,
\begin{equation} \label{Gmomap}
  \br{\Xi^\ast \mathcal{H},-}=  \frac12 \sum_a \Xi^\ast\big((F_a^L+F_a^R)(\mathcal{H})\big)\, (\check{F}_a)_M\,.
\end{equation}
\end{defn}

Quasi-Poisson geometry was originally introduced in the smooth setting \cite{AKSM},
and we shall freely use the notions algebraically for coordinate rings of affine $\CC$-schemes.
For later, we exhibit a choice of dual bases and an expression for the Cartan trivector in the orthogonal and symplectic cases.
We always denote by $E_{ij}$ the elementary matrix of $\gl_N$ with a single nonzero entry equal to $+1$ in position $(i,j)$.

\begin{exmp}[Orthogonal case] \label{Ex:Ortho}
 For $n\geq 2$, note that $F_{ij}=E_{ij}-E_{ji}\in \og(n)$ for all $1\leq i,j\leq n$.
We get dual bases  of $\og(k)$ under the trace pairing by considering
\begin{equation*}
 F_{ij}=E_{ij}-E_{ji}, \quad
 \check{F}_{ij}:=-\frac12 F_{ij},\quad 1\leq i<j\leq n\,.
\end{equation*}
One computes the Lie bracket for $k<l$ and $u<v$ as
\begin{equation*}
\begin{aligned}
{  }
[\check{F}_{kl},\check{F}_{uv}]
=&\frac14 \delta_{ul} F_{kv}-\frac14\delta_{kv} F_{ul}
+\frac14 \delta_{uk}(\delta_{(v<l)} F_{vl}-\delta_{(v>l)} F_{lv})
+\frac14 \delta_{vl}(\delta_{(u<k)} F_{uk}-\delta_{(u>k)} F_{ku})\,.
\end{aligned}
\end{equation*}
By duality of the bases, we directly deduce for $i<j$, $k<l$ and $u<v$,
\begin{equation*}
\langle \check{F}_{ij},  [\check{F}_{kl},\check{F}_{uv}]\rangle_{\og(n)}
=\frac14\left(  \delta_{ul} \delta_{ik}\delta_{jv}-\delta_{kv} \delta_{iu}\delta_{jl}
+ \delta_{uk}\delta_{iv}\delta_{jl} - \delta_{uk} \delta_{il}\delta_{jv}
+ \delta_{vl}\delta_{iu}\delta_{jk} -\delta_{vl} \delta_{ik}\delta_{ju}\right).
\end{equation*}
Substituting these calculations inside \eqref{Eq:Cartan3}, we can write
\begin{equation} \label{Eq:CartO}
\begin{aligned}
 \psi^{\og(n)} &=\frac{1}{12} \sum_{i<j}\sum_{k<l}\sum_{u<v}
\langle \check{F}_{ij},  [\check{F}_{kl},\check{F}_{uv}]\rangle_{\og(n)} \, F_{ij}\wedge F_{kl} \wedge F_{uv}  \\
 &=\frac{1}{8} \sum_{u<i<j} \, F_{ij}\wedge F_{uj} \wedge F_{ui}
 =\frac18\sum_{1\leq,i,j,u\leq n} \, F_{ij} \otimes F_{uj} \otimes F_{ui}\,.
\end{aligned}
\end{equation}
(In this last expression, we consider the element $F_{ab}$ for any $1\leq a,b\leq n$.)
\end{exmp}

\begin{exmp}[Symplectic case] \label{Ex:Symp}
For $n\geq 2$, note that for all $1\leq i,j\leq 2n$,
\begin{equation}
 \label{Eq:SpanSP}
F_{ij}=E_{ij} - \sgn_{2n}(i)\,\sgn_{2n}(j)\, E_{j+n,i+n}\in \spg(2n)\,,
\end{equation}
where indices are understood modulo $2n$ and
\begin{equation}
 \sgn_{2n}: \{1,\ldots,2n\} \to \{-1,+1\}, \qquad \sgn_{2n}(k):= \left\{
 \begin{array}{ll}  +1 & 1  \leq k \leq n, \\ -1 & n+1 \leq k \leq 2n.  \end{array}
\right. \label{Eq:sgn}
\end{equation}
We show in \ref{App:CartSP} that the Cartan trivector of $\spg(2n)$ takes the form
\begin{equation} \label{Eq:CartSp}
\begin{aligned}
 \psi^{\spg(2n)} &
 =\frac{1}{16}\sum_{1\leq,i,j,k\leq 2n} \, (F_{ij} \otimes F_{ki} \otimes F_{jk} - F_{ij} \otimes F_{jk} \otimes F_{ki})\,.
\end{aligned}
\end{equation}
\end{exmp}

We easily deduce an explicit form for the Cartan trivectors of products of orthogonal and symplectic groups.
E.g. for $\alpha=(\alpha_s)\in \N^I$, $I=\{1,\ldots,|I|\}$,
we see $\og_\alpha:=\og(\alpha_1)\times \ldots \times \og(\alpha_{|I|})$ as a subgroup of $\gl_N$, $N=\alpha_1+\ldots+\alpha_{|I|}$, and we obtain dual bases by taking (cf. Example \ref{Ex:Ortho})
\begin{equation*}
 F_{ij}^{(s)}=E_{ij}-E_{ji}, \quad
 \check{F}^{(s)}_{ij}:=-\frac12 F^{(s)}_{ij}, \quad
 \alpha_1+\ldots+\alpha_{s-1}+1\leq i<j\leq \alpha_1+\ldots+\alpha_{s-1}+\alpha_s,\quad s=1,\ldots,|I|\,.
\end{equation*}
The Cartan trivector for $\og_\alpha$ is obtained by summing \eqref{Eq:CartO} for each copy of $\og(\alpha_s)$ in $\og_\alpha \subset \gl_N$.

\subsection{(Twisted) Representation spaces} \label{ss:Rep}

Let $A$ be an algebra considered over $A_0=\oplus_{s\in I} \CC e_s$.
Fix a dimension vector $\alpha\in \N^I$.
The ($A_0$-relative) \emph{affine representation space} of $A$ with dimension vector $\alpha\in \N^I$, denoted $\Rep(A,\alpha)$, is the affine scheme parametrising representations $\rho$ of $A$ on $\CC^N$, $N=\sum_s \alpha_s$, such that
the idempotent $e_s$, $s\in \{1,\ldots,|I|\}$, is represented by the $s$-th identity block
\begin{equation} \label{IdemRep1}
\rho(e_s) = \diag(0_{\alpha_1},\ldots,0_{\alpha_{s-1}},\Id_{\alpha_s},0_{\alpha_{s+1}},\ldots, 0_{\alpha_{|I|}})\,.
\end{equation}
For later reference, note that the indices of matrix entries in that block belong to
\begin{equation} \label{Eq:sIndices}
 \mathtt{R}_s = \{\alpha_{1}+\cdots+\alpha_{s-1}+1\,,\,\ldots\,,\, \alpha_{1}+\cdots+\alpha_{s-1}+\alpha_{s}\}\,.
\end{equation}
We can equivalently describe the coordinate ring $\CC[\Rep(A,\alpha)]$ using the generators
\begin{equation*}
 a_{ij}, \qquad \text{with }a\in A, \quad 1\leq i,j\leq N,
\end{equation*}
satisfying linearity relations in $a$ and the matrix rule $(ab)_{ij}=\sum_{r=1}^N a_{ir}b_{rj}$, together with the algebraic counterpart of \eqref{IdemRep1}:
\begin{equation} \label{IdemRep2}
(e_s)_{ij}=\left\{
\begin{array}{ll}
 \delta_{ij}&\text{if }i,j\in \mathtt{R}_s \,\eqref{Eq:sIndices}, \\
 0& \text{else}.
\end{array}
\right.
\end{equation}

For any $a\in A$, consider $\X(a):\Rep(A,\alpha)\to \gl_N$, $\X(a)(\rho)=\rho(a)$, returning the matrix representing $a$.
The group ${\Gl_\alpha}:=\prod_s \Gl_{\alpha_s}(\CC)$ acts by change of basis according to the decomposition
\begin{equation} \label{DecoCNrep}
 \CC^N=\oplus_{s\in I}\CC^{\alpha_s}
\end{equation}
relative to \eqref{IdemRep1}.
In terms of the coordinate ring, the action reads \cite[\S7.1]{VdB1},
\begin{equation}  \label{Infgrp}
 g\cdot \X(a)=g^{-1}\X(a) g, \qquad g\in {\Gl_\alpha}.
\end{equation}
This action differentiates to an action by derivations of ${\gl_\alpha}:=\prod_s \gl_{\alpha_s}(\CC)$ as
\begin{equation} \label{InfAct}
 \xi\cdot \X(a)=[\X(a) ,\xi], \qquad \xi\in {\gl_\alpha}.
\end{equation}
We note that the invariant ring $\CC[\Rep(A,\alpha)]^{\Gl_\alpha}$ is generated \cite{CB2} by the image of
\begin{equation} \label{Eq:trmap}
 \tr : A\to \CC[\Rep(A,\alpha)]\,, \quad \tr(a)=\sum_{1\leq i\leq N} \X(a)_{ii}\,,
\end{equation}


Following \cite{OS}, a pair $(A,\phi)$ is an \emph{involutive algebra} if $\phi:A\to A$ is a $\CC$-linear anti-automorphism such that $\phi\circ\phi = \id_A$.
If $A$ is an algebra over $A_0$, we further assume that $\phi(e_s)=e_s$ for all $s\in I$.
Next, for the twisted case considered in \cite{OS}, we need to fix an involution $\tau$ on ${\gl_N}$.
Moreover, $\tau$ must preserve a (symmetric or antisymmetric) non-degenerate bilinear form on $\CC^N$ which restricts to each subspace $\CC^{\alpha_s}$ in the decomposition \eqref{DecoCNrep}.
Without loss of generality, we are in one of two cases:
\begin{itemize}
 \item[$\bullet$] $\tau$ is the matrix transposition $\xi\mapsto \xi^T$
(orthogonal type $\typD$ on $\gl_{\alpha_s}(\CC)$ if $\alpha_s$ even, and $\typB$ if $\alpha_s$ odd);
 \item[$\bullet$] $\tau$ is given by the linear map $\xi\mapsto \Omega \xi ^T  \Omega^T$ (symplectic type $\typC$ with all $\alpha_s$ even), where
 \begin{equation}\label{Eq:omega}
  \Omega=\diag(\Omega_{\alpha_1},\ldots,\Omega_{\alpha_{|I|}}) \quad \text{ with }
\Omega_{2d}=\left(\begin{array}{cc} 0_{d} & \Id_{d} \\ -\Id_{d}  & 0_{d}  \end{array} \right) \quad (d\geq 1).
 \end{equation}
\end{itemize}
The twisted representation space is then defined by
\begin{equation*}
 \Rep^{\phi,\tau}(A,\alpha) = \{\rho \in \Rep(A,\alpha) \mid \rho \circ \phi = \tau \circ \rho\}\,.
\end{equation*}
In terms of the coordinate ring, one has
$\CC[\Rep^{\phi,\tau}(A,\alpha)]=\CC[\Rep(A,\alpha)]/I^{\phi,\tau}$ for the ideal
\begin{equation} \label{TwIdeal}
I^{\phi,\tau} := \langle(\phi(a)|E_{ij})-(a|\tau(E_{ij}))  \,|\,
 a\in A,\, 1\leq i,j\leq N\rangle \,,
\end{equation}
where we denote $a_{ij}\in \CC[\Rep(A,\alpha)]$ as $(a|E_{ij})$.
The twisted representation space  is naturally equipped with an action of the corresponding group\footnote{In types $\typB$ and $\typD$, we can use factors $\mathrm{SO}(\alpha_s)$ instead of $\Orm(\alpha_s)$ for some $s\in I$. To simplify the exposition, we will work with all copies being $\Orm(\alpha_s)$; changing some factors to $\mathrm{SO}(\alpha_s)$ does not pose any difficulty.\label{ftnote:TypeO}} ($\Orm_\alpha:=\prod_{s\in I} \Orm(\alpha_s)$ or $\Sp_\alpha:=\prod_{s\in I} \Sp(\alpha_s)$) and its Lie algebra
($\og_\alpha:=\prod_{s\in I} \og(\alpha_s)$ or $\spg_\alpha:=\prod_{s\in I} \spg(\alpha_s)$) induced by \eqref{Infgrp} and \eqref{InfAct}, respectively.

\begin{rem}
 On a (twisted) representation space, we use the following notation:
for $d'\otimes d''\in A^{\otimes 2}$,
 $b'\otimes b''\otimes b''' \in A^{\otimes 3}$,
 and $1\leq i,j,k,l,u,v\leq N$,
\begin{equation} \label{Eq:TensIndex}
 (d'\otimes d'')_{ij,kl}:=d'_{ij}\,d''_{kl}, \quad
 (b'\otimes b''\otimes b''')_{ij,kl,uv}:= b'_{ij}\,b''_{kl}\, b'''_{uv}\,.
\end{equation}
\end{rem}

\subsection{Double brackets}

We recall the key objects from \cite{VdB1} and we deduce some elementary results.

\subsubsection{First constructions} \label{ss:Dbr}
Fix an algebra $A$ over $A_0=\oplus_{s\in I} \CC e_s$.
Its multiplication is simply denoted by concatenation.
We view $A\otimes A$ as an algebra in the obvious way. It is equipped with the outer and the inner $A$-bimodule structures, which commute and are given by
\begin{equation*}
  a \,d\, b=a d' \otimes d'' b \,\,(\text{outer}); \qquad
 a \ast d\ast b=d'b \otimes ad'' \,\, (\text{inner}),
\end{equation*}
for $a,b\in A$ and $d\in A^{\otimes 2}$ denoted $d=d'\otimes d''$ using a Sweedler-type notation.
Permutation of tensor factors is given by the $\CC$-linear map
\begin{equation*}
 (-)^\circ : A^{\otimes 2}\to A^{\otimes 2}, \quad
 d^\circ = d''\otimes d'.
\end{equation*}

\begin{defn}
 A \emph{double bracket} on $A$ is a $\CC$-linear map  $\dgal{-,-}:A\otimes A \to A \otimes A$  satisfying
\begin{subequations}
 \begin{align}
 \dgal{a,b}&=-\dgal{b,a}^\circ \quad \forall a,b\in A\,, \quad &&\text{(cyclic antisymmetry)}
\label{Eq:cycanti} \\
 \dgal{a,bc}&=\dgal{a,b}c+b\dgal{a,c} \quad \forall a,b,c\in A\,, \quad &&\text{(right derivation rule)}
 \label{Eq:outder} \\
 \dgal{bc,a}&=\dgal{b,a}\ast c+b\ast\dgal{c,a} \quad \forall a,b,c\in A\,, \quad &&\text{(left derivation rule)}
 \label{Eq:inder}
 \end{align}
\end{subequations}
together with $A_0$-linearity : $\dgal{e,a}=0=\dgal{a,e}$ for all $a\in A$, $e\in A_0$.
\end{defn}

Given a double bracket, consider its left extension
\begin{equation}
 \dgal{-,-}_L: A\otimes A^{\otimes 2} \to A^{\otimes 3}, \qquad
 \dgal{a,d}_L=\dgal{a,d'}\otimes d''\,.
\end{equation}
Then, define an operation $\dgal{-,-,-} : A^{\otimes 3}\to A^{\otimes 3}$ (the \emph{double Jacobiator}) by setting
\begin{equation}
\label{Eq:TripBr}
\begin{aligned}
  \dgal{a,b,c}=&\dgal{a,\dgal{b,c}}_L
  +\tau_{(123)}\dgal{b,\dgal{c,a}}_L
  +\tau_{(123)}^2\dgal{c,\dgal{a,b}}_L \,, \quad \forall a,b,c\in A.
\end{aligned}
\end{equation}
Here, we used $\tau_{(123)}:\,A^{\otimes 3}\to A^{\otimes 3}$,
$\tau_{(123)}(a_1\otimes a_2\otimes a_3)=a_{3}\otimes a_1 \otimes a_{2}$.
The operation \eqref{Eq:TripBr} is a \emph{triple bracket}~\cite[\S2.3]{VdB1}, i.e. a $\CC$-trilinear map satisfying the cyclic symmetry
\begin{equation} \label{Eq:TriAnti}
\tau_{(123)}\circ \dgal{-,-,-}\circ \tau_{(123)}^{-1}
=\dgal{-,-,-}\,,
\end{equation}
and which is a derivation in its last argument for the outer $A$-bimodule structure on $A^{\otimes 3}$.
(It is also a derivation in its two other arguments for commuting $A$-bimodule structures on $A^{\otimes 3}$.)

\begin{defn}
 A double bracket on $A$ is \emph{Poisson} if the double Jacobiator $\dgal{-,-,-}$ given by \eqref{Eq:TripBr} is identically zero.
In that case, we say that $(A,\dgal{-,-})$ is a \emph{double Poisson algebra}.

\noindent A double bracket on $A$ is \emph{quasi-Poisson} if the double Jacobiator $\dgal{-,-,-}$ satisfies for any $a,b,c\in A$,
\begin{equation}
   \begin{aligned} \label{qPabc}
    \dgal{a,b,c}=&\frac14 \sum_{s\in I} \Big(
c e_s a \otimes e_s b \otimes e_s  - c e_s a \otimes e_s \otimes b e_s - c e_s \otimes a e_s b \otimes e_s
+ c e_s \otimes a e_s \otimes b e_s \\
&\qquad \quad - e_s a \otimes e_s b \otimes e_s c + e_s a \otimes e_s \otimes b e_s c + e_s \otimes a e_s b \otimes e_s c - e_s \otimes a e_s \otimes b e_s c \Big)\,.
  \end{aligned}
\end{equation}
In that case, we say that $(A,\dgal{-,-})$ is a \emph{double quasi-Poisson algebra}.
\end{defn}

\begin{rem} \label{Rem:HoP}
The operation $\mathrm{m}\circ \dgal{-,-}:A\otimes A \to A$ obtained by multiplication of the two tensor factors gives rise
to a Lie bracket $\br{-,-}_\sharp$ on the vector space $A/[A,A]$ \cite[\S2.4]{VdB1}.
\end{rem}

\begin{rem} \label{Rem:TripBr}
If we consider the right extension $\dgal{-,-}_R: A\otimes A^{\otimes 2} \to A^{\otimes 3}$,
$\dgal{a,d}_R=d'\otimes \dgal{a,d''}$ of a double bracket, as noticed in \cite{FMC}
the double Jacobiator can be equivalently defined as
\begin{equation}
\label{Eq:TripBr2}
\begin{aligned}
  \dgal{a,b,c}=&-\dgal{b,\dgal{a,c}}_R
  -\tau_{(123)}\dgal{c,\dgal{b,a}}_R
  -\tau_{(123)}^2\dgal{a,\dgal{c,b}}_R \,.
\end{aligned}
\end{equation}
\end{rem}

\begin{defn}
If $(A,\dgal{-,-})$ is a double Poisson algebra, an element $\mu \in A$ is called a \emph{moment map} if it can be decomposed as $\mu=\sum_{s\in I} \mu_s$ with $\mu_s=e_s\mu e_s$, so that \eqref{mum} holds.
We say that $(A,\dgal{-,-},\mu)$ is a \emph{double Hamiltonian algebra}.

\noindent
If $(A,\dgal{-,-})$ is a double quasi-Poisson algebra, an invertible element $\Phi \in A^\times$ is called a \emph{moment map} if it can be decomposed as $\Phi=\sum_{s\in I} \Phi_s$ with $\Phi_s=e_s\Phi e_s$, so that for any $a\in A$ and $s\in I$,
\begin{equation} \label{Phim}
 \dgal{\Phi_s,a}=\frac12 (ae_s\otimes \Phi_s-e_s \otimes \Phi_s a +  a \Phi_s \otimes e_s-\Phi_s \otimes e_s a)\,.
\end{equation}
We say that $(A,\dgal{-,-},\Phi)$ is a \emph{double quasi-Hamiltonian algebra}.
\end{defn}

\begin{rem}
A double bracket is uniquely determined by its evaluation on generators.
Similarly, the double Jacobiator $\dgal{-,-,-}$ \eqref{Eq:TripBr} is uniquely determined by its value on generators.
Thus, the equality $\dgal{a,b,c}=0$ (resp. \eqref{qPabc}) for any $a,b,c\in A$ follows from establishing this equality
when $a,b,c$ are generators.
It also suffices to check the equalities \eqref{mum} and \eqref{Phim} for any generator $a$.
\end{rem}

Write $A^{\op}$ for the opposite algebra of $A$, i.e. with multiplication $a\cdot_{\op}b=ba$ for all $a,b\in A$.
The following two new results will be useful.

\begin{lem} \label{lem:Op-P}
If $(A,\dgal{-,-})$ is a double Poisson algebra, then
\begin{equation} \label{Eq:dgalOp}
 \dgal{-,-}_{\op}: A^{\op}\otimes A^{\op} \to A^{\op}\otimes A^{\op}, \quad
 \dgal{a,b}_{\op}:=\dgal{a,b}^\circ,
\end{equation}
is a double Poisson bracket on $A^{\op}$.
Moreover, if $\mu\in A$ is a moment map, then
$(A^{\op},\dgal{-,-}_{\op},-\mu)$ is a double Hamiltonian algebra.
\end{lem}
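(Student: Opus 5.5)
The plan is to verify the axioms directly for $\dgal{-,-}_{\op}$, working in $A^{\op}$. The key point is how the two bimodule structures on $A^{\op}\otimes A^{\op}$ relate to those on $A\otimes A$ under the flip $(-)^\circ$. The product $a\cdot_{\op}b=ba$ gives the following, for $d\in A^{\otimes 2}$.

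For the outer structure of $A^{\op}$:
\[
a\cdot_{\op} d\cdot_{\op} b = d'a\otimes b d'' = (b\, d^\circ\, a)^\circ .
\]
For the inner structure of $A^{\op}$:
\[
a\ast_{\op} d\ast_{\op} b = b d'\otimes d'' a = (a\ast d^\circ \ast b)^\circ .
\]
So the flip intertwines the outer structure of $A^{\op}$ with the outer structure of $A$ with sides exchanged, and the inner structure of $A^{\op}$ with the inner structure of $A$ with sides kept. $A_0$-linearity is immediate.

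The double bracket axioms then follow term by term. Cyclic antisymmetry holds because $\dgal{a,b}_{\op}=\dgal{a,b}^\circ=-\dgal{b,a}=-(\dgal{b,a}_{\op})^\circ$. For the right derivation rule, note $b\cdot_{\op}c=cb$, so
\[
\dgal{a,cb}^\circ=(\dgal{a,c}b+c\dgal{a,b})^\circ=\dgal{a,b}_{\op}\cdot_{\op} c+b\cdot_{\op}\dgal{a,c}_{\op},
\]
using the outer identity above. The left derivation rule is handled the same way with the inner identity.

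The main step is the vanishing of the double Jacobiator $\dgal{-,-,-}_{\op}$. Write $\sigma=\tau_{(13)}$ on $A^{\otimes 3}$, which reverses the order of the tensor factors. I would show
\[
\dgal{a,b,c}_{\op} = -\,\sigma\big(\dgal{a,c,b}\big)
\]
(or a cyclic variant of this). First compute
\[
\dgal{a,\dgal{b,c}_{\op}}_{\op,L}=\dgal{a,\dgal{b,c}''}^\circ\otimes \dgal{b,c}' .
\]
Reversing all three factors turns this into $\dgal{b,c}'\otimes\dgal{a,\dgal{b,c}''}$, which is $\dgal{a,\dgal{b,c}}_R$. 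Since $\dgal{b,c}=-\dgal{c,b}^\circ$, this equals $-\sigma\,\dgal{a,\dgal{c,b}}_R$.

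Next use $\sigma\tau_{(123)}\sigma=\tau_{(123)}^{-1}$ to rearrange the three terms of \eqref{Eq:TripBr} for $\dgal{-,-}_{\op}$. They should match, up to the sign $-\sigma$, the three terms of the right-extension formula \eqref{Eq:TripBr2} from Remark~\ref{Rem:TripBr} applied to a suitable permutation of $(a,b,c)$. The cyclic symmetry \eqref{Eq:TriAnti} absorbs any leftover cyclic permutation. Matching these three terms with the correct permutation and powers of $\tau_{(123)}$ is the bookkeeping I expect to be the main obstacle; I would check it by writing all terms on simple tensors. Once it is established, the $\op$ Jacobiator vanishes because the original one does.

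For the moment map, write $\mu_s$ for its components; they satisfy $e_s\mu_s e_s=\mu_s$ in $A^{\op}$ as well. Then
\[
\dgal{-\mu_s,a}_{\op}=-(ae_s\otimes e_s-e_s\otimes e_s a)^\circ=e_s a\otimes e_s-e_s\otimes a e_s .
\]
In $A^{\op}$ one has $e_s a=a\cdot_{\op}e_s$ and $ae_s=e_s\cdot_{\op}a$. Hence the right-hand side is exactly $a\cdot_{\op}e_s\otimes e_s-e_s\otimes e_s\cdot_{\op}a$, which is \eqref{mum} in $A^{\op}$.
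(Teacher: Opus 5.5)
Your route is the paper's. You verify the axioms by flipping, reduce the Jacobiator of $\dgal{-,-}_{\op}$ to the right-extension formula \eqref{Eq:TripBr2}, and check \eqref{mum} for $-\mu$. Your cyclic antisymmetry, right derivation rule and moment-map computations are correct. So is your key identity $\dgal{a,\dgal{b,c}_{\op}}_{\op,L}=\tau_{(13)}\dgal{a,\dgal{b,c}}_R$, which is exactly what the paper uses. However, two of your stated identities are false, and the decisive step is left undone.

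\textbf{Inner structure.} You have $a\ast_{\op}d\ast_{\op}b=bd'\otimes d''a=(b\ast d^\circ\ast a)^\circ$, not $(a\ast d^\circ\ast b)^\circ=ad'\otimes d''b$. Like the outer structure, it matches the structure of $A$ with sides exchanged. With your version, the left derivation check gives $(\dgal{b,a}\ast c+b\ast\dgal{c,a})^\circ=\dgal{bc,a}^\circ$, whereas $\dgal{b\cdot_{\op}c,a}_{\op}=\dgal{cb,a}^\circ$. With the corrected identity it closes by \eqref{Eq:inder}.

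\textbf{The rewriting step.} The claim $\tau_{(13)}\dgal{a,\dgal{b,c}}_R=-\tau_{(13)}\dgal{a,\dgal{c,b}}_R$ is wrong: for $\dgal{b,c}=x\otimes y$ it would assert $x\otimes\dgal{a,y}=y\otimes\dgal{a,x}$. Cyclic antisymmetry moves the inner bracket to the other slot, giving $\dgal{a,\dgal{b,c}}_R=-\tau_{(123)}\dgal{a,\dgal{c,b}}_L$. If you follow your version, the relative powers of $\tau_{(123)}$ no longer match \eqref{Eq:TripBr2} for any ordering of $a,b,c$.

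\textbf{How to close the Jacobiator step.} Drop the rewriting and use your key identity directly, together with $\tau_{(123)}\tau_{(13)}=\tau_{(13)}\tau_{(123)}^2$:
\[
\dgal{a,b,c}_{\op}=\tau_{(13)}\big(\dgal{a,\dgal{b,c}}_R+\tau_{(123)}^2\dgal{b,\dgal{c,a}}_R+\tau_{(123)}\dgal{c,\dgal{a,b}}_R\big)=-\tau_{(13)}\dgal{b,a,c}.
\]
This is \eqref{Eq:TripBr2} for the triple $(b,a,c)$, so the bookkeeping you expected to be the main obstacle is a single line and needs no appeal to \eqref{Eq:TriAnti}. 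Your guess $-\tau_{(13)}\dgal{a,c,b}$ is off by a factor $\tau_{(123)}$, since $\dgal{b,a,c}=\tau_{(123)}\dgal{a,c,b}$. That is harmless for vanishing, but it matters if you reuse the identity for the quasi-Poisson Lemma~\ref{lem:Op-qP}.
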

\begin{rem}
Assume that $a_1,\ldots,a_\ell$ are generators of $A$, and write $a_1',\ldots,a_\ell'$ for their images in $A^{\op}$.
If $\mu=\sum_{j} a_{i_1}\ldots a_{i_{k_j}} \in A$ for indices $i_k\in \{1,\ldots,\ell\}$,
then writing $-\mu \in A^{\op}$ means that we consider the expression
$-\mu = -\sum_{j} a_{i_{k_j}}' \cdot_{\op} \ldots \cdot_{\op} a_{i_1}'$ in terms of generators of $A^{\op}$.
\end{rem}
\begin{proof}[Proof (of Lemma \ref{lem:Op-P})]
It is clear that \eqref{Eq:dgalOp} is well-defined as a $\CC$-bilinear map, and it is $A_0$-linear.
Cyclic antisymmetry \eqref{Eq:cycanti} for $\dgal{-,-}_{\op}$ is direct since $\dgal{-,-}$ has this property.
The derivation rules \eqref{Eq:outder}--\eqref{Eq:inder} for $\dgal{-,-}_{\op}$
are equivalent to those rules for $\dgal{-,-}$. E.g.
\begin{align*}
\dgal{a,b\cdot_{\op} c}_{\op}&= \dgal{a,cb}^\circ = (c\dgal{a,b}+\dgal{a,c}b)^\circ \\
&=c\ast \dgal{a,b}^\circ + \dgal{a,c}^\circ \ast b
= \dgal{a,b}_{\op} \cdot_{\op} c + b \cdot_{\op} \dgal{a,c}_{\op}\,.
\end{align*}
Next, noticing the equality
\begin{align*}
 \ldb a,\dgal{b,c}_{\op}\rdb_{\op,L}
 =\tau_{(12)}\ldb  a,\dgal{b,c}^\circ \rdb_{L}
 =\tau_{(13)}\dgal{a,\dgal{b,c}}_{R} \,,
\end{align*}
the vanishing of the double Jacobiator \eqref{Eq:TripBr} on $A^{\op}$ follows from
\begin{equation}
 \begin{aligned} \label{Eq:TripOp}
  \dgal{a,b,c}_{\op}=&\ldb  a,\dgal{b,c}_{\op}\rdb_{\op,L}
  +\tau_{(123)}\ldb b,\dgal{c,a}_{\op}\rdb_{\op,L}
  +\tau_{(123)}^2 \ldb c,\dgal{a,b}_{\op}\rdb_{\op,L} \\
  =&\tau_{(13)}\left( \dgal{a,\dgal{b,c}}_{R} +\tau_{(123)}^2 \dgal{b,\dgal{c,a}}_{R}
+\tau_{(123)} \dgal{c,\dgal{a,b}}_{R} \right) \\
=&-\tau_{(13)} \dgal{b,a,c}\,,
 \end{aligned}
\end{equation}
where we used \eqref{Eq:TripBr2} in the last equality.
For the moment map property, one gets from  \eqref{mum}
\begin{equation*}
 -\dgal{\mu_s,a}_{\op}=-(a e_s \otimes e_s - e_s \otimes e_s a)^\circ
 = a \cdot_{\op}e_s \otimes e_s - e_s \otimes e_s \cdot_{\op}a\,,
\end{equation*}
which holds for any $a\in A^{\op}$, as desired.
\end{proof}

\begin{lem} \label{lem:Op-qP}
If $(A,\dgal{-,-})$ is a double quasi-Poisson algebra, then $\dgal{-,-}_{\op}$ defined as in \eqref{Eq:dgalOp}
is a double quasi-Poisson bracket on $A^{\op}$.
Moreover, if $\Phi\in A^\times$ is a moment map, then
$(A^{\op},\dgal{-,-}_{\op},\Phi^{-1})$ is a double quasi-Hamiltonian algebra.
\end{lem}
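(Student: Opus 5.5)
The plan is to reuse the proof of Lemma~\ref{lem:Op-P} for everything that does not involve the Jacobi identity, and then to treat the quasi-Poisson identity \eqref{qPabc} and the moment map condition \eqref{Phim} by direct computation.

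\emph{Double bracket axioms.} $A_0$-linearity, cyclic antisymmetry \eqref{Eq:cycanti} and the derivation rules \eqref{Eq:outder}--\eqref{Eq:inder} for $\dgal{-,-}_{\op}$ involve no Jacobi-type condition. Their verification is therefore word for word the one in Lemma~\ref{lem:Op-P}.

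\emph{Double Jacobiator.} The computation \eqref{Eq:TripOp} is also valid here: it uses only the identity $\ldb a,\dgal{b,c}_{\op}\rdb_{\op,L}=\tau_{(13)}\dgal{a,\dgal{b,c}}_R$ and the alternative expression \eqref{Eq:TripBr2} from Remark~\ref{Rem:TripBr}, and both hold for any double bracket. This gives
\[
\dgal{a,b,c}_{\op}=-\tau_{(13)}\dgal{b,a,c}.
\]
I then substitute the quasi-Poisson identity \eqref{qPabc} for $\dgal{b,a,c}$, with $a$ and $b$ exchanged, and apply $-\tau_{(13)}$ to each of the eight terms. Each resulting term has to be rewritten in terms of the product $\cdot_{\op}$, so that a word $x e_s y$ in $A$ becomes $y\cdot_{\op} e_s\cdot_{\op} x$. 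Finally I compare with the right-hand side of \eqref{qPabc} written in $A^{\op}$.

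This comparison is the main obstacle: it is pure bookkeeping, but the signs and the permutation of the eight terms must be matched carefully. I expect the eight terms to be permuted among themselves, with the overall minus sign absorbed. For instance, the term $-\tau_{(13)}(-e_s b\otimes e_s a\otimes e_s c)=e_s c\otimes e_s a\otimes e_s b$ reads $c\cdot_{\op}e_s\otimes a\cdot_{\op} e_s\otimes b\cdot_{\op} e_s$ in $A^{\op}$. Here $e_s$ behaves as an ordinary idempotent in $A^{\op}$ as well. I will check every term in this way and collect them.

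\emph{Moment map.} Let $\Phi_s^{-1}$ denote the inverse of $\Phi_s$ in $e_sAe_s$, so that $\Phi^{-1}=\sum_s\Phi_s^{-1}$; inverses are the same in $A^{\op}$. Applying the left derivation rule \eqref{Eq:inder} to $\Phi_s\Phi_s^{-1}=e_s$, together with $A_0$-linearity, gives
\[
\dgal{\Phi_s^{-1},a}=-\Phi_s^{-1}\ast\dgal{\Phi_s,a}\ast\Phi_s^{-1}.
\]
I evaluate this using \eqref{Phim} and the inner bimodule structure $x\ast d\ast y=d'y\otimes xd''$. For example,
\[
\Phi_s^{-1}\ast(ae_s\otimes\Phi_s)\ast\Phi_s^{-1}=a\Phi_s^{-1}\otimes e_s,
\]
and the other three terms are handled the same way. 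Applying $(-)^\circ$ and rewriting the products with $\cdot_{\op}$ should then reproduce exactly \eqref{Phim} for $\Phi_s^{-1}$ in $(A^{\op},\dgal{-,-}_{\op})$. As noted in the preceding remark, it suffices to verify this for generators $a$.
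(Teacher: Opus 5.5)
Your proposal is correct and follows the paper's proof: you reuse Lemma~\ref{lem:Op-P} for the double bracket axioms, invoke \eqref{Eq:TripOp} (which indeed holds for any double bracket) to get $\dgal{a,b,c}_{\op}=-\tau_{(13)}\dgal{b,a,c}$, and match the eight terms with the right-hand side of \eqref{qPabc} in $A^{\op}$. For the moment map you compute $\dgal{\Phi_s^{-1},a}=-\Phi_s^{-1}\ast\dgal{\Phi_s,a}\ast\Phi_s^{-1}$ from \eqref{Eq:inder} and \eqref{Phim}, apply $(-)^\circ$ and read off \eqref{Phim} in $A^{\op}$, exactly as in \eqref{Phim-inv}; the term matching you anticipate does work out term by term.
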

\begin{proof}
From the proof of Lemma \ref{lem:Op-P}, we already know that $\dgal{-,-}_{\op}$ is a double bracket, hence we have to check \eqref{qPabc}.
In view of \eqref{Eq:TripOp} and the property \eqref{qPabc} for $\dgal{-,-}$, one has
\begin{equation*}
   \begin{aligned}
    &\dgal{a,b,c}_{\op}=-\tau_{(13)} \dgal{b,a,c} \\
=&
    \frac14 \sum_{s\in I} \tau_{(13)} \Big(
- c e_s b \otimes e_s a \otimes e_s  + c e_s b \otimes e_s \otimes a e_s + c e_s \otimes b e_s a \otimes e_s
- c e_s \otimes b e_s \otimes a e_s \\
&\qquad \quad + e_s b \otimes e_s a \otimes e_s c - e_s b \otimes e_s \otimes a e_s c - e_s \otimes b e_s a \otimes e_s c
+ e_s \otimes b e_s \otimes a e_s c \Big)\\
=&
\frac14 \sum_{s\in I} \Big(
- e_s \otimes a \cdot_{\op} e_s  \otimes b \cdot_{\op} e_s \cdot_{\op} c
+ e_s \cdot_{\op} a \otimes e_s \otimes  b \cdot_{\op} e_s \cdot_{\op} c \\
& \qquad + e_s \otimes a \cdot_{\op} e_s \cdot_{\op} b \otimes e_s \cdot_{\op} c
- e_s \cdot_{\op} a \otimes e_s \cdot_{\op} b \otimes e_s \cdot_{\op} c
 + c \cdot_{\op}  e_s  \otimes a\cdot_{\op} e_s \otimes b \cdot_{\op}  e_s  \\
 &\qquad -  c \cdot_{\op} e_s \cdot_{\op} a \otimes e_s \otimes b \cdot_{\op}  e_s
 - c \cdot_{\op}  e_s \otimes a\cdot_{\op}  e_s \cdot_{\op}  b \otimes e_s
+c\cdot_{\op} e_s \cdot_{\op} a \otimes e_s\cdot_{\op} b \otimes e_s \Big)\,.
  \end{aligned}
\end{equation*}
This is exactly the right-hand side (RHS) of \eqref{qPabc} written in $A^{\op}$.

For the moment map property, we get from \eqref{Eq:inder} and \eqref{Phim} in $A$
\begin{equation} \label{Phim-inv}
 \dgal{\Phi_s^{-1},a}
 =-\Phi_s^{-1}\ast \dgal{\Phi_s,a} \ast \Phi_s^{-1}
 =-\frac12 (a\Phi_s^{-1}\otimes e_s-\Phi_s^{-1} \otimes e_s a +  a e_s \otimes \Phi_s^{-1} -e_s \otimes \Phi_s^{-1} a)\,.
\end{equation}
This equality entails in $A^{\op}$,
\begin{align*}
\dgal{\Phi_s^{-1},a}_{\op} &= \dgal{\Phi_s^{-1},a}^\circ
=\frac12 (-e_s \otimes \Phi_s^{-1}\cdot_{\op}a+ a\cdot_{\op}e_s  \otimes \Phi_s^{-1}
- \Phi_s^{-1} \otimes e_s\cdot_{\op}a + a\cdot_{\op}\Phi_s^{-1} \otimes e_s)
\end{align*}
which is precisely the defining condition \eqref{Phim}.
\end{proof}

\subsubsection{Fusion of algebras} \label{ss:Fus}

We recall the notion of fusion following \cite[\S 2.5]{VdB1}, see also \cite[\S2]{F2}.
Fix an algebra $A$ over $A_0=\oplus_{s\in I} \CC e_s$  with $I=\{1,2,\ldots, |I|\}$ ($|I|\geq 2$).
The $\CC$-algebra $\Mat_2(\CC)$ is generated by $e_1=e_{11},e_{12},e_{21},e_2=e_{22}$ with $e_{st}e_{uv}=\delta_{tu}e_{sv}$.
Let $\mu=1-e_1-e_2$ and $\epsilon=1-e_2$, then introduce
\begin{align*}
 A^{(1,2)}:=&\,A \ast_{\CC e_1 \oplus \CC e_2 \oplus \CC \mu} (\Mat_2(\CC)\oplus \CC \mu) = A \ast_{A_0} A_0^{(1,2)}\,; \\
 A^f:=&\, \epsilon A^{(1,2)} \epsilon\,.
\end{align*}
The algebra $A^{(1,2)}$ is the extension algebra of $A$ along the pair $(e_1,e_2)$.
The fusion algebra $A^f$ of $A$ (obtained by fusing $e_2$ onto $e_1$) is the algebra
obtained from $A^{(1,2)}$ by discarding elements of $e_2 A^{(1,2)} + A^{(1,2)} e_2$.
The algebra $A^f$ is an algebra over $A_0^f$, with  $A_0^f=\epsilon A^{(1,2)}_0 \epsilon\simeq \oplus_{s\in I\setminus \{2\}} \CC e_s$.
Any element of $A^f$ can be written in terms of generators of the form
\begin{subequations}
  \begin{align}
   (\text{first kind})&\qquad\qquad a=t\,, \quad \text{with } t \in \epsilon A \epsilon\,, \label{type1}\\
   (\text{second kind})&\qquad\qquad a=e_{12}u\,, \quad \text{with } u \in e_2 A \epsilon\,,\label{type2} \\
   (\text{third kind})&\qquad\qquad a=v e_{21}\,, \quad \text{with } v \in \epsilon A e_2\,, \label{type3} \\
   (\text{fourth kind})&\qquad\qquad a=e_{12} w e_{21}\,, \quad \text{with } w \in e_2 A e_2\,. \label{type4}
  \end{align}
\end{subequations}
Now, assume that $A$ is equipped with a double bracket $\dgal{-,-}$.
It naturally extends to a double bracket on $A^{(1,2)}$, and then it restricts to $A^f$ where it is $A_0^f$-linear.
We still denote it as $\dgal{-,-}$ on $A^f$.
\begin{prop}[\cite{F2,VdB1}]
 \label{PropIndbr}
If $\dgal{-,-}$ is Poisson on $A$, then the induced double bracket on $A^f$ is Poisson.
Furthermore, if $\mu=\sum_{s\in I} \mu_s\in A$ is a moment map, then $(A^f,\dgal{-,-})$ has the moment map
\[
 \mu^f= \mu_1+e_{12}\mu_2 e_{21} +\sum_{s\neq 1,2} \mu_s \in A^f \,.
\]
If $\dgal{-,-}$ is quasi-Poisson on $A$, then $A^f$ has an $A_0^f$-linear double quasi-Poisson bracket given by
\begin{equation} \label{dgalf}
  \dgal{-,-}^f:= \dgal{-,-} + \dgal{-,-}_{\fus}\,,
\end{equation}
where  the first double bracket is the one induced in $A^f$,
and the second double bracket $\dgal{-,-}_{\fus}$ is the unique double bracket satisfying the relations in
\cite[Lem.~2.19]{F2}.
Furthermore, if $\Phi=\sum_{s\in I} \Phi_s\in A^\times$ is a moment map, then $(A^f,\dgal{-,-}^f)$ has the moment map
\[
 \Phi^f= \Phi_1\,e_{12}\Phi_2 e_{21} +\sum_{s\neq 1,2} \Phi_s \,\, \in (A^f)^\times\,.
\]
\end{prop}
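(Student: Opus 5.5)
The statement is a recollection of results of Van den Bergh and of \cite{F2}, so the plan is to reduce everything to computations on the four kinds of generators \eqref{type1}--\eqref{type4} of $A^f$.

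\textbf{The induced bracket.} First I check that the double bracket on $A$ extends uniquely to $A^{(1,2)}=A\ast_{A_0}A_0^{(1,2)}$. Since $A_0^{(1,2)}$ is semisimple, I declare $\dgal{-,-}$ to vanish whenever one argument lies in $A_0^{(1,2)}$ and extend by the derivation rules \eqref{Eq:outder}--\eqref{Eq:inder}. This is well defined on the free product: a double bracket on $A\ast_{A_0}B$ is determined by its values on generators, and the only relations are those inside $A$ and inside $B$, each of which the prescribed values respect. The double Jacobiator \eqref{Eq:TripBr} is a derivation in each argument and vanishes as soon as one argument is in $A_0^{(1,2)}$, so if it vanishes on $A$ it vanishes on $A^{(1,2)}$. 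Since $\epsilon$ is an idempotent annihilated by the bracket, both $\dgal{\epsilon a\epsilon,\epsilon b\epsilon}$ and the triple bracket of elements of $\epsilon A^{(1,2)}\epsilon$ lie in the corresponding $\epsilon$-corners, so restriction to $A^f$ preserves Poissonity.

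\textbf{Moment map in the Poisson case.} I verify \eqref{mum} for $\mu^f$ on the four kinds of generators. For $s=1$, using $e_1=\epsilon\cdots$ and $e_{12}e_{21}=e_1$, I have $\dgal{e_{12}\mu_2e_{21},a}=e_{12}\ast\dgal{\mu_2,a}\ast e_{21}$. Adding $\dgal{\mu_1,a}$ should then produce $ae_1\otimes e_1-e_1\otimes e_1 a$ in $A^f$, where $e_1$ now denotes the unit $\epsilon$-block at vertex $1$. For example, for $a=e_{12}u$ the $\mu_1$ term contributes $-e_1\otimes e_{12}u$ type pieces, and the $\mu_2$ term contributes the remaining ones after conjugation. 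These are routine bookkeeping checks.

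\textbf{Quasi-Poisson case.} The induced bracket alone is no longer quasi-Poisson, because the right-hand side of \eqref{qPabc} for $s=1$ and $s=2$ does not combine into the single term for the fused vertex. The fix is the fusion bracket $\dgal{-,-}_{\fus}$ of \cite[Lem.~2.19]{F2}, which is defined on $A^{(1,2)}$ as $\frac12$ of the standard expression built from $e_{12},e_{21}$ and is then restricted to $A^f$. I would verify three facts.
\begin{enumerate}
\item[(i)] $\dgal{-,-}_{\fus}$ is a double bracket.
\item[(ii)] The mixed Jacobiator terms between $\dgal{-,-}$ and $\dgal{-,-}_{\fus}$ vanish. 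This uses $A_0$-linearity: the bracket is $A_0$-linear, and $\dgal{-,-}_{\fus}$ only sees the idempotent components.
\item[(iii)] The Jacobiator of $\dgal{-,-}_{\fus}$ alone equals the difference between the fused right-hand side of \eqref{qPabc} and the sum of the $s=1,2$ terms, restricted to $\epsilon$-corners.
\end{enumerate}
Because all brackets involved are determined on generators, (iii) reduces to finitely many cases over the kinds \eqref{type1}--\eqref{type4}. This is the main obstacle: a large but mechanical case analysis. I would organise it by noting that $\dgal{-,-}_{\fus}$ is itself a quasi-Poisson-type correction attached to the idempotents, so that one can compute on $A_0^{(1,2)}$-generic elements and extend by derivation.

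\textbf{Moment map in the quasi-Poisson case.} For $\Phi^f$ I apply the product rule \eqref{Eq:inder} to $\Phi_1\cdot e_{12}\Phi_2e_{21}$. The cross terms produced by $\dgal{-,-}_{\fus}$ cancel the mismatched halves coming from \eqref{Phim} for $\Phi_1$ and $\Phi_2$, leaving \eqref{Phim} for $\Phi^f$ at vertex $1$. Again, it suffices to check this on the four kinds of generators.
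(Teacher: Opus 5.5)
Your Poisson argument (extend $A_0^{(1,2)}$-linearly to $A^{(1,2)}$, note that the Jacobiator is a triple bracket vanishing on $A_0^{(1,2)}$, restrict to the $\epsilon$-corner) is Van den Bergh's, and it is fine. The quasi-Poisson part, however, is not proved. Your step (iii) rests on a misconception that already shows up in your $\mu^f$ example.

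The identities \eqref{mum}, \eqref{Phim}, \eqref{qPabc} do \emph{not} hold on $A^{(1,2)}$. The extended bracket kills $e_{12}$, whereas for instance $ae_1\otimes e_1-e_1\otimes e_1a=-e_1\otimes e_{12}$ at $a=e_{12}$. So brackets involving elements of $A^f$ must be computed by transporting through $e_{12},e_{21}$, not by substituting into these formulas. Correctly, $\dgal{\mu_1,e_{12}u}=e_{12}\dgal{\mu_1,u}=e_{12}ue_1\otimes e_1$ and $\dgal{e_{12}\mu_2e_{21},e_{12}u}=-e_1\otimes e_{12}u$. This is the reverse of your attribution. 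Naive substitution would give $0$ for the second term and ``prove'' that $\mu_1$ alone is a moment map, which is false.

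In (iii) the same issue becomes an actual error. Restricted to $A^f$, every term of the $s=2$ summand of \eqref{qPabc} contains $e_2x$ or $xe_2$ with $x\in\epsilon A^{(1,2)}\epsilon$, hence vanishes. The $s=1$ summand is already the fused-vertex term. So (iii) as written says that $\dgal{-,-}_{\fus}$ has zero Jacobiator, which is false.

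The true Jacobiator of the induced bracket on $A^f$ is not a restriction of \eqref{qPabc}. Take $t,\tilde t\in\epsilon A\epsilon$ and $w\in e_2Ae_2$. On $(t,\tilde t,e_{12}we_{21})$ the Jacobiator equals $e_{12}\dgal{t,\tilde t,w}e_{21}=0$, since all eight terms of \eqref{qPabc} vanish on $(t,\tilde t,w)$. The fused version of \eqref{qPabc} on this triple is in general nonzero; its first term is $\frac14\,e_{12}we_{21}t\otimes e_1\tilde t\otimes e_1$.

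So you never identify the defect that $\dgal{-,-}_{\fus}$ has to compensate. Your (ii) is only a slogan, and the entire content of the statement is deferred to an unperformed ``mechanical'' case analysis.

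The missing idea is Van den Bergh's bivector calculus. Put $E_s(a)=ae_s\otimes e_s-e_s\otimes e_sa$, extended $A_0^{(1,2)}$-linearly. Then:
\begin{itemize}
\item The gauge derivation of $A^f$ at the fused vertex is $E_1+e_{12}E_2e_{21}$; this is exactly what the correct $\mu^f$ computation shows.
\item $\dgal{-,-}_{\fus}$ is the double bracket of the bivector $-\frac12E_1\,(e_{12}E_2e_{21})$, which gives the existence in (i) for free.
\item The quasi-Poisson condition reads $\{P,P\}\equiv\frac16\sum_sE_s^3$ modulo commutators.
\item The mixed term is governed by $\{P,E_1\,e_{12}E_2e_{21}\}$, which vanishes because $\{P,E_s\}=0$ for an $A_0$-linear $P$. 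This is the precise form of your (ii).
\item The Schouten square of the fusion bivector supplies exactly the cross terms of $\frac16(E_1+e_{12}E_2e_{21})^3$, using $E_2^3\equiv(e_{12}E_2e_{21})^3$ modulo commutators.
\end{itemize}

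A direct verification on the four kinds of generators is possible in principle. But then you must actually compute the transported Jacobiator, and $\dgal{\Phi_1e_{12}\Phi_2e_{21},a}^f$, on those generators. Your proposal does not do this.
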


In full generalities, we say that an algebra is obtained by fusion if it can be constructed by a sequence of fusions of 2 idempotents as above.
Given $A,A'$ algebras over $A_0,A_0'$, respectively, $A\oplus A'$ is an algebra over $A_0\oplus A_0'$.
Then, we can perform fusion of idempotents of $A_0$ with those in $A_0'$.

\begin{exmp} \label{Ex:Fus}
 Let $A$ be an algebra over $A_0$ endowed with a double bracket $\dgal{-,-}$.
Then, one gets a double bracket $\dgal{-,-}_{\op}$ on $A_0^{\op}$ by considering \eqref{Eq:dgalOp}
 (this is the first part of Lemma \ref{lem:Op-P}).
We can form the double $\overline{A}:= A \ast_{A_0} A^{\op}$ of $A$ by recursively performing fusion in $A\oplus A^{\op}$ of the idempotents
$(e_j,0),(0,e_j)\in A_0\oplus A_0$, $j\in I$.
By construction, we get a double bracket $\dgal{-,-}^{-}$ on $\overline{A}$ which is the unique $A_0$-linear double bracket such that
\[
 \dgal{a,b}^{-}=\dgal{a,b}, \quad \dgal{a,a'}^-=0, \quad \dgal{a',b'}^- = \dgal{a',b'}_{\op},
\]
for $a,b\in A$ and $a',b'\in A^{\op}$.
Note that if $Q$ is a quiver, cf. \ref{sss:P-exmp}, then the fusion algebra $\overline{\CC Q}$ of the path algebra $\CC Q$ of $Q$ is just the path algebra $\CC \overline{Q}$ of the double quiver $\overline{Q}$.
\end{exmp}

Next, note that any \emph{anti}-morphism $\phi:A\to A$ preserving $A_0$ can be extended to
$\widetilde{\phi}:A^{(1,2)}\to A^{(1,2)}$
by acting through $e_{uv}\mapsto e_{vu}$ on $\Mat_2(\CC)$.
By restricting to $A^f$, we get an anti-morphism $\phi_f$ on $A^f$.
By construction, $\phi_f$ acts on the $4$ kinds of generators as
\begin{equation} \label{Eq:phiExt}
  \phi_f(t)=\phi(t), \quad \phi_f(e_{12}u)=\phi(u)e_{21},
 \quad \phi_f(ve_{21})=e_{12}\phi(v),
 \quad \phi_f(e_{12}we_{21})=e_{12}\phi(w)e_{21},
\end{equation}
where $t\in \epsilon A \epsilon$, $u\in e_2 A \epsilon$, $v\in \epsilon A e_2$ and $w\in e_2Ae_2$.
For later purpose, remark that if $\phi$ is involutive, i.e. $\phi^2=\id_A$, then $\phi_f$ is also involutive.

\begin{lem} \label{Lem:Tw-Rep}
Assume that $\alpha_1=\alpha_2$ and let $\alpha^f=(\alpha_1,\alpha_3,\ldots,\alpha_{|I|})$.
Define the endomorphism $\tau_f$ on $\gl_{N-\alpha_1}$, $\xi \mapsto \xi^T$.
Consider $\Orm_{\alpha^f}\hookrightarrow \Orm_{\alpha}$ where the embedding on the first factor is diagonal
and on the other factors is the identity.
Then, there is a canonical isomorphism between $\Rep^{\phi,\tau}(A,\alpha)$ and $\Rep^{\phi_f,\tau_f}(A^f,\alpha^f)$ such that
the induced $\Orm_{\alpha^f}$ action on $\Rep^{\phi,\tau}(A,\alpha)$ is obtained by restriction from the $\Orm_{\alpha}$-action.

\noindent Furthermore, the same statement holds in the symplectic case using the embedding $\Sp_{\alpha^f}\hookrightarrow \Sp_{\alpha}$ and the endomorphism $\tau_f:\xi \mapsto \Omega^f \xi^T (\Omega^f)^T$,
with $\Omega^f=\diag(\Omega_{\alpha_2},\ldots,\Omega_{\alpha_{|I|}})$.
\end{lem}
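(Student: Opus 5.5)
The plan is to build the isomorphism at the level of untwisted representation spaces first, and then show that it respects the twisting conditions and the group actions. Write $N=\sum_s\alpha_s$ and $N^f=N-\alpha_1$. Since $\alpha_1=\alpha_2$, there is a canonical matrix $J\in\Mat_N(\CC)$: it has the identity $\Id_{\alpha_1}$ in block position $(\mathtt R_1,\mathtt R_2)$ and zeros elsewhere. Its transpose $J^T$ has $\Id_{\alpha_1}$ in block $(\mathtt R_2,\mathtt R_1)$.

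\textbf{Step 1: the untwisted isomorphism.} Given $\rho\in\Rep(A,\alpha)$, we extend it to a representation $\widetilde\rho$ of $A^{(1,2)}=A\ast_{A_0}A_0^{(1,2)}$ by setting $\widetilde\rho(e_{12})=J$ and $\widetilde\rho(e_{21})=J^T$. This is well defined by the universal property of the free product. Indeed, $J,J^T,\rho(e_1),\rho(e_2)$ satisfy the matrix-unit relations $e_{st}e_{uv}=\delta_{tu}e_{sv}$, and $\rho(\mu)=\Id-\rho(e_1)-\rho(e_2)$ is orthogonal to them. We then restrict $\widetilde\rho$ to $A^f=\epsilon A^{(1,2)}\epsilon$. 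We identify $\rho(\epsilon)\CC^N$ with $\CC^{N^f}$ by deleting the $\mathtt R_2$ coordinates. This gives $\rho^f\in\Rep(A^f,\alpha^f)$, and $\rho^f(e_1)$ is the identity block of size $\alpha_1$.

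For the converse, we recover $\rho$ from $\rho^f$ block by block using the four kinds of generators \eqref{type1}--\eqref{type4}:
\[
\rho(t)=\rho^f(t),\quad \rho(u)=\rho^f(e_{12}u),\quad \rho(v)=\rho^f(ve_{21}),\quad \rho(w)=\rho^f(e_{12}we_{21}).
\]
Each right-hand side is read in the appropriate block, with the $\mathtt R_1$ indices relabelled as $\mathtt R_2$ indices where $e_2$ occurs. Multiplicativity follows from $e_{21}e_{12}=e_2$ in $A^{(1,2)}$. Both constructions are given by explicit polynomial formulas on generators, so on coordinate rings they are mutually inverse algebra morphisms $\CC[\Rep(A,\alpha)]\leftrightarrow\CC[\Rep(A^f,\alpha^f)]$. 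For instance, $(e_{12}u|E_{ij})\leftrightarrow(u|E_{i+\alpha_1,j})$ for $i\in\mathtt R_1$.

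\textbf{Step 2: matching the twisting ideals.} We must show that the isomorphism carries $I^{\phi,\tau}$ onto $I^{\phi_f,\tau_f}$. Concretely, $\rho\circ\phi=\tau\circ\rho$ should be equivalent to $\rho^f\circ\phi_f=\tau_f\circ\rho^f$. In the orthogonal case we have $\tau(J)=J^T=\widetilde\rho(e_{21})$ and $\widetilde\phi(e_{12})=e_{21}$, so the extension satisfies $\widetilde\rho\circ\widetilde\phi=\tau\circ\widetilde\rho$ on $A$ and on $\Mat_2(\CC)$. Both $\widetilde\rho\circ\widetilde\phi$ and $\tau\circ\widetilde\rho$ are anti-morphisms, so the identity holds on all of $A^{(1,2)}$. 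Restricting to $A^f$ and using \eqref{Eq:phiExt} gives the twisted condition for $\rho^f$, since $\tau$ restricted to the $\epsilon$-corner is $\tau_f$. Conversely, the four formulas of \eqref{Eq:phiExt} translate the condition for $\rho^f$ generator by generator into the condition for $\rho$ on each block $e_sAe_t$.

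In the symplectic case we need the analogous identity $\Omega J^T\Omega^T=J^T$. It holds because $\Omega$ carries the same block $\Omega_{\alpha_1}$ on $\mathtt R_1$ and $\mathtt R_2$, so $\Omega J\Omega^T$ reduces to $\Omega_{\alpha_1}\Id\,\Omega_{\alpha_1}^T=\Id$ in the relevant block. Moreover, deleting the $\mathtt R_2$ block of $\Omega$ gives $\Omega^f$, so $\tau$ restricts to $\tau_f$.

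\textbf{Step 3: equivariance.} For $g=(g_1,g_3,\ldots)\in\Orm_{\alpha^f}$ embedded as $(g_1,g_1,g_3,\ldots)$, we have $g^{-1}Jg=J$ and $g^{-1}J^Tg=J^T$, because the two diagonal blocks agree. Hence $g\cdot\widetilde\rho$ is the extension of $g\cdot\rho$, and the isomorphism intertwines the action \eqref{Infgrp}; the same argument applies to $\Sp$.

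I expect the main obstacle to be Step 2. The delicate points are the careful index bookkeeping (relabelling $\mathtt R_2$ as $\mathtt R_1$) and verifying that the ideals correspond exactly, not merely up to inclusion. I would handle this by working with the anti-morphism identity on $A^{(1,2)}$ as described, rather than ideal generator by ideal generator.
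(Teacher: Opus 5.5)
Your proposal is correct and is essentially the argument the paper has in mind. The paper only says the lemma is the twisted analogue of Van den Bergh's fusion identification and that it suffices to identify coordinate rings. Your extension $e_{12}\mapsto J$, $e_{21}\mapsto J^T$ fills in exactly those details, via the check that $\tau(J)=J^T$ also holds for $\tau:\xi\mapsto\Omega\xi^T\Omega^T$ (because $\Omega_{\alpha_1}=\Omega_{\alpha_2}$ is orthogonal) and the anti-morphism argument on $A^{(1,2)}$.

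The one thing to make explicit is that Step 2 should be run for representations $A\to\Mat_N(R)$ with $R$ an arbitrary commutative $\CC$-algebra, for instance the universal one. Then the anti-morphism argument gives equality of the ideals $I^{\phi,\tau}$ and $I^{\phi_f,\tau_f}$, and not merely of their zero sets.
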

\begin{proof}
This is the twisted analogue of \cite[\S7.10]{VdB1}.
 It suffices to identify the coordinate rings.
\end{proof}


\section{From double Poisson to Poisson in types \texorpdfstring{$\typB,\typC,\typD$}{B,C,D}}  \label{Sec:DPoi}

We shall generalise the construction of Olshanski and Safonkin \cite[\S4.2]{OS}
in the situation where the algebra $A$ is taken over the semi-simple algebra $A_0=\oplus_{s\in I} \CC e_s$ and the double bracket is $A_0$-linear.

 Given $(A,\phi)$ an involutive algebra (cf. Sect.~\ref{S:Basics}) endowed with a double bracket $\dgal{-,-}$, we say that $\dgal{-,-}$ is $\phi$-adapted, or that $\phi$ is bracket-compatible, if \eqref{Eq:BrComp} holds.
Note that condition \eqref{Eq:BrComp} only needs to be checked on generators of $A$, cf. \cite[Lem.~4.1]{OS}.
Examples of pairs $(\dgal{-,-},\phi)$ satisfying \eqref{Eq:BrComp} are presented in \cite[Sect.~4]{OS}.
New cases can be obtained from a general `cotangent lift' construction given in \ref{ss:NCcot}, which will be applied to quivers.
Before diving into this theory, we give an example of bracket-compatibility and state a preliminary result.

\begin{exmp}
 Fix $g,r\geq 0$ not both zero.
The free algebra
$$A_{g,r}=\CC\langle x_1,y_1,\ldots,x_g,y_g,z_1,\ldots,z_r\rangle$$
is equipped with a double Poisson bracket uniquely determined by
\begin{equation} \label{Eq:dbr-Lin}
 \dgal{x_i,y_j}=- \dgal{y_j,x_i}=\delta_{ij} \,1\otimes 1, \quad
 \dgal{z_k,z_\ell}= \delta_{k\ell}\,(z_k\otimes 1 - 1 \otimes z_k),
\end{equation}
and being zero on any other pair of generators. This operation appeared in connection to higher genera Kashiwara-Vergne problems \cite{AKKN}.
The involutive anti-automorphism $\phi:A_{g,r}\to A_{g,r}$ defined through
\begin{equation*}
 \phi(x_i)=-x_i, \quad \phi(y_i)=-y_i,\quad \phi(z_k)=-z_k,
\end{equation*}
is bracket-compatible for \eqref{Eq:dbr-Lin}, as is easily checked.
Compatibility for $g=0$ was given in \cite{OS}.
\end{exmp}

\begin{prop} \label{Pr:muselect}
 Let $(A,\dgal{-,-},\mu)$ be a double Hamiltonian algebra over $A_0=\oplus_{s\in I} \CC e_s$
 with bracket-compatible anti-involution $\phi$.
Then, up to a shift $\mu \mapsto \mu+c$ with $c\in A_0$,
 we can assume
 \begin{equation}\label{mum-select}
 \mu+\phi(\mu)=0 \,.
\end{equation}
\end{prop}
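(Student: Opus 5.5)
Apply $\phi\otimes\phi$ to the moment map identity \eqref{mum} and use \eqref{Eq:BrComp} to show that $\phi(\mu_s)$ is a moment map for $-\dgal{-,-}$. Then the element $\nu_s:=\mu_s+\phi(\mu_s)$ has vanishing double bracket with everything. Finally, I argue that such an element can be taken to be a constant $c_s e_s$, and shift by $-\tfrac12 c_s e_s$.

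Fix $s\in I$ and $a\in A$, and put $b=\phi(a)$, so that $a=\phi(b)$. By \eqref{Eq:BrComp},
\[
\dgal{\phi(\mu_s),a}=\dgal{\phi(\mu_s),\phi(b)}=\big((\phi\otimes\phi)\dgal{\mu_s,b}\big)^\circ .
\]
Next substitute \eqref{mum}, namely $\dgal{\mu_s,b}=be_s\otimes e_s-e_s\otimes e_sb$. Using $\phi(e_s)=e_s$, the anti-multiplicativity of $\phi$ and $\phi(b)=a$, we get
\[
(\phi\otimes\phi)\dgal{\mu_s,b}=e_s a\otimes e_s-e_s\otimes a e_s .
\]
Flipping the tensor factors gives
\[
\dgal{\phi(\mu_s),a}=e_s\otimes e_s a-a e_s\otimes e_s=-\dgal{\mu_s,a}.
\]
Hence $\dgal{\nu_s,a}=0$ for all $a\in A$. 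Also $\nu_s\in e_sAe_s$, because $\phi$ fixes the idempotents and reverses products.

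It remains to turn $\nu_s$ into an element of $A_0$. This is where I expect the main obstacle: the statement asserts that \eqref{mum-select} can be achieved by a shift $c\in A_0$, which requires $\nu_s\in\CC e_s$. A double Poisson Casimir need not lie in $A_0$ in general, so some input is needed. I would first try to argue directly that $\mu_s+\phi(\mu_s)$ is a scalar multiple of $e_s$. The reasoning is that moment maps are unique up to elements $z$ with $\dgal{z,-}=0$, and in the intended setting, such as the cotangent lifts and quivers considered later, such central terms in $e_sAe_s$ are constants.

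If no general argument emerges, I would interpret the proposition as follows. We replace $\mu$ by $\mu':=\tfrac12(\mu-\phi(\mu))$, which is again a moment map: indeed, by the computation above,
\[
\dgal{\mu'_s,a}=\tfrac12\dgal{\mu_s,a}-\tfrac12\dgal{\phi(\mu_s),a}=\dgal{\mu_s,a}.
\]
This $\mu'$ satisfies $\mu'+\phi(\mu')=0$ by involutivity of $\phi$. The difference $\mu-\mu'=\tfrac12\nu$ is a Casimir, and it reduces to a shift by $c=-\tfrac12\sum_s \nu_s\in A_0$ exactly when each $\nu_s\in\CC e_s$.

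So the concrete plan is: (1) the computation above, which is routine; (2) conclude that $\nu_s$ is a Casimir in $e_sAe_s$; (3) justify $\nu_s\in\CC e_s$, which is the delicate step; (4) set $c=-\tfrac12\sum_s\nu_s$ and check \eqref{mum-select} using $\phi(c)=c$.
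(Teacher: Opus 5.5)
Your steps (1), (2) and (4) match the paper's argument; your identity $\dgal{\phi(\mu_s),a}=-\dgal{\mu_s,a}$ is exactly \eqref{mum-b}. However, step (3) is left open, and it is precisely the content of the proposition.

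Your remark that a double Poisson Casimir need not lie in $A_0$ misses the point. Once a moment map exists, every $z$ with $\dgal{z,-}=0$ does lie in $A_0$. This is the uniqueness of moment maps up to $A_0$ that the paper imports from \cite[Lem.~3.3]{F3}. Appealing to the ``intended settings'' (quivers, cotangent lifts) is not a proof. Your fallback $\mu'=\tfrac12(\mu-\phi(\mu))$ only proves a weaker statement: a correction of $\mu$ by a Casimir, rather than by an element of $A_0$ as claimed.

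The missing argument is short: test the moment map against $\nu_s$ itself.
\begin{itemize}
\item By cyclic antisymmetry \eqref{Eq:cycanti} and your step (2), $\dgal{\mu_s,\nu_s}=-\dgal{\nu_s,\mu_s}^\circ=0$.
\item On the other hand, \eqref{mum} with $a=\nu_s\in e_sAe_s$ gives $\dgal{\mu_s,\nu_s}=\nu_s\otimes e_s-e_s\otimes\nu_s$.
\item Hence $\nu_s\otimes e_s=e_s\otimes\nu_s$ in $A\otimes A$.
\item If $e_s\neq0$, apply $\id_A\otimes f$ for a linear form $f$ on $A$ with $f(e_s)=1$. This yields $\nu_s=f(\nu_s)\,e_s\in\CC e_s$. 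If $e_s=0$, then trivially $\nu_s=0$.
\end{itemize}
With this, $c=-\tfrac12\sum_s\nu_s$ lies in $A_0$, and your step (4) closes the proof. Note also that $\mu+c$ remains a moment map, since $\dgal{c,-}=0$ by $A_0$-linearity.
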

\begin{proof}
First, note the following consequence of bracket-compatibility \eqref{Eq:BrComp} and \eqref{mum}
\begin{equation} \label{mum-b}
 \dgal{\phi(\mu_s),b}=\phi^{\otimes 2}(\dgal{\mu_s,\phi(b)})^\circ
 =e_s\otimes e_sb - be_s\otimes e_s = -\dgal{\mu_s,b}\,.
\end{equation}
In other words, $-\phi(\mu)=-\sum_{s\in I}\phi(\mu_s)$ is also a moment map.
It follows from \cite[Lem.~3.3]{F3} that $-\phi(\mu)=\mu+c$ for some $c\in A_0$.
The same result also gives that $\mu':=\mu+\frac12c$ is a moment map, and one has
$ \mu'+\phi(\mu')=0$.
Thus, up to a shift by $A_0$, we can pick the moment map so that \eqref{mum-select} holds.
\end{proof}

\subsection{The orthogonal case (types \texorpdfstring{$\typB,\typD$}{B,D})} \label{ss:P-orth}

Recall the notation from \ref{ss:Rep}, where the anti-automorphism in the orthogonal case is given by transposition: $\tau(E_{ij})=E_{ji}$.
The defining ideal \eqref{TwIdeal} of the twisted representation space is generated by the elements
\begin{equation} \label{Eq:DefRelO}
 \phi(a)_{ij}-a_{ji}, \qquad a\in A, \quad 1\leq i,j\leq N.
\end{equation}

\begin{thm}[Orthogonal Poisson case] \label{Thm:BrPO}
 Let $(A,\dgal{-,-})$ be a double Poisson algebra over the semi-simple algebra $A_0=\oplus_{s\in I} \CC e_s$.
Assume that $\phi:A\to A$ is an involutive anti-automorphism compatible with $\dgal{-,-}$ in the sense of \eqref{Eq:BrComp}.
For any dimension vector $\alpha\in \N^I$, $N=\sum_{s\in I}\alpha_s$, considering the involution $\tau$ on $\gl_N$
given by transposition, the antisymmetric biderivation $\br{-,-}_{\phi,\Orm}$ on $\Rep^{\phi,\tau}(A,\alpha)$ uniquely determined by\footnote{The factor $\frac12$ is taken for convenience for the moment map property and does not appear in \cite[(1.8)]{OS}.}
\begin{equation} \label{Eq:BrPO}
\br{a_{ij},b_{kl}}_{\phi,\Orm}=\frac12 \dgal{a,b}_{kj,il} + \frac12 \dgal{\phi(a),b}_{ki,jl} \,, \quad a,b\in A,
\end{equation}
is a Poisson bracket for which the action of $\Orm_\alpha :=\prod_{s\in I} \Orm(\alpha_s)$ is by Poisson automorphisms.

Furthermore, if $\mu \in A$ is a moment map chosen to satisfy \eqref{mum-select}, then
$\X(\mu):\Rep^{\phi,\tau}(A,\alpha) \to \og_\alpha$ is a moment map
for $\br{-,-}_{\phi,\Orm}$ after identifying $\og_\alpha=\prod_{s\in I} \og(\alpha_s)$ and its dual under the trace pairing.
\end{thm}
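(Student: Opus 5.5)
Our approach is to obtain everything from the Van den Bergh bracket $\br{a_{ij},b_{kl}}=\dgal{a,b}_{kj,il}$ on $\CC[\Rep(A,\alpha)]$, which is a $\Gl_\alpha$-invariant Poisson bracket, together with the symmetry of the twisted ideal. We proceed in four steps.

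\textbf{Step 1: well-definedness.} Formula \eqref{Eq:BrPO} defines a biderivation on the free commutative algebra on symbols $a_{ij}$. We must check that it preserves linearity in $a$, the matrix rule $(ab)_{ij}=\sum_r a_{ir}b_{rj}$, the idempotent relations \eqref{IdemRep2}, and the generators \eqref{Eq:DefRelO} of $I^{\phi,\tau}$.
\begin{itemize}
\item For the product rule in the second slot, the term $\frac12\dgal{a,b}_{kj,il}$ is handled as in \cite{VdB1} by the outer derivation rule \eqref{Eq:outder}. The term $\frac12\dgal{\phi(a),b}_{ki,jl}$ is the same expression with $(i,j)$ swapped and $a$ replaced by $\phi(a)$, so it is also compatible.
\item For the first slot, the inner rule \eqref{Eq:inder} handles $\dgal{ab,c}$. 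For the $\phi$-term we use $\phi(ab)=\phi(b)\phi(a)$ and then the inner rule, which reproduces the sum over $r$ with transposed indices, modulo $I^{\phi,\tau}$.
\item $A_0$-linearity kills the idempotent relations.
\item Compatibility with $\phi(a)_{ij}-a_{ji}$ in the first slot: we compute
\[
\br{\phi(a)_{ij},b_{kl}}=\tfrac12\dgal{\phi(a),b}_{kj,il}+\tfrac12\dgal{a,b}_{ki,jl},
\]
which equals $\br{a_{ji},b_{kl}}$ exactly because $\phi^2=\id$.
\item Compatibility in the second slot: we must show $\br{a_{ij},\phi(b)_{kl}}\equiv\br{a_{ij},b_{lk}}$ modulo $I^{\phi,\tau}$. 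Here we use \eqref{Eq:BrComp}. It gives $\dgal{a,\phi(b)}=(\phi\otimes\phi)(\dgal{\phi(a),b})^\circ$. On the quotient, $\phi(c)_{uv}=c_{vu}$, so $((\phi\otimes\phi)(d)^\circ)_{kj,il}=d_{li,jk}$, and the two terms swap into each other.
\item Antisymmetry follows from cyclic antisymmetry \eqref{Eq:cycanti} combined with the same index bookkeeping.
\end{itemize}

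\textbf{Step 2: Jacobi identity.} This is the main obstacle. We will compute $\br{a_{ij},\br{b_{kl},c_{uv}}}$ on generators. Expanding produces four types of terms, according to whether $\phi$ is inserted on the outer bracket, the inner bracket, or both.

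The plan is to rewrite every term as an entry of one of $\dgal{a',\dgal{b',c'}}_L$, with $a'\in\{a,\phi(a)\}$ and $b'\in\{b,\phi(b)\}$, evaluated at appropriately permuted index sextuples. We then use $\phi^{\otimes 2}$-compatibility to move all $\phi$'s to a canonical position. After summing cyclically, the terms regroup into
\[
\tfrac14\sum \dgal{a',b',c}_{\sigma(\cdots)}
\]
over the four choices of $(a',b')$. Each of these double Jacobiator entries vanishes by the double Poisson hypothesis; this mirrors \cite[\S4]{OS}.

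To keep the bookkeeping manageable, we would alternatively lift the problem: view the quotient as obtained from the $\Gl$-Poisson bracket by averaging with the involution $\rho\mapsto\tau\circ\rho\circ\phi$. That map is an anti-Poisson/Poisson map by \eqref{Eq:BrComp}, so the averaged bracket restricted to its fixed locus is Poisson. We will try this conceptual route first and fall back on the direct index computation.

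\textbf{Step 3: invariance.} $\Orm_\alpha$-invariance is immediate. The action $\X(a)\mapsto g^{-1}\X(a)g$ with $g^{-1}=g^T$ transforms both index patterns in \eqref{Eq:BrPO} covariantly, as in the $\Gl$ case, and $\Orm_\alpha$ preserves $I^{\phi,\tau}$.

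\textbf{Step 4: moment map.} By \eqref{mum-select}, $\X(\mu)^T=\X(\phi(\mu))=-\X(\mu)$ on the quotient, so $\X(\mu)$ is valued in $\og_\alpha$. For the bracket we use \eqref{mum} and \eqref{mum-b}: $\dgal{\phi(\mu_s),b}=-\dgal{\mu_s,b}$. This gives
\[
\br{(\mu_s)_{ij},b_{kl}}=\tfrac12\big(\dgal{\mu_s,b}_{kj,il}-\dgal{\mu_s,b}_{ki,jl}\big),
\]
which is the action of $\frac12(E_{ij}-E_{ji})$ in the form \eqref{InfAct}. Pairing with the trace form and the dual basis $\check F_{ij}=-\frac12F_{ij}$ of Example~\ref{Ex:Ortho} then yields \eqref{momap}, the factor $\frac12$ in \eqref{Eq:BrPO} being exactly what makes the constants match.
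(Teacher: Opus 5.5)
Your Steps 1, 3 and 4 match what the paper does. Step 4 is the computation \eqref{Eq:pfPO1}--\eqref{Eq:pfPO2}, and Steps 1 and 3 are Proposition~\ref{Pr:BrType} and Lemma~\ref{Lem:ActType} specialised to type $\Orm$.

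The genuine difference is your preferred route to the Jacobi identity. The paper (following \cite{OS}) instead expands the Jacobiator explicitly into the eight double-Jacobiator entries of \eqref{Eq:pfqPO1}.

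Your route works, but you must resolve the ``anti-Poisson/Poisson'' hesitation, because for an anti-Poisson involution the induced bracket would vanish. The automorphism $\sigma^*(a_{ij})=\phi(a)_{ji}$ of $R=\CC[\Rep(A,\alpha)]$ is well defined, since $\phi$ is an anti-automorphism fixing the $e_s$. It is Poisson for Van den Bergh's bracket: by \eqref{Eq:BrComp},
\[
\br{\phi(a)_{ji},\phi(b)_{lk}}=\dgal{\phi(a),\phi(b)}_{li,jk}=\phi(\dgal{a,b}'')_{li}\,\phi(\dgal{a,b}')_{jk}=\sigma^*\big(\dgal{a,b}_{kj,il}\big).
\]
Now split $R=R^+\oplus R^-$ into the eigenspaces of $\sigma^*$. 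Then $I^{\phi,\tau}=R\cdot R^-=R^-\oplus R^-R^-$, where $R^-R^-$ is the span of products. So $R^+\to R/I^{\phi,\tau}$ is onto with kernel $R^-R^-$. Moreover, $R^+$ is a Poisson subalgebra, and $R^-R^-$ is a Poisson ideal in it because $\br{R^+,R^-}\subset R^-$. Hence $R/I^{\phi,\tau}$ is Poisson; this is the algebraic form of the fact that the fixed locus of a Poisson involution inherits a Poisson structure.

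Evaluating on the invariant lifts $\frac12(a_{ij}+\phi(a)_{ji})$ gives exactly \eqref{Eq:BrPO}, with the factor $\frac12$ coming from the averaging. This makes your Step 1 redundant. It also gives $\Orm_\alpha$-invariance for free, since orthogonal $g$ commute with $\sigma$.

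What the paper's explicit route buys is the identity \eqref{Eq:JacType}, valid for any adapted double bracket. The paper reuses it to match the Jacobiator against the Cartan trivector in the quasi-Poisson and mixed-type theorems. Your route is shorter and explains where the formula comes from, but as stated it only yields vanishing.

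If you do fall back on the direct computation, the terms do not regroup as $\dgal{a',b',c}$. Terms with $\phi$ on both $a$ and $b$ are converted by \eqref{Eq:BrComp} into terms with $\phi$ on $c$ alone, which produces the eight entries of \eqref{Eq:pfqPO1}.

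In Step 4 there is a sign slip. Your formula $\br{(\mu_s)_{ij},b_{kl}}=\frac12(\dgal{\mu_s,b}_{kj,il}-\dgal{\mu_s,b}_{ki,jl})$ is correct. With the convention \eqref{InfAct}, however, it equals $\frac12[\X(b),E_{ji}-E_{ij}]_{kl}$, which is the action of $\frac12(E_{ji}-E_{ij})$, not of $\frac12(E_{ij}-E_{ji})$.

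No dual basis is needed for \eqref{momap}. You have $\langle\X(\mu),F_{ij}\rangle_{\og}=\mu_{ji}-\mu_{ij}$, whose bracket with $b_{kl}$ is $[\X(b),F_{ij}]_{kl}=(F_{ij})_M(b_{kl})$, which is \eqref{Eq:MuFO}. With your sign you would instead get $-(F_{ij})_M$.
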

\begin{proof}
The first part is a straightforward adaptation of Theorem 3.9 and Proposition 3.10 of \cite{OS} to the orthogonal case over a semi-simple algebra.
The second part was proved over $A_0=\CC$ by Safonkin \cite{Saf}, but we give a proof for an arbitrary semi-simple base $A_0$.

We are assuming that $\mu$ satisfies $\mu+\phi(\mu)=0$.
In terms of the matrix $\X(\mu)$ on  $\Rep^{\phi,\tau}(A,\alpha)$,
this gives by \eqref{Eq:DefRelO} the equality $\X(\mu)+\X(\mu)^T=0_N$.
In particular, $\X(\mu)$ is $\og_\alpha$-valued.

Next, using the basis from Example \ref{Ex:Ortho}, we need to check
\begin{equation} \label{Eq:MuFO}
 \br{\langle\mu,F^{(s)}_{ij}\rangle_\og,-}_{\phi,\Orm} = (F^{(s)}_{ij})_M
\end{equation}
for all $s\in I$ and
$i,j\in\mathtt{R}_s$ \eqref{Eq:sIndices} subject to $i<j$,
where on the RHS we consider the infinitesimal vector field on $M=\Rep^{\phi,\tau}(A,\alpha)$.
The left-hand side (LHS) of \eqref{Eq:MuFO} evaluated on $b_{kl}$
reads thanks to \eqref{Eq:BrPO}, \eqref{mum} and \eqref{mum-b} as
\begin{equation} \label{Eq:pfPO1}
 \begin{aligned}
 \br{\mu_{ji}-\mu_{ij},b_{kl}}_{\phi,\Orm}
 &=\frac12 \dgal{\mu_s,b}_{ki,jl} + \frac12 \dgal{\phi(\mu_s),b}_{kj,il}
 -\frac12 \dgal{\mu_s,b}_{kj,il} - \frac12 \dgal{\phi(\mu_s),b}_{ki,jl} \\
&=(be_s\otimes e_s-e_s \otimes e_s b)_{ki,jl}
-(be_s\otimes e_s-e_s \otimes e_s b)_{kj,il}  \,.
 \end{aligned}
\end{equation}
Meanwhile, we obtain from \eqref{InfAct},
\begin{equation}
 \begin{aligned} \label{Eq:pfPO2}
 (F^{(s)}_{ij})_M(b_{kl})
&= ([\X(b),E^{(s)}_{ij}-E^{(s)}_{ji}])_{kl}
= b_{ki} \delta_{jl} - \delta_{ki} b_{jl}
 - b_{kj} \delta_{il} + \delta_{kj} b_{il} \,.
 \end{aligned}
\end{equation}
Due to the choice of indices $i,j$, we have $(be_s)_{ki}=b_{ki}$, $\delta_{ki}=(e_s)_{ki}$, and similar relations for the other indices that allow to conclude that \eqref{Eq:pfPO1} and \eqref{Eq:pfPO2} are equal, i.e. \eqref{Eq:MuFO} holds.
\end{proof}

As $\Orm_\alpha$ acts by Poisson automorphisms, the Poisson bracket of Theorem \ref{Thm:BrPO} restricts to $\CC[\Rep^{\phi,\tau}(A,\alpha)]^{\Orm_\alpha}$.
Explicit formulas can be given on the subalgebra $\CC[\Rep^{\phi,\tau}(A,\alpha)]^{\tr}$ generated by trace functions, cf.  \eqref{Eq:trmap}.

\begin{prop} \label{Pr:Tr-O}
The algebra $\CC[\Rep^{\phi,\tau}(A,\alpha)]^{\tr}$ is a Poisson algebra for the bracket induced by \eqref{Eq:BrPO}.
Explicitly, for $a,b\in A$, one has
\begin{equation} \label{Eq:BrPO-tr}
 \br{\tr(a),\tr(b)}_{\phi,\Orm}=\frac12 \tr\big(\mathrm{m} \circ (\dgal{a,b}+\dgal{\phi(a),b})\big)\,.
\end{equation}
\end{prop}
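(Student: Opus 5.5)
The plan is to compute the bracket \eqref{Eq:BrPO-tr} directly from the defining formula \eqref{Eq:BrPO} using biderivation rules. Closure of the subalgebra and the Poisson property then follow at once.

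First I reduce the generators. Since $\br{-,-}_{\phi,\Orm}$ is a biderivation and $\CC[\Rep^{\phi,\tau}(A,\alpha)]^{\tr}$ is generated as an algebra by the elements $\tr(a)$, $a\in A$, it suffices to show that $\br{\tr(a),\tr(b)}_{\phi,\Orm}$ is again a trace function. The Leibniz rule then shows that the subalgebra is closed under the bracket. The Jacobi identity is inherited from Theorem~\ref{Thm:BrPO}, since the bracket on the subalgebra is a restriction of a Poisson bracket. So the content of the statement is the explicit formula \eqref{Eq:BrPO-tr}.

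Next I compute the bracket itself. By bilinearity, $\br{\tr(a),\tr(b)}_{\phi,\Orm}=\sum_{i,k}\br{a_{ii},b_{kk}}_{\phi,\Orm}$. By \eqref{Eq:BrPO} this equals
\[
\frac12\sum_{i,k}\dgal{a,b}_{ki,ik}+\frac12\sum_{i,k}\dgal{\phi(a),b}_{ki,ik}.
\]
Write $\dgal{a,b}=d'\otimes d''$ in Sweedler notation. Using \eqref{Eq:TensIndex}, the first sum is $\sum_{i,k}d'_{ki}d''_{ik}=\sum_k (d'd'')_{kk}=\tr(\mathrm{m}(\dgal{a,b}))$, by the matrix rule in $\CC[\Rep(A,\alpha)]$, which still holds in the quotient. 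The second sum is treated in exactly the same way with $\phi(a)$ in place of $a$. The two pieces combine to give \eqref{Eq:BrPO-tr}.

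I expect the only point needing care is the index bookkeeping: confirming that the pattern $(ki,ik)$ contracts to a trace of a product, which it does. One should also note that $A_0$-linearity and the block structure \eqref{IdemRep2} play no role, since we sum over all indices. No real obstacle is expected.
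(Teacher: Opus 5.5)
Your proposal is correct and follows essentially the paper's argument: you contract indices in \eqref{Eq:BrPO} and apply the matrix product rule, then inherit closure from the biderivation property and the Jacobi identity from Theorem~\ref{Thm:BrPO}. The only cosmetic difference is that the paper first sums over $i=j$ alone, recording the intermediate identity $\br{\tr(a),b_{kl}}_{\phi,\Orm}=\frac12(\mathrm{m}\circ(\dgal{a,b}+\dgal{\phi(a),b}))_{kl}$ (reused later for the Kontsevich flow), before setting $k=l$, whereas you contract both index pairs at once.
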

\begin{proof}
 We directly get by summing over $i=j$ in \eqref{Eq:BrPO},
\begin{align} \label{Eq:BrPO-trLeft}
\br{\tr(a),b_{kl}}_{\phi,\Orm}=\frac12 (\mathrm{m} \circ (\dgal{a,b}+\dgal{\phi(a),b}))_{kl} .
\end{align}
Summing over $k=l$ yields \eqref{Eq:BrPO-tr}. Thus the Poisson bracket closes on $\CC[\Rep^{\phi,\tau}(A,\alpha)]^{\tr}$.
\end{proof}

\subsection{The symplectic case (type \texorpdfstring{$\typC$}{C})}   \label{ss:P-symp}

\subsubsection{Notation for the symplectic case} \label{sss:Not-symp}

Recall the notation from \ref{ss:Rep}, where for fixed $\alpha\in 2\N^I$ the anti-automorphism in the orthogonal case is given by
$\tau : \xi \mapsto \Omega \xi^T \Omega^T$ with $\Omega$ as in \eqref{Eq:omega}.
Let us express $\tau$ on an elementary matrix $E_{ij}$.  To do so, put $N=\sum_{s\in I}\alpha_s$ and introduce
\begin{subequations} \label{Eq:IotaAlph}
 \begin{align}
 &\iota: \{1,\ldots,N\} \to I, \qquad \iota(k):=s \quad \text{if } \sum_{1\leq t< s}\alpha_t< k \leq \sum_{1\leq t\leq s}\alpha_t,  \\
 &\hat{\alpha} : \{1,\ldots,N\} \to \N, \qquad \hat{\alpha}(k):=\alpha_{\iota(k)}.
\end{align}
\end{subequations}

In the ordered decomposition $N=\alpha_1+\ldots+\alpha_{|I|}$, $\iota$ assigns
the $\iota(k)$-th ``block'' to which $k$ belongs
and $\hat{\alpha}$ gives the corresponding dimension.
We also introduce $\sgn_{\alpha}: \{1,\ldots,N\} \to \{-1,+1\}$ by
\begin{equation}
 \begin{aligned}
 &\sgn_{\alpha}(k):= \sgn_{\hat{\alpha}(k)}\Big(k- \sum_{1\leq t< \iota(k)}\alpha_t\Big)= \left\{
 \begin{array}{ll}
 +1 &\text{if } \sum\limits_{1\leq t< \iota(k)}\alpha_t  < k \leq \sum\limits_{1\leq t< \iota(k)}\alpha_t+\frac12 \hat{\alpha}(k) \\
 & \\
 -1 &\text{if } \sum\limits_{1\leq t< \iota(k)}\alpha_t+\frac12 \hat{\alpha}(k) < k \leq \sum\limits_{1\leq t\leq \iota(k)}\alpha_t
 \end{array} \right. \label{Eq:sgnAlp}
\end{aligned}
\end{equation}
where we used \eqref{Eq:sgn} for the explicit expression.
We can finally define $\tau$ at the level of indices as
\begin{equation} \label{Eq:TauSp}
 \tau: \{1,\ldots,N\} \to \{1,\ldots,N\}, \qquad \tau(k)=k+\frac12 \sgn_{\alpha}(k)\,  \hat{\alpha}(k)\,,
\end{equation}
where it satisfies $\tau^2=\id$.
One should understand $\tau$ as the operation ``$k\mapsto k+\frac12\alpha_s$ modulo $\alpha_s$'' inside each block of size $\alpha_s$, as one can see in the next example.

\begin{exmp}
 If $|I|=1$ and $\alpha=2n$, $\tau(k)=k+n$ if $1\leq k\leq n$ while $\tau(k)=k-n$ if $k+1\leq k\leq 2n$.
\end{exmp}

\begin{lem} \label{lem:TauSp}
 On elementary matrices, one has $\tau(E_{ij})=\sgn_\alpha(i)\, \sgn_\alpha(j)\, E_{\tau(j),\tau(i)}$.
\end{lem}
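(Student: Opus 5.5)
The plan is a direct computation with the explicit form of $\Omega$ from \eqref{Eq:omega}. Since $\Omega$ is block diagonal, for any $k$ the only nonzero entry in row $k$ of $\Omega$ lies in the same block $\iota(k)$. Within a block of size $2d$ with local index $k'=k-\sum_{t<\iota(k)}\alpha_t$, we have $\Omega_{2d}=\sum_{m=1}^{d}(E_{m,m+d}-E_{m+d,m})$. I will rewrite this globally as
\begin{equation*}
\Omega=\sum_{k=1}^N \sgn_\alpha(k)\, E_{k,\tau(k)}\,,
\end{equation*}
using \eqref{Eq:sgnAlp} and \eqref{Eq:TauSp}. Indeed, if $k$ lies in the first half of its block then $\tau(k)=k+\frac12\hat\alpha(k)$ and the entry is $+1$; otherwise $\tau(k)=k-\frac12\hat\alpha(k)$ and the entry is $-1$. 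Transposing gives $\Omega^T=\sum_k \sgn_\alpha(k)E_{\tau(k),k}$.

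Next I expand $\tau(E_{ij})=\Omega E_{ji}\Omega^T$ with the rule $E_{ab}E_{cd}=\delta_{bc}E_{ad}$. The left multiplication $\Omega E_{ji}$ selects $k$ with $\tau(k)=j$, that is $k=\tau(j)$ because $\tau^2=\id$. This gives $\Omega E_{ji}=\sgn_\alpha(\tau(j))E_{\tau(j),i}$. Multiplying on the right by $\Omega^T$ selects the term $E_{i,\tau(i)}$ of $\Omega^T$ written as $\sgn_\alpha(\tau(i))E_{i,\tau(i)}$, where the summation index is $\tau(i)$. The result is
\begin{equation*}
\tau(E_{ij})=\sgn_\alpha(\tau(i))\,\sgn_\alpha(\tau(j))\,E_{\tau(j),\tau(i)}\,.
\end{equation*}

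Finally, $\tau$ preserves blocks and swaps their two halves, so $\sgn_\alpha(\tau(k))=-\sgn_\alpha(k)$ for all $k$. The two sign flips cancel in the product, which yields $\sgn_\alpha(i)\sgn_\alpha(j)E_{\tau(j),\tau(i)}$ as claimed. The only delicate point is the bookkeeping of the signs and of which index $\tau$ is applied to. I will check it on the case $|I|=1$, $\alpha=2$: there $\tau(E_{11})=\Omega E_{11}\Omega^T=E_{22}$, and the formula also gives $(+1)(+1)E_{22}$.
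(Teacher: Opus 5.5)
Your proof is correct and follows essentially the same route as the paper. Your global formula $\Omega=\sum_k \sgn_\alpha(k)E_{k,\tau(k)}$ is the paper's $\Omega_{uv}=\sgn_\alpha(u)\,\delta_{u,\tau(v)}$; from it you expand $\Omega E_{ji}\Omega^T$ and cancel the two signs using $\sgn_\alpha(\tau(k))=-\sgn_\alpha(k)$. The only difference is cosmetic: you multiply matrix units, while the paper reads off the entries $\Omega_{pj}\Omega_{qi}$.
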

\begin{proof}
 By definition, $\tau(E_{ij})=\sum_{p,q=1}^N (\Omega E_{ji} \Omega^T)_{pq} E_{pq}$.
 Using $\Omega$ \eqref{Eq:omega}, we get for $1\leq u,v\leq N$
 \begin{align*}
\Omega_{uv} &= \delta_{\iota(u),\iota(v)}\,  (\Omega_{\hat{\alpha}(u)})_{u-\sum_{s< \iota(u)}\alpha_s , v-\sum_{t< \iota(v)}\alpha_t} \\
&= \delta_{\iota(u),\iota(v)}\, \sgn_{\hat{\alpha}(u)}\Big(u- \sum_{s< \iota(u)}\alpha_s\Big)\, \delta_{u,\tau(v)}
= \sgn_\alpha(u)\, \delta_{u,\tau(v)}.
 \end{align*}
Noticing from \eqref{Eq:sgnAlp} and \eqref{Eq:TauSp} the equality
\begin{equation} \label{Eq:sgnTau}
 \sgn_\alpha(\tau(u)) = - \sgn_\alpha(u)\,,
\end{equation}
we deduce the claimed coefficients $(\Omega E_{ji} \Omega^T)_{pq}=\Omega_{pj}\Omega_{qi}$.
\end{proof}

We are led to define a $2$-parameter sign function using \eqref{Eq:sgnAlp} as follows
\begin{equation} \label{Eq:sgnDble}
 \sgn_\alpha:\{1,\ldots,N\}\times\{1,\ldots,N\}\to \{-1,+1\}, \quad
 \sgn_\alpha(i,j)=\sgn_\alpha(i) \,\sgn_\alpha(j)\,.
 \end{equation}
It satisfies for any $1\leq i,j,k\leq N$,
\begin{equation} \label{Eq:sgnProp}
 \sgn_\alpha(i,j)=\sgn_\alpha(j,i), \quad \sgn_\alpha(i,j)\sgn_\alpha(j,k)=\sgn_\alpha(i,k), \quad
 \sgn_\alpha(i,\tau(j))=-\sgn_\alpha(i,j).
\end{equation}
As a corollary of Lemma \ref{lem:TauSp} and this notation, the defining ideal \eqref{TwIdeal} of the twisted representation space is generated by
\begin{equation} \label{Eq:DefRelSp}
 \phi(a)_{ij}-\sgn_\alpha(i,j)\, a_{\tau(j),\tau(i)}, \qquad a\in A, \quad 1\leq i,j\leq N.
\end{equation}

\subsubsection{The theorem}

\begin{thm}[Symplectic Poisson case] \label{Thm:BrSmp}
 Let $(A,\dgal{-,-})$ be a double Poisson algebra over the semi-simple algebra $A_0=\oplus_{s\in I} \CC e_s$.
Assume that $\phi:A\to A$ is an involutive anti-automorphism compatible with $\dgal{-,-}$ in the sense of \eqref{Eq:BrComp}.
For any dimension vector $\alpha\in 2\N^I$, $N=\sum_{s\in I}\alpha_s$, considering the involution $\tau$ on $\gl_N$
given by $\xi \mapsto \Omega \xi^T \Omega^T$ (for $\Omega$ in \eqref{Eq:omega}), the antisymmetric biderivation $\br{-,-}_{\phi,\Sp}$ on $\Rep^{\phi,\tau}(A,\alpha)$ uniquely determined by (using $\sgn_\alpha(-,-)$ \eqref{Eq:sgnDble})
\begin{equation} \label{Eq:BrPsymp}
\br{a_{ij},b_{kl}}_{\phi,\Sp}=\frac12 \dgal{a,b}_{kj,il}
+\frac12 \sgn_\alpha(i,j)\, \dgal{\phi(a),b}_{k\tau(i),\tau(j)l} \,, \quad a,b\in A,
\end{equation}
is a Poisson bracket for which the action of $\Sp_\alpha :=\prod_{s\in I} \Sp(\alpha_s)$ is by Poisson automorphisms.

Furthermore, if $\mu \in A$ is a moment map chosen to satisfy \eqref{mum-select}, then
$\X(\mu):\Rep^{\phi,\tau}(A,\alpha) \to \spg_\alpha$ is a moment map for $\br{-,-}_{\phi,\Sp}$ after identifying $\spg_\alpha=\prod_{s\in I} \spg(\alpha_s)$ and its dual under the trace pairing.
\end{thm}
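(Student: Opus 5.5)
The plan follows the orthogonal case (Theorem \ref{Thm:BrPO}) and has two parts. First I establish that \eqref{Eq:BrPsymp} defines a Poisson bracket on the twisted space on which $\Sp_\alpha$ acts by Poisson automorphisms. Then I check the moment map property by direct computation.

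\textbf{Well-definedness.} I view \eqref{Eq:BrPsymp} as a biderivation on $\CC[\Rep(A,\alpha)]$, given by its values on the generators $a_{ij}$, and show that it preserves the ideal $I^{\phi,\tau}$ generated by \eqref{Eq:DefRelSp}. Concretely, I would check
\[
\br{\phi(a)_{ij}-\sgn_\alpha(i,j)\,a_{\tau(j)\tau(i)},\,b_{kl}}_{\phi,\Sp}\in I^{\phi,\tau}.
\]
\begin{itemize}
\item Using $\phi^2=\id$, the first term of the bracket of $\phi(a)_{ij}$ matches the second term of the bracket of $a_{\tau(j)\tau(i)}$, and conversely.
\item The needed sign identities come from \eqref{Eq:sgnProp} together with $\sgn_\alpha(\tau(i),\tau(j))=\sgn_\alpha(i,j)$.
\item The check must also work in the second slot. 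There I use \eqref{Eq:BrComp} in the form $\dgal{a,\phi(b)}=(\phi\otimes\phi)(\dgal{\phi(a),b})^\circ$, and then rewrite each $\phi(c)_{pq}$ modulo the ideal via \eqref{Eq:DefRelSp}.
\end{itemize}

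\textbf{Antisymmetry and the Jacobi identity.} Antisymmetry follows from cyclic antisymmetry \eqref{Eq:cycanti} applied to both terms, after the same $\phi$-rewriting. For Jacobi, I expand $\br{a_{ij},\br{b_{kl},c_{uv}}}$ plus cyclic terms into pieces involving $\dgal{-,-}_L$ with zero, one or two insertions of $\phi$. Each insertion of $\phi$ is moved out using \eqref{Eq:BrComp} and the relations \eqref{Eq:DefRelSp}. The aim is to regroup the whole sum as an average of the index-components of $\dgal{a,b,c}$, $\dgal{\phi(a),b,c}$, $\dgal{a,\phi(b),c}$ and $\dgal{\phi(a),\phi(b),c}$ (or their cyclic images), all of which vanish because the double bracket is Poisson. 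This is the argument of \cite[Thm.~3.9]{OS} with signs $\sgn_\alpha$ inserted. The $A_0$-linearity is automatic, since $e_s$ acts by scalar blocks that commute with $\tau$, and $\tau$ preserves each block $\mathtt{R}_s$.

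\textbf{Invariance.} $\Sp_\alpha$-invariance I would deduce from $\Gl_\alpha$-invariance of the underlying structure. Conjugation by $g$ with $\Omega g^{T}\Omega^T=g^{-1}$ transforms the index pattern $(k\tau(i),\tau(j)l)$ with its signs in the same way as $(kj,il)$. Equivalently, I can check infinitesimally that $\xi_M$ for $\xi\in\spg_\alpha$ is a derivation of the bracket.

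\textbf{Moment map.} Write $\X(\mu)^\tau:=\Omega\X(\mu)^T\Omega^T$. From $\mu+\phi(\mu)=0$ and \eqref{Eq:DefRelSp} I get $\X(\mu)^\tau=-\X(\mu)$, so $\X(\mu)$ is $\spg_\alpha$-valued. I then use the spanning elements $F_{ij}$ of \eqref{Eq:SpanSP} in each block and compute the pairing $\langle\X(\mu),F_{ij}\rangle=\mu_{ji}-\sgn_\alpha(i,j)\mu_{\tau(i)\tau(j)}$. Applying \eqref{Eq:BrPsymp} together with \eqref{mum} and \eqref{mum-b}, the bracket of this pairing with $b_{kl}$ collapses to four terms of the form $(be_s\otimes e_s-e_s\otimes e_sb)$ with indices. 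I compare these with $([\X(b),F_{ij}])_{kl}$ computed from \eqref{InfAct}. Since the $F_{ij}$ span $\spg_\alpha$, checking on them suffices.

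\textbf{Main obstacle.} I expect the Jacobi identity to be the hardest step, specifically keeping track of the sign factors $\sgn_\alpha$ through the nested brackets. My first approach would be to reduce the symplectic case to a uniform computation valid for any involution $\tau(E_{ij})=\epsilon_i\epsilon_jE_{\tau(j)\tau(i)}$ with multiplicative signs. With that abstraction, the orthogonal and symplectic cases become literally the same index computation.
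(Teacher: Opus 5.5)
Your plan follows the paper's route in most respects. The Poisson part is the adaptation of \cite{OS}, which the paper carries out with multiplicative signs in Proposition~\ref{Pr:BrType}; this is the uniform abstraction you propose. There, the doubly twisted terms \eqref{JAC6} and \eqref{JAC8} are converted into terms carrying a single $\phi$ on $c$, and this is what lets the three cyclic summands assemble into the double Jacobiators of \eqref{Eq:JacType}. Invariance is the direct check of Lemma~\ref{Lem:ActType}, and your moment map computation is the paper's. One step, however, fails as you set it up: well-definedness.

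You treat \eqref{Eq:BrPsymp} as a biderivation on $\CC[\Rep(A,\alpha)]$ and then only check that it preserves $I^{\phi,\tau}$. The formula does not respect the product relations $(ac)_{ij}=\sum_u a_{iu}c_{uj}$ of $\CC[\Rep(A,\alpha)]$ in the first slot. Since $\phi(ac)=\phi(c)\phi(a)$, the rule \eqref{Eq:inder} makes $\br{(ac)_{ij},b_{kl}}_{\phi,\Sp}$ contain
\[
\frac12\sgn_\alpha(i,j)\sum_u \dgal{\phi(c),b}_{k\tau(u),\tau(j)l}\,\phi(a)_{\tau(u)\tau(i)}\,.
\]
The Leibniz rule applied to $\sum_u a_{iu}c_{uj}$ produces in its place
\[
\frac12\sum_u \sgn_\alpha(u,j)\,a_{iu}\,\dgal{\phi(c),b}_{k\tau(u),\tau(j)l}\,.
\]
These agree only after substituting $a_{iu}=\sgn_\alpha(i,u)\,\phi(a)_{\tau(u)\tau(i)}$. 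That is \eqref{Eq:DefRelSp} applied to $\phi(a)$ (using $\phi^2=\id$), a relation of the twisted space and not of $\CC[\Rep(A,\alpha)]$. The same happens with the term containing $c_{uj}$.

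When $\X(a)$ and $\X(\phi(a))$ are independent coordinates, the identity is false. An example is $\phi(a)=a'$ for a loop $a\notin\Lambda$ in \ref{sss:P-exmp}. So the biderivation whose ideal-preservation you want to check does not exist, and your plan is missing the product-rule check.

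The fix is the one in the proof of Proposition~\ref{Pr:BrType}. Define the bracket on the polynomial algebra freely generated by the symbols $a_{ij}$. Then verify compatibility with all defining relations (linearity, idempotents, product rule, twisting relations) modulo the whole ideal. The first-slot product rule is exactly where the twisting relations enter; the second-slot product rule holds on the nose. With this correction, your cancellation for the generators $\phi(a)_{ij}-\sgn_\alpha(i,j)\,a_{\tau(j)\tau(i)}$ and your antisymmetry argument modulo $I^{\phi,\tau}$ go through as stated.
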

\begin{rem} \label{Rem:Sympl}
  If $|I|=1$ and $\alpha=2n$, \eqref{Eq:BrPsymp} reads
 \begin{align*}
\br{a_{ij},b_{kl}}_{\phi,\Sp}&=\frac12 \dgal{a,b}_{kj,il}
+\frac12\, \dgal{\phi(a),b}_{k (i+n),(j+n)l}, \quad \text{for } 1\leq i,j\leq n, \\
&=\frac12 \dgal{a,b}_{kj,il}  +\frac12\, \dgal{\phi(a),b}_{k (i-n),(j-n)l}, \quad \text{for } n+1\leq i,j\leq 2n, \\
&=\frac12 \dgal{a,b}_{kj,il}  -\frac12\, \dgal{\phi(a),b}_{k (i+n),(j-n)l}, \quad \text{for } n+1\leq i\leq 2n,\,\, 1\leq j\leq n, \\
&=\frac12 \dgal{a,b}_{kj,il}  -\frac12\, \dgal{\phi(a),b}_{k (i-n),(j+n)l}, \quad \text{for } 1\leq i\leq n,\,\, n+1\leq j\leq 2n.
 \end{align*}
\end{rem}

\begin{proof}[Proof of Theorem \ref{Thm:BrSmp}]
The first part is again an adaptation of \cite{OS}.
For the second part,
recall that $\mu$ satisfies $\mu+\phi(\mu)=0$.
Combined with Lemma \ref{lem:TauSp} and \eqref{Eq:DefRelSp}, this entails
$\X(\mu)+\Omega \X(\mu)^T \Omega^T=0_N$, i.e. $\X(\mu)$ is $\spg_\alpha$-valued.
Next, consider the spanning family of $\spg_\alpha$ given by
\begin{equation}  \label{Eq:BasSp}
 F_{ij}^{(s)}=E_{ij}- \sgn_\alpha(i,j)\, E_{\tau(j),\tau(i)}\,,
\end{equation}
for all $s\in I$ and $i,j\in\mathtt{R}_s$ \eqref{Eq:sIndices}.
It suffices to prove
\begin{equation} \label{Eq:MuFSp}
 \br{\langle\mu,F^{(s)}_{ij}\rangle_\spg,-}_{\phi,\Sp} = (F^{(s)}_{ij})_M
\end{equation}
where on the RHS we consider the infinitesimal vector field on $M=\Rep^{\phi,\tau}(A,\alpha)$.
For the LHS, we get
\begin{equation} \label{Eq:pfPS1}
 \begin{aligned}
 \br{\langle\mu,F^{(s)}_{ij}\rangle_\spg , b_{kl}}_{\phi,\Sp}
 &=
 \br{ \mu_{ji} , b_{kl}}_{\phi,\Sp}
 - \sgn_\alpha(i,j)  \br{ \mu_{\tau(i),\tau(j)}  , b_{kl}}_{\phi,\Sp} \\
 &=\frac12 \dgal{\mu_s,b}_{ki,jl}
 +\frac12 \sgn_\alpha(j,i)\, \dgal{\phi(\mu(s)),b}_{k\tau(j),\tau(i)l} \\
&\quad  -\frac12 \sgn_\alpha(i,j) \dgal{\mu_s,b}_{k\tau(j),\tau(i)l}
- \frac12 \dgal{\phi(\mu_s),b}_{ki,jl} \\
&=(be_s\otimes e_s-e_s \otimes e_s b)_{ki,jl}
-\sgn_\alpha(i,j)\, (be_s\otimes e_s-e_s \otimes e_s b)_{k\tau(j),\tau(i)l}  \,.
 \end{aligned}
\end{equation}
We used \eqref{Eq:BrPsymp}, the involutivity of $\tau$ and \eqref{Eq:sgnTau} for the second equality, then \eqref{mum} with \eqref{mum-b}.

Meanwhile, we obtain from \eqref{InfAct},
\begin{equation}
 \begin{aligned} \label{Eq:pfPS2}
 (F^{(s)}_{ij})_M(b_{kl})
&= ([\X(b),E_{ij}])_{kl}-\sgn_\alpha(i,j)\,([\X(b),E_{\tau(j),\tau(i)}])_{kl} \\
&= b_{ki} \delta_{jl} - \delta_{ki} b_{jl}
 -\sgn_\alpha(i,j)( b_{k\tau(j)} \delta_{\tau(i)l} - \delta_{k\tau(j)} b_{\tau(i)l}) \,.
 \end{aligned}
\end{equation}
As in the proof of Theorem \ref{Thm:BrPO}, one concludes that \eqref{Eq:pfPS1} and \eqref{Eq:pfPS2} are equal.
\end{proof}

As in the orthogonal case,
the invariant ring $\CC[\Rep^{\phi,\tau}(A,\alpha)]^{\Sp_\alpha}\subset \CC[\Rep^{\phi,\tau}(A,\alpha)]$ is a Poisson subalgebra.
Denote by $\CC[\Rep^{\phi,\tau}(A,\alpha)]^{\tr}$ the subalgebra generated by trace elements.
The following result is proved similarly to Proposition \ref{Pr:Tr-O}.
\begin{prop} \label{Pr:Tr-Smp}
The algebra $\CC[\Rep^{\phi,\tau}(A,\alpha)]^{\tr}$ is a Poisson algebra for the bracket induced by \eqref{Eq:BrPsymp}.
Explicitly, for $a,b\in A$, one has
\begin{equation} \label{Eq:BrPSymp-tr}
\br{\tr(a),\tr(b)}_{\phi,\Sp}=\frac12 \tr\big(\mathrm{m} \circ (\dgal{a,b}+\dgal{\phi(a),b})\big)\,.
\end{equation}
\end{prop}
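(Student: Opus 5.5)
The approach mirrors the proof of Proposition~\ref{Pr:Tr-O}: compute the bracket of a trace function with a matrix entry by contracting indices in \eqref{Eq:BrPsymp}, then contract once more. The only new feature is the sign-twisted second term, which must collapse to an ordinary matrix product.

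The first step is to set $i=j$ in \eqref{Eq:BrPsymp} and sum over $1\leq i\leq N$. The first term gives $\frac12\sum_i \dgal{a,b}_{ki,il}=\frac12(\mathrm{m}\circ\dgal{a,b})_{kl}$, exactly as in the orthogonal case. For the second term, $\sgn_\alpha(i,i)=\sgn_\alpha(i)^2=1$, so we obtain $\frac12\sum_i \dgal{\phi(a),b}_{k\tau(i),\tau(i)l}$. Since $\tau$ is a bijection of $\{1,\ldots,N\}$ (indeed $\tau^2=\id$), we reindex $i\mapsto\tau(i)$ to get $\frac12(\mathrm{m}\circ\dgal{\phi(a),b})_{kl}$. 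Hence
\[
\br{\tr(a),b_{kl}}_{\phi,\Sp}=\tfrac12\big(\mathrm{m}\circ(\dgal{a,b}+\dgal{\phi(a),b})\big)_{kl},
\]
which is the symplectic analogue of \eqref{Eq:BrPO-trLeft}.

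Setting $k=l$ and summing then yields \eqref{Eq:BrPSymp-tr}. Here we use that $\mathrm{m}\circ\dgal{-,-}$ is $\CC$-bilinear, so that $(d'd'')_{kk}=\sum_r d'_{kr}d''_{rk}$ summed over $k$ equals $\tr(d'd'')$.

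This shows that the bracket of two trace functions is again a trace function. Because $\br{-,-}_{\phi,\Sp}$ is a biderivation, the subalgebra generated by traces is closed under the bracket. It is Poisson by Theorem~\ref{Thm:BrSmp}, since it is a subalgebra of a Poisson algebra closed under the bracket.

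The only point requiring care is the sign bookkeeping in the second term. One must make sure that the diagonal contraction produces $\sgn_\alpha(i,i)=+1$ rather than some $\sgn_\alpha(i,\tau(i))=-1$, and that the reindexing by $\tau$ is legitimate. Both follow immediately from the definitions \eqref{Eq:sgnDble} and \eqref{Eq:TauSp}, so no step is a genuine obstacle.
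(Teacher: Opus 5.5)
Your proposal is correct and follows the paper's approach, which simply says the result is proved as in Proposition~\ref{Pr:Tr-O}: contract $i=j$ in \eqref{Eq:BrPsymp}, then $k=l$. Your handling of the twisted term is the only extra point, and it is right: $\sgn_\alpha(i,i)=1$, and reindexing by the bijection $\tau$ turns the second term into $\frac12(\mathrm{m}\circ\dgal{\phi(a),b})_{kl}$.
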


\subsection{Applications}

\subsubsection{Bracket compatibility on NC cotangent spaces} \label{ss:NCcot}

Let $A$ be an algebra over $A_0=\oplus_{s\in I} \CC e_s$.
Assume that $A$ is equipped with an involutive anti-automorphism $\phi_0:A \to A$ satisfying
$\phi_0(e_s)=e_s$ for all $s\in I$.
Define the $A$-bimodule of double derivations (relative to $A_0$) as
\begin{equation*}
 \DDer(A)= \{\delta \in \Hom_{\CC}(A,A\otimes A) \mid \delta(A_0)=0; \,\,
\delta(ab)=\delta(a)b+a\delta(b)\quad \forall a,b\in A\}\,.
\end{equation*}
The bimodule structure is obtained from the inner one on $A^{\otimes 2}$ as
$(a\delta b)(c) := a\ast \delta(c)\ast b$ for $a,b,c \in A$ and $\delta\in \DDer(A)$.

\begin{lem}
 The involutive anti-automorphism $\phi_0:A \to A$ induces a linear map
 \begin{equation} \label{Def-phi1}
\phi_1 :  \DDer(A) \to  \Hom_{\CC}(A,A\otimes A), \qquad
\phi_1(\delta) (a):= \phi_0^{\otimes 2} ( [\delta(\phi_0(a))]^\circ)
 \end{equation}
which is valued in $\DDer(A)$ and is involutive. Furthermore, the $A$-bimodule structure transforms as
\begin{equation} \label{phi1Bim}
 \phi_1(a \delta b) = \phi_0(b) \,\phi_1(\delta)\,\phi_0(a)\,, \qquad a,b\in A, \,\, \delta \in \DDer(A)\,.
\end{equation}
\end{lem}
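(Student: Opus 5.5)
The proof is a direct verification in four steps: $\phi_1(\delta)$ is a double derivation, the bimodule rule \eqref{phi1Bim} holds, and $\phi_1$ is involutive. Throughout I write $\delta(c)=\delta'(c)\otimes\delta''(c)$ in Sweedler notation, and I use two elementary facts. First, since $\phi_0$ is an anti-automorphism, for $x,y\in A$ and $d\in A^{\otimes 2}$,
\begin{equation*}
\phi_0^{\otimes 2}\big((x\,d\,y)^\circ\big)=\phi_0(y)\,\phi_0^{\otimes 2}(d^\circ)\,\phi_0(x),
\qquad
\phi_0^{\otimes 2}\big((x\ast d\ast y)^\circ\big)=\phi_0(y)\ast\phi_0^{\otimes 2}(d^\circ)\ast\phi_0(x).
\end{equation*}
Second, $(-)^\circ$ commutes with $\phi_0^{\otimes 2}$ and is an involution.

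\textbf{Step 1 (double derivation).} $A_0$-vanishing is immediate: $\phi_0(e_s)=e_s$ gives $\phi_1(\delta)(e_s)=\phi_0^{\otimes 2}(\delta(e_s)^\circ)=0$, and linearity extends this to $A_0$. For the Leibniz rule I would compute
\begin{equation*}
\phi_1(\delta)(ab)=\phi_0^{\otimes 2}\big(\delta(\phi_0(b)\phi_0(a))^\circ\big)
=\phi_0^{\otimes 2}\big((\delta(\phi_0(b))\phi_0(a))^\circ\big)+\phi_0^{\otimes 2}\big((\phi_0(b)\delta(\phi_0(a)))^\circ\big).
\end{equation*}
Applying the first identity to each term gives $a\,\phi_1(\delta)(b)+\phi_1(\delta)(a)\,b$, using $\phi_0^2=\id$.

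\textbf{Step 2 (bimodule rule).} By definition $(a\delta b)(c)=a\ast\delta(c)\ast b$. Hence
\begin{equation*}
\phi_1(a\delta b)(c)=\phi_0^{\otimes 2}\big((a\ast\delta(\phi_0(c))\ast b)^\circ\big)=\phi_0(b)\ast\phi_1(\delta)(c)\ast\phi_0(a),
\end{equation*}
by the inner version of the identity above, and this is exactly \eqref{phi1Bim}.

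\textbf{Step 3 (involutivity).} We have
\begin{equation*}
\phi_1(\phi_1(\delta))(a)=\phi_0^{\otimes 2}\Big(\big[\phi_0^{\otimes 2}\big(\delta(\phi_0^2(a))^\circ\big)\big]^\circ\Big)=\delta(a),
\end{equation*}
because $\phi_0^2=\id$ and $\circ^2=\id$.

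The only place where care is needed is the bookkeeping in the two flip identities. In the outer structure $(x\,d\,y)^\circ=d''y\otimes xd'$, and applying $\phi_0^{\otimes 2}$ gives $\phi_0(y)\phi_0(d'')\otimes\phi_0(d')\phi_0(x)$, which is the outer action on $\phi_0^{\otimes 2}(d^\circ)$. The inner case is checked the same way: $(x\ast d\ast y)^\circ=xd''\otimes d'y$, and applying $\phi_0^{\otimes 2}$ gives $\phi_0(d'')\phi_0(x)\otimes\phi_0(y)\phi_0(d')$, which is indeed $\phi_0(y)\ast\phi_0^{\otimes 2}(d^\circ)\ast\phi_0(x)$. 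I expect no genuine obstacle beyond checking these factor orders.
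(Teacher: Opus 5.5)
Your proof is correct and follows essentially the same route as the paper: $A_0$-vanishing from $\phi_0(e_s)=e_s$, then the Leibniz rule and the bimodule rule \eqref{phi1Bim} by tracking how $(-)^\circ$ and $\phi_0^{\otimes 2}$ interact with the outer and inner bimodule structures. Packaging this into your two flip identities is cleaner than the paper's inline computation (which contains a typo, $\delta(\phi_0(b))$ written where $\delta(\phi_0(a))$ is meant), and your Step 3 explicitly checks involutivity of $\phi_1$, which the paper asserts but does not write out.
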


\begin{proof}
For any $\delta \in \DDer(A)$ and $s\in I$, $\phi_1(\delta)(e_s)=0$ because this is true for $\delta$.
Also, $\phi_1(\delta)$ is clearly linear. Thus, we need to check the derivation property.
For $a,b\in A$, one has
\begin{align*}
 \phi_1(\delta) (ab) &= \phi_0^{\otimes 2} ( [\delta(\phi_0(b)\phi_0(a))]^\circ) \\
 &= \phi_0^{\otimes 2} ( [\delta(\phi_0(b))\phi_0(a) + \phi_0(b) \delta(\phi_0(b)) ]^\circ) \\
 &= \phi_0^{\otimes 2} ( \delta(\phi_0(b))^\circ \ast \phi_0(a) + \phi_0(b) \ast \delta(\phi_0(b))^\circ) \\
 &= \phi_0^2(a)\, \phi_0^{\otimes 2} (\delta(\phi_0(b))^\circ)
 +  \phi_0^{\otimes 2}(\delta(\phi_0(b))^\circ) \, \phi_0^2(b)
 =a\, (\phi_1(\delta)(b)) + (\phi_1(\delta)(a))\, b\,.
\end{align*}
To show \eqref{phi1Bim}, we evaluate the LHS on $c\in A$ to get
\begin{align*}
\phi_1(a \delta b)(c)
 &=\phi_0^{\otimes 2} ( [a\ast \delta(\phi_0(c))\ast b]^\circ) \\
 &=\phi_0^{\otimes 2} ( a \, [\delta(\phi_0(c))]^\circ \, b)
 = \phi_0(b) \ast \phi_0^{\otimes 2}([\delta(\phi_0(c))]^\circ) \ast \phi_0(a) \,.
\end{align*}
This is the RHS of  \eqref{phi1Bim} evaluated on $c$ by definition of $\phi_1$ and the bimodule structure on $\DDer(A)$.
\end{proof}

Following Van den Bergh \cite{VdB1}, we consider the operations
\begin{equation}
 \begin{aligned}  \label{Eq:dbrDDer}
 &\dgal{-,-}_l : \DDer(A)^{\otimes 2}\to \DDer(A)\otimes A, \quad
 \dgal{\delta,\Delta}_l = \tau_{(23)} \circ \left[(\delta \otimes \id_A)\circ \Delta - (\id_A \otimes \Delta)\circ \delta \right], \\
  &\dgal{-,-}_r : \DDer(A)^{\otimes 2}\to A\otimes \DDer(A), \quad
 \dgal{\delta,\Delta}_r = \tau_{(12)} \circ \left[(\id_A \otimes \delta)\circ \Delta - (\Delta \otimes \id_A)\circ \delta  \right].
\end{aligned}
\end{equation}

(Here, we interpret $\tau_s$ for any permutation $s\in S_3$ as the map permutting tensor factors accordingly after applying the operation to an element $a\in A$ as we end up in $A^{\otimes 3}$.)
Next, we form the tensor algebra $\Tnc A:=T_A\DDer(A)$, which we see as an ungraded algebra.

\begin{thm} \label{Thm:dbrSN}
There is a unique structure of double Poisson algebra on $\Tnc A$ satisfying
for any $a,b\in A$ and $\delta,\Delta\in \DDer(A)$,
 \begin{equation}
  \label{Eq:dbrSN}
\dgal{a,b}=0, \qquad \dgal{\delta,a}=\delta(a),\qquad
\dgal{\delta,\Delta}=\dgal{\delta,\Delta}_l + \dgal{\delta,\Delta}_r\,.
 \end{equation}
\end{thm}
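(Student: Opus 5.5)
The approach follows Van den Bergh's construction of the double Schouten--Nijenhuis bracket \cite{VdB1}, in the ungraded setting. Uniqueness is immediate. Over $A_0$, the algebra $\Tnc A=T_A\DDer(A)$ is generated by $A$ and $\DDer(A)$. A double bracket is determined by its values on generators through \eqref{Eq:outder}--\eqref{Eq:inder}, and \eqref{Eq:dbrSN} prescribes the bracket on every pair of generators, with $\dgal{a,\delta}=-\dgal{\delta,a}^\circ$ forced by cyclic antisymmetry. The work is therefore in existence and in the Jacobi identity.

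\textbf{Existence.} The step is to check that the prescriptions \eqref{Eq:dbrSN} are compatible with the defining relations of the tensor algebra. These relations are the multiplication in $A$, $A_0$-linearity, and the identification of $a\,\delta\, b$ (the bimodule action, via the inner structure) with the product $a\cdot\delta\cdot b$ in $\Tnc A$.
\begin{itemize}
\item For $\dgal{\delta,a}$, compatibility with $ab$ is exactly the derivation property of $\delta$.
\item Compatibility with the bimodule relation in the first slot reads $\dgal{a\delta b,c}=(a\ast\delta(c)\ast b)$. This matches what \eqref{Eq:inder} gives from $\dgal{a,c}=0=\dgal{b,c}$.
\item For two double derivations, one checks that $\delta\mapsto \dgal{\delta,\Delta}_l+\dgal{\delta,\Delta}_r$ satisfies the inner left derivation rule. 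The additional terms coming from $\dgal{a,\Delta}$ and $\dgal{b,\Delta}$ must be reproduced. Symmetrically, $\Delta\mapsto\dgal{\delta,\Delta}$ satisfies the outer rule.
\item Cyclic antisymmetry on generators amounts to the identity $\dgal{\delta,\Delta}_l=-(\dgal{\Delta,\delta}_r)^\circ$ (suitably interpreted in $\DDer(A)\otimes A$ versus $A\otimes\DDer(A)$). This follows by direct inspection of \eqref{Eq:dbrDDer}.
\end{itemize}
These are routine Sweedler-notation computations, essentially those in \cite[\S3.2]{VdB1} with the Koszul signs dropped.

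\textbf{Jacobi identity.} By the remark preceding Lemma~\ref{lem:Op-P}, together with the cyclic symmetry \eqref{Eq:TriAnti} of $\dgal{-,-,-}$, it suffices to check that $\dgal{x,y,z}=0$ for generators. Up to cyclic permutation there are four cases, sorted by the number of double derivations among $x,y,z$.
\begin{itemize}
\item Case $(a,b,c)$: this vanishes trivially.
\item Case $(\delta,a,b)$: every term involves a bracket of two elements of $A$, or a bracket of $a$ with an element of $A^{\otimes2}$, so it vanishes.
\item Case $(\delta,\Delta,a)$: expanding, one obtains $\dgal{\delta,\Delta(a)}_L$, terms $\tau_{(123)}\dgal{\Delta,\dgal{a,\delta}}_L$, and $\tau^2_{(123)}\dgal{a,\dgal{\delta,\Delta}}_L$. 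The last term is evaluation of $\dgal{\delta,\Delta}_l+\dgal{\delta,\Delta}_r$ on $a$. The definition \eqref{Eq:dbrDDer} is precisely designed as the ``commutator'' $(\delta\otimes\id)\Delta-(\id\otimes\Delta)\delta$, suitably permuted, so that these terms cancel.
\end{itemize}

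\textbf{Main obstacle.} The remaining case $(\delta,\Delta,E)$, with three double derivations, is the one I expect to be hardest. Here $\dgal{\delta,\Delta,E}$ lies in the span of tensors with exactly two factors in $A$ and one in $\DDer(A)$. My first attempt would be indirect. The element $\dgal{\delta,\Delta,E}$ is detected by its ``evaluation'' on an arbitrary $a\in A$, meaning by feeding $a$ into the $\DDer(A)$ factor through the bracket $\dgal{-,a}$. So I would derive a four-term identity relating $\dgal{\dgal{\delta,\Delta,E},a}$ to combinations of the already-established triple brackets $\dgal{\delta,\Delta,a}$ and its cyclic versions. This is an analogue of the statement that the Jacobiator is a derivation in each argument, a polyderivation-type identity. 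The vanishing of those combinations by the third case would then force $\dgal{\delta,\Delta,E}=0$.

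If this four-term identity proves awkward to establish in the double setting, the fallback is a direct expansion of the twelve terms using \eqref{Eq:dbrDDer}. That expansion follows the computation in \cite[Thm.~3.2.2]{VdB1}, checking that removing the grading introduces no sign obstruction. I expect no obstruction, because each term carries exactly one $\DDer$-factor in each slot, so the Koszul signs in Van den Bergh's proof appear uniformly and cancel.
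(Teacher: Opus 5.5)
Your outline is correct and takes essentially the paper's route. The paper's proof simply invokes Van den Bergh's graded Theorem~3.2.2 and calls the ungraded version folklore, and that is exactly where your hardest case $(\delta,\Delta,E)$ ends up. To make the transfer rigorous, replace the heuristic of ``one $\DDer$-factor in each slot'' (there is one in total) by the observation that on triples of generators the graded and ungraded brackets and double Jacobiators coincide term by term: double derivations have shifted degree $0$, brackets inside $A$ vanish, and every tensor that occurs has at most one odd factor, so all Koszul signs are $+1$. The two brackets do differ on products such as $\dgal{a,\delta\Delta}$, so this comparison must be made only on generators, after which the reduction-to-generators remark concludes.
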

\begin{proof}
For a \emph{graded} double Poisson bracket (of degree $-1$), this is \cite[Thm.~3.2.2]{VdB1}.
It is folklore that the statement also holds in the ungraded case, see e.g. \cite[Prop.~3.2.2]{DA}.
\end{proof}

\begin{prop}
There is a well-defined involutive anti-automorphism $\phi:\Tnc A \to \Tnc A$
that agrees with $\phi_0$ on $(\Tnc A)_0=A$ and with $\phi_1$ \eqref{Def-phi1} on $(\Tnc A)_1=\DDer(A)$.
We call  $\phi$ the \emph{cotangent lift} of $\phi_0$.
\end{prop}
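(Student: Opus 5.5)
The approach is to use the universal property of the tensor algebra $\Tnc A = T_A\DDer(A)$. An anti-automorphism of $T_A M$ is the same thing as an algebra morphism $T_A M \to (T_A M)^{\op}$. Such a morphism is determined by two pieces of data: an anti-morphism $\phi_0$ on $A$, and a linear map $\phi_1: M\to M$ that intertwines the bimodule structures in a twisted way. Concretely, we need $\phi_1(a\delta b)=\phi_0(b)\phi_1(\delta)\phi_0(a)$, which is exactly \eqref{phi1Bim}.

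So the first step is to note that $(\Tnc A)^{\op}=T_{A^{\op}}(\DDer(A)^{\op})$. Here $\DDer(A)^{\op}$ denotes $\DDer(A)$ with the left and right actions exchanged, viewed as an $A^{\op}$-bimodule. The pair $(\phi_0,\phi_1)$ is then an algebra morphism $A\to A^{\op}$ together with a bimodule map $\DDer(A)\to \DDer(A)^{\op}$ over it, by \eqref{phi1Bim} and the fact that $\phi_1$ is valued in $\DDer(A)$. By the universal property of $T_A$, this data extends uniquely to an algebra morphism $\phi:\Tnc A\to(\Tnc A)^{\op}$.

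On a word $\delta_1\otimes_A\cdots\otimes_A\delta_n$, the extension is given explicitly by
\[
\phi(\delta_1\otimes_A \cdots\otimes_A\delta_n)=\phi_1(\delta_n)\otimes_A\cdots\otimes_A\phi_1(\delta_1).
\]
To check directly that this is well defined on $\otimes_A$, we verify that the relation $\delta a\otimes \Delta=\delta\otimes a\Delta$ is preserved. The left side is sent to $\phi_1(\Delta)\otimes\phi_0(a)\phi_1(\delta)$ and the right side to $\phi_1(\Delta)\phi_0(a)\otimes\phi_1(\delta)$, using \eqref{phi1Bim}. These agree in the tensor product over $A$. We also need $\phi_0(e_s)=e_s$ so that the construction is compatible with working relative to $A_0$.

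Involutivity follows because $\phi^2$ is an algebra endomorphism of $\Tnc A$ that restricts to $\phi_0^2=\id$ on $A$ and to $\phi_1^2=\id$ on $\DDer(A)$ (the previous lemma). Since $\Tnc A$ is generated by these two pieces, uniqueness in the universal property gives $\phi^2=\id$. In particular $\phi$ is bijective, hence an anti-automorphism.

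The only delicate point is the well-definedness over $\otimes_A$, that is, checking that \eqref{phi1Bim} has exactly the order reversal required. Everything else is formal.
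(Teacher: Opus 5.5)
Your proof is correct and follows the same route as the paper: $\phi$ is defined on monomials by reversing their order and applying $\phi_0$ on $A$ and $\phi_1$ on $\DDer(A)$, and well-definedness over $\otimes_A$ comes exactly from the twisted bimodule rule \eqref{phi1Bim}. Phrasing this through the universal property of $T_A\DDer(A)$ mapping into $(\Tnc A)^{\op}$, and deducing involutivity explicitly from $\phi_0^2=\id$ and $\phi_1^2=\id$, only spells out what the paper leaves implicit.
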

\begin{proof}
 An arbitrary element of $\Tnc A$ is a linear combination of terms
$\pi_k := a_1 \delta_1 a_2 \delta_2 \cdots a_k \delta_k a_{k+1}$ with
$a_i\in A$, $\delta_i \in \DDer(A)$, $k\geq 0$.
We define $\phi$ by extending it linearly from
 \begin{equation*}
\phi(\pi_k) := \phi_0(a_{k+1}) \phi_1(\delta_k) \phi_0(a_k) \cdots \phi_0(a_2) \phi_1(\delta_1) \phi_0(a_{1})\,.
 \end{equation*}
This is well-defined by \eqref{phi1Bim}.
\end{proof}

\begin{thm} \label{Thm:NCBrComp}
The cotangent lift $\phi$ of the involutive anti-automorphism $\phi_0:A \to A$ is bracket-compatible with the (Van den Bergh's) double Poisson bracket on $\Tnc A$ from Theorem \ref{Thm:dbrSN}.
\end{thm}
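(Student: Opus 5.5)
We have to show $(\phi\otimes\phi)(\dgal{x,y})=\dgal{\phi(x),\phi(y)}^\circ$ for all $x,y\in\Tnc A$. As the paper recalls after \eqref{Eq:BrComp} (cf. \cite[Lem.~4.1]{OS}), it suffices to check this on generators. Indeed, both sides are biderivations for compatible bimodule structures once $\phi$ is an anti-automorphism. Concretely, $\phi^{\otimes2}$ exchanges the outer and inner bimodule structures up to the flip $(-)^\circ$, because $\phi^{\otimes 2}(d\,b)=\phi(b)\ast\phi^{\otimes 2}(d)$ after applying $\circ$. The generators of $\Tnc A$ are $A$ and $\DDer(A)$, so we only need to check three cases: $(a,b)$, $(\delta,a)$ and $(\delta,\Delta)$. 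The case $(a,\delta)$ follows from $(\delta,a)$ by cyclic antisymmetry \eqref{Eq:cycanti}.

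The first two cases are direct. For $a,b\in A$ both sides vanish, since $\phi(A)=A$ and $\dgal{A,A}=0$. For $\delta\in\DDer(A)$ and $a\in A$, the right-hand side is $\dgal{\phi_1(\delta),\phi_0(a)}^\circ=[\phi_1(\delta)(\phi_0(a))]^\circ$. By \eqref{Def-phi1} and $\phi_0^2=\id$, this equals $[\phi_0^{\otimes2}(\delta(a)^\circ)]^\circ=\phi_0^{\otimes 2}(\delta(a))$, which is the left-hand side.

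The main case is $(\delta,\Delta)$, where both sides lie in $(\DDer(A)\otimes A)\oplus(A\otimes\DDer(A))$. I would compare them by evaluating the $\DDer(A)$ tensor factor on an arbitrary $c\in A$, obtaining identities in $A^{\otimes 3}$. First, I expect $\phi^{\otimes2}$ to send the $l$-part to the $r$-part: the claim is
\[
(\phi_1\otimes\phi_0)\dgal{\delta,\Delta}_l=\dgal{\phi_1(\delta),\phi_1(\Delta)}_r^\circ,
\]
and symmetrically with $l$ and $r$ exchanged. To verify this, write out \eqref{Eq:dbrDDer}. The term $(\delta\otimes\id)\circ\Delta$ evaluated at $\phi_0(c)$, after applying $\phi_0^{\otimes 3}$ and reversing tensor order (the map $\tau_{(13)}$), should match $(\id\otimes\phi_1(\delta))\circ\phi_1(\Delta)$ evaluated at $c$. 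This amounts to unwinding $\phi_1(\Delta)(c)=\phi_0^{\otimes 2}(\Delta(\phi_0 c)^\circ)$ and then applying $\phi_1(\delta)$ to its second leg.

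The main obstacle is the bookkeeping of the permutations $\tau_{(23)}$, $\tau_{(12)}$, $\circ$ and $\tau_{(13)}$, and of which leg is the "$\DDer$" leg after evaluation at $c$. I would fix a convention: an element $D\otimes a\in\DDer(A)\otimes A$ corresponds to the map $c\mapsto D(c)\otimes a\in A^{\otimes3}$, and similarly on the other side. Then I would check that $\phi^{\otimes 2}$ followed by $\circ$ corresponds to $c\mapsto\tau_{(13)}\phi_0^{\otimes3}(\,\cdot\,(\phi_0 c))$ with a suitable inner flip. With this dictionary, each of the four terms of $\dgal{\delta,\Delta}_l+\dgal{\delta,\Delta}_r$ maps to one of the four terms of $\dgal{\phi_1\delta,\phi_1\Delta}$, with matching signs. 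Summing the two parts then gives \eqref{Eq:BrComp} on the pair $(\delta,\Delta)$, which completes the proof.
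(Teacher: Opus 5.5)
Your proposal is correct and follows the paper's proof. You reduce to generators, handle $(a,b)$ and $(\delta,a)$ directly from \eqref{Def-phi1}, and for $(\delta,\Delta)$ you show by evaluating on $c\in A$ that $\phi^{\otimes 2}$ sends the $l$-part of $\dgal{\delta,\Delta}$ to the flipped $r$-part of $\dgal{\phi_1(\delta),\phi_1(\Delta)}$ and vice versa; your key identity $(\id\otimes\phi_1(\delta))\circ\phi_1(\Delta)(c)=\tau_{(13)}\phi_0^{\otimes 3}\big((\delta\otimes\id)\circ\Delta(\phi_0(c))\big)$ is exactly the type of computation in \eqref{NCBrComp3}. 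The only difference is that you propose to verify both correspondences directly, whereas the paper proves one and deduces the other from the relation between $\dgal{-,-}_r$ and $\dgal{-,-}_l$ (which, strictly, requires swapping the arguments).
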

\begin{proof}
It suffices to check \eqref{Eq:BrComp} on generators of $\Tnc A$, i.e. on elements of $A$ and $\DDer(A)$.
For $a\in A$, $\phi(a)=\phi_0(a)\in A$ so for any $a,b\in A$ both sides of \eqref{Eq:BrComp} are trivially zero by \eqref{Eq:dbrSN}.

Given $\delta\in \DDer(A)$ and $a\in A$, we compute using \eqref{Def-phi1} and \eqref{Eq:dbrSN},
\begin{align*}
 \dgal{\phi(\delta),\phi(a)}^\circ = [\phi_1(\delta)\big(\phi_0(a)\big)]^\circ
 = [\phi_0^{\otimes 2} \delta(a)^\circ ]^\circ
 =\phi_0^{\otimes 2} \circ \delta(a) = \phi^{\otimes 2} \dgal{\delta,a}\,.
\end{align*}
This is precisely \eqref{Eq:BrComp}.
Finally, we need to verify for $\delta,\Delta \in \DDer(A)$ that
\begin{equation} \label{NCBrComp1}
\dgal{\phi(\delta),\phi(\Delta)}^\circ =  \phi^{\otimes 2} \dgal{\delta,\Delta}\,.
\end{equation}
Permutation of factors in $\Tnc A\otimes \Tnc A$ on the LHS must be understood as follows,
cf. \eqref{Eq:dbrDDer}-\eqref{Eq:dbrSN},
\begin{equation} \label{NCBrComp2}
\eqref{NCBrComp1}_{LHS} =
\tau_{(123)} \dgal{\phi_1(\delta),\phi_1(\Delta)}_l
+ \tau_{(132)}  \dgal{\phi_1(\delta),\phi_1(\Delta)}_r\,.
\end{equation}
Let us first compute for any $a\in A$ using \eqref{Def-phi1},
\begin{align*}
 [\phi_1(\delta) \otimes \id_A] \circ \phi_1(\Delta)(a)
 =& [(\phi_0^{\otimes 2}\otimes \id_A)\circ \tau_{(12)}\circ (\delta\circ \phi_0 \otimes \id_A)]
 \circ \phi_0^{\otimes 2}\circ (\Delta\circ \phi_0(a))^\circ \\
 =& \phi_0^{\otimes 3} \circ \tau_{(12)}\circ \big(\tau_{(132)} (\id_A\otimes \delta) \circ \Delta \big) \circ \phi_0(a) \\
 =& \tau_{(13)}\circ  \phi_0^{\otimes 3} \circ \big((\id_A\otimes \delta) \circ \Delta\big)\circ \phi_0(a)\,; \\
[\id_A\otimes \phi_1(\Delta)] \circ \phi_1(\delta)(a)
=& \tau_{(13)}\circ  \phi_0^{\otimes 3} \circ \big((\Delta \otimes \id_A) \circ \delta\big)\circ \phi_0(a)\,.
\end{align*}
We can also find from \eqref{Def-phi1} and \eqref{Eq:dbrDDer},
\begin{align*}
\phi^{\otimes 2}(\dgal{\delta,\Delta}_r)
&= \phi_0^{\otimes 3} \circ \tau_{(23)} \circ \dgal{\delta,\Delta}_r \circ \phi_0 \\
&= \tau_{(132)} \circ \phi_0^{\otimes 3} \circ
\left[(\id_A \otimes \delta)\circ \Delta - (\Delta \otimes \id_A)\circ \delta  \right] \circ \phi_0\,.
\end{align*}
(The first equality was obtained since $\phi^{\otimes 2}$ acts as $\phi_0\otimes \phi_1$.)
These equalities and \eqref{Eq:dbrDDer} yield
\begin{equation}
 \begin{aligned}  \label{NCBrComp3}
\tau_{(123)} \dgal{\phi_1(\delta),\phi_1(\Delta)}_l
&= \tau_{(12)} \circ \left[(\phi_1(\delta) \otimes \id_A)\circ \phi_1(\Delta)
- (\id_A \otimes \phi_1(\Delta))\circ \phi_1(\delta) \right] \\
&= \tau_{(132)}\circ \phi_0^{\otimes 3} \circ
\left[  (\id_A\otimes \delta) \circ \Delta - (\Delta \otimes \id_A) \circ \delta\right]
\circ \phi_0 \\
&= \phi^{\otimes 2}(\dgal{\delta,\Delta}_r)\,.
 \end{aligned}
\end{equation}
Using that $\dgal{-,-}_r=-\tau_{(123)}\circ \dgal{-,-}_l$, \eqref{NCBrComp3} entails
\begin{equation}
 \begin{aligned}  \label{NCBrComp4}
\tau_{(132)} \dgal{\phi_1(\delta),\phi_1(\Delta)}_r
&=- \tau_{(132)} \phi^{\otimes 2}(\dgal{\delta,\Delta}_r)
= \phi^{\otimes 2}(\dgal{\delta,\Delta}_l)\,.
 \end{aligned}
\end{equation}
Combining \eqref{NCBrComp2} with \eqref{NCBrComp3}-\eqref{NCBrComp4} yields \eqref{NCBrComp1}.
\end{proof}

\subsubsection{Quivers and NC cotangent spaces} \label{sss:P-exmp}

Let $\Upsilon=(\Upsilon,I,t,h)$ be a finite quiver. I.e. $I$ is the finite vertex set, $\Upsilon$ the finite arrow set, and
$t,h:\Upsilon\to I$ are the tail and head maps. For simplicity, we write $a:i\to j$ if $t(a)=i$ and $h(a)=j$.

Choose a set $\Lambda\subset \Upsilon$ of loops, i.e. $a\in \Lambda$ when $h(a)=t(a)$. (This can be the empty set, e.g. if there are no loops.)
We construct the quiver $Q=(Q_{\Upsilon,\Lambda},I,t,h)$ as follows: its vertex set is the same as $\Upsilon$, its arrow set is
obtained by adjoining an arrow $a':h(a)\to t(a)$ to each $a\in \Upsilon \setminus \Lambda$.

Finally, construct the double $\overline{Q}$ of $Q$ in the usual way by adding an arrow $a^\ast:h(a)\to t(a)$ for each $a\in Q$. We extend
$(-)^\ast:\overline{Q}\to \overline{Q}$ by putting $(a^\ast)^\ast=a$ for all $a\in Q$.
In that way, going from $\Upsilon$ to $\overline{Q}$ quadruples the quiver, except on $\Lambda$ which is only doubled. An example is depicted in Figure \ref{fig:M1}.

\begin{figure}
\centering
   \begin{tikzpicture}[scale=0.8]
  \node[circle,thick,fill=black,inner sep=1pt]  (v0) at (0,0) {};
  \node[circle,thick,fill=black,inner sep=1pt]  (vu) at (-2,1) {};
   \node[circle,thick,fill=black,inner sep=1pt] (vd) at (-2,-1) {};
  \node  (vname) at (-1,-1.8) {$Q_{\Upsilon,\Lambda}$};
\draw[->,thick,>=latex] (vu) edge [bend left=12] node[above,font=\small]{$a$}   (v0) ;
\draw[->,thick,>=latex,blue] (v0) edge [bend left=12] (vu) ;
\node[font=\small,blue] (vAname) at (-1.6,0.4) {$a'$} ;
\draw[->,thick,>=stealth] (vu) to[out=140,in=220,looseness=15] node[left,font=\small]{$c$}  (vu);
\draw[->,thick,>=stealth,blue] (vd) to[out=90,in=170,looseness=15] node[left,font=\small]{$d'$}  (vd);
\draw[->,thick,>=stealth] (vd) to[out=200,in=280,looseness=15] node[left,font=\small]{$d$}  (vd);
\draw[->,thick,>=latex] (vd) edge [bend right=12] node[below,font=\small]{$b$}   (v0) ;
\draw[->,thick,>=latex,blue] (v0) edge [bend right=12] node[left,font=\small]{$b'$}   (vd) ;
  \node  (vname) at (-3.5,0) {$\longrightarrow$};
  \node[circle,thick,fill=black,inner sep=1pt]  (Lv0) at (-5,0) {};
  \node[circle,thick,fill=black,inner sep=1pt]  (Lvu) at (-7,1) {};
   \node[circle,thick,fill=black,inner sep=1pt] (Lvd) at (-7,-1) {};
  \node  (vname) at (-6,-1.8) {$\Upsilon$};
\draw[->,thick,>=latex] (Lvu) -- node[above,font=\small]{$a$}   (Lv0) ;
\draw[->,thick,>=stealth] (Lvu) to[out=140,in=220,looseness=15] node[left,font=\small]{$c$}  (Lvu);
\draw[->,thick,>=stealth] (Lvd) to[out=140,in=220,looseness=15] node[left,font=\small]{$d$}  (Lvd);
\draw[->,thick,>=latex] (Lvd) -- node[below,font=\small]{$b$}   (Lv0) ;
  \node  (vname) at (1.5,0) {$\longrightarrow$};
  \node[circle,fill=black,inner sep=1pt]  (Rv0) at (5,0) {};
  \node[circle,fill=black,inner sep=1pt]  (Rvu) at (4,1.2) {};
   \node[circle,fill=black,inner sep=1pt] (Rvd) at (4,-1.2) {};
  \node  (vname) at (-6,-1.8) {$\Upsilon$};
\draw[->,>=latex] (Rvu) edge [bend left=10] (Rv0) ;
\draw[->,>=latex,blue] (Rv0) edge [bend left=10] (Rvu) ;
\draw[->,>=latex,teal] (Rv0) edge [bend right=35] node[right,font=\scriptsize]{$a^\ast$} (Rvu) ;
\draw[->,>=latex,red] (Rvu) edge [bend right=35] node[left,font=\scriptsize]{${a'}^\ast$} (Rv0) ;
\draw[->,>=stealth] (Rvu) to[out=70,in=130,looseness=20] (Rvu);
\draw[->,>=stealth,teal] (Rvu) to[out=150,in=200,looseness=20] node[left,font=\small]{$c^\ast$}  (Rvu);
\draw[->,>=stealth,blue] (Rvd) to[out=100,in=140,looseness=20] (Rvd);
\draw[->,>=stealth] (Rvd) to[out=280,in=320,looseness=20] (Rvd);
\draw[->,>=stealth,red] (Rvd) to[out=160,in=200,looseness=20] node[left,font=\scriptsize]{${d'}^\ast$} (Rvd);
\draw[->,>=stealth,teal] (Rvd) to[out=220,in=260,looseness=20] node[left,font=\scriptsize]{$d^\ast$} (Rvd);
\draw[->,>=latex] (Rvd) edge [bend right=12] (Rv0) ;
\draw[->,>=latex,blue] (Rv0) edge [bend right=12] (Rvd) ;
\draw[->,>=latex,red] (Rv0) edge [bend right=35] node[left,font=\scriptsize]{${b'}^\ast$} (Rvd) ;
\draw[->,>=latex,teal] (Rvd) edge [bend right=35] node[right,font=\scriptsize]{$b^\ast$} (Rv0) ;
   \end{tikzpicture}
\caption{Example of construction of the quivers $Q_{\Upsilon,\Lambda}$ and $\overline Q_{\Upsilon,\Lambda}$ from a quiver $\Upsilon$ and a choice of loops $\Lambda=\{c\}$. }
\label{fig:M1}
\end{figure}

With the above notation, consider the path algebra $\CC\overline{Q}$ generated by the arrows of $\overline{Q}$ and the complete set of orthogonal idempotents $\{e_s \mid s \in I\}$  subject to the relations\footnote{We write paths from left to right as in \cite{VdB1}.}
$$a = e_{t(a)} a e_{h(a)}, \qquad a\in \overline{Q}\,.$$
Fix $\gamma:\Lambda\to \{\pm 1\}$, $c\mapsto \gamma_c$, so that $\gamma^2\equiv 1$.
The path algebra $\CC\overline{Q}$ is equipped with an involutive anti-automorphism $\phi$ that fixes the idempotents and such that on arrows
\begin{subequations} \label{Eq:PhiQ}
 \begin{align}
&\phi(a)=a', \,\, \phi(a')=a, \,\, \text{ for }a\in \Upsilon\setminus\Lambda,
&&\phi(c)=\gamma_c \,c,\,\, \text{ for }c\in \Lambda ; \label{Eq:PhiQa} \\
&\phi(a^\ast)={a'}^\ast, \,\, \phi({a'}^\ast)=a^\ast, \,\, \text{ for }a\in \Upsilon\setminus\Lambda,
&&\phi(c^\ast)=\gamma_c \,c^\ast,\,\, \text{ for }c\in \Lambda . \label{Eq:PhiQb}
 \end{align}
\end{subequations}

\begin{prop} \label{Pr:Upsilon}
Consider a quiver $Q$ constructed from a pair $(\Upsilon,\Lambda)$ as above.
The involutive anti-automorphism $\phi$ \eqref{Eq:PhiQ} is bracket-compatible for
the canonical double Poisson bracket on $\CC\overline{Q}$ given by Van den Bergh \cite[\S6.3]{VdB1}.
Furthermore, the following moment map satisfies \eqref{mum-select}:
\begin{equation} \label{Eq:MomapQ}
 \mu=\sum_{a\in \Upsilon}  [a,a^\ast] + \sum_{a\in \Upsilon\setminus\Lambda} [a',a'^\ast]\,.
\end{equation}
\end{prop}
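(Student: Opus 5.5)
The plan is to check both claims directly on generators, using the remark after \eqref{Eq:BrComp} that bracket-compatibility only needs to be verified on generators of $\CC\overline{Q}$ (cf.\ \cite[Lem.~4.1]{OS}). For the canonical double Poisson bracket of \cite[\S6.3]{VdB1}, the only nonzero values on pairs of arrows are
\begin{equation*}
\dgal{b,b^\ast}=e_{h(b)}\otimes e_{t(b)}, \qquad \dgal{b^\ast,b}=-e_{t(b)}\otimes e_{h(b)}, \qquad b\in Q .
\end{equation*}
The exact placement of the idempotents depends on the left-to-right convention for paths. It will not matter, because the argument only uses that the formula is uniform in $b$ and that $\phi$ fixes idempotents.

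For compatibility, I will take a pair of arrows $(x,y)$ of $\overline{Q}$ and compare $(\phi\otimes\phi)\dgal{x,y}$ with $\dgal{\phi(x),\phi(y)}^\circ$. Idempotents are harmless, since both sides vanish by $A_0$-linearity and $\phi(e_s)=e_s$. The cases are as follows.
\begin{itemize}
\item If $\{x,y\}$ is not of the form $\{b,b^\ast\}$, then $\{\phi(x),\phi(y)\}$ is also not of that form, by \eqref{Eq:PhiQ}. For example $(a,a')\mapsto(a',a)$ and $(a,{a'}^\ast)\mapsto(a',a^\ast)$. So both sides vanish.
\item For $(x,y)=(a,a^\ast)$ with $a\in\Upsilon\setminus\Lambda$, the right-hand side is $\dgal{a',{a'}^\ast}^\circ$. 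Since $h(a')=t(a)$ and $t(a')=h(a)$, this equals $(e_{t(a)}\otimes e_{h(a)})^\circ=e_{h(a)}\otimes e_{t(a)}=\phi^{\otimes2}\dgal{a,a^\ast}$.
\item The cases $(a',{a'}^\ast)$, $(a^\ast,a)$ and $({a'}^\ast,a')$ are symmetric, or follow from cyclic antisymmetry.
\item For a loop $c\in\Lambda$, both sides pick up the factor $\gamma_c^2=1$. The transposition is harmless because $t(c)=h(c)$.
\end{itemize}
Alternatively, one could identify $\CC\overline{Q}$ with $\Tnc(\CC Q)$ and invoke Theorem \ref{Thm:NCBrComp}, with $\phi$ the cotangent lift of $\phi|_{\CC Q}$. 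This would require checking that the double derivation corresponding to $a^\ast$ is sent by $\phi_1$ to the one corresponding to ${a'}^\ast$. I expect the direct check to be quicker.

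For the moment map, $\mu$ in \eqref{Eq:MomapQ} is exactly Van den Bergh's moment map $\sum_{b\in Q}[b,b^\ast]$ for $\CC\overline{Q}$, so \eqref{mum} holds by \cite{VdB1}. It remains to compute $\phi(\mu)$ using the anti-homomorphism property.
\begin{itemize}
\item For $a\in\Upsilon\setminus\Lambda$, we get $\phi([a,a^\ast])=\phi(a^\ast)\phi(a)-\phi(a)\phi(a^\ast)={a'}^\ast a'-a'{a'}^\ast=-[a',{a'}^\ast]$. Likewise $\phi([a',{a'}^\ast])=-[a,a^\ast]$.
\item For $c\in\Lambda$, we get $\phi([c,c^\ast])=\gamma_c^2[c^\ast,c]=-[c,c^\ast]$.
\end{itemize}
Summing gives $\phi(\mu)=-\mu$, which is \eqref{mum-select}; the decomposition $\mu=\sum_s e_s\mu e_s$ is respected because $\phi$ fixes each $e_s$.

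The only real obstacle is bookkeeping: fixing Van den Bergh's sign and idempotent conventions consistently, and in particular keeping track of how the transposition $(-)^\circ$ interacts with the reversal of head and tail under $a\mapsto a'$. Once the uniform formula for $\dgal{b,b^\ast}$ is pinned down, everything is a one-line check.
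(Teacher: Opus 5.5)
Your proof is correct and essentially matches the paper's. The paper leads with Theorem \ref{Thm:NCBrComp}, viewing $\CC\overline{Q}\simeq\Tnc(\CC Q)$ with \eqref{Eq:PhiQb} as the cotangent lift of \eqref{Eq:PhiQa}, but it then notes that compatibility can be checked directly from \eqref{Eq:dbr-Q} using $t(a')=h(a)$ and $h(a')=t(a)$, which is exactly your case analysis; your moment map argument (citing \cite{VdB1} for \eqref{mum} and computing $\phi(\mu)=-\mu$ from \eqref{Eq:PhiQ}) is likewise the paper's.
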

\begin{proof}
This is just an application of Theorem \ref{Thm:NCBrComp} with $A=\CC Q$, $A_0=\oplus_{s\in I} \CC e_s$,
because $\Tnc A\simeq \CC\overline{Q}$, cf. \cite[\S6.2-6.3]{VdB1}. The original anti-automorphism is \eqref{Eq:PhiQa} which lifts to \eqref{Eq:PhiQb} on $\DDer(\CC Q)$.
The result can be checked explicitly: up to an overall sign, the double Poisson bracket satisfies
\begin{equation} \label{Eq:dbr-Q}
 \begin{aligned}
  &\dgal{a,a^\ast}=e_{h(a)}\otimes e_{t(a)}, \quad &&\dgal{a^\ast,a}=-e_{t(a)}\otimes e_{h(a)}, \quad &&&a\in \Upsilon, \\
  &\dgal{a',{a'}^\ast}=e_{h(a')}\otimes e_{t(a')}, \quad &&\dgal{{a'}^\ast,a'}=-e_{t(a')}\otimes e_{h(a')}, \quad &&&a\in \Upsilon\setminus \Lambda,
 \end{aligned}
\end{equation}
with all other brackets being trivially zero. The compatibility \eqref{Eq:BrComp} is easily checked using $t(a')=h(a)$ and $h(a')=t(a)$ for $a\in \Upsilon\setminus \Lambda$.
For the second part, it is again a consequence of \cite{VdB1} that \eqref{Eq:MomapQ} is a moment map.
The fact that it satisfies $\mu+\phi(\mu)=0$ \eqref{mum-select} is easy to obtain from \eqref{Eq:PhiQ}.
\end{proof}

\begin{exmp} \label{Exmp:Almost}
Consider the quiver $\Upsilon$ made of two vertices $\{0,\infty\}$, and two arrows $x:0\to 0$, $v:0\to \infty$.
Put $\Lambda=\{x\}$.
The quiver $Q$ has an extra arrow $v':\infty \to 0$, and its double $\overline{Q}$ has three extra arrows
$x^\ast:0\to 0$, $v^\ast:\infty \to 0$, $v'^\ast:0\to \infty$.
The anti-involution $\phi$ satisfies
$\phi(x)=-x$, $\phi(x^\ast)=-x^\ast$,  $\phi(v)=v'$ and $\phi(v^\ast)={v'}^\ast$.
By proposition \ref{Pr:Upsilon}, $\phi$ is compatible with Van den Bergh's double Poisson bracket on $\CC\overline{Q}$.
We also note that the moment map \eqref{Eq:MomapQ} is
\begin{equation*}
 \mu = [x,x^\ast]+[v,v^\ast] + [v',v'^\ast]\,.
\end{equation*}

Take the dimension vector $(n_0,n_\infty)=(N,1)$ for some $N\geq 1$.
In the orthogonal case, one has
$$\Rep^{\phi,\tau}(\CC \overline{Q}_{\Upsilon,x},(N,1))\simeq \{X,Y\in \og_N, \,\, V\in \CC^N,\,\, W\in (\CC^N)^\ast \}\,,$$
with $\og_N$-valued moment map
$\mu^{(\og_N)}:=\X(\mu_0)=[X,Y]+VW + (VW)^T$ by \eqref{Eq:DefRelO}.
(We could eliminate the prime variables thanks to \eqref{Eq:DefRelO}.)
The Poisson structure is simply obtained from \eqref{Eq:BrPO} as
\begin{equation*}
 \br{X_{ij},Y_{kl}}=\frac12 (\delta_{kj}\delta_{il}-\delta_{ki}\delta_{jl}), \quad
 \br{V_{i},W_{l}}=\frac12 \delta_{il}\,.
\end{equation*}
In the symplectic case with $N$ even, one has
$$\Rep^{\phi,\tau}(\CC \overline{Q}_{\Upsilon,x},(N,1))\simeq \{X,Y\in \spg_N, \,\, V\in \CC^N,\,\, W\in (\CC^N)^\ast \}\,,$$
with $\spg_N$-valued moment map
$\mu^{(\spg_N)}:=[X,Y]+VW + \Omega_{N}(VW)^T\Omega_{N}^T$ by \eqref{Eq:DefRelSp} and
Poisson structure \eqref{Eq:BrPsymp} written as
\begin{equation*}
 \br{X_{ij},Y_{kl}}=\frac12 (\delta_{kj}\delta_{il}-\sgn_{N}(i,j)\, \delta_{k,i+\frac{N}{2}}\,\delta_{j+\frac{N}{2},l}), \quad
 \br{V_{i},W_{l}}=\frac12 \delta_{il}\,,
\end{equation*}
with indices understood modulo $N$, cf. Remark \ref{Rem:Sympl}.
Performing reduction, both $(\mu^{(\og_N)})^{-1}(0_N)/\!/\Orm_N$ and $(\mu^{(\spg_N)})^{-1}(0_N)/\!/\Sp_N$ inherit a Poisson structure.
These are rank one versions of the commuting variety.
\end{exmp}

\subsubsection{Orthogonal and symplectic symmetries for untwisted representation spaces} \label{ss:Untwist}

In the previous \ref{sss:P-exmp}, starting from $\Upsilon$ without choosing loops ($\Lambda=\emptyset$), $\overline{Q}$ is the twice doubled quiver $\overline{\overline{\Upsilon}}$ with arrows
\[
 a,a'^\ast : t(a) \to h(a), \quad
 a',a^\ast: h(a) \to t(a), \qquad \forall a\in \Upsilon\,.
\]
Using the defining relation \eqref{TwIdeal} of
the twisted representation space $\Rep^{\phi,\tau}(\CC \overline{\overline{\Upsilon}},\alpha)$
(for a fixed type $\tau$ orthogonal or symplectic) and the involution \eqref{Eq:PhiQ},
the generators $a'_{ij}, a'^\ast_{ij}$ can be omitted.
I.e.,
\begin{equation} \label{Eq:Untwist}
 \CC[\Rep^{\phi,\tau}(\CC \overline{\overline{\Upsilon}},\alpha) ]
\simeq \CC[\Rep(\CC \overline{\Upsilon},\alpha)]
\end{equation}
is the coordinate ring of the (standard) representation space of the once-doubled quiver $\overline{\Upsilon}$.
In fact, since the double Poisson bracket \eqref{Eq:dbr-Q} restricted to $\CC \overline{\Upsilon}$ is the original one of Van den Bergh, the identification \eqref{Eq:Untwist} is a Poisson algebra isomorphism \emph{up to rescaling} the Poisson bracket on the right side by $\frac12$. This factor comes from \eqref{Eq:BrPO} and \eqref{Eq:BrPsymp}.
We can also transfer through \eqref{Eq:Untwist} the action of $\Orm_\alpha$ or $\Sp_\alpha$ (depending on $\tau$) and the corresponding moment map $\X(\mu)$
for $\mu$ \eqref{Eq:MomapQ}, cf. Theorem \ref{Thm:BrPO} or \ref{Thm:BrSmp}.
Then, the moment map reads (we assume $\Lambda=\emptyset$)
\begin{equation} \label{Eq:MomapQ-expl}
 \begin{aligned}
 \X(\mu)&=\sum_{a\in \Upsilon}  ( [\X(a),\X(a^\ast)] + [\X(\phi(a)),\X(\phi(a^\ast))] ) \\
 &=\left\{
 \begin{array}{ll}
\sum_{a\in \Upsilon}  [\X(a),\X(a^\ast)] + \sum_{a\in \Upsilon} [\X(a)^T,\X(a^\ast)^T]  & (\text{type }\typB,\typD) \\
\sum_{a\in \Upsilon}   [\X(a),\X(a^\ast)] + \sum_{a\in \Upsilon} \Omega[\X(a)^T,\X(a^\ast)^T]\Omega^T  & (\text{type }\typC)
 \end{array}
 \right.
\end{aligned}
\end{equation}
We gather these observations inside the following result.
\begin{cor}
The (untwisted) representation space $\Rep(\CC \overline{\Upsilon},\alpha)$ endowed with Van den Bergh's Poisson bracket rescaled by a factor $\frac12$ is equipped with an action of $\Orm_\alpha$ or $\Sp_\alpha$ by Poisson automorphism.
Furthermore, it admits a corresponding moment map given by \eqref{Eq:MomapQ-expl}.
\end{cor}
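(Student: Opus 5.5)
The plan is to derive the corollary by transporting everything through the identification \eqref{Eq:Untwist}, using Theorem \ref{Thm:BrPO} (types $\typB,\typD$) or Theorem \ref{Thm:BrSmp} (type $\typC$) applied to $A=\CC\overline{Q}$ with $Q=Q_{\Upsilon,\emptyset}$. By Proposition \ref{Pr:Upsilon}, $\phi$ from \eqref{Eq:PhiQ} is bracket-compatible with Van den Bergh's double Poisson bracket on $\CC\overline{Q}=\CC\overline{\overline{\Upsilon}}$. The same proposition says that $\mu$ from \eqref{Eq:MomapQ} satisfies \eqref{mum-select}. Hence the relevant theorem applies directly, and $\Rep^{\phi,\tau}(\CC\overline{\overline{\Upsilon}},\alpha)$ is a Poisson variety with $\Orm_\alpha$ (or $\Sp_\alpha$) acting by Poisson automorphisms and moment map $\X(\mu)$.

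First I will make \eqref{Eq:Untwist} precise. The subalgebra $\CC\overline{\Upsilon}\subset \CC\overline{\overline{\Upsilon}}$ is generated by the arrows $a,a^\ast$, $a\in\Upsilon$. By \eqref{Eq:DefRelO} (resp. \eqref{Eq:DefRelSp}), the entries of $\X(a')=\X(\phi(a))$ and $\X(a'^\ast)=\X(\phi(a^\ast))$ are $\pm$ entries of $\X(a),\X(a^\ast)$ with permuted indices. No relation is imposed among the $a,a^\ast$ themselves, since $\phi$ swaps primed and unprimed arrows. Therefore restricting a twisted representation to $\CC\overline{\Upsilon}$ gives an isomorphism onto $\Rep(\CC\overline{\Upsilon},\alpha)$. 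This isomorphism is $\Orm_\alpha$ (resp. $\Sp_\alpha$)-equivariant, because the action \eqref{Infgrp} is conjugation on every $\X(b)$. Transporting the group action along it defines the claimed action on the untwisted space.

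Next I compare the brackets on generators $a_{ij},b_{kl}$ with $a,b\in\{c,c^\ast\}_{c\in\Upsilon}$, using \eqref{Eq:BrPO} or \eqref{Eq:BrPsymp}. The first term is $\frac12\dgal{a,b}_{kj,il}$, which is half of Van den Bergh's formula \eqref{Eq:VdB}. The second term involves $\dgal{\phi(a),b}$, where $\phi(a)$ is a primed arrow and $b$ is unprimed. By \eqref{Eq:dbr-Q}, the only nonzero brackets pair $c$ with $c^\ast$ and $c'$ with $c'^\ast$, so this term vanishes. Since both sides are biderivations, agreement on generators gives agreement of the brackets. It follows that the rescaled Van den Bergh bracket is Poisson and invariant under the transported action.

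Finally, I rewrite $\X(\mu)$ from \eqref{Eq:MomapQ} as $\sum_a([\X(a),\X(a^\ast)]+[\X(\phi(a)),\X(\phi(a^\ast))])$, using $\phi(a)=a'$ and $\phi(a^\ast)=a'^\ast$. Substituting $\X(\phi(b))=\tau(\X(b))$ turns this into \eqref{Eq:MomapQ-expl}. The orthogonal case gives the transposes directly. The symplectic case uses $\Omega^T\Omega=\Id$ and the anti-multiplicativity of $\tau$: $\tau(\X(a))\tau(\X(a^\ast))-\tau(\X(a^\ast))\tau(\X(a))=\tau([\X(a^\ast),\X(a)])$, written in the paper's form. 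The main obstacle is this sign and ordering bookkeeping in $\tau([\cdot,\cdot])$, together with the vanishing of the cross term in the bracket. Both are routine once \eqref{Eq:dbr-Q} is invoked.
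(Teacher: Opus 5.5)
Your proposal is correct and follows the paper's own argument. You apply Theorem~\ref{Thm:BrPO} or~\ref{Thm:BrSmp} to $\CC\overline{\overline{\Upsilon}}$ via Proposition~\ref{Pr:Upsilon}, then transport the bracket, the $\Orm_\alpha$/$\Sp_\alpha$-action and $\X(\mu)$ through the identification \eqref{Eq:Untwist}, with the factor $\frac12$ coming from \eqref{Eq:BrPO}/\eqref{Eq:BrPsymp}. Your explicit check, via \eqref{Eq:dbr-Q}, that the cross term $\dgal{\phi(a),b}$ vanishes for $a,b\in\CC\overline{\Upsilon}$ is the step the paper leaves implicit when it says the double bracket restricts to Van den Bergh's one on $\CC\overline{\Upsilon}$.
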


In fact, we simply derived a classical construction for turning
a (Hamiltonian) Poisson $\Gl_\alpha$-variety into one for the action of
$\Orm_\alpha\subset \Gl_\alpha$ or $\Sp_\alpha\subset \Gl_\alpha$.
In greater generalities the following holds.

\begin{prop} \label{Pr:RepP-Type}
 Let $(A,\dgal{-,-})$ be a double Poisson algebra over $A_0=\oplus_{s\in I}\CC e_s$.
Let $\alpha\in \N^I$ be a dimension vector.
For any $s\in I$, pick
$G_s\in \{\Gl(\alpha_s),\Orm(\alpha_s)\}$ if $\alpha_s$ is odd;
$G_s\in \{\Gl(\alpha_s),\Orm(\alpha_s),\Sp(\alpha_s)\}$ if $\alpha_s$ is even.
Then the (untwisted) representation space $\Rep(A,\alpha)$ endowed with the unique Poisson bracket satisfying
\begin{equation} \label{Eq:BrGen}
\br{a_{ij},b_{kl}}=\frac12 \dgal{a,b}_{kj,il}\,, \quad a,b\in A,
\end{equation}
is equipped with an action of $G:=\prod_{s\in I}G_s$ by Poisson automorphism.
Furthermore, it admits a corresponding $\operatorname{Lie}(G)$-valued moment map $Y=\prod_s Y_s$ given by
\begin{equation} \label{Eq:MomapGen}
 \begin{aligned}
 Y_s
 &=\left\{
 \begin{array}{ll}
 2 \X(\mu_s), & G_s=\Gl(\alpha_s); \\
\X(\mu_s) - \X(\mu_s)^T , & G_s=\Orm(\alpha_s);\\
\X(\mu_s)- \Omega_{\alpha_s} \X(\mu_s)^T \Omega_{\alpha_s}^T , & G_s=\Sp(\alpha_s).
 \end{array}
 \right.
\end{aligned}
\end{equation}
\end{prop}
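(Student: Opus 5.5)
The plan is to reduce to Van den Bergh's untwisted theory and then check the moment map property block by block. By \cite{VdB1}, the bracket $\br{a_{ij},b_{kl}}=\dgal{a,b}_{kj,il}$ is a $\Gl_\alpha$-invariant Poisson bracket on $\Rep(A,\alpha)$. Rescaling by $\frac12$ keeps it Poisson and invariant, so \eqref{Eq:BrGen} is a Poisson bracket invariant under $\Gl_\alpha$. Since $G=\prod_s G_s$ is a subgroup of $\Gl_\alpha$ (each $\Orm(\alpha_s)$ or $\Sp(\alpha_s)$ sits inside $\Gl(\alpha_s)$, with the block form of $\Omega$ respecting \eqref{DecoCNrep}), $G$ acts by Poisson automorphisms. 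The remaining work is the moment map.

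\textbf{Moment map identity.} Because $G$ is a product and the bracket is a biderivation, it suffices to check \eqref{momap} for each factor separately, on a spanning family of $\operatorname{Lie}(G_s)$, evaluated on generators $b_{kl}$. From \eqref{mum} and \eqref{Eq:BrGen}, for $i,j\in\mathtt{R}_s$:
\[
\br{(\mu_s)_{ji},b_{kl}}=\tfrac12(be_s\otimes e_s-e_s\otimes e_sb)_{ki,jl}=\tfrac12\big(b_{ki}\delta_{jl}-\delta_{ki}b_{jl}\big),
\]
which is $\frac12([\X(b),E_{ij}])_{kl}$ by \eqref{InfAct}. In other words, $\br{\tr(\X(\mu_s)\xi),-}=\frac12\xi_M$ for every $\xi\in\gl_{\alpha_s}$, which is the rescaled version of Van den Bergh's moment map statement.

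\textbf{The three cases.} For $G_s=\Gl(\alpha_s)$, the trace pairing gives $Y_s=2\X(\mu_s)$ immediately. For $G_s=\Orm(\alpha_s)$, take $\xi=F_{ij}=E_{ij}-E_{ji}$. Then
\[
\tr(\X(\mu_s)F_{ij})=\tfrac12\tr\big((\X(\mu_s)-\X(\mu_s)^T)F_{ij}\big),
\]
because $F_{ij}$ is antisymmetric. Hence $\langle Y_s,F_{ij}\rangle=2\tr(\X(\mu_s)F_{ij})$, whose Hamiltonian vector field is $(F_{ij})_M$. The symplectic case works the same way with $F_{ij}$ from \eqref{Eq:BasSp}. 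For $\xi\in\spg$ and any $X$,
\[
\tr(\Omega X^T\Omega^T\xi)=\tr(X\,\Omega^T\xi^T\Omega)=\tr(X\tau(\xi))=-\tr(X\xi),
\]
since $\Omega^T=\Omega^{-1}$. Therefore $\tr(Y_s\xi)=2\tr(\X(\mu_s)\xi)$ again. In each case $Y_s$ indeed lies in $\operatorname{Lie}(G_s)$ by construction.

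\textbf{Equivariance and main obstacle.} Equivariance comes from that of $\X(\mu_s)$ under $\Gl(\alpha_s)$ together with the fact that $X\mapsto X^T$ and $X\mapsto\Omega X^T\Omega^T$ commute with conjugation by $\Orm$ and $\Sp$ respectively; this uses \eqref{Infgrp}, with $g^{-1}=g^T$ or $g^{-1}=\Omega^{-1} g^{T}\Omega$. The main obstacle is purely bookkeeping: tracking the factor $\frac12$ and the sign conventions, namely the $-\xi$ in \eqref{EqinfVectM} versus \eqref{InfAct}, and the identification of $\operatorname{Lie}(G_s)^\ast$ via the trace pairing. The rescaling by $\frac12$ exactly compensates the averaging projection $X\mapsto\frac12(X+\tau'(X))$ onto $\operatorname{Lie}(G_s)$, where $\tau'$ is the corresponding anti-involution; this is why $Y_s$ carries no prefactor $\frac12$ in the orthogonal and symplectic cases but a factor $2$ in the $\Gl$ case.
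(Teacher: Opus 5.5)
Your proof is correct and follows the paper's own argument: take Van den Bergh's $\Gl_\alpha$-invariant bracket and moment map (rescaled by $\frac12$) and restrict to $G\subset\Gl_\alpha$. You also make explicit the ``standard calculation'' that the paper leaves unstated, via the identity $\tr(Y_s\xi)=2\tr(\X(\mu_s)\xi)$ for $\xi\in\operatorname{Lie}(G_s)$. One small slip in your closing heuristic: the projection onto $\operatorname{Lie}(G_s)$ is $X\mapsto\frac12(X-\tau'(X))$, not $\frac12(X+\tau'(X))$. Note also that the paper gives a second, purely noncommutative proof in \ref{ss:NCproof}, using $\overline{A}=A\ast_{A_0}A^{\op}$ and Theorem \ref{Thm:PType}.
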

\begin{proof}
This is classical.
If all $G_s=\Gl(\alpha_s)$, the fact that twice the Poisson bracket satisfying \eqref{Eq:BrGen} with the moment map $\frac12Y$ \eqref{Eq:MomapGen} satisfies the statement is a consequence of \cite[\S7]{VdB1}.
Then, passing to subgroups $\Orm(\alpha_s)\subset \Gl(\alpha_s)$ or $\Sp(\alpha_s)\subset \Gl(\alpha_s)$ is a standard calculation in Poisson geometry.
\end{proof}

In Section \ref{Sec:Mix}, we shall see how to mix the types at different $s\in I$.
This will allow us to give a proof of Proposition \ref{Pr:RepP-Type} purely based on noncommutative Poisson geometry, see \ref{ss:NCproof}.


\section{From double quasi-Poisson to quasi-Poisson in types \texorpdfstring{$\typB,\typC,\typD$}{B,C,D}}  \label{Sec:DqPoi}

We adapt the main results of Olshanski and Safonkin, as stated in Sec.\ref{Sec:DPoi}, to the quasi-Poisson setting.
We start with some examples of bracket-compatible involutions for double quasi-Poisson brackets on the free (Laurent) algebras
$$L_{g,r}=\CC\langle x_1^{\pm 1},y_1^{\pm 1},\ldots,x_g^{\pm 1},y_g^{\pm 1},z_1^{\pm 1},\ldots,z_r^{\pm 1}\rangle\,.$$
Note that $L_{g,r}$ has a double quasi-Poisson bracket if we realise it as $\CC\pi_1(\Sigma,\ast)$ for $\Sigma$ a genus $g$ surface with $r+1$ boundary components and a marked point $\ast\in \partial \Sigma$.
The construction is due to Massuyeau-Turaev \cite{MT14}, and an explicit formula for the bracket can be found in \cite[Thm.~3.5]{F2}.

\begin{exmp}[$g=0,r=1$] \label{Ex:g0r1}
On $L_{0,1}=\CC[ z^{\pm 1}]$, we have the double quasi-Poisson bracket
\begin{equation} \label{Eq:dqbr-z}
\dgal{z,z}= \frac12(z^2\otimes 1 - 1 \otimes z^2),
\end{equation}
associated with the moment map $\Phi=z$.
The involutive anti-automorphism $\phi:L_{0,1}\to L_{0,1}$ satisfying $z\mapsto z^{-1}$
is bracket-compatible for \eqref{Eq:dqbr-z} since
\begin{align*}
 \dgal{\phi(z),\phi(z)}^\circ
&=(z^{-1}\ast z^{-1}\dgal{z,z}z^{-1}\ast z^{-1})^\circ
=\frac12(z^{-2}\otimes 1 - 1 \otimes z^{-2})
=\phi^{\otimes 2}(\dgal{z,z}).
\end{align*}
\end{exmp}

\begin{exmp}[$g=1,r=0$] \label{Ex:g1r0}
On $L_{1,0}=\CC\langle x^{\pm 1},y^{\pm 1}\rangle$, we have the double quasi-Poisson bracket
\begin{equation} \label{Eq:dqbr-xy}
\begin{aligned}
 \dgal{x,x}&= \frac12(x^2\otimes 1 - 1 \otimes x^2), \quad
 \dgal{y,y}= -\frac12(y^2\otimes 1 - 1 \otimes y^2), \\
 \dgal{x,y}&= \frac12(yx\otimes 1 + 1\otimes xy - x \otimes y + y\otimes x),
\end{aligned}
\end{equation}
associated with the moment map $\Phi=[x,y]_\mathrm{m}:=xyx^{-1}y^{-1}$.
The involutive anti-automorphism $\phi:L_{1,0}\to L_{1,0}$ satisfying $x\mapsto x^{-1}$, $y\mapsto y^{-1}$,
is bracket-compatible for \eqref{Eq:dqbr-xy}.
Indeed, we can readily check \eqref{Eq:BrComp} for $a=b=x$ or $a=b=y$ as in Example \ref{Ex:g0r1}.
Then it remains to compute
\begin{align*}
& \dgal{\phi(x),\phi(y)}^\circ
=(x^{-1}\ast y^{-1} \dgal{x,y}y^{-1} \ast x^{-1})^\circ \\
&=\frac12(1\otimes x^{-1}y^{-1} + y^{-1}x^{-1}\otimes 1 - y^{-1} \otimes x^{-1} + x^{-1}\otimes y^{-1})^\circ \\
&=\frac12(\phi(yx) \otimes 1 + 1\otimes \phi(xy) - \phi(x)\otimes \phi(y) + \phi(y) \otimes \phi(x))
=\phi^{\otimes 2}(\dgal{x,y}).
\end{align*}
\end{exmp}

\begin{exmp}[General case] \label{Ex:grGen}
The involutive anti-automorphism $\phi:L_{g,r}\to L_{g,r}$ satisfying $x_i\mapsto x_i^{-1}$, $y_i\mapsto y_i^{-1}$, for $1\leq i\leq g$ and $z_k\mapsto z_k^{-1}$ for $1\leq k \leq r$ is bracket-compatible for
the Massuyeau-Turaev double quasi-Poisson bracket \cite{MT14}.
This follows from  Examples \ref{Ex:g0r1} and \ref{Ex:g1r0}, because the double bracket on $L_{g,r}$ is obtained by fusion from those simpler cases and bracket-compatibility is preserved under fusion, see Proposition \ref{Pr:CompFus} below.
Note that the moment map is then
$\Phi=[x_1,y_1]_\mathrm{m}\cdots [x_g,y_g]_\mathrm{m} z_1\cdots z_r$.
\end{exmp}

We finish with some preliminary results.

\begin{prop}[Fusion preserves bracket-compatibility]\label{Pr:CompFus}
Let $(A,\dgal{-,-})$ be a double quasi-Poisson algebra over $A_0=\oplus_{s\in I} \CC e_s$ ($|I|\geq 2$)
with bracket-compatible anti-involution $\phi$.
Let $A^f$ be the algebra obtained by fusing $e_2$ onto $e_1$, equipped with the double quasi-Poisson bracket $\dgal{-,-}^f$ \eqref{dgalf} obtained in Proposition \ref{PropIndbr}.
Then the anti-involution $\phi_f$ on $A^f$ induced by $\phi$  is  bracket-compatible.
\end{prop}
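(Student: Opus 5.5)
The plan is to split $\dgal{-,-}^f=\dgal{-,-}+\dgal{-,-}_{\fus}$ as in \eqref{dgalf} and check \eqref{Eq:BrComp} for $\phi_f$ separately on each summand. Compatibility is a linear condition in the double bracket, so this splitting is legitimate. We also recall that $\phi_f$ is an involutive anti-automorphism of $A^f$ fixing $A_0^f$ (noted after \eqref{Eq:phiExt}). The compatibility condition need only be verified on generators, as in \cite[Lem.~4.1]{OS}. That lemma's argument only uses the derivation rules \eqref{Eq:outder}--\eqref{Eq:inder} and the anti-multiplicativity of $\phi$, so it applies verbatim to any double bracket, and in particular to each summand. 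It therefore suffices to check $(\phi_f\otimes\phi_f)\dgal{a,b}=\dgal{\phi_f(a),\phi_f(b)}^\circ$ for $a,b$ among the four kinds of generators \eqref{type1}--\eqref{type4}.

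For the first summand, work in $A^{(1,2)}$. The double bracket on $A$ extends to $A^{(1,2)}$ by declaring it zero whenever an argument lies in $\Mat_2(\CC)$, and it is $A_0^{(1,2)}$-linear. The extension $\widetilde\phi$, which transposes $e_{uv}\mapsto e_{vu}$, is compatible with this extension. Compatibility holds on generators of $A$ by hypothesis. When one argument is some $e_{uv}$, both sides of \eqref{Eq:BrComp} vanish, because $\widetilde\phi(e_{uv})=e_{vu}$ is again in $\Mat_2(\CC)$. Hence compatibility holds on all of $A^{(1,2)}$ by the generator argument. Since $\widetilde\phi$ preserves $\epsilon$ and restricts to $\phi_f$, restricting to $A^f=\epsilon A^{(1,2)}\epsilon$ gives compatibility of the induced bracket.

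For the fusion term, use the explicit formulas for $\dgal{-,-}_{\fus}$ from \cite[Lem.~2.19]{F2}. These express the bracket on pairs of generators of the four kinds in terms of $e_1$, $e_{12}$, $e_{21}$ and the components $t,u,v,w$, with coefficients $\pm\frac12$. I would organise the check by cases over pairs of kinds. By cyclic antisymmetry \eqref{Eq:cycanti}, compatibility for $(a,b)$ is equivalent to compatibility for $(b,a)$, since $\phi_f^{\otimes2}$ commutes with $(-)^\circ$. Furthermore, $\phi_f$ exchanges the second and third kinds and preserves the first and fourth. So the cases pair up: for instance, checking $(e_{12}u,\,t)$ is essentially matched against $(\phi(u)e_{21},\,\phi(t))$.

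I expect this step to be the main obstacle: verifying that the formulas of \cite[Lem.~2.19]{F2} transform correctly under the rule ``apply $\phi$ and $e_{uv}\mapsto e_{vu}$ factorwise, then swap tensor factors''. The hope is that the fusion bracket is built from the left/right idempotent actions in a way that is manifestly symmetric under reversing words and exchanging $e_{12}\leftrightarrow e_{21}$. In effect, $\dgal{-,-}_{\fus}$ should be invariant under the ``opposite-plus-transpose'' operation, much as in the proofs of Lemmas \ref{lem:Op-P} and \ref{lem:Op-qP}.

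To reduce the casework, I would first reduce to the case where $t,u,v,w$ are the elementary pieces $e_1 a e_1$, $e_2 a e_1$, and so on, with $a\in A$. I would then use the derivation rules so that only the brackets between elements of the form $e_{12}e_2$, $e_2 e_{21}$ and elements of $e_1Ae_1$ need to be checked. Checking the representative cases explicitly then completes the proof.
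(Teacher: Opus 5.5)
Your plan is essentially the paper's proof. The paper also splits $\dgal{-,-}^f=\dgal{-,-}+\dgal{-,-}_{\fus}$ and shows that each summand is separately $\phi_f$-compatible on the four kinds of generators; your global argument on $A^{(1,2)}$ for the induced summand is a tidier packaging of the paper's per-case use of $\Mat_2(\CC)$-linearity plus compatibility in $A$, and for $\dgal{-,-}_{\fus}$ the paper directly verifies the formulas of \cite[Lem.~2.19]{F2} on the ten unordered pairs of kinds (one in the main text, the rest in Appendix~\ref{App:Fus}), which confirms your expectation. The one step to drop is your proposed shortcut through $e_{12}=e_{12}e_2$ and $e_{21}=e_2e_{21}$: since $e_{12}\epsilon=0=\epsilon e_{21}$, these elements are not in $A^f$, where $\dgal{-,-}_{\fus}$ is defined, so you cannot factor generators through them with its derivation rules, and the casework must be carried out on the generators \eqref{type1}--\eqref{type4} themselves.
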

\begin{proof}
Recall that $A^f$ is generated by elements of the four kinds \eqref{type1}--\eqref{type4}.
By construction, $\phi$ induces the anti-involution $\phi_f:A^f\to A^f$ defined on those generators by \eqref{Eq:phiExt}.
We are left to check the compatibility of $\phi_f$ and $\dgal{-,-}^f$, which amounts to checking in view of \eqref{Eq:BrComp} and  \eqref{dgalf},
\begin{equation} \label{Eq:FusComp}
 \phi_f^{\otimes 2}(\dgal{a,b})+ \phi_f^{\otimes 2}(\dgal{a,b}_{\fus})
 =\dgal{\phi_f(a),\phi_f(b)}^\circ + \dgal{\phi_f(a),\phi_f(b)}_{\fus}^\circ \qquad \forall a,b\in A^f.
\end{equation}
This condition must be checked for the four kinds of generators of $A^f$.
In fact, we will show that the first terms on both sides of \eqref{Eq:FusComp} agree, and this is also the case for the second terms.

We consider the case where $a=t$ is a generator of first kind \eqref{type1} and $b=e_{12}we_{21}$ is a generator of fourth kind \eqref{type4};
all the other cases are similar and collected in \ref{App:Fus}.
On the one hand, $\dgal{t,e_{12}we_{21}}\in e_{12}A \epsilon \otimes \epsilon A e_{21}$, so the tensor factors are generators of the second and third kinds. Thus
\begin{align*}
\phi_f^{\otimes 2}(\dgal{t,e_{12}we_{21}})
= e_{12}\ast(\phi^{\otimes 2}(\dgal{t,w}))\ast e_{21}
&= e_{12}\ast(\dgal{\phi(t),\phi(w)}^\circ)\ast e_{21} \\
&=(e_{12}\dgal{\phi(t),\phi(w)} e_{21})^\circ
= \dgal{\phi_f(t),\phi_f(e_{12} w e_{21})}^\circ
\end{align*}
where in the second equality we used the bracket-compatibility in $A$,
and in the first/fourth equality we used the $\Mat_2(\CC)$-linearity of the induced double bracket.
On the other hand, we have
\begin{align*}
&\phi_f^{\otimes 2}( \dgal{t , e_{12} w e_{21}}_{\fus}) \\
=&\frac12 \left( \phi(t)e_{12}\phi(w)e_{21} \otimes e_1 + e_1 \otimes e_{12} \phi(w) e_{21} \phi(t)
- e_{12} \phi(w) e_{21} \otimes e_1 \phi(t) - \phi(t) e_1 \otimes e_{12} \phi(w) e_{21}\right) \\
=&\frac12 \left( e_{12} \phi(w) e_{21}\phi(t) \otimes e_1 + e_1 \otimes \phi(t) e_{12} \phi(w) e_{21}
- e_{12} \phi(w) e_{21} \otimes \phi(t) e_1 - e_1 \phi(t) \otimes e_{12} \phi(w) e_{21}\right)^\circ \\
=&\dgal{\phi_f( t) , \phi_f(e_{12} w e_{21})}_{\fus}^\circ
\end{align*}
where we used \cite[(2.14d)]{F2} and the fact that $\phi_f$ is an anti-involution in the first equality,
and we used again \cite[(2.14d)]{F2} in the last equality.
\end{proof}

\begin{prop} \label{Pr:Phiselect}
 Let $(A,\dgal{-,-},\Phi)$ be a double quasi-Hamiltonian algebra over $A_0=\oplus_{s\in I} \CC e_s$
 with bracket-compatible anti-involution $\phi$.
Then, up to multiplication $\Phi \mapsto c \Phi$ with $c\in A_0^\times$,
we can assume
 \begin{equation}\label{phim-select}
 \phi(\Phi)\,\Phi=1 \,.
\end{equation}
\end{prop}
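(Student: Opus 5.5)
We follow the proof of Proposition \ref{Pr:muselect}, with the additive shift replaced by a multiplicative rescaling. The first step is to show that $\phi(\Phi)^{-1}$ is again a moment map. Fix $s\in I$; note that $\phi(\Phi_s)\in e_sAe_s$ since $\phi(e_s)=e_s$, and it is invertible in $e_sAe_s$ because $\phi$ is an anti-automorphism. Bracket-compatibility \eqref{Eq:BrComp} gives, for any $b\in A$,
\begin{equation*}
 \dgal{\phi(\Phi_s),b}=\big(\phi^{\otimes 2}\dgal{\Phi_s,\phi(b)}\big)^\circ .
\end{equation*}
Inserting \eqref{Phim} with $a=\phi(b)$ and applying $\phi^{\otimes2}$ followed by $(-)^\circ$, we obtain
\begin{equation*}
 \dgal{\phi(\Phi_s),b}=\tfrac12\big(\phi(\Phi_s)\otimes e_sb-be_s\otimes \phi(\Phi_s)+e_s\otimes \phi(\Phi_s)b-b\phi(\Phi_s)\otimes e_s\big).
\end{equation*}
This is exactly minus the right-hand side of \eqref{Phim-inv} with $\Phi_s^{-1}$ replaced by $\phi(\Phi_s)$. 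Thus $\phi(\Phi_s)$ satisfies the relation \eqref{Phim-inv} characterising inverses of moment maps. Running the computation \eqref{Phim-inv} backwards, via $\dgal{x^{-1},a}=-x^{-1}\ast\dgal{x,a}\ast x^{-1}$, shows that $\Psi:=\sum_s\phi(\Phi_s)^{-1}=\phi(\Phi)^{-1}=\phi(\Phi^{-1})$ satisfies \eqref{Phim}. Hence $\Psi$ is a moment map.

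The second step is a uniqueness statement for multiplicative moment maps: two moment maps $\Phi,\Psi$ of the same double quasi-Poisson algebra differ by $\Psi=c\Phi$ with $c\in A_0^\times$. This is the multiplicative analogue of \cite[Lem.~3.3]{F3}, and I expect it to be the main obstacle, since we must either locate a suitable reference (e.g.\ in \cite{F3} or the quasi-Poisson literature) or prove it. To prove it, I would consider $\Psi_s\Phi_s^{-1}$ and compute $\dgal{\Psi_s\Phi_s^{-1},a}$ using the derivation rules together with \eqref{Phim} and \eqref{Phim-inv}. The difficulty is that the resulting terms do not cancel unless $\Psi_s$ and $\Phi_s$ are already related, so some structural input is needed. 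That input would be the same nondegeneracy argument that underlies the additive lemma; alternatively, one can restrict to the cases of interest, which arise by fusion from the examples above, where the moment map is explicit. If only the weaker statement ``$\Psi$ and $\Phi$ differ by a central element commuting with everything relevant'' is available, I would note this as an assumption in the same spirit as \cite[Lem.~3.3]{F3}.

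The final step combines the two. Granting $\phi(\Phi)^{-1}=c\,\Phi$ with $c=\sum_s c_se_s$, $c_s\in\CC^\times$, we get $\phi(\Phi)\,c\,\Phi=1$. Applying $\phi$ to this identity, and using that $\phi$ fixes $c$ while $c$ is central blockwise, confirms consistency. Choose $\lambda_s\in\CC^\times$ with $\lambda_s^2=c_s$; this uses that $\CC$ is algebraically closed. Set $\Phi':=\lambda\Phi$ with $\lambda=\sum_s\lambda_se_s$. Rescaling by an element of $A_0^\times$ preserves \eqref{Phim}, since both sides are linear in $\Phi_s$. Then
\begin{equation*}
 \phi(\Phi')\,\Phi'=\phi(\Phi)\,\lambda^2\,\Phi=\phi(\Phi)\,c\,\Phi=1,
\end{equation*}
which is \eqref{phim-select}.
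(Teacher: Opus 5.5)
Your outline is the paper's proof. You derive \eqref{Phim-tw} from bracket-compatibility, invert it to see that $\phi(\Phi)^{-1}=\phi(\Phi^{-1})$ satisfies \eqref{Phim}, use uniqueness of multiplicative moment maps up to $A_0^\times$, and rescale by a blockwise square root. Your first and third steps are fine. There is one small slip: the expression you obtain for $\dgal{\phi(\Phi_s),b}$ is \emph{equal} to the right-hand side of \eqref{Phim-inv} with $\Phi_s^{-1}$ replaced by $\phi(\Phi_s)$, not minus it, and your next sentence in fact uses it that way.

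The gap is step two. You leave it unproven and suspect it needs a nondegeneracy hypothesis. Your fallbacks, restricting to the fusion examples or recording the claim as an assumption, would not prove the proposition as stated.

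The paper closes this step by citing \cite[Lem.~4.3]{F3}, but no extra structure is needed. The missing idea is to test each moment-map identity against the other moment map, rather than hoping $\dgal{\Psi_s\Phi_s^{-1},a}$ vanishes identically. Suppose $\Phi,\Psi$ both satisfy \eqref{Phim}. Use \eqref{Phim} for $\Phi_s$ at $a=\Psi_s$. Separately, use \eqref{Phim} for $\Psi_s$ at $a=\Phi_s$ together with \eqref{Eq:cycanti}. This gives
\begin{align*}
\dgal{\Phi_s,\Psi_s}&=\tfrac12\big(\Psi_s\otimes\Phi_s-e_s\otimes\Phi_s\Psi_s+\Psi_s\Phi_s\otimes e_s-\Phi_s\otimes\Psi_s\big),\\
\dgal{\Phi_s,\Psi_s}=-\dgal{\Psi_s,\Phi_s}^\circ&=\tfrac12\big(\Phi_s\otimes\Psi_s-e_s\otimes\Phi_s\Psi_s+\Psi_s\Phi_s\otimes e_s-\Psi_s\otimes\Phi_s\big).
\end{align*}
Subtracting yields $\Psi_s\otimes\Phi_s=\Phi_s\otimes\Psi_s$ in $A\otimes A$. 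Apply $\id\otimes f$ for a linear form $f$ with $f(\Phi_s)=1$; this gives $\Psi_s=f(\Psi_s)\Phi_s$. Moreover $f(\Psi_s)\neq0$, since $\Psi_s$ is invertible in $e_sAe_s$.

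Taking $\Psi=\phi(\Phi)^{-1}$ gives $\phi(\Phi)^{-1}=c\,\Phi$ with $c\in A_0^\times$, which is exactly what your third step needs. With this inserted, your proof is complete.
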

\begin{proof}
First, we combine \eqref{Phim} and bracket-compatibility \eqref{Eq:BrComp} to get for any $b\in A$,
\begin{equation} \label{Phim-tw}
 \dgal{\phi(\Phi_s),b}=\frac12 (\phi(\Phi_s)\otimes e_s b - b \phi(\Phi_s) \otimes e_s
 +  e_s \otimes \phi(\Phi_s) b - b e_s\otimes  \phi(\Phi_s) )\,.
\end{equation}
Making use of \eqref{Eq:inder}, this entails
\begin{equation*}
 \dgal{\phi(\Phi_s)^{-1},b}=\frac12 (be_s\otimes \phi(\Phi_s)^{-1}-e_s \otimes \phi(\Phi_s)^{-1} b
 +  b \phi(\Phi_s)^{-1} \otimes e_s- \phi(\Phi_s)^{-1} \otimes e_s b)\,,
\end{equation*}
hence $\phi(\Phi^{-1})=\sum_{s\in I}\phi(\Phi_s)^{-1}$ satisfies \eqref{Phim}, i.e. it is also a moment map.
It follows from \cite[Lem.~4.3]{F3} that $\phi(\Phi^{-1})=c\Phi$ for some $c\in A_0^\times$, hence $\phi(\Phi)=c^{-1}\Phi^{-1}$.
The same result also gives that $\Phi':=\sqrt{c}\Phi$ is a moment map, and one easily sees that
$\phi(\Phi')\,\Phi'=1$.
Thus, up to constant multiplication, we can pick the moment map such that \eqref{phim-select} holds.
\end{proof}

\subsection{The orthogonal case (types \texorpdfstring{$\typB,\typD$}{B,D})}

\begin{thm}[Orthogonal quasi-Poisson case] \label{Thm:BrqPO}
 Let $(A,\dgal{-,-})$ be a double quasi-Poisson algebra over the semi-simple algebra $A_0=\oplus_{s\in I} \CC e_s$.
Assume that $\phi:A\to A$ is an involutive anti-automorphism compatible with $\dgal{-,-}$ in the sense of \eqref{Eq:BrComp}.
For any dimension vector $\alpha\in \N^I$, $N=\sum_{s\in I}\alpha_s$, considering the involution $\tau$ on $\gl_N$
given by transposition, the antisymmetric biderivation $\br{-,-}_{\phi,\Orm}$ on $\Rep^{\phi,\tau}(A,\alpha)$ uniquely determined by \eqref{Eq:BrPO}
is a quasi-Poisson bracket for the action of $\Orm_\alpha:=\prod_{s\in I} \Orm(\alpha_s)$.

Furthermore, if $\Phi \in A^\times$ is a moment map chosen to satisfy \eqref{phim-select}, then
$\X(\Phi):\Rep^{\phi,\tau}(A,\alpha) \to \Orm_\alpha$ is a moment map for $\br{-,-}_{\phi,\Orm}$.
\end{thm}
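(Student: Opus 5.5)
The argument has three parts: well-definedness and invariance of the bracket, the Jacobiator computation, and the moment map.

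\emph{Well-definedness and invariance.} The bracket \eqref{Eq:BrPO} must descend to $\CC[\Rep^{\phi,\tau}(A,\alpha)]$. This part uses only the double bracket axioms and bracket-compatibility \eqref{Eq:BrComp}, never Jacobi, so the argument of Olshanski--Safonkin used for Theorem~\ref{Thm:BrPO} carries over verbatim. Concretely, one checks three things:
\begin{itemize}
\item $\br{\phi(a)_{ij}-a_{ji},b_{kl}}_{\phi,\Orm}=0$ modulo the ideal generated by \eqref{Eq:DefRelO}, using $\phi^2=\id$ and \eqref{Eq:BrComp};
\item the bracket is antisymmetric, by \eqref{Eq:cycanti} combined with \eqref{Eq:BrComp};
\item the bracket is compatible with the matrix product rule $(ab)_{ij}=\sum_r a_{ir}b_{rj}$ and with \eqref{IdemRep2}, via the derivation rules and $A_0$-linearity.
\end{itemize}
$\Orm_\alpha$-invariance follows because \eqref{Eq:BrPO} is built from index contractions that are equivariant under $g\mapsto g^{-1}(-)g$ when $g^T=g^{-1}$.

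\emph{The Jacobiator.} Both sides of \eqref{Jac-qP} are trider ivations, so it suffices to evaluate them on generators $a_{ij},b_{kl},c_{uv}$. First expand $\br{a_{ij},\br{b_{kl},c_{uv}}}$ using \eqref{Eq:BrPO} twice. Each term has the form $\frac14\dgal{x,\dgal{y,z}}_L$ with $x\in\{a,\phi(a)\}$, $y\in\{b,\phi(b)\}$, $z=c$, carrying various index patterns. Terms in which $\phi$ sits on the inner tensor factor must be rewritten using \eqref{Eq:BrComp} and the defining relations \eqref{Eq:DefRelO}, which transpose indices. Summing the cyclic permutations, the terms regroup into eight index-twisted copies of the double Jacobiator: $\frac14\dgal{x,y,c}$ for $x\in\{a,\phi(a)\}$, $y\in\{b,\phi(b)\}$, together with transpositions of index pairs. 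This is the same regrouping that yields vanishing in the Poisson case of \cite{OS}; here each copy is instead replaced by the right-hand side of \eqref{qPabc}. I expect this bookkeeping to be the main obstacle.

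After substitution, the result is a sum of products of entries of $a$, $b$, $c$, $\phi(a)$, $\phi(b)$ and $e_s$ with Kronecker deltas over $\mathtt{R}_s$. The $\phi$-entries are eliminated via $\phi(a)_{ij}=a_{ji}$. This expression must then match $\frac12\psi^{\og_\alpha}_M$ applied to the three generators, computed from \eqref{Eq:CartO} summed over $s$, with $(F^{(s)}_{ij})_M(b_{kl})$ as in \eqref{Eq:pfPO2}. My strategy is to first treat $|I|=1$, comparing coefficients of each monomial type such as $(cab)_{uj}\delta\delta$ and $a_{\cdot\cdot}b_{\cdot\cdot}c_{\cdot\cdot}$, and then observe that the $e_s$ factors simply localise indices to blocks. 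As a sanity check on normalisations, note that the $\gl_N$ computation of Van den Bergh gives $\frac12\psi^{\gl}_M$ from \eqref{qPabc}. The antisymmetrisation $E_{ij}-E_{ji}$ in $F_{ij}$, together with $\check F=-\frac12F$, should reproduce the $\frac18$ in \eqref{Eq:CartO} from the four $\phi$-variants, each weighted by $\frac14$.

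\emph{Moment map.} By \eqref{phim-select} and \eqref{Eq:DefRelO}, $\X(\Phi)^T\X(\Phi)=\Id$, so $\X(\Phi)$ is $\Orm_\alpha$-valued, and it is equivariant. It suffices to verify \eqref{Gmomap} for the coordinate functions $\mathcal H=g_{ij}$, i.e.\ to compute $\br{\Phi_{ij},b_{kl}}$. Using \eqref{Eq:BrPO}, \eqref{Phim}, and the twisted version \eqref{Phim-tw} (where $\phi(\Phi_s)=\Phi_s^{-1}$), the bracket becomes a sum of terms $(b\Phi)_{\cdot\cdot}$, $(\Phi b)_{\cdot\cdot}$, $(b\Phi^{-1})$, and so on. These are compared with $\frac12\sum_a(F_a^L+F_a^R)(g_{ij})(\check F_a)_M(b_{kl})$ for the basis of Example~\ref{Ex:Ortho}, using $\Phi^{-1}=\Phi^T$, exactly as in the proof of Theorem~\ref{Thm:BrPO}.
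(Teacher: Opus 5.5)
Your proposal is correct and follows essentially the same route as the paper. The paper expresses the Jacobiator on generators $a_{ij},b_{kl},c_{uv}$ as eight index-twisted copies of the double Jacobiator, reading this off \cite[(3.5)]{OS} and proving it in general as \eqref{Eq:JacType}. It then substitutes \eqref{qPabc}, matches the result against $\frac12\psi^{\og_\alpha}_M$ computed from \eqref{Eq:CartO}, and finally verifies \eqref{Gmomap} on coordinate functions using \eqref{Phim}, \eqref{Phim-tw} and $\phi(\Phi)_{ij}=\Phi_{ji}$.

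The only difference is in how the trivector comparison is organised. The paper expands each $F^{(s)}=E-E^T$ in $\frac1{16}\sum_s\sum (F^{(s)}_{i_sj_s})_M(a_{ij})(F^{(s)}_{u_sj_s})_M(b_{kl})(F^{(s)}_{u_si_s})_M(c_{uv})$ into eight pieces and identifies each piece directly with one triple-bracket term; you instead eliminate $\phi$-entries and compare monomial coefficients. In the paper's form, $\phi$ falls on at most one of $a,b,c$, because the terms carrying both $\phi(a)$ and $\phi(b)$ are converted via \eqref{Eq:BrComp} into terms with $\phi(c)$.
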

\begin{proof}
 For the first statement, it suffices to check \eqref{Jac-qP} when evaluated on the triple of generators $a_{ij},b_{kl},c_{uv}$ with $a,b,c\in A$ and $1\leq i,j,k,l,u,v\leq N$.
The LHS can be read from\footnote{There is a factor $(1/2)^2$ coming from the choice of constant in \eqref{Eq:BrPO}. See \eqref{Eq:JacType} and Remark \ref{Rem:TypeDegen} for a general proof.} \cite[(3.5)]{OS} with $x=E_{ij}^\ast$, $y=E_{kl}^\ast$, $z=E_{uv}^\ast$ and $\tau$ given by transposition as
\begin{equation}
 \begin{aligned} \label{Eq:pfqPO1}
\Jac_{\br{-,-}_{\phi,\Orm}} ( a_{ij},b_{kl},c_{uv}) =& \quad
\frac14\dgal{a,b,c}_{uj,il,kv} -\frac14 \dgal{a,c,b}_{kj,iv,ul} \\
&+\frac14 \dgal{\phi(a),b,c}_{ui,jl,kv} -\frac14 \dgal{\phi(a),c,b}_{ki,jv,ul} \\
&+\frac14\dgal{a,\phi(b),c}_{uj,ik,lv} -\frac14 \dgal{a,c,\phi(b)}_{lj,iv,uk} \\
&+ \frac14\dgal{a,b,\phi(c)}_{vj,il,ku} -\frac14 \dgal{a,\phi(c),b}_{kj,iu,vl} \,.
 \end{aligned}
\end{equation}
Since $\dgal{-,-}$ is quasi-Poisson, expressions in the RHS of \eqref{Eq:pfqPO1} can be explicitly written using \eqref{qPabc}.

Meanwhile, we get by summing the various Cartan trivectors \eqref{Eq:CartO} of the different copies $\og(\alpha_s)$ that
\begin{align*}
\frac12\psi_M^{\og_\alpha} (a_{ij},b_{kl},c_{uv})
 =\frac{1}{16}\sum_{s\in I}\sum_{i_s,j_s,u_s} \, (F_{i_sj_s}^{(s)})_{M}(a_{ij}) \,
 (F_{u_sj_s}^{(s)})_{M}(b_{kl}) \, (F_{u_si_s}^{(s)})_{M}(c_{uv})\,,
\end{align*}
where $\xi_M$ is the infinitesimal action of $\xi\in \og_\alpha$ on $M:=\Rep^{\phi,\tau}(A,\alpha)$, and we sum over
$i_s,j_s,u_s\in\mathtt{R}_s$ \eqref{Eq:sIndices} for $s\in I$.
For $F_{uv}^{(s)}$ as in Example \ref{Ex:Ortho} and
the infinitesimal action induced by \eqref{InfAct}, we get
\begin{equation} \label{InfAct-O}
 (F_{uv}^{(s)})_{M}(a_{ij}) = [\X(a),E_{uv}]_{ij}-[\X(a),E_{vu}]_{ij}\,,
\end{equation}
and then
\begin{equation}
\begin{aligned}  \label{Eq:pfqPO2}
\frac12\psi^{\og_\alpha} (a_{ij},b_{kl},c_{uv})
 =\frac{1}{16}&\sum_{s\in I}\sum_{i_s,j_s,u_s} \, ([\X(a),E_{i_sj_s}]_{ij}-[\X(a),E_{j_si_s}]_{ij}) \,
\\
&\,\,
 ([\X(b),E_{u_sj_s}]_{kl}-[\X(b),E_{j_su_s}]_{kl}) \,  ([\X(c),E_{u_si_s}]_{uv}-[\X(c),E_{i_su_s}]_{uv}) \,.
\end{aligned}
\end{equation}
The eight possibilities to take factors in \eqref{Eq:pfqPO2} will match the eight terms appearing on the RHS of \eqref{Eq:pfqPO1}. Let us do one such case in details. One has
\begin{align}
\eqref{Eq:pfqPO2}_A:=&\frac{1}{16}\sum_{s\in I}\sum_{i_s,j_s,u_s} \, [\X(a),E_{i_sj_s}]_{ij} \,
 [\X(b),E_{u_sj_s}]_{kl} \,  [\X(c),E_{u_si_s}]_{uv}\, \nonumber \\
 =& \frac{1}{16}\sum_{s\in I}\sum_{i_s,j_s,u_s}
(a_{ii_s} \delta_{j_sj}-\delta_{ii_s} a_{j_sj})
(b_{ku_s} \delta_{j_sl}-\delta_{ku_s} b_{j_sl})
(c_{uu_s} \delta_{i_sv}-\delta_{uu_s} c_{i_sv}) \,. \label{Eq:pfqPO-A1}
\end{align}
As $(e_s)_{i_sj_s}=\delta_{i_sj_s}$ when both $i_s,j_s$ are in $\mathtt{R}_s$ \eqref{Eq:sIndices} and is zero otherwise, \eqref{Eq:pfqPO-A1} can be rewritten
\begin{align*}
\eqref{Eq:pfqPO2}_A=&
\frac{1}{16}\sum_{s\in I}\sum_{i_s,j_s,u_s=1}^N
((ae_s)_{ii_s} (e_s)_{j_sj}-(e_s)_{ii_s} (e_sa)_{j_sj})   \\
&\qquad
((be_s)_{ku_s} (e_s)_{j_sl}-(e_s)_{ku_s} (e_sb)_{j_sl})  ((ce_s)_{uu_s} (e_s)_{i_sv}-(e_s)_{uu_s} (e_sc)_{i_sv}) \,.
\end{align*}
In $\CC[\Rep^{\phi,\tau}(A,\alpha)]$, one has $a_{ij}=\phi(a)_{ji}$ and $e_{ij}=e_{ji}$ ($a\in A$, $e\in A_0$, $1\leq i,j\leq N$), cf. \eqref{Eq:DefRelO}, so that
\begin{equation}
 \begin{aligned}
\eqref{Eq:pfqPO2}_A
 = \frac{1}{16}\sum_{s\in I}& \Big(
(ae_s)_{iv} (e_s)_{lj} (ce_s\phi(b))_{uk}  
-(e_s)_{iv} (e_sa)_{lj} (ce_s\phi(b))_{uk} \\ 
&- (ae_s)_{iv} (\phi(b)e_s)_{lj} (ce_s)_{uk}  
+ (e_s)_{iv} (\phi(b)e_sa)_{lj} (ce_s)_{uk}  
\\
&-(ae_sc)_{iv} (e_s)_{lj} (e_s\phi(b))_{uk}  
+(e_sc)_{iv} (e_sa)_{lj} (e_s\phi(b))_{uk} \\ 
&+(ae_sc)_{iv} (\phi(b)e_s)_{lj} (e_s)_{uk}  
-(e_sc)_{iv} (\phi(b) e_sa)_{lj} (e_s)_{uk}  
\Big) .  \label{Eq:pfqPO-A2}
\end{aligned}
\end{equation}

Meanwhile, we can write thanks to \eqref{qPabc},
\begin{equation}
 \begin{aligned}  \label{Eq:pfqPO-A3}
  &-\frac14 \dgal{a,c,\phi(b)}_{lj,iv,uk} \\
  =&  \frac{-1}{16} \sum_{s\in I} \Big(
\phi(b) e_s a \otimes e_s c \otimes e_s  - \phi(b) e_s a \otimes e_s \otimes c e_s - \phi(b) e_s \otimes a e_s c \otimes e_s
+ \phi(b) e_s \otimes a e_s \otimes c e_s \\
&\quad - e_s a \otimes e_s c \otimes e_s \phi(b) + e_s a \otimes e_s \otimes c e_s \phi(b)
+ e_s \otimes a e_s c \otimes e_s \phi(b) - e_s \otimes a e_s \otimes c e_s \phi(b) \Big)_{lj,iv,uk}
 \end{aligned}
\end{equation}
Since \eqref{Eq:pfqPO-A2} and \eqref{Eq:pfqPO-A3} agree, we get that
$\eqref{Eq:pfqPO2}_A$ is the sixth term in \eqref{Eq:pfqPO1}.
Next, we compute
\begin{align*}
\eqref{Eq:pfqPO2}_B:=&
\frac{-1}{16}\sum_{s\in I}\sum_{i_s,j_s,u_s} \, [\X(a),E_{j_si_s}]_{ij} \,
 [\X(b),E_{u_sj_s}]_{kl} \,  [\X(c),E_{u_si_s}]_{uv} \\
 =& \frac{1}{16}
 \sum_{s\in I} \Big(
(\phi(c) e_s a)_{vj} (e_s b)_{il} (e_s)_{ku}  - (\phi(c) e_s a)_{vj} (e_s)_{il} (b e_s)_{ku}
 - (\phi(c) e_s)_{vj}  (a e_s b)_{il} (e_s)_{ku} \\
&\qquad + (\phi(c) e_s)_{vj} (a e_s)_{il} (b e_s)_{ku}
 - (e_s a)_{vj} (e_s b)_{il} (e_s \phi(c))_{ku} + (e_s a)_{vj} (e_s)_{il} (b e_s \phi(c))_{ku} \\
&\qquad + (e_s)_{vj} (a e_s b)_{il} (e_s \phi(c))_{ku} - (e_s)_{vj} (a e_s)_{il} (b e_s \phi(c))_{ku} \Big)
\end{align*}
which is $\frac14 \dgal{a,b,\phi(c)}_{vj,il,ku}$ by \eqref{qPabc}, i.e. this is the seventh term of \eqref{Eq:pfqPO1};
\begin{align*}
\eqref{Eq:pfqPO2}_C:=&
\frac{-1}{16}\sum_{s\in I}\sum_{i_s,j_s,u_s} \, [\X(a),E_{i_sj_s}]_{ij} \,
 [\X(b),E_{j_su_s}]_{kl} \,  [\X(c),E_{u_si_s}]_{uv} \\
 =& \frac{-1}{16}
 \sum_{s\in I} \Big(
(b e_s a)_{kj} (e_s c)_{iv} (e_s)_{ul}  - (b e_s a)_{kj} (e_s)_{iv} (c e_s)_{ul}
 - (b e_s)_{kj}  (a e_s c)_{iv} (e_s)_{ul} \\
&\qquad + (b e_s)_{kj} (a e_s)_{iv} (c e_s)_{ul}
 - (e_s a)_{kj} (e_s c)_{iv} (e_s b)_{ul} + (e_s a)_{kj} (e_s)_{iv} (c e_s b)_{ul} \\
&\qquad + (e_s)_{kj} (a e_s c)_{iv} (e_s b)_{ul} - (e_s)_{kj} (a e_s)_{iv} (c e_s b)_{ul} \Big)
\end{align*}
which  is $-\frac14 \dgal{a,c,b}_{kj,iv,ul}$ by \eqref{qPabc}, i.e. this is  the second term of \eqref{Eq:pfqPO1};
\begin{align*}
\eqref{Eq:pfqPO2}_D:=&
\frac{-1}{16}\sum_{s\in I}\sum_{i_s,j_s,u_s} \, [\X(a),E_{i_sj_s}]_{ij} \,
 [\X(b),E_{u_sj_s}]_{kl} \,  [\X(c),E_{i_su_s}]_{uv} \\
 =& \frac{1}{16}
 \sum_{s\in I} \Big(
(c e_s \phi(a))_{ui} (e_s b)_{jl} (e_s)_{kv}  - (c e_s \phi(a))_{ui} (e_s)_{jl} (b e_s)_{kv}
 - (c e_s)_{ui}  (\phi(a) e_s b)_{jl} (e_s)_{kv} \\
&\qquad + (c e_s)_{ui} (\phi(a) e_s)_{jl} (b e_s)_{kv}
 - (e_s \phi(a))_{ui} (e_s b)_{jl} (e_s c)_{kv} + (e_s \phi(a))_{ui} (e_s)_{jl} (b e_s c)_{kv} \\
&\qquad + (e_s)_{ui} (\phi(a) e_s b)_{jl} (e_s c)_{kv} - (e_s)_{ui} (\phi(a) e_s)_{jl} (b e_s c)_{kv} \Big)
\end{align*}
which is $\frac14 \dgal{\phi(a),b,c}_{ui,jl,kv}$ by \eqref{qPabc}, i.e. this is the third term of \eqref{Eq:pfqPO1};
\begin{align*}
\eqref{Eq:pfqPO2}_E:=&
\frac{1}{16}\sum_{s\in I}\sum_{i_s,j_s,u_s} \, [\X(a),E_{j_si_s}]_{ij} \,
 [\X(b),E_{j_su_s}]_{kl} \,  [\X(c),E_{u_si_s}]_{uv} \\
 =& \frac{-1}{16}
 \sum_{s\in I} \Big(
(b e_s \phi(a))_{ki} (e_s c)_{jv} (e_s)_{ul}  - (b e_s \phi(a))_{ki} (e_s)_{jv} (c e_s)_{ul}
 - (b e_s)_{ki}  (\phi(a) e_s c)_{jv} (e_s)_{ul} \\
&\qquad + (b e_s)_{ki} (\phi(a) e_s)_{jv} (c e_s)_{ul}
 - (e_s \phi(a))_{ki} (e_s c)_{jv} (e_s b)_{ul} + (e_s \phi(a))_{ki} (e_s)_{jv} (c e_s b)_{ul} \\
&\qquad + (e_s)_{ki} (\phi(a) e_s c)_{jv} (e_s b)_{ul} - (e_s)_{ki} (\phi(a) e_s)_{jv} (c e_s b)_{ul} \Big)
\end{align*}
which is $-\frac14 \dgal{\phi(a),c,b}_{ki,jv,ul}$ by \eqref{qPabc}, i.e. this is the fourth term of \eqref{Eq:pfqPO1};
\begin{align*}
\eqref{Eq:pfqPO2}_F:=&
\frac{1}{16}\sum_{s\in I}\sum_{i_s,j_s,u_s} \, [\X(a),E_{j_si_s}]_{ij} \,
 [\X(b),E_{u_sj_s}]_{kl} \,  [\X(c),E_{i_su_s}]_{uv} \\
 =& \frac{1}{16}
 \sum_{s\in I} \Big(
(c e_s a)_{uj} (e_s b)_{il} (e_s)_{kv}  - (c e_s a)_{uj} (e_s)_{il} (b e_s)_{kv}
 - (c e_s)_{uj}  (a e_s b)_{il} (e_s)_{kv} \\
&\qquad + (c e_s)_{uj} (a e_s)_{il} (b e_s)_{kv}
 - (e_s a)_{uj} (e_s b)_{il} (e_s c)_{kv} + (e_s a)_{uj} (e_s)_{il} (b e_s c)_{kv} \\
&\qquad + (e_s)_{uj} (a e_s b)_{il} (e_s c)_{kv} - (e_s)_{uj} (a e_s)_{il} (b e_s c)_{kv} \Big)
\end{align*}
which is  $\frac14 \dgal{a,b,c}_{uj,il,kv}$ by \eqref{qPabc}, i.e. this is the first term of \eqref{Eq:pfqPO1};
\begin{align*}
\eqref{Eq:pfqPO2}_G:=&
\frac{1}{16}\sum_{s\in I}\sum_{i_s,j_s,u_s} \, [\X(a),E_{i_sj_s}]_{ij} \,
 [\X(b),E_{j_su_s}]_{kl} \,  [\X(c),E_{i_su_s}]_{uv} \\
 =& \frac{-1}{16}
 \sum_{s\in I} \Big(
(b e_s a)_{kj} (e_s \phi(c))_{iu} (e_s)_{vl}  - (b e_s a)_{kj} (e_s)_{iu} (\phi(c) e_s)_{vl}
 - (b e_s)_{kj}  (a e_s \phi(c))_{iu} (e_s)_{vl} \\
&\qquad + (b e_s)_{kj} (a e_s)_{iu} (\phi(c) e_s)_{vl}
 - (e_s a)_{kj} (e_s \phi(c))_{iu} (e_s b)_{vl} + (e_s a)_{kj} (e_s)_{iu} (\phi(c) e_s b)_{vl} \\
&\qquad + (e_s)_{kj} (a e_s \phi(c))_{iu} (e_s b)_{vl} - (e_s)_{kj} (a e_s)_{iu} (\phi(c) e_s b)_{vl} \Big)
\end{align*}
which is  $-\frac14 \dgal{a,\phi(c),b}_{kj,iu,vl}$ by \eqref{qPabc}, i.e. this is the eighth term of \eqref{Eq:pfqPO1};
\begin{align*}
\eqref{Eq:pfqPO2}_H:=&
\frac{-1}{16}\sum_{s\in I}\sum_{i_s,j_s,u_s} \, [\X(a),E_{j_si_s}]_{ij} \,
 [\X(b),E_{j_su_s}]_{kl} \,  [\X(c),E_{i_su_s}]_{uv} \\
 =& \frac{1}{16}
 \sum_{s\in I} \Big(
(c e_s a)_{uj} (e_s \phi(b))_{ik} (e_s)_{lv}  - (c e_s a)_{uj} (e_s)_{ik} (\phi(b) e_s)_{lv}
 - (c e_s)_{uj}  (a e_s \phi(b))_{ik} (e_s)_{lv} \\
&\qquad + (c e_s)_{uj} (a e_s)_{ik} (\phi(b) e_s)_{lv}
 - (e_s a)_{uj} (e_s \phi(b))_{ik} (e_s c)_{lv} + (e_s a)_{uj} (e_s)_{ik} (\phi(b) e_s c)_{lv} \\
&\qquad + (e_s)_{uj} (a e_s \phi(b))_{ik} (e_s c)_{lv} - (e_s)_{uj} (a e_s)_{ik} (\phi(b) e_s c)_{lv} \Big)
\end{align*}
which is $\frac14 \dgal{a,\phi(b),c}_{uj,ik,lv}$ by \eqref{qPabc}, i.e. this is the fifth term of \eqref{Eq:pfqPO1}.
Summing up all these expressions, we get \eqref{Eq:pfqPO1}=\eqref{Eq:pfqPO2} as desired.

\medskip

Assume that  $\Phi \in A^\times$ is a moment map, so that \eqref{Phim} holds.
Let us first check that $\X(\Phi)$ is $\Orm_\alpha$-valued\footnote{
Recall from Footnote \ref{ftnote:TypeO} that we can replace some copies of $\Orm(\alpha_s)$ with $\operatorname{SO}(\alpha_s)$.
In such a case, if $\det(\X(\Phi_s))=-1$, it suffices to multiply $\Phi_s$ by a $\alpha_s$-th root of $-1$ to get $\det(\X(\Phi_s))=+1$.
Such multiplications are possible without breaking the moment map property by \cite[Lem.~4.3]{F3}.%
}.
By Proposition \ref{Pr:Phiselect}, recall that we can assume $\phi(\Phi)\, \Phi=1$.
Making use of \eqref{Eq:DefRelO}, $\X(\Phi)$ is indeed $\Orm_\alpha$-valued because
$\X(\phi(\Phi)\, \Phi) = \X(\Phi)^T\,\X(\Phi) = \Id_N$ on $\Rep^{\phi,\tau}(A,\alpha)$.

Next, we have to check \eqref{Gmomap} where $\Xi=\X(\Phi)$ and, without loss of generality, we take the function $g_{i_sj_s}:\Orm_\alpha\to \CC$ with $i_s,j_s\in\mathtt{R}_s$ \eqref{Eq:sIndices} returning the $(i_s,j_s)$-entry.
This is equivalent to establishing
\begin{equation} \label{Eq:pfqPO5}
  \br{(\Phi_s)_{i_sj_s},b_{kl}}_{\phi,\Orm}
= -\frac14 \sum_{t\in I}\sum_{u_t<v_t} \X(\Phi)^\ast\big((F_{u_tv_t}^L+F_{u_tv_t}^R)(g_{i_sj_s})\big)\, (F_{u_tv_t})_M(b_{kl})\,,
\end{equation}
after using the dual bases from Example \ref{Ex:Ortho}, and where we sum over $u_t,v_t\in\mathtt{R}_t$ \eqref{Eq:sIndices} subject to $u_t<v_t$.
(The infinitesimal action is taken on $M:=\Rep^{\phi,\tau}(A,\alpha)$.)
For the LHS,
\eqref{Phim-tw} and \eqref{Eq:BrPO} yield
\begin{equation} \label{Eq:BrqPO6}
\begin{aligned}
\br{(\Phi_s)_{i_sj_s},b_{kl}}_{\phi,\Orm}=&\frac12 \dgal{\Phi_s,b}_{kj_s,i_sl} + \frac12 \dgal{\phi(\Phi_s),b}_{ki_s,j_sl} \\
=& \frac14 \big(be_s\otimes \Phi_s-e_s \otimes \Phi_s b +  b \Phi_s \otimes e_s-\Phi_s \otimes e_s b\big)_{kj_s,i_sl} \\
&+\frac14 \big(\phi(\Phi_s)\otimes e_s b - b \phi(\Phi_s) \otimes e_s
 +  e_s \otimes \phi(\Phi_s) b - b e_s\otimes  \phi(\Phi_s)\big)_{ki_s,j_sl}\,.
\end{aligned}
\end{equation}

For the RHS,
we need the following equalities on $\Orm_\alpha$ which are obtained similarly to the $\Gl_\alpha$-case at the beginning of the proof of \cite[Prop.~7.13.2]{VdB1}:
\begin{equation*}
F_{u_tv_t}^L(g_{i_sj_s})=g_{i_su_t}\delta_{v_tj_s} - g_{i_sv_t}\delta_{u_tj_s}, \quad
F_{u_tv_t}^R(g_{i_sj_s})=\delta_{i_su_t} g_{v_tj_s} - \delta_{i_sv_t}g_{u_tj_s}.
\end{equation*}
Thanks to \eqref{InfAct-O}, we find
\begin{align*} \eqref{Eq:pfqPO5}_{RHS}
=& -\frac14 \sum_{t\in I}\sum_{u_t<v_t}
\big(\Phi_{i_su_t}\delta_{v_tj_s} - \Phi_{i_sv_t}\delta_{u_tj_s}
+ \delta_{i_su_t} \Phi_{v_tj_s} - \delta_{i_sv_t}\Phi_{u_tj_s}\big)\\
&\qquad \qquad (b_{ku_t} \delta_{v_tl} - b_{kv_t} \delta_{u_tl}
- \delta_{ku_t} b_{v_tl} + \delta_{kv_t} b_{u_tl}) \,, \\
= -\frac14 \sum_{u_s,v_s} &
\big((\Phi_s)_{i_su_s} (e_s)_{v_sj_s}
+ (e_s)_{i_su_s} (\Phi_s)_{v_sj_s} \big)\,
 (b_{ku_s} \delta_{v_sl} - b_{kv_s} \delta_{u_sl}
- \delta_{ku_s} b_{v_sl} + \delta_{kv_s} b_{u_sl}) \,,
\end{align*}
where we sum over unordered pairs $u_s,v_s\in\mathtt{R}_s$ \eqref{Eq:sIndices} for the fixed $s\in I$ we started with.
Summing together these various terms and remembering that $\phi(\Phi)_{ij}=\Phi_{ji}$ for $1\leq i,j\leq N$, we precisely get \eqref{Eq:BrqPO6}.
Hence \eqref{Eq:pfqPO5} holds.
\end{proof}

\subsection{The symplectic case (type \texorpdfstring{$\typC$}{C})}

We use all the notations introduced in \ref{sss:Not-symp}.

\begin{thm}[Symplectic quasi-Poisson case] \label{Thm:BrqPsp}
 Let $(A,\dgal{-,-})$ be a double quasi-Poisson algebra over the semi-simple algebra $A_0=\oplus_{s\in I} \CC e_s$.
Assume that $\phi:A\to A$ is an involutive anti-automorphism compatible with $\dgal{-,-}$ in the sense of \eqref{Eq:BrComp}.
For any dimension vector $\alpha\in 2\N^I$, $N=\sum_{s\in I}\alpha_s$, considering the involution $\tau$ on $\gl_N$
given by $\xi \mapsto \Omega \xi^T \Omega^T$ (for $\Omega$ in \eqref{Eq:omega}),
the antisymmetric biderivation $\br{-,-}_{\phi,\Sp}$ on $\Rep^{\phi,\tau}(A,\alpha)$ uniquely determined by \eqref{Eq:BrPsymp}
is a quasi-Poisson bracket for the action of $\Sp_\alpha :=\prod_{s\in I} \Sp(\alpha_s)$.

Furthermore, if $\Phi \in A^\times$ is a moment map chosen to satisfy \eqref{phim-select}, then
$\X(\Phi):\Rep^{\phi,\tau}(A,\alpha) \to \Sp_\alpha$ is a moment map for $\br{-,-}_{\phi,\Sp}$.
\end{thm}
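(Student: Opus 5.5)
The proof will follow the pattern of Theorem~\ref{Thm:BrqPO}, with transposition replaced by $\tau$ at the level of indices, as in Lemma~\ref{lem:TauSp} and \eqref{Eq:DefRelSp}.

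\textbf{Jacobiator.} I first evaluate $\Jac_{\br{-,-}_{\phi,\Sp}}(a_{ij},b_{kl},c_{uv})$ on generators. I would use the general formula of \cite[(3.5)]{OS} for an arbitrary involution $\tau$, with the prefactor $(1/2)^2$. The result is a sum of eight terms of the form $\pm\frac14\dgal{\cdot,\cdot,\cdot}$, mirroring \eqref{Eq:pfqPO1}. Whenever $\phi$ is applied to one argument, the two adjacent indices are replaced by their $\tau$-images and a factor $\sgn_\alpha(\cdot,\cdot)$ appears. For example, the third term becomes
\[
\tfrac14\sgn_\alpha(i,j)\dgal{\phi(a),b,c}_{u\tau(i),\tau(j)l,kv}.
\]
Each of these eight terms is then expanded using \eqref{qPabc}.

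\textbf{Cartan trivector.} On the other side, I sum the trivectors \eqref{Eq:CartSp} over the blocks $\mathtt{R}_s$, using $F^{(s)}_{ij}$ from \eqref{Eq:BasSp}. The infinitesimal action is
\[
(F_{pq}^{(s)})_M(a_{ij})=[\X(a),E_{pq}]_{ij}-\sgn_\alpha(p,q)[\X(a),E_{\tau(q)\tau(p)}]_{ij}.
\]
Expanding each of the two summands $F\otimes F\otimes F$ in \eqref{Eq:CartSp} gives $2\cdot 2^3=16$ products against the prefactor $\frac1{32}$.

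\textbf{Main obstacle.} The difficulty is matching these 16 contributions with the 8 terms of the Jacobiator, with the correct signs. Consider a product in which the second piece of some factor is taken. I would reindex the summation variable by $p\mapsto\tau(p)$, which is a bijection of each $\mathtt{R}_s$. Then I use \eqref{Eq:DefRelSp} in the form $x_{pq}=\sgn_\alpha(p,q)\,\phi(x)_{\tau(q)\tau(p)}$, together with $(e_s)_{\tau(p)\tau(q)}=\sgn_\alpha(p,q)(e_s)_{qp}$. This converts entries into the transposed, $\phi$-twisted forms that appear in the expansion of \eqref{qPabc}, as was done in passing from \eqref{Eq:pfqPO-A1} to \eqref{Eq:pfqPO-A2}.

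The sign bookkeeping relies on \eqref{Eq:sgnProp} and \eqref{Eq:sgnTau}. I expect that the two cyclic summands in \eqref{Eq:CartSp}, after reindexing, pair up to produce each Jacobiator term with total weight $\frac14$. This plays the role of the single summand of \eqref{Eq:CartO} in the orthogonal case.

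I would do one case in full, analogous to $\eqref{Eq:pfqPO2}_A$, namely the case in which $\phi$ lands on $b$. I would then argue that the remaining cases follow by the same substitutions. If the signs resist this, a fallback is to prove a type-independent identity for a general $\tau$, with $\tau(E_{ij})=\epsilon_{ij}E_{\tau(j)\tau(i)}$, which would specialise to both cases.

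\textbf{Moment map.} If $\phi(\Phi)\Phi=1$, then by \eqref{Eq:DefRelSp} and Lemma~\ref{lem:TauSp} we get $\Omega\X(\Phi)^T\Omega^T\X(\Phi)=\Id_N$, so $\X(\Phi)$ is $\Sp_\alpha$-valued.

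For \eqref{Gmomap}, I test on the entry functions $g_{i_sj_s}$. The left-hand side is computed from \eqref{Eq:BrPsymp}, \eqref{Phim} and \eqref{Phim-tw}, exactly as in \eqref{Eq:BrqPO6}, but with $\dgal{\phi(\Phi_s),b}_{k\tau(i_s),\tau(j_s)l}$ weighted by $\sgn_\alpha(i_s,j_s)$.

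For the right-hand side, I use the dual basis of the spanning family \eqref{Eq:BasSp} under the trace form, taken from \ref{App:CartSP}. I then need $F^{L}$ and $F^{R}$ applied to $g_{i_sj_s}$; on the span of $E_{pq}$ these take the same form as in the orthogonal case. Finally, I substitute $\Phi_{pq}=\sgn_\alpha(p,q)\phi(\Phi)_{\tau(q)\tau(p)}$ and compare term by term, which gives equality with the left-hand side.
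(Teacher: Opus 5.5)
Your proposal is correct and follows essentially the paper's route. The Jacobiator is taken from \cite[(3.5)]{OS} as \eqref{Eq:pfqPsp1}; the 16 terms of $\frac12\psi^{\spg_\alpha}$ are paired off by the relabelling $(i_s,j_s,u_s)\mapsto(\tau(j_s),\tau(i_s),\tau(u_s))$ (note that it also exchanges $i_s$ and $j_s$) and then matched with the eight Jacobiator terms via exactly your substitution, in the form $\sgn_\alpha(i_s,j_s)[\X(a),E_{\tau(j_s),\tau(i_s)}]_{pq}=\sgn_\alpha(p,q)[\X(\phi(a)),E_{\tau(p),\tau(q)}]_{j_si_s}$; for the moment map the paper uses the genuine basis $F^{(1)},F^{(2)},F^{(3)}$ of \ref{App:CartSP} with its dual basis (the spanning family \eqref{Eq:BasSp} has no dual basis) and regroups the three contributions into a single sum over $\tilde u_s,\tilde v_s\in\mathtt{R}_s$ before comparing with \eqref{Eq:BrqPsp6}.
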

\begin{proof}
 For the first statement, it suffices to check \eqref{Jac-qP} when evaluated on the triple of generators $a_{ij},b_{kl},c_{uv}$ with $a,b,c\in A$ and $1\leq i,j,k,l,u,v\leq N$.
Similarly to the orthogonal case, the LHS can be read from \cite[(3.5)]{OS} (or Remark \ref{Rem:TypeDegen})
by making use of \eqref{Eq:TauSp} and Lemma \ref{lem:TauSp}. It yields
\begin{equation}
 \begin{aligned} \label{Eq:pfqPsp1}
\Jac_{\br{-,-}_{\phi,\Sp}} &( a_{ij},b_{kl},c_{uv}) =
\frac14\dgal{a,b,c}_{uj,il,kv} -\frac14 \dgal{a,c,b}_{kj,iv,ul} \\
&+\frac14 \sgn_\alpha(i,j) \, (\dgal{\phi(a),b,c}_{u\tau(i),\tau(j)l,kv}
-\dgal{\phi(a),c,b}_{k\tau(i),\tau(j)v,ul} )\\
&+\frac14 \sgn_\alpha(k,l) \, (\dgal{a,\phi(b),c}_{uj,i\tau(k),\tau(l)v}
- \dgal{a,c,\phi(b)}_{\tau(l)j,iv,u\tau(k)} )\\
&+ \frac14 \sgn_\alpha(u,v) \, (\dgal{a,b,\phi(c)}_{\tau(v)j,il,k\tau(u)}
- \dgal{a,\phi(c),b}_{kj,i\tau(u),\tau(v)l} ) \,.
 \end{aligned}
\end{equation}
Since $\dgal{-,-}$ is quasi-Poisson, expressions in the RHS of \eqref{Eq:pfqPO1} can be explicitly written using \eqref{qPabc}.

Meanwhile, we get by summing the various Cartan trivectors \eqref{Eq:CartSp} of the different copies $\spg(\alpha_s)$ that
\begin{align*}
\frac12\psi_M^{\spg_\alpha} (a_{ij},b_{kl},c_{uv})
 =\frac{1}{32}\sum_{s\in I}\sum_{i_s,j_s,u_s} \, \Big(
 &(F_{i_sj_s}^{(s)})_{M}(a_{ij}) \,(F_{u_si_s}^{(s)})_{M}(b_{kl}) \, (F_{j_su_s}^{(s)})_{M}(c_{uv}) \\
&- (F_{i_sj_s}^{(s)})_{M}(a_{ij}) \,  (F_{j_su_s}^{(s)})_{M}(b_{kl}) \, (F_{u_si_s}^{(s)})_{M}(c_{uv}) \Big)\,,
\end{align*}
where $\xi_M$ denotes the infinitesimal action of $\xi\in \spg_\alpha$ on $M:=\Rep^{\phi,\tau}(A,\alpha)$, and for fixed $s\in I$ we sum over
$i_s,j_s,u_s  \in\mathtt{R}_s$ \eqref{Eq:sIndices}.
The infinitesimal action can be written for the basis elements $(F_{uv}^{(s)})$ \eqref{Eq:BasSp} in terms of elementary matrices as
\begin{equation} \label{InfAct-sp}
 (F_{uv}^{(s)})_{M}(a_{ij}) = [\X(a),E_{uv}]_{ij}-\sgn_\alpha(u,v)\,[\X(a),E_{\tau(v),\tau(u)}]_{ij}\,.
\end{equation}
Therefore, we can write
\begin{equation} \label{Eq:pfqPspPsi}
 \frac12\psi^{\spg_\alpha} (a_{ij},b_{kl},c_{uv}) = \Psi^{(1)} + \Psi^{(2)},
\end{equation}
with the expressions (we use the identities \eqref{Eq:sgnProp} of $\sgn_\alpha(-,-)$)
\begin{subequations}
 \begin{align}
\Psi^{(1)}:=
\frac{1}{32}\sum_{s\in I}&\sum_{i_s,j_s,u_s}\Big(
 [\X(a),E_{i_sj_s}]_{ij}\, [\X(b),E_{u_s i_s}]_{kl} \, [\X(c),E_{j_su_s}]_{uv} \label{Psi1-000} \\
&-\sgn_\alpha(i_s,j_s) \, [\X(a),E_{\tau(j_s),\tau(i_s)}]_{ij}\, [\X(b),E_{u_s i_s}]_{kl}
\, [\X(c),E_{j_su_s}]_{uv} \label{Psi1-100} \\
&-\sgn_\alpha(i_s,u_s) \, [\X(a),E_{i_sj_s}]_{ij}\, [\X(b),E_{\tau(i_s),\tau(u_s)}]_{kl}
\, [\X(c),E_{j_su_s}]_{uv} \label{Psi1-010} \\
&-\sgn_\alpha(j_s,u_s) \, [\X(a),E_{i_sj_s}]_{ij}\, [\X(b),E_{u_s i_s}]_{kl}
\, [\X(c),E_{\tau(u_s),\tau(j_s)}]_{uv} \label{Psi1-001} \\
&+ \sgn_\alpha(j_s,u_s) \, [\X(a),E_{\tau(j_s),\tau(i_s)}]_{ij}\, [\X(b),E_{\tau(i_s),\tau(u_s)}]_{kl}
\, [\X(c),E_{j_su_s}]_{uv} \label{Psi1-110} \\
&+\sgn_\alpha(i_s,u_s) \, [\X(a),E_{\tau(j_s),\tau(i_s)}]_{ij}\, [\X(b),E_{u_s i_s}]_{kl}
\, [\X(c),E_{\tau(u_s),\tau(j_s)}]_{uv} \label{Psi1-101} \\
&+\sgn_\alpha(i_s,j_s) \, [\X(a),E_{i_sj_s}]_{ij}\, [\X(b),E_{\tau(i_s),\tau(u_s)}]_{kl}
\, [\X(c),E_{\tau(u_s),\tau(j_s)}]_{uv} \label{Psi1-011} \\
&- [\X(a),E_{\tau(j_s),\tau(i_s)}]_{ij}\, [\X(b),E_{\tau(i_s),\tau(u_s)}]_{kl}
\, [\X(c),E_{\tau(u_s),\tau(j_s)}]_{uv} \Big) \,, \label{Psi1-111} \\
\Psi^{(2)}:=
\frac{1}{32}\sum_{s\in I}&\sum_{i_s,j_s,u_s}\Big(
 -[\X(a),E_{i_sj_s}]_{ij}\, [\X(b),E_{j_s u_s}]_{kl} \, [\X(c),E_{u_si_s}]_{uv} \label{Psi2-000} \\
&+\sgn_\alpha(i_s, j_s) \, [\X(a),E_{\tau(j_s),\tau(i_s)}]_{ij}\, [\X(b),E_{j_s u_s}]_{kl}
\, [\X(c),E_{u_si_s}]_{uv} \label{Psi2-100} \\
&+\sgn_\alpha(j_s,u_s) \, [\X(a),E_{i_sj_s}]_{ij}\, [\X(b),E_{\tau(u_s),\tau(j_s)}]_{kl}
\, [\X(c),E_{u_si_s}]_{uv} \label{Psi2-010} \\
&+\sgn_\alpha(i_s,u_s) \, [\X(a),E_{i_sj_s}]_{ij}\, [\X(b),E_{j_s u_s}]_{kl}
\, [\X(c),E_{\tau(i_s),\tau(u_s)}]_{uv} \label{Psi2-001} \\
&- \sgn_\alpha(i_s,u_s) \, [\X(a),E_{\tau(j_s),\tau(i_s)}]_{ij}\, [\X(b),E_{\tau(u_s),\tau(j_s)}]_{kl}
\, [\X(c),E_{u_si_s}]_{uv} \label{Psi2-110} \\
&-\sgn_\alpha(j_s,u_s) \, [\X(a),E_{\tau(j_s),\tau(i_s)}]_{ij}\, [\X(b),E_{j_s u_s}]_{kl}
\, [\X(c),E_{\tau(i_s),\tau(u_s)}]_{uv} \label{Psi2-101} \\
&-\sgn_\alpha(i_s,j_s) \, [\X(a),E_{i_sj_s}]_{ij}\, [\X(b),E_{\tau(u_s),\tau(j_s)}]_{kl}
\, [\X(c),E_{\tau(i_s),\tau(u_s)}]_{uv} \label{Psi2-011} \\
&+ [\X(a),E_{\tau(j_s),\tau(i_s)}]_{ij}\, [\X(b),E_{\tau(u_s),\tau(j_s)}]_{kl}
\, [\X(c),E_{\tau(i_s),\tau(u_s)}]_{uv} \Big) \,.\label{Psi2-111}
 \end{align}
\end{subequations}
Under relabelling indices through $(i_s,j_s,u_s)\mapsto(\tau(j_s),\tau(i_s),\tau(u_s))$, which is an involution on the summation set with fixed $s\in I$, we get the equalities:
 \eqref{Psi1-000}=\eqref{Psi2-111},
 \eqref{Psi1-100}=\eqref{Psi2-011},
 \eqref{Psi1-010}=\eqref{Psi2-101},
 \eqref{Psi1-001}=\eqref{Psi2-110},
 \eqref{Psi1-110}=\eqref{Psi2-001},
 \eqref{Psi1-101}=\eqref{Psi2-010},
 \eqref{Psi1-011}=\eqref{Psi2-100},
 \eqref{Psi1-111}=\eqref{Psi2-000}.

We can now proceed to compare these terms with \eqref{Eq:pfqPsp1}, as we did in the orthogonal case.
First of all,
\begin{align*}
&\eqref{Psi1-000}+\eqref{Psi2-111}
=
\frac{1}{16}\sum_{s\in I}\sum_{i_s,j_s,u_s}
 [\X(a),E_{i_sj_s}]_{ij}\, [\X(b),E_{u_s i_s}]_{kl} \, [\X(c),E_{j_su_s}]_{uv} \\
=& \frac{1}{16}\sum_{s\in I}\sum_{i_s,j_s,u_s}
(a_{ii_s} \delta_{j_sj}-\delta_{ii_s} a_{j_sj})
(b_{ku_s} \delta_{i_sl}-\delta_{ku_s} b_{i_sl})
(c_{uj_s} \delta_{u_sv}-\delta_{uj_s} c_{u_sv}) \\
=&  \frac{1}{16}\sum_{s\in I} \Big(
(ae_s)_{il} (be_s)_{kv} (ce_s)_{uj}  
-(ae_s)_{il} (be_sc)_{kv} (e_s)_{uj} 
- (ae_s b)_{il} (e_s)_{kv} (ce_s)_{uj} \\ 
&\qquad+ (ae_sb)_{il} (e_sc)_{kv} (e_s)_{uj}  
 -(e_s)_{il} (be_s)_{kv} (ce_sa)_{uj}  
+(e_s)_{il} (be_sc)_{kv} (e_sa)_{uj} \\ 
&\qquad+(e_sb)_{il} (e_s)_{kv} (ce_sa)_{uj}  
-(e_sb)_{il} (e_sc)_{kv} (e_sa)_{uj}  
\Big)
\end{align*}
which is $\frac14 \dgal{a,b,c}_{uj,il,kv}$ by \eqref{qPabc}, i.e. this is the first term of \eqref{Eq:pfqPsp1}. Similarly,
\begin{align*}
&\eqref{Psi1-111}+\eqref{Psi2-000}
= \frac{-1}{16}\sum_{s\in I}\sum_{i_s,j_s,u_s}
(a_{ii_s} \delta_{j_sj}-\delta_{ii_s} a_{j_sj})
(b_{kj_s} \delta_{u_sl}-\delta_{kj_s} b_{u_sl})
(c_{uu_s} \delta_{i_sv}-\delta_{uu_s} c_{i_sv}) \\
=&  \frac{-1}{16}\sum_{s\in I} \Big(
(ae_s)_{iv} (be_s)_{kj} (ce_s)_{ul}  
-(e_s)_{iv} (be_sa)_{kj} (ce_s)_{ul} 
- (ae_s)_{iv} (e_s)_{kj} (ce_sb)_{ul} \\ 
&\qquad + (e_s)_{iv} (e_sa)_{kj} (ce_sb)_{ul}  
 - (ae_sc)_{iv} (be_s)_{kj} (e_s)_{ul}  
+ (e_sc)_{iv} (be_sa)_{kj} (e_s)_{ul} \\ 
&\qquad + (ae_sc)_{iv} (e_s)_{kj} (e_sb)_{ul}  
-(e_sc)_{iv} (e_sa)_{kj} (e_sb)_{ul}  
\Big)
\end{align*}
which is $-\frac14 \dgal{a,c,b}_{kj,iv,ul}$ by \eqref{qPabc}, i.e. this is the second term of \eqref{Eq:pfqPsp1}.
To handle the next cases, recall that $\tau$ \eqref{Eq:TauSp} is such that $\sgn_\alpha(\tau(\ell))=-\sgn_\alpha(\ell)$ by \eqref{Eq:sgnAlp}, $(\sgn_\alpha(\ell))^2=+1$, and we work with the relation \eqref{Eq:DefRelSp}. This entails
\begin{equation} \label{Eq:pfqPsp2}
 \begin{aligned}
  &\sgn_\alpha(i_s,j_s) \left(a_{p,\tau(j_s)}\delta_{\tau(i_s),q}
  - \delta_{p,\tau(j_s)}a_{\tau(i_s),q}\right) \\
=&\sgn_\alpha(p,q) \left(\phi(a)_{j_s,\tau(p)}(e_s)_{\tau(q),i_s}
- (e_s)_{j_s,\tau(p)}\phi(a)_{\tau(q),i_s}\right)
 \end{aligned}
\end{equation}
for any $a\in A$, $1\leq p,q\leq N$, and $i_s,j_s\in\mathtt{R}_s$ \eqref{Eq:sIndices}. In matrix form, this reads,
\begin{equation} \label{Eq:pfqPsp3}
 \sgn_\alpha(i_s,j_s) [\X(a),E_{\tau(j_s),\tau(i_s)}]_{pq}
 =  \sgn_\alpha(p,q) [\X(\phi(a)),E_{\tau(p),\tau(q)}]_{j_si_s}\,.
\end{equation}
Next, we compute thanks to \eqref{Eq:pfqPsp3}
\begin{align*}
&\eqref{Psi1-100}+\eqref{Psi2-011}
= \frac{-1}{16}\sum_{s\in I}\sum_{i_s,j_s,u_s}
\sgn_\alpha(i,j) [\X(\phi(a)),E_{\tau(i)\tau(j)}]_{j_si_s}\,
[\X(b),E_{u_s i_s}]_{kl}\, [\X(c),E_{j_su_s}]_{uv}\\
=&\frac{-\sgn_\alpha(i,j)}{16}\sum_{s\in I}\sum_{i_s,j_s,u_s}
(\phi(a)_{j_s\tau(i)} \delta_{\tau(j)i_s} - \delta_{j_s\tau(i)} \phi(a)_{\tau(j)i_s})
(b_{ku_s} \delta_{i_sl}-\delta_{ku_s} b_{i_sl})
(c_{uj_s} \delta_{u_sv}-\delta_{uj_s} c_{u_sv}) \\
=&  \frac{-1}{16}\sgn_\alpha(i,j)\sum_{s\in I} \Big(
(ce_s\phi(a))_{u\tau(i)}  (e_s)_{\tau(j)l}  (be_s)_{kv}  
- (ce_s)_{u\tau(i)}  (\phi(a)e_s)_{\tau(j)l}  (be_s)_{kv} \\ 
&\qquad- (ce_s\phi(a))_{u\tau(i)}  (e_sb)_{\tau(j)l}  (e_s)_{kv}   
 + (ce_s)_{u\tau(i)}  (\phi(a)e_sb)_{\tau(j)l}  (e_s)_{kv}   
 - (e_s\phi(a))_{u\tau(i)}  (e_s)_{\tau(j)l}  (be_sc)_{kv} \\  
&\qquad+ (e_s)_{u\tau(i)}  (\phi(a)e_s)_{\tau(j)l}  (be_sc)_{kv}   
 + (e_s\phi(a))_{u\tau(i)}  (e_sb)_{\tau(j)l}  (e_sc)_{kv}  
- (e_s)_{u\tau(i)}  (\phi(a)e_sb)_{\tau(j)l}  (e_sc)_{kv}   
\Big)
\end{align*}
which is $\frac14 \sgn_\alpha(i,j)\,\dgal{\phi(a),b,c}_{u\tau(i),\tau(j)l,kv}$ by \eqref{qPabc}, i.e. this is the third term of  \eqref{Eq:pfqPsp1};
\begin{align*}
&\eqref{Psi1-011}+\eqref{Psi2-100}
= \frac{1}{16}\sum_{s\in I}\sum_{i_s,j_s,u_s}
\sgn_\alpha(i,j) [\X(\phi(a)),E_{\tau(i)\tau(j)}]_{j_si_s}\,
[\X(b),E_{j_s u_s}]_{kl}\, [\X(c),E_{u_si_s}]_{uv}\\
&=  \frac{1}{16} \sgn_\alpha(i,j)\sum_{s\in I} \Big(
(be_s\phi(a))_{k\tau(i)}  (e_s)_{\tau(j)v}  (ce_s)_{ul}  
- (be_s)_{k\tau(i)}  (\phi(a)e_s)_{\tau(j)v}  (ce_s)_{ul} \\ 
&\qquad- (e_s\phi(a))_{k\tau(i)}  (e_s)_{\tau(j)v}  (ce_sb)_{ul}    
 + (e_s)_{k\tau(i)}  (\phi(a)e_s)_{\tau(j)v}  (ce_sb)_{ul}   
 - (be_s\phi(a))_{k\tau(i)}  (e_sc)_{\tau(j)v}  (e_s)_{ul}  \\ 
&\qquad+ (be_s)_{k\tau(i)}  (\phi(a)e_sc)_{\tau(j)v}  (e_s)_{ul}    
 + (e_s\phi(a))_{k\tau(i)}  (e_sc)_{\tau(j)v}  (e_sb)_{ul}   
- (e_s)_{k\tau(i)}  (\phi(a)e_sc)_{\tau(j)v}  (e_sb)_{ul}    
\Big)
\end{align*}
which is  $-\frac14 \sgn_\alpha(i,j)\,\dgal{\phi(a),c,b}_{k\tau(i),\tau(j)v,ul}$ by \eqref{qPabc}, i.e. this is  the fourth term of  \eqref{Eq:pfqPsp1};
\begin{align*}
&\eqref{Psi1-010}+\eqref{Psi2-101}
= \frac{-1}{16}\sum_{s\in I}\sum_{i_s,j_s,u_s}
\sgn_\alpha(k,l) [\X(a),E_{i_sj_s}]_{ij}\,
[\X(\phi(b)),E_{\tau(k)\tau(l)}]_{i_su_s}\, [\X(c),E_{j_su_s}]_{uv}\\
&=  \frac{-1}{16}\sgn_\alpha(k,l)\sum_{s\in I} \Big(
(ce_s)_{uj}  (ae_s\phi(b))_{i\tau(k)} (e_s)_{\tau(l)v}    
-(ce_sa)_{uj}  (e_s\phi(b))_{i\tau(k)} (e_s)_{\tau(l)v}  \\ 
&\qquad- (ce_s)_{uj}  (ae_s)_{i\tau(k)} (\phi(b)e_s)_{\tau(l)v}  
+(ce_sa)_{uj}  (e_s)_{i\tau(k)} (\phi(b)e_s)_{\tau(l)v}   
- (e_s)_{uj}  (ae_s\phi(b))_{i\tau(k)} (e_sc)_{\tau(l)v} \\ 
&\qquad+ (e_sa)_{uj}  (e_s\phi(b))_{i\tau(k)} (e_sc)_{\tau(l)v}  
+ (e_s)_{uj}  (ae_s)_{i\tau(k)} (\phi(b)e_sc)_{\tau(l)v}  
- (e_sa)_{uj}  (e_s)_{i\tau(k)} (\phi(b)e_sc)_{\tau(l)v}  
 \Big)
\end{align*}
which is  $\frac14 \sgn_\alpha(k,l)\,\dgal{a,\phi(b),c}_{uj,i\tau(k),\tau(l)v}$ by \eqref{qPabc}, i.e. this is  the fifth term of  \eqref{Eq:pfqPsp1};
\begin{align*}
&\eqref{Psi1-101}+\eqref{Psi2-010}
= \frac{1}{16}\sum_{s\in I}\sum_{i_s,j_s,u_s}
\sgn_\alpha(k,l) [\X(a),E_{i_sj_s}]_{ij}\,
[\X(\phi(b)),E_{\tau(k)\tau(l)}]_{u_sj_s} [\X(c),E_{u_si_s}]_{uv}\\
&=  \frac{1}{16}\sgn_\alpha(k,l)\sum_{s\in I} \Big(
(e_s)_{\tau(l)j}  (ae_s)_{iv} (ce_s\phi(b))_{u\tau(k)}    
- (e_sa)_{\tau(l)j}  (e_s)_{iv} (ce_s\phi(b))_{u\tau(k)}    \\ 
&\qquad- (\phi(b)e_s)_{\tau(l)j}  (ae_s)_{iv} (ce_s)_{u\tau(k)}   
+ (\phi(b)e_sa)_{\tau(l)j}  (e_s)_{iv} (ce_s)_{u\tau(k)}    
- (e_s)_{\tau(l)j}  (ae_sc)_{iv} (e_s\phi(b))_{u\tau(k)}  \\ 
&\qquad+ (e_sa)_{\tau(l)j}  (e_sc)_{iv} (e_s\phi(b))_{u\tau(k)}    
+ (\phi(b)e_s)_{\tau(l)j}  (ae_sc)_{iv} (e_s)_{u\tau(k)}    
- (\phi(b)e_sa)_{\tau(l)j}  (e_sc)_{iv} (e_s)_{u\tau(k)}    
 \Big)
\end{align*}
which is $-\frac14 \sgn_\alpha(k,l)\,\dgal{a,c,\phi(b)}_{\tau(l)j,iv,u\tau(k)}$ by \eqref{qPabc}, i.e. this is the sixth term of  \eqref{Eq:pfqPsp1};
\begin{align*}
&\eqref{Psi1-001}+\eqref{Psi2-110}
= \frac{-1}{16}\sum_{s\in I}\sum_{i_s,j_s,u_s}
\sgn_\alpha(u,v) [\X(a),E_{i_sj_s}]_{ij}\,
[\X(b),E_{u_s i_s}]_{kl}\, [\X(\phi(c)),E_{\tau(u)\tau(v)}]_{u_sj_s} \\
&=  \frac{-1}{16}\sgn_\alpha(u,v)\sum_{s\in I} \Big(
(e_s)_{\tau(v)j}  (ae_s)_{il} (be_s\phi(c))_{k\tau(u)}    
-(e_sa)_{\tau(v)j}  (e_s)_{il} (be_s\phi(c))_{k\tau(u)}    \\ 
&\qquad- (e_s)_{\tau(v)j}  (ae_sb)_{il} (e_s\phi(c))_{k\tau(u)}   
+ (e_sa)_{\tau(v)j}  (e_sb)_{il} (e_s\phi(c))_{k\tau(u)}     
- (\phi(c)e_s)_{\tau(v)j}  (ae_s)_{il} (be_s)_{k\tau(u)}  \\ 
&\qquad+ (\phi(c)e_sa)_{\tau(v)j}  (e_s)_{il} (be_s)_{k\tau(u)}   
+ (\phi(c)e_s)_{\tau(v)j}  (ae_sb)_{il} (e_s)_{k\tau(u)}   
- (\phi(c)e_sa)_{\tau(v)j}  (e_sb)_{il} (e_s)_{k\tau(u)}   
 \Big)
\end{align*}
which is $\frac14 \sgn_\alpha(u,v)\,\dgal{a,b,\phi(c)}_{\tau(v)j,il,k\tau(u)}$ by \eqref{qPabc}, i.e. this is the seventh term of  \eqref{Eq:pfqPsp1};
\begin{align*}
&\eqref{Psi1-110}+\eqref{Psi2-001}
= \frac{1}{16}\sum_{s\in I}\sum_{i_s,j_s,u_s}
\sgn_\alpha(u,v) [\X(a),E_{i_sj_s}]_{ij}\,
[\X(b),E_{j_s u_s}]_{kl} \, [\X(\phi(c)),E_{\tau(u)\tau(v)}]_{i_su_s}  \\
&=  \frac{1}{16}\sgn_\alpha(u,v)\sum_{s\in I} \Big(
(be_s)_{kj}  (ae_s\phi(c))_{i\tau(u)} (e_s)_{\tau(v)l}    
- (be_sa)_{kj}  (e_s\phi(c))_{i\tau(u)} (e_s)_{\tau(v)l} \\ 
&\qquad- (e_s)_{kj}  (ae_s\phi(c))_{i\tau(u)} (e_sb)_{\tau(v)l}   
+ (e_sa)_{kj}  (e_s\phi(c))_{i\tau(u)} (e_sb)_{\tau(v)l} 
- (be_s)_{kj}  (ae_s)_{i\tau(u)} (\phi(c)e_s)_{\tau(v)l} \\ 
&\qquad+ (be_sa)_{kj}  (e_s)_{i\tau(u)} (\phi(c)e_s)_{\tau(v)l}  
+ (e_s)_{kj}  (ae_s)_{i\tau(u)} (\phi(c)e_sb)_{\tau(v)l}  
- (e_sa)_{kj}  (e_s)_{i\tau(u)} (\phi(c)e_sb)_{\tau(v)l} 
 \Big) \\
&=-\frac14 \sgn_\alpha(u,v)\,\dgal{a,\phi(c),b}_{kj,i\tau(u),\tau(v)l},
\end{align*}
which is $-\frac14 \sgn_\alpha(u,v)\,\dgal{a,\phi(c),b}_{kj,i\tau(u),\tau(v)l}$ by \eqref{qPabc}, i.e. this is the eighth term of  \eqref{Eq:pfqPsp1}.
Summing up all these expressions, we get the desired equality of \eqref{Eq:pfqPsp1} and \eqref{Eq:pfqPspPsi}.

\medskip

Assume that  $\Phi \in A^\times$ is a moment map, so that \eqref{Phim} holds.
First, the matrix-valued function  $\X(\Phi):\Rep^{\phi,\tau}(A,\alpha)\to \Gl_\alpha$ should be $\Sp_\alpha$-valued.
By assumption, $\phi(\Phi)\, \Phi=1$, so that making use of \eqref{Eq:DefRelSp}, we get
$\X(\phi(\Phi)\, \Phi) = \Omega \X(\Phi)^T \Omega^T\,\X(\Phi) = \Id_N$ on $\Rep^{\phi,\tau}(A,\alpha)$;
this proves the claim.
Next, we have to check \eqref{Gmomap} where $\Xi=\X(\Phi)$ and, without loss of generality,
we take the function $g_{i_sj_s}:\Sp_\alpha\to \CC$ with $i_s,j_s\in\mathtt{R}_s$ \eqref{Eq:sIndices} returning the $(i_s,j_s)$-entry for fixed $s\in I$.
This is equivalent to establishing
\begin{equation} \label{Eq:pfqPsp5}
\begin{aligned}
  \br{(\Phi_s)_{i_sj_s},b_{kl}}_{\phi,\Sp}
=& \frac12 \sum_{t\in I}\sum_{u_t,v_t} \X(\Phi)^\ast\big((F_{u_t v_t}^{(1),L}+F_{u_t v_t}^{(1),R})(g_{i_sj_s})\big)\, (\check{F}_{u_t v_t}^{(1)})_M(b_{kl}) \\
&\frac12 \sum_{t\in I}\sum_{u_t \leq v_t} \X(\Phi)^\ast\big((F_{u_t v_t}^{(2),L}+F_{u_t v_t}^{(2),R})(g_{i_sj_s})\big)\,
(\check{F}_{u_t v_t}^{(2)})_M(b_{kl}) \\
&\frac12 \sum_{t\in I}\sum_{u_t \leq v_t} \X(\Phi)^\ast\big((F_{u_t v_t}^{(3),L}+F_{u_t v_t}^{(3),R})(g_{i_sj_s})\big)\,
(\check{F}_{u_t v_t}^{(3)})_M(b_{kl})\,,
\end{aligned}
\end{equation}
after using the dual bases from \ref{App:CartSP} of
$\spg(\alpha_t)\hookrightarrow \spg_\alpha$ for $t\in I$, and where we sum over
\begin{equation} \label{CondSp-t}
 \alpha_1+\ldots+\alpha_{t-1}+1 \leq u_t,v_t \leq \alpha_1+\ldots+\alpha_{t-1}+\frac12\alpha_{t}\,.
\end{equation}

For the LHS of \eqref{Eq:pfqPsp5}, \eqref{Phim} and
\eqref{Phim-tw} together with \eqref{Eq:BrPsymp} yield for any $b\in A$ and indices $k,l$,
\begin{equation} \label{Eq:BrqPsp6}
\begin{aligned}
&\br{(\Phi_s)_{i_sj_s},b_{kl}}_{\phi,\Sp}=\frac12 \dgal{\Phi_s,b}_{kj_s,i_sl}
+ \frac12 \sgn_\alpha(i_s,j_s) \dgal{\phi(\Phi_s),b}_{k\tau(i_s),\tau(j_s)l} \\
=& \frac14 (be_s\otimes \Phi_s-e_s \otimes \Phi_s b +  b \Phi_s \otimes e_s-\Phi_s \otimes e_s b)_{kj_s,i_sl} \\
&+\frac14\sgn_\alpha(i_s,j_s)  \Big(\phi(\Phi_s)\otimes e_s b - b \phi(\Phi_s) \otimes e_s
 +  e_s \otimes \phi(\Phi_s) b - b e_s\otimes  \phi(\Phi_s)\Big)_{k\tau(i_s),\tau(j_s)l} \,.
\end{aligned}
\end{equation}
For the RHS,
we need the following equalities on $\Sp_\alpha$ which are obtained from \eqref{Eq:LRinvVF} similarly to the $\Gl_\alpha$- and $\Orm_\alpha$-cases using the basis \eqref{Eq:AppSP1} (where $n=\alpha_t$):
\begin{align*}
F_{u_tv_t}^{(1),L}(g_{i_sj_s})=g_{i_s,u_t}\delta_{v_t,j_s} - g_{i_s,\alpha_t+v_t}\delta_{\alpha_t+u_t,j_s}, \quad
&F_{u_tv_t}^{(1),R}(g_{i_sj_s})=\delta_{i_su_t} g_{v_tj_s} - \delta_{i_s,\alpha_t+v_t}g_{\alpha_t+u_t,j_s};  \\
F_{u_tv_t}^{(2),L}(g_{i_sj_s})=g_{i_su_t}\delta_{\alpha_t+v_t,j_s} + g_{i_sv_t}\delta_{\alpha_t+u_t,j_s}, \quad
&F_{u_tv_t}^{(2),R}(g_{i_sj_s})=\delta_{i_su_t} g_{\alpha_t+v_t,j_s} + \delta_{i_sv_t}g_{\alpha_t+u_t,j_s};  \\
F_{u_tv_t}^{(3),L}(g_{i_sj_s})=g_{i_s,\alpha_t+u_t}\delta_{v_tj_s} + g_{i_s,\alpha_t+v_t}\delta_{u_tj_s}, \quad
&F_{u_tv_t}^{(3),R}(g_{i_sj_s})=\delta_{i_s,\alpha_t+u_t} g_{v_tj_s} + \delta_{i_s,\alpha_t+v_t}g_{u_tj_s}.
\end{align*}
This entails,
\small
\begin{align*}
\X(\Phi)^\ast\big((F_{u_t v_t}^{(1),L}+F_{u_t v_t}^{(1),R})(g_{i_sj_s})\big)
&=\delta_{st} \left( (\Phi_s)_{i_s,u_t}\delta_{v_t,j_s} - (\Phi_s)_{i_s,\alpha_t+v_t}\delta_{\alpha_t+u_t,j_s}
+\delta_{i_su_t} (\Phi_s)_{v_tj_s} - \delta_{i_s,\alpha_t+v_t} (\Phi_s)_{\alpha_t+u_t,j_s}\right);  \\
\X(\Phi)^\ast\big((F_{u_t v_t}^{(2),L}+F_{u_t v_t}^{(2),R})(g_{i_sj_s})\big)
&=\delta_{st} \left( (\Phi_s)_{i_su_t}\delta_{\alpha_t+v_t,j_s} + (\Phi_s)_{i_sv_t}\delta_{\alpha_t+u_t,j_s}
+ \delta_{i_su_t} (\Phi_s)_{\alpha_t+v_t,j_s} + \delta_{i_sv_t}(\Phi_s)_{\alpha_t+u_t,j_s} \right);  \\
\X(\Phi)^\ast\big((F_{u_t v_t}^{(3),L}+F_{u_t v_t}^{(3),R})(g_{i_sj_s})\big)
&=\delta_{st} \left( (\Phi_s)_{i_s,\alpha_t+u_t}\delta_{v_tj_s} + (\Phi_s)_{i_s,\alpha_t+v_t}\delta_{u_tj_s}
+ \delta_{i_s,\alpha_t+u_t} (\Phi_s)_{v_tj_s} + \delta_{i_s,\alpha_t+v_t} (\Phi_s)_{u_tj_s}\right).
\end{align*}
\normalsize
In those expressions, we can always write $\alpha_t+u_t=\tau(u_t)$ and $\alpha_t+v_t=\tau(v_t)$ due to \eqref{Eq:TauSp} and the condition \eqref{CondSp-t}.
We also make use of \eqref{InfAct-sp} and \eqref{Eq:AppSP2} to deduce
\begin{align*}
 (\check{F}_{u_tv_t}^{(1)})_{M}(b_{kl}) &=
 \frac12(F_{v_tu_t}^{(1)})_{M}(b_{kl}) =\frac12[\X(b),E_{v_tu_t}]_{kl}-\frac12[\X(b),E_{\alpha_t+u_t,\alpha_t+v_t}]_{kl}\,, \\
 (\check{F}_{u_tv_t}^{(2)})_{M}(b_{kl}) &\stackrel{u<v}{=}
 \frac12(F_{u_tv_t}^{(3)})_{M}(b_{kl}) =\frac12[\X(b),E_{\alpha_t+u_t,v_t}]_{kl}+\frac12[\X(b),E_{\alpha_t+v_t,u_t}]_{kl}\,, \\
 (\check{F}_{u_tu_t}^{(2)})_{M}(b_{kl}) &=
 \frac14(F_{u_tu_t}^{(3)})_{M}(b_{kl}) =\frac12[\X(b),E_{\alpha_t+u_t,u_t}]_{kl}\,, \\
 (\check{F}_{u_tv_t}^{(3)})_{M}(b_{kl}) &\stackrel{u<v}{=}
 \frac12(F_{u_tv_t}^{(2)})_{M}(b_{kl}) =\frac12[\X(b),E_{u_t,\alpha_t+v_t}]_{kl}+\frac12[\X(b),E_{v_t,\alpha_t+u_t}]_{kl}\,, \\
 (\check{F}_{u_tu_t}^{(3)})_{M}(b_{kl}) &=
 \frac14(F_{u_tu_t}^{(2)})_{M}(b_{kl}) =\frac12[\X(b),E_{u_t,\alpha_t+u_t}]_{kl}\,.
\end{align*}
We are in position to compute the RHS of \eqref{Eq:pfqPsp5}. Its first term is given by
\begin{align*}
&\eqref{Eq:pfqPsp5}_{RHS}^{(1)}=
\frac12 \sum_{t\in I}\sum_{u_t,v_t} \X(\Phi)^\ast\big((F_{u_t v_t}^{(1),L}+F_{u_t v_t}^{(1),R})(g_{i_sj_s})\big)\, (\check{F}_{u_t v_t}^{(1)})_M(b_{kl}) \\
=\frac14& \sum_{u_s,v_s} \big( (be_s)_{kv_s} (e_s)_{u_sl} - (e_s)_{kv_s} (e_sb)_{u_sl} \big)
\Big[(\Phi_s)_{i_su_s}(e_s)_{v_sj_s}+(e_s)_{i_su_s}(\Phi_s)_{v_sj_s} \\
&\qquad \qquad - \sgn_\alpha(i_s,j_s)
\big(\, \phi(\Phi_s)_{v_s,\tau(i_s)}(e_s)_{\tau(j_s),u_s}+(e_s)_{v_s,\tau(i_s)}\, \phi(\Phi_s)_{\tau(j_s),u_s} \,\big)\Big]\\
+\frac14& \sum_{u_s,v_s} \big( (be_s)_{k,\alpha_s+u_s} (e_s)_{\alpha_s+v_s,l} - (e_s)_{k,\alpha_s+u_s} (e_sb)_{\alpha_s+v_s,l} \big)
\Big[(\Phi_s)_{i_s,\alpha_s+v_s}(e_s)_{\alpha_s+u_s,j_s}+(e_s)_{i_s,\alpha_s+v_s}(\Phi_s)_{\alpha_s+u_s,j_s} \\
&\qquad \qquad - \sgn_\alpha(i_s,j_s)
\big(\, \phi(\Phi_s)_{\alpha_s+u_s,\tau(i_s)}(e_s)_{\tau(j_s),\alpha_s+v_s}+(e_s)_{\alpha_s+u_s,\tau(i_s)}\, \phi(\Phi_s)_{\tau(j_s),\alpha_s+v_s} \,\big)\Big]
\end{align*}
where $u_s,v_s$ satisfy \eqref{CondSp-t} with $t=s$, and we used the symplectic notations from \ref{sss:Not-symp} and the defining relation \eqref{Eq:DefRelSp}.
If we put
\begin{align}
 \cF^{(i_s,j_s)}_{(\alpha,\beta)}&=(\Phi_s)_{i_s\alpha}(e_s)_{\beta j_s}+(e_s)_{i_s\alpha}(\Phi_s)_{\beta j_s} \nonumber \\
&\qquad  - \sgn_\alpha(i_s,j_s)
\big(\, \phi(\Phi_s)_{\beta,\tau(i_s)}\,(e_s)_{\tau(j_s),\alpha}+(e_s)_{\beta,\tau(i_s)}\, \phi(\Phi_s)_{\tau(j_s),\alpha}\,\big) \label{Eq:cFab}\\
\cB^{(k,l)}_{(\alpha,\beta)} &= (be_s)_{k \alpha} (e_s)_{\beta l} - (e_s)_{k \alpha} (e_sb)_{\beta l}\,, \label{Eq:cBab}
\end{align}
we can simply write
\begin{align*}
&\eqref{Eq:pfqPsp5}_{RHS}^{(1)}
=\frac14 \sum_{u_s,v_s} (\cB^{(k,l)}_{(v_s,u_s)}   \cF^{(i_s,j_s)}_{(u_s,v_s)}
+ \cB^{(k,l)}_{(\alpha_s+u_s,\alpha_s+v_s)}   \cF^{(i_s,j_s)}_{(\alpha_s+v_s,\alpha_s+u_s)} )\,.
\end{align*}
Similarly, we can compute for the second and third terms,
\begin{align*}
&\eqref{Eq:pfqPsp5}_{RHS}^{(2)}
=\frac14 \sum_{u_s<v_s} \Big(\cB^{(k,l)}_{(\alpha_s+v_s,u_s)}   \cF^{(i_s,j_s)}_{(u_s,\alpha_s+v_s)}
+ \cB^{(k,l)}_{(\alpha_s+u_s,v_s)}   \cF^{(i_s,j_s)}_{(v_s,\alpha_s+u_s)} \Big)
+ \frac14 \sum_{u_s} \cB^{(k,l)}_{(\alpha_s+u_s,u_s)}   \cF^{(i_s,j_s)}_{(u_s,\alpha_s+u_s)} ,\\
&\eqref{Eq:pfqPsp5}_{RHS}^{(3)}
= \frac14 \sum_{u_s<v_s} \Big(\cB^{(k,l)}_{(v_s,\alpha_s+u_s)}   \cF^{(i_s,j_s)}_{(\alpha_s+u_s,v_s)}
+ \cB^{(k,l)}_{(u_s,\alpha_s+v_s)}   \cF^{(i_s,j_s)}_{(\alpha_s+v_s,u_s)} \Big)
+ \frac14 \sum_{u_s} \cB^{(k,l)}_{(u_s,\alpha_s+u_s)}   \cF^{(i_s,j_s)}_{(\alpha_s+u_s,u_s)} .
\end{align*}
Gathering the different contributions from the RHS of \eqref{Eq:pfqPsp5}, one obtains
\begin{equation*}
 \eqref{Eq:pfqPsp5}_{RHS}=
 \frac14 \sum_{\tilde{u}_s,\tilde{v}_s} \cB^{(k,l)}_{(\tilde{v}_s,\tilde{u}_s)}   \cF^{(i_s,j_s)}_{(\tilde{u}_s,\tilde{v}_s)}\,,
\end{equation*}
where $\tilde{u}_s,\tilde{v}_s\in\mathtt{R}_s$ \eqref{Eq:sIndices}
(this is \emph{not} the range of $u_s,v_s$ as in \eqref{CondSp-t}).
Using the expressions \eqref{Eq:cFab} and \eqref{Eq:cBab}, one easily finds the same result for the LHS \eqref{Eq:BrqPsp6} after a routine calculation.
\end{proof}

\subsection{Applications}

\subsubsection{Fundamental groups}

Recall Example \ref{Ex:grGen} where we explained that the double quasi-Poisson bracket of \cite{MT14} on
$L_{g,r}\simeq\CC\pi_1(\Sigma,\ast)$ is compatible with the anti-involution $\phi$ induced by inverting generators.
It was remarked by Massuyeau and Turaev \cite[Sect.~8 \& App.~B]{MT14} that the quasi-Poisson bracket $\br{-,-}$ on
$\Rep(L_{g,r},N)$ induced by their double quasi-Poisson bracket through Van den Bergh's theory \cite{VdB1} is, up to scaling, the quasi-Poisson structure given in \cite{AKSM}.
Furthermore, its restriction to $\Rep(L_{g,r},N)/\!\!/\Gl_N$ coincides (up to scaling) with the Goldman bracket \cite{Gol86}.

Given $\alpha,\beta \in L_{g,r}$, one reads from \eqref{Eq:BrPO-tr} and \eqref{Eq:BrPSymp-tr} (which also hold in the quasi-Poisson case):
\begin{equation}
\begin{aligned} \label{Eq:Goldtr}
  \br{\tr(\alpha),\tr(\beta)}_{\phi,G}&=\frac12 \br{\tr(\alpha),\tr(\beta)}+ \frac12 \br{\tr(\alpha^{-1}),\tr(\beta)}  \\
  &=\frac12 \br{\tr(\alpha),\tr(\beta)}+ \frac12 \br{\tr(\alpha),\tr(\beta^{-1})},
\end{aligned}
\end{equation}
where $G=\Orm_N$ or $\Sp_N$.
Using \eqref{Eq:Goldtr} and by a direct inspection of the main Theorem of \cite{Gol86},
the Poisson bracket that we induce on
$\Rep^{\phi,\tau}(L_{g,r},N)/\!\!/G$ coincides with the one of Goldman when restricted to trace functions.
Since trace functions generate the ring of invariant functions for $G=\Sp_N$ or when $N$ is odd for $G=\Orm_{N}$ (see e.g. \cite{LS}),
our Poisson bracket $\br{-,-}_{\phi,G}$ coincides with the Goldman bracket in types $\typB,\typC$.
In type $\typD$ where $G=\Orm_{N}$ with even $N$, invariant functions involving Pfaffians also exist.

\begin{rem}
The formulas induced by Massuyeau-Turaev agree with those of Goldman \cite{Gol86}, see \cite[Sect.~12]{MT18}.
Thus, our comparison with \cite{MT18} made in Section~\ref{Sec:MT} gives an alternative proof of the previous claims.
Formulas for the (real) quasi-Poisson bracket can also be found in the works of Li-Bland and \v{S}evera \cite{LBS}, and Nie \cite{Nie}.
\end{rem}

\begin{rem}
This result can be adapted to the weighted surfaces of \cite[Sect.~10]{MT14}
obtained by quotienting $L_{g,r}$ by the ideal $\langle z_j^{n_j}\mid j\in J\rangle$ for some $J\subset \{1,\ldots,r\}$ and weights $(n_j)\in \Z_{>1}^J$.
Indeed, we can consider the $g=0,r=1$ case of Example \ref{Ex:g0r1} on $\CC[z^{\pm 1}]/\langle z^n-1\rangle$ for any $n>1$, and then obtain Example \ref{Ex:grGen} with weights by performing fusion.
\end{rem}

\subsubsection{Quivers} \label{ss:qPquivers}

We start by recalling the double quasi-Poisson bracket for quivers due to Van den Bergh \cite{VdB1}, with the formulation of \cite{F2}.
It will be used to introduce a multiplicative analogue of \ref{sss:P-exmp} under some extra invertibility conditions, amounting to get a groupoid version of Example \ref{Ex:grGen}.

Let $\Upsilon$ be a quiver.
Define $\epsilon:\overline{\Upsilon}\to \{\pm1\}$ as the map which takes value $+1$ on arrows originally in $\Upsilon$, and $-1$ on arrows in $\overline{\Upsilon}\setminus \Upsilon$.
For each $a\in \Upsilon$, choose $\gamma_a \in \CC$ and set $\gamma_{a^\ast}=\gamma_a$.
We encode these constants in $\gamma=(\gamma_a)\in \CC^{\Upsilon}$.
Finally, we define the localised path algebra
\begin{equation}
 \tA_{\Upsilon,\gamma} := (\CC \overline{\Upsilon})_{S_\gamma}, \quad
 S_\gamma= \{1+(\gamma_a-1)e_{t(a)}+a a^\ast \,|\, a \in \overline{\Upsilon}\}\,.
\end{equation}
This amounts to adding a generator $(\gamma_a e_{t(a)}+a a^\ast)^{-1}\in e_{t(a)} \CC \overline{\Upsilon}e_{t(a)}$ for each $a \in \overline{\Upsilon}$ and imposing that it is an inverse to $\gamma_a e_{t(a)}+a a^\ast$. If $\gamma_a=0$, then $a^{-1}:=a^\ast(a a^\ast)^{-1}$ is an inverse to $a$ (i.e. $a a^{-1}=e_{t(a)}$ and $a^{-1}a=e_{h(a)}$),
and $(a^\ast)^{-1}:=a(a^\ast a)^{-1}$ is an inverse to $a^\ast$.

For each vertex $s\in I$, consider a total ordering $<_s$ on the set $T_s=\{a\in \overline{\Upsilon}\mid t(a)=s\}$. Write $o_s(-,-)$ for the ordering function at vertex $s$ : on arrows $a,b$ we have $o_s(a,b)=+1$ if $a<_s b$, $o_s(a,b)=-1$ if $b<_s a$, while it is zero otherwise, i.e. if $a=b\in T_s$, if $a \notin T_s$ or if $b \notin T_s$.

\begin{thm} \label{ThmVdB}
   The algebra $\tA_{\Upsilon,\gamma}$ has a double quasi-Poisson bracket defined by
  \begin{subequations}
        \begin{align}
 \dgal{a,a}\,=\,&\frac{1}{2}o_{t(a)}(a,a^*)\left( a^2\otimes e_{t(a)}- e_{h(a)}\otimes a^2 \right) \qquad (a\in\overline{\Upsilon})\,, \label{loopG}\\
 \dgal{a,a^*}\,=\,&\gamma_a e_{h(a)}\otimes e_{t(a)}
 +\frac{1}{2} a^*a\otimes e_{t(a)} +\frac{1}{2} e_{h(a)}\otimes aa^* \nonumber\\
 & +\frac{1}{2}o_{t(a)}(a,a^*)\, (a^*\otimes a-a\otimes a^*)\qquad (a\in \Upsilon)\,, \label{aastG}
        \end{align}
  \end{subequations}
 and for $b,c\in\overline{\Upsilon}$ such that $ c\ne b,b^*$
 \begin{equation}
  \begin{aligned}
 \dgal{b,c}\,=\,&-\frac{1}{2}o_{t(b)}(b,c)\,(b\otimes c)-\frac{1}{2}o_{h(b)}(b^*,c^*)\,(c\otimes b)
\\ \label{a<bG}
 &+\frac{1}{2}o_{t(b)}(b,c^*)\, cb\otimes e_{t(b)} + \frac{1}{2}o_{h(b)}(b^*,c)\,e_{h(b)}\otimes bc   \,.
  \end{aligned}
 \end{equation}
Furthermore, $\tA_{\Upsilon,\gamma}$ admits the moment map
\begin{equation} \label{EqPhiVdB}
  \Phi=\sum_{s\in I} \Phi_s\,, \quad \Phi_s=\prod_{a\in T_s}^{\longrightarrow}(\gamma_a e_s+ a a^\ast)^{\epsilon(a)}\,.
\end{equation}
\end{thm}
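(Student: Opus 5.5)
The plan is to build the bracket by fusion, using Proposition~\ref{PropIndbr}, and not to verify the quasi-Poisson identity \eqref{qPabc} head-on on $\tA_{\Upsilon,\gamma}$. This is Van den Bergh's original strategy~\cite{VdB1}, in the form of~\cite{F2}.

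\textbf{Building block.} First treat the quiver $\Upsilon_1$ with one arrow $a:1\to 2$ and $1\neq 2$. Here $\tA_{\Upsilon_1,\gamma}$ is generated by $a,a^\ast$ and the inverses $(\gamma_a e_1+aa^\ast)^{-1}$, $(\gamma_a e_2+a^\ast a)^{-1}$. The orderings at each vertex are trivial, so all $o$-terms vanish. The bracket reduces to $\dgal{a,a^\ast}=\gamma_a e_2\otimes e_1+\frac12 a^\ast a\otimes e_1+\frac12 e_2\otimes aa^\ast$, with $\dgal{a,a}=0=\dgal{a^\ast,a^\ast}$. By the derivation rules and the $A_0$-linearity, it suffices to check \eqref{qPabc} on triples of generators from $\{a,a^\ast\}$. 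Each such check is a short direct expansion of \eqref{Eq:TripBr} against the eight terms of \eqref{qPabc}. Next we verify \eqref{Phim} for $\Phi=(\gamma_a e_1+aa^\ast)+(\gamma_a e_2+a^\ast a)^{-1}$, on $a$ and $a^\ast$ only. The factor $\Phi_2$ is inverted because $\epsilon(a^\ast)=-1$. This uses formula \eqref{Phim-inv} for inverses. The localisation ensures these inverses exist, and the bracket extends uniquely to the localisation by the derivation rules.

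\textbf{General quiver by fusion.} Next take the disjoint union over $a\in\Upsilon$ of copies of $\tA_{\Upsilon_1,\gamma_a}$. Its vertex set is $\{t_a,h_a\}_{a}$, and the bracket is the direct sum. It is quasi-Poisson, with moment map the sum of the individual moment maps. We then fuse vertices to recover $I$. For each $s\in I$, we fuse successively the endpoints of all arrows in $T_s$, in the order $<_s$. The algebra obtained is $\tA_{\Upsilon,\gamma}$: iterated fusion of path algebras gives the path algebra of the glued quiver, and the localising sets are carried along. By Proposition~\ref{PropIndbr}, the resulting bracket $\dgal{-,-}+\dgal{-,-}_{\fus}$ is quasi-Poisson at every step. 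The moment map becomes the ordered product $\Phi_1 e_{12}\Phi_2e_{21}$. After all fusions, this product is exactly the ordered product \eqref{EqPhiVdB} over $T_s$.

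\textbf{Main obstacle.} The hard part is identifying the iterated bracket explicitly with \eqref{loopG}--\eqref{a<bG}. Each fusion contributes $\dgal{-,-}_{\fus}$, given by the formulas of~\cite[Lem.~2.19]{F2}, on pairs of generators whose endpoints sit at the fused vertices. I would argue by induction on the number of fusions. The inductive claim is that after fusing at $s$ the arrows $b_1<_s\dots<_s b_m$, the bracket between generators $b,c$ with $t(b)=t(c)=s$ (or with heads at $s$) carries exactly the terms $-\frac12 o_s(b,c)\,b\otimes c$, $\frac12 o_s(b,c^\ast)\,cb\otimes e_s$, and so on. The reason is that a fusion term appears precisely for pairs whose tails or heads were fused, with sign fixed by which vertex was fused onto which. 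Loops arise when $t(a)$ and $h(a)$ are fused. Their terms \eqref{loopG} come from the fusion bracket of a generator with itself, with $o_{t(a)}(a,a^\ast)$ recording the relative order of $a$ and $a^\ast$ in $T_{t(a)}$. This bookkeeping of signs through many fusions is where I expect the real work. If it gets unwieldy, a fallback is to take \eqref{loopG}--\eqref{a<bG} as a definition. The first step would then be to check that it is a well-defined double bracket. The quasi-Poisson property would follow by comparison: two double brackets agreeing on generators coincide.
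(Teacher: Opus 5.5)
Your proposal is correct and follows the route the paper indicates; the paper cites \cite{VdB1,F2} for this result and only remarks that it follows by fusion from the one-arrow quiver. Your plan is the same: verify the one-arrow case directly, then at each $s\in I$ fuse the tails of the arrows in $T_s$ in the order $<_s$, with Proposition~\ref{PropIndbr} giving both the quasi-Poisson property and the ordered product \eqref{EqPhiVdB}. The identification of the fused bracket with \eqref{loopG}--\eqref{a<bG} is the bookkeeping carried out in \cite{F2}, and your convention of fusing each $<_s$-larger vertex onto the smaller one gives the correct signs; for example, the first/second-kind fusion term produces $-\frac12 o_s(b,c)\,b\otimes c$.
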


A crucial observation is that the proof of Theorem \ref{ThmVdB} follows by fusion (cf. \ref{ss:Fus}) from the statement for the $1$-arrow quiver.
Since bracket-compatibility is preserved by fusion due to Proposition \ref{Pr:CompFus}, we need to understand the case of a 1-arrow quiver with $\gamma=0$ to adapt Example \ref{Ex:g1r0}.

\begin{rem}
Let $\Upsilon$ is a 1-loop quiver with arrow $x$ and denote its double by $y=x^\ast$.
For $\gamma_x=0$ and $x<y$, Theorem \ref{ThmVdB} gives the structure of Example \ref{Ex:g1r0}.
\end{rem}

\begin{exmp} \label{Ex:Grpd1}
Let $\Upsilon$ be the quiver with two vertices $\{1,2\}$ and one arrow
$1\stackrel{a}{\longrightarrow}2$.
Then $\tA_{\Upsilon,0}=\CC \overline{\Upsilon}_{S_0}$ for the set
$S_0=\{ aa^\ast+e_2,\,
a^\ast a+e_1 \}$
is equipped with the double quasi-Poisson bracket satisfying
\begin{equation} \label{qP-1arrow}
 \dgal{a,a^\ast} = \frac12 (a^\ast a \otimes e_1 + e_2 \otimes aa^\ast), \quad
 \dgal{a,a} = 0, \quad \dgal{a^\ast,a^\ast} = 0.
\end{equation}
By construction, one has in $\tA_{\Upsilon,0}$ the inverses $a^{-1}$ and $(a^\ast)^{-1}$ satisfying
\[
 a a^{-1}=e_{1}, \,\, a^{-1}a=e_{2}, \,\,
 a^\ast (a^\ast)^{-1}=e_{2}, \,\, (a^\ast)^{-1}a^\ast=e_{1}.
\]
The moment map reads
\begin{equation}
 \Phi=\Phi_1+\Phi_2, \quad \Phi_1=aa^\ast, \,\, \Phi_2=a^{-1}(a^\ast)^{-1}\,.
\end{equation}
If one introduces the anti-involution $\phi:\tA_{\Upsilon,0}\to \tA_{\Upsilon,0}$
such that $\phi(a)=a^{-1}$ and $\phi(a^\ast)=(a^\ast)^{-1}$,
it is easy to check that bracket-compatibility \eqref{Eq:BrComp} is satisfied.
\end{exmp}

\begin{prop} \label{Pr:Grpd}
The double quasi-Poisson structure on the algebra $\tA_{\Upsilon,0}$ given in Theorem \ref{ThmVdB} with $\gamma=0\in \CC^\Upsilon$ is $\phi$-adapted for the anti-involution satisfying $\phi(a)=a^{-1}$ for each $a\in \overline{\Upsilon}$.
\end{prop}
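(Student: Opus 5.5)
Our plan is to reduce the statement to the two elementary building blocks by fusion, using Proposition \ref{Pr:CompFus}. As recalled after Theorem \ref{ThmVdB}, the double quasi-Poisson algebra $\tA_{\Upsilon,0}$ is obtained from the direct sum $\bigoplus_{a\in\Upsilon}\tA_{\Upsilon_a,0}$ by a sequence of fusions of idempotents. Here $\Upsilon_a$ is the one-arrow quiver carrying only $a$, with two vertices if $a$ is not a loop and one vertex if $a$ is a loop. The fusions identify the copies of the vertex $s\in I$ appearing as tails or heads of different arrows. The order of the fusions is dictated by the total orderings $<_s$ on $T_s$; the bracket \eqref{loopG}--\eqref{a<bG} is exactly the result of this iterated fusion.

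First, we check that the direct sum $\bigoplus_a \tA_{\Upsilon_a,0}$ carries an anti-involution $\bigoplus_a\phi_a$ that is compatible with the sum of the double brackets. This holds because brackets between different summands vanish and \eqref{Eq:BrComp} is checked summand by summand. Next, we treat each block.

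\begin{itemize}
\item If $a$ is not a loop, the block is Example \ref{Ex:Grpd1}. There one verifies compatibility directly on generators. For instance, using \eqref{Eq:inder}, \eqref{Eq:outder} and $\dgal{a,a^{-1}}=-a^{-1}\dgal{a,a}a^{-1}=0$, we compute
\[
\dgal{a^{-1},(a^\ast)^{-1}}=a^{-1}\ast\big((a^\ast)^{-1}\dgal{a,a^\ast}(a^\ast)^{-1}\big)\ast a^{-1}
\]
and compare it with $\phi^{\otimes2}\dgal{a,a^\ast}^\circ$. The brackets $\dgal{a,a}$ and $\dgal{a^\ast,a^\ast}$ vanish on both sides.
\item If $a$ is a loop, the block is Example \ref{Ex:g1r0} with $x=a$, $y=a^\ast$. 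This holds up to the choice of orientation $o_s(a,a^\ast)=\pm1$. If $a^\ast<_s a$, we obtain the opposite ordering, and this amounts to exchanging the roles of $x$ and $y$ and changing the sign of the bracket. Compatibility is invariant under both operations.
\end{itemize}

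Having the blocks, we apply Proposition \ref{Pr:CompFus} repeatedly. At each fusion step, the induced anti-involution $\phi_f$ acts by \eqref{Eq:phiExt}. We then check that the composite anti-involution on $\tA_{\Upsilon,0}$, restricted to the images of the arrows, sends each arrow $a$ to $a^{-1}$. Under fusion, an arrow $a$ becomes, for example, $e_{12}a$. Its image is $\phi(a)e_{21}=a^{-1}e_{21}$, which is precisely the inverse of $e_{12}a$ in $A^f$. Hence, after all fusions, $\phi(a)=a^{-1}$ for every $a\in\overline{\Upsilon}$. By uniqueness of an anti-automorphism given on generators (the inverses of the generators $aa^\ast$ and $a^\ast a$ being forced), this is the anti-involution of the statement.

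The main obstacle is bookkeeping. We must make sure that the sequence of fusions, with the orderings $<_s$, reproduces exactly the bracket of Theorem \ref{ThmVdB}, including the signs $o_s$. We must also make sure that Proposition \ref{Pr:CompFus}, which is stated for fusion of two idempotents inside one algebra, applies to fusions across direct summands. For the first point we rely on \cite{F2}, where Theorem \ref{ThmVdB} is proved by exactly this fusion. For the second point, we note that $A\oplus A'$ over $A_0\oplus A_0'$ is itself an algebra over a semisimple base, so the proposition applies verbatim.
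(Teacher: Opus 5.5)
Your strategy is the paper's. It builds $\tA_{\Upsilon,0}$ by fusion from one-arrow blocks and propagates compatibility with Proposition \ref{Pr:CompFus}; the paper's proof is exactly that sentence, with Example \ref{Ex:Grpd1} as the only building block. Your two-vertex block, the direct-sum remark, and the check that $\phi_f$ sends each fused arrow to its inverse are all fine.

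The step that fails is your treatment of a loop with $a^\ast<_s a$ as a one-vertex block. With $x=a$ and $y=a^\ast$, formulas \eqref{loopG}--\eqref{aastG} give for this ordering
\[
\dgal{x,x}=-\tfrac12(x^2\otimes 1-1\otimes x^2),\quad
\dgal{y,y}=\tfrac12(y^2\otimes 1-1\otimes y^2),\quad
\dgal{x,y}=\tfrac12(yx\otimes 1+1\otimes xy+x\otimes y-y\otimes x).
\]
So only the $o_s$-dependent terms change sign; the term $\tfrac12(yx\otimes 1+1\otimes xy)$ does not. Exchanging $x\leftrightarrow y$ and negating cannot produce this bracket, because that operation is a symmetry of \eqref{Eq:dqbr-xy}. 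Indeed, if $\sigma$ swaps $x$ and $y$, then $-\sigma^{\otimes 2}\dgal{y,y}=\dgal{x,x}$ and $-\sigma^{\otimes 2}\dgal{y,x}=\dgal{x,y}$, so you get the $x<y$ bracket back. As written, the reversed-order loop block is therefore not covered.

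The repair is short; any one of the following works.
\begin{itemize}
\item Use the algebra automorphism $\psi:x\mapsto x^{-1},\ y\mapsto y^{-1}$. It satisfies $\psi\phi=\phi\psi$, and $\psi^{\otimes 2}\dgal{\psi^{-1}(-),\psi^{-1}(-)}$ is exactly the reversed bracket above. Hence $\phi$-compatibility transfers from Example \ref{Ex:g1r0}.
\item Verify \eqref{Eq:BrComp} on the pair $(x,y)$ directly, as in Example \ref{Ex:g1r0}.
\item Follow the paper and drop one-vertex loop blocks altogether. A loop arises from the block of Example \ref{Ex:Grpd1} by fusing its two vertices, and the two orderings of $a,a^\ast$ correspond to the two directions of fusion. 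Proposition \ref{Pr:CompFus} then covers loops with no separate computation.
\end{itemize}
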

\begin{proof}
 This follows from the 1-arrow case in Example \ref{Ex:Grpd1} and Proposition \ref{Pr:CompFus}.
\end{proof}

\begin{rem}
 We are unable to provide an analogue of Proposition \ref{Pr:Grpd} when $\gamma\neq 0$.
A consequence of this drawback is that the representation space $\Rep(\tA_{\Upsilon,0},\alpha)$ (hence any twisted version $\Rep^{\phi,\tau}(\tA_{\Upsilon,0},\alpha)$) is empty except when all $\alpha_s$ are equal on connected components of $\Upsilon$ because we require the invertibility of each $\X(a)$ with $a\in \overline{\Upsilon}$.
\end{rem}

\begin{thm} \label{Thm:qP-Double}
 Fix $N\in \Z_{>0}$ and consider the one-arrow quiver $\Upsilon=1\longrightarrow2$.
Then the double quasi-Hamiltonian algebra on $\tA_{\Upsilon,0}$ given in Theorem \ref{ThmVdB}
equipped with the anti-involution $\phi$ given by inverting arrows
is such that $\Rep^{\phi,\tau}(\tA_{\Upsilon,0},(N,N))$ is isomorphic to the quasi-Poisson double
$D(\Orm_N)$ (if $\tau:\xi \mapsto \xi^T$)
or  $D(\Sp_{N})$ (if $\tau:\xi \mapsto \Omega\xi^T \Omega^T$ for $\Omega=\diag(\Omega_N,\Omega_N)$ with $N$ even).
\end{thm}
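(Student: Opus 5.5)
The plan is to identify the coordinate rings explicitly, then compare the bracket of Theorem \ref{Thm:BrqPO} (resp. Theorem \ref{Thm:BrqPsp}) with the quasi-Poisson structure on the double $D(G)=G\times G$ of \cite{AKSM}, together with the moment maps and the group actions.

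\textbf{Step 1: the variety.} A point of $\Rep(\tA_{\Upsilon,0},(N,N))$ is a pair of matrices $A=\X(a)$ and $B=\X(a^\ast)$, viewed as off-diagonal blocks of size $N$. Since $aa^\ast+e_2$ and $a^\ast a+e_1$ are localised and $\gamma=0$, both $A$ and $B$ are invertible, so $\Rep(\tA_{\Upsilon,0},(N,N))\simeq \Gl_N\times\Gl_N$. The twisting relations \eqref{TwIdeal}, applied to $\phi(a)=a^{-1}$, read $\X(a^{-1})=\tau(\X(a))$ in the appropriate block. In the orthogonal case this gives $A^{-1}=A^T$ and $B^{-1}=B^T$; in the symplectic case it gives $A^{-1}=\Omega_NA^T\Omega_N^T$, and similarly for $B$. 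Hence the twisted space is $G\times G$ with $G=\Orm_N$ or $\Sp_N$. The group $G_\alpha=G\times G$ acts by $(g_1,g_2)\cdot(A,B)=(g_1^{-1}Ag_2,\,g_2^{-1}Bg_1)$, which is the action on $D(G)$ after the change of variables $(A,B)\mapsto(A,B^{-1})$ or a similar one. I will fix that identification by matching the moment map: by Example \ref{Ex:Grpd1}, $\X(\Phi)=(AB,\,A^{-1}B^{-1})$. This should be compared with the moment map $(ab^{-1},\,a^{-1}b)$ of $D(G)$ from \cite{AKSM} (up to their conventions), which suggests $a=A$ and $b=B^{-1}$. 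I also need to check that $\phi(\Phi)\Phi=1$ holds, so that Theorem \ref{Thm:BrqPO} (resp. Theorem \ref{Thm:BrqPsp}) applies to $\Phi$ directly.

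\textbf{Step 2: brackets on generators.} From \eqref{qP-1arrow} and \eqref{Eq:BrPO} (resp. \eqref{Eq:BrPsymp}), I will write $\br{A_{ij},B_{kl}}$, $\br{A_{ij},A_{kl}}$ and $\br{B_{ij},B_{kl}}$. The second and third are $\frac12\dgal{\phi(a),a}$-type terms. These I compute via the derivation rule: $\dgal{a^{-1},a}=-a^{-1}\ast\dgal{a,a}\ast a^{-1}=0$, so $\br{A,A}=0$. Similarly, $\br{B,B}$ reduces to $\dgal{(a^\ast)^{-1},a^\ast}=0$. For $\br{A,B}$ there are two contributions: one from $\dgal{a,a^\ast}$ and one from $\dgal{a^{-1},a^\ast}$, the latter computed from \eqref{qP-1arrow} by the inner derivation rule. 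The result is an explicit quadratic expression in the entries of $A,B,A^T,B^T$.

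\textbf{Step 3: comparison with $D(G)$.} The quasi-Poisson bivector of $D(G)$ is given in \cite{AKSM} as
\[
P=\tfrac12\sum_a\bigl(\check F_a^{L,1}\wedge F_a^{R,2}+\check F_a^{R,1}\wedge F_a^{L,2}\bigr)\text{-type terms},
\]
expressed with invariant vector fields. Evaluating it on the coordinate functions $a_{ij}$ and $b_{kl}$ with the dual bases of Example \ref{Ex:Ortho} (or \ref{App:CartSP}) yields sums over $\og(N)$ or $\spg(N)$ basis elements. These collapse to expressions using $\delta$'s and their transposes or $\Omega$-conjugates, exactly as in the moment-map verification in the proof of Theorem \ref{Thm:BrqPO}.

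Rather than matching $P$ directly, I would argue more economically. Both structures are quasi-Poisson for the same $G\times G$ action, and they share the same group-valued moment map; that they are quasi-Poisson follows from Theorem \ref{Thm:BrqPO} and Theorem \ref{Thm:BrqPsp}, with the moment map identified in Step 1. Moreover, $D(G)$ is nondegenerate. A nondegenerate Hamiltonian quasi-Poisson structure is determined by its moment map and action through the moment map condition \eqref{Gmomap}, since the vector fields $(\check F_a)_M$ together with $d\Xi$ span everything when nondegenerate. Concretely, \eqref{Gmomap} fixes $\br{\Phi_{ij},-}$, and the entries of $AB$ and $A^{-1}B^{-1}$ generically generate the function field of $G\times G$ up to finitely many choices. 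I expect that to determine the bivector, but this uniqueness argument is delicate. So the fallback, and the main obstacle, is the direct index computation of Step 3: matching the $\frac12\dgal{\phi(a),a^\ast}$ terms with the $\og$/$\spg$ Casimir contractions, keeping track of sign and scaling conventions, and absorbing a possible global factor by rescaling.
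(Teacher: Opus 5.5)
Your Steps 1--2, with the fallback in Step 3, are the paper's proof. The twisting relation makes $(A,B)$ a point of $G\times G$, and one gets $\br{A_{ij},A_{kl}}=\br{B_{ij},B_{kl}}=0$. Next, $\br{A_{ij},B_{kl}}$ comes from \eqref{qP-1arrow} together with $\dgal{a^{-1},a^\ast}=-a^{-1}\ast\dgal{a,a^\ast}\ast a^{-1}=-\frac12(a^\ast\otimes a^{-1}+a^{-1}\otimes a^\ast)$. Finally one matches with the bivector $P=\frac12\sum_\alpha (F_\alpha,0)^L\wedge(0,\check F_\alpha)^R+\frac12\sum_\alpha (F_\alpha,0)^R\wedge(0,\check F_\alpha)^L$ and the moment map of \cite[Ex.~5.3]{AKSM}.

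One correction to Step 1: no change of variables is needed. The double there has action $(g_1a_1g_2^{-1},g_2a_2g_1^{-1})$ and moment map $(a_1a_2,a_1^{-1}a_2^{-1})$. So $(a_1,a_2)=(A,B)$ matches both the action and $\X(\Phi)=(AB,A^{-1}B^{-1})$, whereas $b=B^{-1}$ would spoil the action.

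The ``economical'' uniqueness argument is wrong, not merely delicate, and should be dropped. Condition \eqref{Gmomap} only prescribes $\br{\Xi^\ast\mathcal H,-}$, i.e.\ $P^\sharp$ on $\Xi^\ast T^\ast(G\times G)$, and here $\Xi$ is far from \'etale. Since $\Xi_2^{-1}=BA=A^{-1}\Xi_1A$, the entries of $\Xi$ satisfy $\tr((AB)^m)=\tr((A^{-1}B^{-1})^{-m})$, so they do not generate a finite-index subfield. Infinitesimally, $\ker d\Xi_{(A,B)}\cong\mathfrak z_{\g}(BA)$ has dimension at least $\operatorname{rank}G$ whenever $BA$ lies in the identity component (always for $\Sp_N$). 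Hence two structures with the same action and moment map may differ by an invariant bivector with values in $\wedge^2\ker d\Xi$. Nothing forces this to vanish once $\operatorname{rank}G\geq 2$; only for $\operatorname{rank}G\leq 1$ does your argument go through. Nondegeneracy does not help either. Already for the Hamiltonian $G\times G$-space $T^\ast G$ with $\operatorname{rank}G\geq 2$, take $f_1,f_2$ invariant polynomials on $\g^\ast$ and $\mu_1$ the moment map for left translations. Adding $d(f_1\circ\mu_1)\wedge d(f_2\circ\mu_1)$ to the symplectic form changes it, yet keeps closedness, nondegeneracy, the action and the moment map.

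The direct comparison, which the paper only asserts, is short if you contract with the Casimir rather than individual basis elements. For $\og_N$, $\sum_\alpha F_\alpha\otimes\check F_\alpha=\frac12\sum_{p,q}(E_{pq}\otimes E_{qp}-E_{pq}\otimes E_{pq})$, and $P$ gives
\[
P\big(d(a_1)_{ij},d(a_2)_{kl}\big)=\tfrac14\delta_{kj}(a_1a_2)_{il}+\tfrac14\delta_{il}(a_2a_1)_{kj}-\tfrac14(a_1)_{ik}(a_2)_{jl}-\tfrac14(a_2)_{ki}(a_1)_{lj}.
\]
This is exactly your Step 2 output after using $A^{-1}=A^T$, and $P$ vanishes on pairs of entries of the same factor.

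In type $\typC$ the Casimir is $\frac12\sum_{p,q}E_{pq}\otimes(E_{qp}-\sgn_N(p,q)E_{\tau(p)\tau(q)})$. The two cross terms become $+\frac14\sgn_N(k,j)A_{i\tau(k)}B_{\tau(j)l}+\frac14\sgn_N(i,l)B_{k\tau(i)}A_{\tau(l)j}$. On the double-bracket side this uses $(A^{-1})_{uv}=+\sgn_N(u,v)A_{\tau(v)\tau(u)}$, which is what \eqref{Eq:DefRelSp} gives (check $N=2$). The paper's display invokes this identity with a minus sign, which flips these two terms. It is the plus-sign version that matches $P$.
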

\begin{proof}
 We show the result in the symplectic case, and leave the orthogonal case as an easy exercise.

First of all, $\Rep(\tA_{\Upsilon,0},(N,N))$ is parametrised by the couple of $N\times N$ invertible matrices $(A,B)$ that correspond to the non-trivial blocks in the matrices representing the arrow $a\in \Upsilon$ and its double $2\stackrel{a^\ast}{\longrightarrow}1$, respectively.
Passing to the twisted representation space $\Rep^{\phi,\tau}(\tA_{\Upsilon,0},(N,N))$,
the defining relation \eqref{TwIdeal} entails that the matrices $(A,B)$ satisfy
\[
 A^{-1}=\Omega_N A^T \Omega_N^T, \quad B^{-1}=\Omega_N B^T \Omega_N^T,
\]
i.e. $\Rep^{\phi,\tau}(\tA_{\Upsilon,0},(N,N))= \Sp_N\times \Sp_N$. The $(\Sp_N\times \Sp_N)$-action induced by \eqref{Infgrp} reads
\[
 (g,h)\cdot (A,B) = (gAh^{-1},hBg^{-1}), \qquad g,h \in \Sp_N\,.
\]
The quasi-Poisson bracket, denoted $\br{-,-}$ for short, is determined by the following equalities:
 \begin{align*}
 \br{A_{ij},A_{kl}}=& \, 0, \qquad  \br{B_{ij},B_{kl}}= 0\\
 \br{A_{ij},B_{kl}}=&\frac14 (BA)_{kj} \delta_{il} + \frac14 \delta_{kj} (AB)_{il}
  -\frac14\sgn_N(i,l) B_{k,i+\frac{N}{2}}\, A_{l+\frac{N}{2},j} -\frac14\sgn_N(k,j) A_{i,k+\frac{N}{2}} \,B_{j+\frac{N}{2},l}.
 \end{align*}
(Indices are understood modulo $N$.) These are obtained by combining \eqref{Eq:BrPsymp} and \eqref{qP-1arrow}, e.g.
\begin{align*}
  &\br{A_{ij},B_{kl}}=\frac12\left(\dgal{a,a^{\ast}}_{kj;il}+\sgn_N(i,j)\,
  (-a^{-1}\ast\dgal{a,a^{\ast}}\ast a^{-1})_{k,i+\frac{N}{2} ;j+\frac{N}{2},l}\right)
  \Big|_{\X(a)\to A,\, \X(a^\ast)\to B}\\
  &= \frac14 (BA)_{kj} \delta_{il} + \frac14 \delta_{kj} (AB)_{il}
  -\frac14\sgn_N(i,j) B_{k,i+\frac{N}{2}} A^{-1}_{j+\frac{N}{2},l} -\frac14\sgn_N(i,j)
  A^{-1}_{k,i+\frac{N}{2}} B_{j+\frac{N}{2},l}
\end{align*}
and then concluding with $(A^{-1})_{uv}=-\sgn_N(u,v)\, A_{v+N/2,u+N/2}$.

This is just the quasi-Poisson structure from \cite[Ex.~5.3]{AKSM} after setting $(a_1,a_2):=(A,B)$. Indeed, the two moment maps directly coincide, and it is not hard to get the above quasi-Poisson bracket from the quasi-Poisson bivector
\begin{align*}
 P=\frac12\sum_\alpha (F_\alpha,0)^L \wedge (0,\check{F}_\alpha)^R + \frac12\sum_\alpha (F_\alpha,0)^R \wedge (0,\check{F}_\alpha)^L
\end{align*}
where $(F_\alpha)$ and $(\check{F}_\alpha)$ denote the dual bases of $\spg_N$ from \ref{App:CartSP} (with $n=N/2$) which we embed in
$\spg_N\oplus \spg_N$ before acting as the left- or right-invariant vector field on the corresponding component.
\end{proof}

The previous result has the following implication for an arbitrary quiver $\Upsilon$.
Fix $N\in \Z_{>0}$ and let $\alpha=(N,\ldots,N)\in \Z^{I}$.
Considering $\tA_{\Upsilon,0}$ as in Theorem \ref{ThmVdB}
equipped with the anti-involution $\phi$ given by inverting arrows,
$\Rep^{\phi,\tau}(\tA_{\Upsilon,0},\alpha)$ is the quasi-Poisson space obtained by fusion from $|\Upsilon|$ copies of the quasi-Poisson double
$D(G)$, where $G=\Orm_N$ or $G=\Sp_{N}$ depending on the chosen type.
Indeed, $\tA_{\Upsilon,0}$ is obtained by fusion of $|\Upsilon|$ copies of the $1$-arrow case, and each one of the latter corresponds to a quasi-Poisson double by Theorem \ref{Thm:qP-Double}.
Following \cite{Maiz26} and assuming that $\Upsilon$ has no isolated vertex, let
\[
 I_{\intern}=\{s\in I \mid \,|h^{-1}(s)|\neq 1 \text{ and }|t^{-1}(s)|\neq 1\,\}\,.
\]
We can consider the action of $G_{\intern}:=\prod_{s\in I_{\intern}} G\hookrightarrow \prod_{s\in I}G$ on $\Rep^{\phi,\tau}(\tA_{\Upsilon,0},\alpha)$, and the corresponding $G_{\intern}$-valued factor $\X(\Phi)_{\intern}:=\prod_{s\in I_{\intern}} \X(\Phi_s)$ of the moment map.
Then, we set
\begin{equation}
 \mathcal{N}_G(\Upsilon) := (\X(\Phi)_{\intern})^{-1}(1)/\!\!/ G_{\intern}\,.
\end{equation}
The reduced spaces constructed in this way are complex algebraic versions of the quasi-Hamiltonian spaces associated with a quiver $\Upsilon$ and a compact Lie group $G$ in \cite{Maiz26}.
Those real spaces were introduced as deformations of the Lax-Kirchhoff moduli spaces \cite{MM25}, which are used to build new TQFTs.

\begin{rem}
 The algebra $\tA_{\Upsilon,0}$ equipped with the double bracket from Theorem \ref{ThmVdB} and the anti-involution
 $\phi:a\mapsto a^{-1}$ should be viewed as a groupoid-version of Example \ref{Ex:grGen}.
Thus, one should be able to adapt \cite{MT14} for fundamental groupoids and recover this double quasi-Hamiltonian algebra.
Making these claims precise and comparing with the approach of \cite{FR} constitutes an important open problem.
\end{rem}


\section{Mixing types \texorpdfstring{$\typB,\typC,\typD$}{B,C,D}}  \label{Sec:Mix}

We modify all previous constructions in order to mix actions of the orthogonal and symplectic groups.
This requires to adapt the definitions of twisted representation space and of involutive algebra.

\subsection{Constructions from a choice of types}

Let $A$ be an algebra over $A_0=\oplus_{s\in I} \CC e_s$.
\begin{defn} \label{def:AlgType}
A \emph{choice of types} for $A$ consists in assigning to each $s\in I$ a label, called \emph{type}, given
either by $\Orm$ (type $\typB/\typD$) or by $\Sp$ (type $\typC$).
\end{defn}
From now on, we assume that a choice of types is fixed on $A$. It is encoded in a mapping
\begin{equation} \label{Eq:fs}
 \fs:I\to \{-1,+1\}, \quad
 \fs(s)=\left\{
\begin{array}{ll}
 +1&\text{if }s \text{ is of type }\Orm , \\
 -1&\text{if }s\text{ is of type }\Sp .
\end{array}
\right.
\end{equation}

\begin{defn}
A \emph{typed anti-involution} on $A$ is an anti-morphism $\varpi:A\to A$ such that $\varpi(e_s)=e_s$ for each $s\in I$,
and if $a\in e_s A e_t$ with $s,t\in I$, then
$\varpi^2(a)=\fs(s)\fs(t)\,a$.

\noindent If $A$ is endowed with a double bracket $\dgal{-,-}$, we say that $\dgal{-,-}$ is $\varpi$\emph{-typed} when
\begin{equation} \label{Eq:BrTyped}
 \varpi^{\otimes 2}(\dgal{a,b})=\dgal{\varpi(a),\varpi(b)}^\circ \qquad \forall a,b\in A.
\end{equation}
\end{defn}
\begin{rem}
 When the choice of types is such that we assign the same type to each $s\in I$, we are in the situation where $(A,\varpi)$ is an involutive algebra and $\dgal{-,-}$ is $\varpi$-adapted as in the previous sections.
\end{rem}
\begin{rem} \label{Rem:TypeInd}
One has $\varpi^4(a)=a$ for any $a\in A$.
If $a\in e_s Ae_t$, then $\varpi^{-1}(a)=\varpi^3(a)=\fs(s)\fs(t)\,\varpi(a)$.
When $a\in e_s A e_t$ and $b\in A$ is arbitrary,
\begin{equation} \label{Eq:BrTyped2}
 \varpi^{\otimes 2}(\dgal{\varpi(a),b})=\fs(s)\fs(t)\,\dgal{a,\varpi(b)}^\circ .
\end{equation}
\end{rem}

\begin{lem}
 Let $A$ be generated by $\{a_j \mid j\in J\}$ over $A_0$.
 If \eqref{Eq:BrTyped} holds when $a,b$ are two elements in $\{a_j \mid j\in J\}$, then \eqref{Eq:BrTyped} holds for any $a,b\in A$.
\end{lem}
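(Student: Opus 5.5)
The plan is to show that the set
\[
 \mathcal{C}:=\{(a,b)\in A\times A \mid \varpi^{\otimes 2}(\dgal{a,b})=\dgal{\varpi(a),\varpi(b)}^\circ\}
\]
contains all pairs of elements of $A$. First, $\mathcal{C}$ is closed under $\CC$-linear combinations in each argument. Second, it contains every pair with one entry in $A_0$, since both sides vanish by $A_0$-linearity and $\varpi(A_0)=A_0$. It therefore suffices to show that if $(a,b),(a,c)\in\mathcal{C}$ then $(a,bc)\in\mathcal{C}$, and symmetrically for the first argument. Every element of $A$ is a linear combination of products of generators and idempotents, so an induction on word length then yields the claim. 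We first fix $a$ a generator and induct on $b$; then we induct on $a$ with $b$ arbitrary.

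For the second argument, we compute using the right derivation rule \eqref{Eq:outder}:
\[
 \varpi^{\otimes 2}(\dgal{a,bc})=\varpi^{\otimes 2}(\dgal{a,b}c)+\varpi^{\otimes 2}(b\dgal{a,c}).
\]
Because $\varpi$ is an anti-morphism, $\varpi^{\otimes 2}(d\,c)=d'' \ldots$ more precisely $\varpi^{\otimes 2}(d' \otimes d''c)=\varpi(d')\otimes\varpi(c)\varpi(d'')$. Hence
\[
 \varpi^{\otimes 2}(dc)=\bigl(\varpi(c)\ast(\varpi^{\otimes 2}d)^{\circ}\bigr)^{\circ},
\]
and similarly for left multiplication by $b$. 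Applying the inductive hypothesis, the sum becomes
\[
 \bigl(\varpi(c)\ast\dgal{\varpi(a),\varpi(b)}+\dgal{\varpi(a),\varpi(c)}\ast\varpi(b)\bigr)^\circ .
\]
By the left derivation rule \eqref{Eq:inder}, this expression should be compared with $\dgal{\varpi(a),\varpi(c)\varpi(b)}$. Here one must be careful: \eqref{Eq:inder} concerns the first argument, so for the second argument we instead use \eqref{Eq:outder}. With $\dgal{x,yz}=\dgal{x,y}z+y\dgal{x,z}$, we get
\[
 \dgal{\varpi(a),\varpi(c)\varpi(b)}^\circ=(\dgal{\varpi(a),\varpi(c)}\varpi(b))^\circ+(\varpi(c)\dgal{\varpi(a),\varpi(b)})^\circ .
\]
Since $(xdy)^\circ=y\ast d^\circ\ast x$ relates outer and inner actions under transposition, the two sides should match. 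Checking this bookkeeping of outer/inner actions and the transposition is the main (routine but error-prone) obstacle, and I would verify it on pure tensors $d=d'\otimes d''$.

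For the first argument, rather than redo the computation, I would use cyclic antisymmetry \eqref{Eq:cycanti}. Given $(a,b)\in\mathcal{C}$, applying $\circ$ to both sides gives $(b,a)\in\mathcal{C}$, since $\varpi^{\otimes2}$ commutes with $\circ$. Thus $\mathcal{C}$ is symmetric, and closure under products in the first slot follows from the second-slot case. No use of the type condition $\varpi^2=\fs(s)\fs(t)$ is needed, exactly as in \cite[Lem.~4.1]{OS}.
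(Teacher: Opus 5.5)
Your strategy is correct and is the same as the paper's: the paper just says to reproduce the Leibniz-rule induction of \cite[Lem.~4.1]{OS}, and your remark that the type condition on $\varpi^2$ is never used is exactly why that argument carries over. The $A_0$ base case, the symmetry of $\mathcal{C}$ via cyclic antisymmetry (using that $\varpi^{\otimes 2}$ commutes with $(-)^\circ$), and the induction scheme are all fine.

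However, the bookkeeping you display in the second-slot step is wrong, and with your formulas the two sides would not match.

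\textbf{First slip.} Your identity $\varpi^{\otimes 2}(dc)=\bigl(\varpi(c)\ast(\varpi^{\otimes 2}d)^{\circ}\bigr)^{\circ}$ evaluates to $\varpi(c)\varpi(d')\otimes\varpi(d'')$. It should be $\varpi(d')\otimes\varpi(c)\varpi(d'')$, as you correctly computed one line earlier.

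\textbf{Second slip.} The identity $(xdy)^\circ=y\ast d^\circ\ast x$ has the order reversed.

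\textbf{Correct identities.} No transpositions are needed in the first two:
\[
\varpi^{\otimes 2}(dc)=\varpi(c)\ast\varpi^{\otimes 2}(d),\qquad
\varpi^{\otimes 2}(bd)=\varpi^{\otimes 2}(d)\ast\varpi(b),\qquad
(xdy)^\circ=x\ast d^\circ\ast y .
\]
With these and the inductive hypothesis, the left-hand side becomes
\[
\varpi(c)\ast\dgal{\varpi(a),\varpi(b)}^\circ+\dgal{\varpi(a),\varpi(c)}^\circ\ast\varpi(b)
=\bigl(\varpi(c)\dgal{\varpi(a),\varpi(b)}+\dgal{\varpi(a),\varpi(c)}\varpi(b)\bigr)^\circ
=\dgal{\varpi(a),\varpi(c)\varpi(b)}^\circ .
\]
By \eqref{Eq:outder}, this is $\dgal{\varpi(a),\varpi(bc)}^\circ$, which closes the step. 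Once this is fixed, the rest of your argument goes through as stated.
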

\begin{proof}
 It suffices to reproduce \cite[Lem.~4.1]{OS}.
\end{proof}

Next, take $\alpha \in \N^I$ with the constraint that $\alpha_s$ is even if $s$ has type $\Sp$.
Such an $\alpha$ is a \emph{typed dimension vector}.
Put $N=\sum_{s\in I} \alpha_s$, and consider the decomposition \eqref{DecoCNrep} of $\CC^N$.
We let $G_s:=\Orm(\alpha_s)$ if $s$ has type $\Orm$, and $G_s:=\Sp(\alpha_s)$ if $s$ has type $\Sp$.
We also define the matrix $\Theta_s=\Id_{\alpha_s}$ if $s$ has type $\Orm$, and $\Theta_s:=\Omega_{\alpha_s}$ (cf. \eqref{Eq:omega}) if $s$ has type $\Sp$.
Then, we introduce an endomorphism $\theta$ on $\gl_N$ by
\begin{equation} \label{Eq:thetaType}
 \theta:\xi \mapsto \Theta \xi^T \Theta^T, \qquad \Theta= \diag(\Theta_1,\ldots,\Theta_{|I|})\,,
\end{equation}
where $\Theta_s$ depends on the type of $s$ accordingly.
If all types are $\Orm$ we are in the orthogonal case of \ref{ss:Rep};
if all types are $\Sp$ we are in the symplectic case of \ref{ss:Rep}.
If the types are mixed, note that $\theta$ is \emph{not} an involution and we only get $\theta^4=\id$ because
$\Theta^2=\diag(\fs(1) \Id_{\alpha_1},\ldots,\fs(|I|) \Id_{\alpha_{|I|}})$
with $\fs$ given in \eqref{Eq:fs}.

The twisted representation space (corresponding to our choice of types) is defined as
\begin{equation}
 \Rep^{\varpi,\theta}(A,\alpha) = \{\rho \in \Rep(A,\alpha) \mid \rho \circ \varpi = \theta \circ \rho\}\,.
\end{equation}
In terms of the coordinate ring, one has
$\CC[\Rep^{\varpi,\theta}(A,\alpha)]=\CC[\Rep(A,\alpha)]/I^{\varpi,\theta}$ for the ideal
\begin{equation} \label{TypeIdeal}
I^{\varpi,\theta} := \langle(\varpi(a)|E_{ij})-(a|\theta(E_{ij}))  \,|\,
 a\in A,\, 1\leq i,j\leq N\rangle \,.
\end{equation}
This twisted representation space  is naturally equipped with an action of $G:=\prod_s G_s$ and its Lie algebra
$\g=\prod_s \operatorname{Lie}(G_s)$ induced by \eqref{Infgrp} and \eqref{InfAct}, respectively.
This is easily seen by realizing $g\in G$  as a block-diagonal matrix in $\Gl_N(\CC)$, with blocks of respective sizes $\alpha_1,\ldots,\alpha_{|I|}$, satisfying $g \Theta g^T \Theta^T=\Id_N$.

\medskip

Next, define $\iota,\hat{\alpha}$ as in \eqref{Eq:IotaAlph}.
Similarly to \eqref{Eq:sgnAlp}, introduce $\sgn_{\alpha}: \{1,\ldots,N\} \to \{-1,+1\}$ by
\begin{equation}
 \sgn_{\alpha}(k):= \left\{
 \begin{array}{ll}
 +1 &\iota(k) \text{ has type } \Orm; \\
\sgn_{\hat{\alpha}(k)}\Big(k- \sum_{1\leq t< \iota(k)}\alpha_t\Big) &\iota(k) \text{ has type } \Sp;
 \end{array} \right. \label{Eq:sgnType}
\end{equation}
where we used \eqref{Eq:sgn} in type $\Sp$.
We can then consider $\theta$ at the level of indices as
\begin{equation} \label{Eq:ThetaSpType}
 \theta: \{1,\ldots,N\} \to \{1,\ldots,N\}, \qquad \theta(k)=\left\{
 \begin{array}{ll}
 k &\iota(k) \text{ has type } \Orm ;\\
k+\frac12 \sgn_{\alpha}(k)\,  \hat{\alpha}(k) &\iota(k) \text{ has type } \Sp;
 \end{array} \right.
\end{equation}
which satisfies $\theta^2=\id$ (although $\theta$ \eqref{Eq:thetaType} may \emph{not}). We also deduce $\sgn_\alpha(\theta(j))=\fs(\iota(j))\,\sgn_\alpha(j)$.

\begin{lem} \label{lem:thetaType}
 On elementary matrices, one has $\theta(E_{ij})=\fs(\iota(i))\sgn_\alpha(i)\, \fs(\iota(j))\sgn_\alpha(j)\, E_{\theta(j),\theta(i)}$.
\end{lem}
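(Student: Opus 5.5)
The plan is to compute $\theta(E_{ij})=\Theta E_{ji}\Theta^T$ entrywise, exactly as in Lemma \ref{lem:TauSp}. First I determine the entries of $\Theta$. Since $\Theta$ is block diagonal, $\Theta_{uv}=0$ unless $\iota(u)=\iota(v)$. In a block of type $\Orm$ we have $\Theta_s=\Id$, so $\Theta_{uv}=\delta_{uv}=\sgn_\alpha(u)\,\delta_{u,\theta(v)}$, because both $\sgn_\alpha$ and $\theta$ are trivial there by \eqref{Eq:sgnType}--\eqref{Eq:ThetaSpType}. In a block of type $\Sp$, the computation in the proof of Lemma \ref{lem:TauSp} gives $\Theta_{uv}=\sgn_\alpha(u)\,\delta_{u,\theta(v)}$, since there $\theta$ and $\sgn_\alpha$ coincide with $\tau$ and the symplectic sign. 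In both cases, therefore,
\[
\Theta_{uv}=\sgn_\alpha(u)\,\delta_{u,\theta(v)},
\]
and, using $\theta^2=\id$, this equals $\sgn_\alpha(\theta(v))\,\delta_{\theta(u),v}$.

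Next, since $(E_{ji})_{ab}=\delta_{ja}\delta_{ib}$, we get $(\Theta E_{ji}\Theta^T)_{pq}=\Theta_{pj}\Theta_{qi}$. Substituting the formula above gives
\[
\Theta_{pj}\Theta_{qi}=\sgn_\alpha(\theta(j))\,\delta_{\theta(p),j}\;\sgn_\alpha(\theta(i))\,\delta_{\theta(q),i}.
\]
This is nonzero only when $p=\theta(j)$ and $q=\theta(i)$. Hence
\[
\theta(E_{ij})=\sgn_\alpha(\theta(i))\,\sgn_\alpha(\theta(j))\,E_{\theta(j),\theta(i)}.
\]

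Finally, I apply the identity $\sgn_\alpha(\theta(k))=\fs(\iota(k))\,\sgn_\alpha(k)$, stated just before the lemma. It is checked blockwise: in type $\Orm$ it is trivial, and in type $\Sp$ it is \eqref{Eq:sgnTau}. Applying it with $k=i$ and $k=j$ yields exactly the claimed coefficient $\fs(\iota(i))\sgn_\alpha(i)\,\fs(\iota(j))\sgn_\alpha(j)$.

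The only delicate point is bookkeeping: one must rewrite $\Theta_{uv}$ so that the Kronecker delta fixes the row index $p$ rather than $\theta(p)$, and then track which sign gets evaluated at $\theta(\cdot)$. That is precisely where the factors $\fs$ appear, and where this result differs from the uniform symplectic case, in which $\sgn_\alpha(\tau(u))=-\sgn_\alpha(u)$ and the two minus signs cancel.
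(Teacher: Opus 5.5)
Your proof is correct and follows essentially the same route as the paper: both compute $(\Theta E_{ji}\Theta^T)_{pq}=\Theta_{pj}\Theta_{qi}$ using a Kronecker-delta formula for the entries of $\Theta$, checked separately on blocks of type $\Orm$ and type $\Sp$. The only difference is cosmetic. The paper writes $\Theta_{pk}=\fs(\iota(k))\sgn_\alpha(k)\,\delta_{p,\theta(k)}$ directly, while you write $\Theta_{uv}=\sgn_\alpha(u)\,\delta_{u,\theta(v)}$ and then convert using $\sgn_\alpha(\theta(k))=\fs(\iota(k))\sgn_\alpha(k)$; the two formulations say the same thing.
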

\begin{proof}
It suffices to calculate $\theta(E_{ij})=\sum_{p,q=1}^N (\Theta E_{ji} \Theta^T)_{pq} E_{pq}=\sum_{p,q=1}^N (\Theta)_{pj} (\Theta)_{qi} E_{pq}$.
The formula follows once we note that for any $1\leq p,k\leq N$, we can write
\begin{equation*}
 \Theta_{pk}=\fs(\iota(k)) \sgn_\alpha(k) \delta_{p,\theta(k)}\,.
\end{equation*}
Indeed, this last formula holds when $\iota(k)$ has type $\Orm$ because both sides evaluate to $\delta_{pk}$,
and when $\iota(k)$ has type $\Sp$ because both sides evaluate to $\sgn_\alpha(p)\delta_{p,\theta(k)}$
(cf. \eqref{Eq:fs}, \eqref{Eq:sgnType} and \eqref{Eq:ThetaSpType}).
\end{proof}

We are led to define the $2$-parameter typed sign function generalizing \eqref{Eq:sgnType}:
\begin{equation} \label{Eq:sgnTypeDble}
 \fsgn_\alpha:\{1,\ldots,N\}\times\{1,\ldots,N\}\to \{-1,+1\}, \quad
 \fsgn_\alpha(i,j)=\fs(\iota(i))\sgn_\alpha(i)\, \fs(\iota(j))\sgn_\alpha(j)\,,
 \end{equation}
 which satisfies ($1\leq i,j,k\leq N$)
\begin{equation} \label{Eq:sgnTypePr}
 \fsgn_\alpha(i,j)=\fsgn_\alpha(j,i), \quad \fsgn_\alpha(i,j)\fsgn_\alpha(j,k)=\fsgn_\alpha(i,k), \quad
 \fsgn_\alpha(i,\theta(j))=\fs(\iota(j))\,\fsgn_\alpha(i,j).
\end{equation}
Owing to Lemma \ref{lem:thetaType}, the defining ideal \eqref{TypeIdeal} of the twisted representation space is generated by
\begin{equation} \label{Eq:DefRelType2}
 \varpi(a)_{ij}-\fsgn_\alpha(i,j)\, a_{\theta(j),\theta(i)}, \qquad a\in A, \quad 1\leq i,j\leq N,
\end{equation}
or equivalently
\begin{equation} \label{Eq:DefRelType3}
 a_{ij}-\fsgn_\alpha(\theta(i),\theta(j))\, \varpi(a)_{\theta(j),\theta(i)}, \qquad a\in A, \quad 1\leq i,j\leq N.
\end{equation}

\begin{exmp}
Using \eqref{Eq:DefRelType2}, we recover \eqref{Eq:DefRelO} whenever $\iota(i),\iota(j)$ both have type $\Orm$.
Similarly, we recover \eqref{Eq:DefRelSp} whenever $\iota(i),\iota(j)$ both have type $\Sp$.
If $\iota(i)$ has type $\Orm$ and $\iota(j)$ type $\Sp$ (or vice-versa), one has
\begin{equation*}
 \varpi(a)_{ij} = - \sgn_\alpha(j) a_{\theta(j),i}, \qquad
 a_{ji}=-\sgn_\alpha(j)\, \varpi(a)_{i,\theta(j)},
\end{equation*}
which is consistent with $\varpi^2(a)_{ji}=-a_{ji}$ in that case (since $\varpi$ is a typed anti-involution).
\end{exmp}

\subsection{Inducing a biderivation for mixed types}

\begin{prop} \label{Pr:BrType}
 Let $A$ be an algebra over the semi-simple algebra $A_0=\oplus_{s\in I} \CC e_s$, and fix a choice of types for $A$.
Assume that $\varpi:A\to A$ is a typed anti-involution and $A$ is equipped with a double bracket  $\dgal{-,-}$ which is $\varpi$-typed.
Fix a typed dimension vector $\alpha\in \N^I$, $N=\sum_{s\in I}\alpha_s$,
with the corresponding involution $\theta$ on $\gl_N$ given by \eqref{Eq:thetaType}.
Then, the antisymmetric biderivation $\br{-,-}_{\varpi,\theta}$ on $\Rep^{\varpi,\theta}(A,\alpha)$ uniquely determined by
\begin{equation} \label{Eq:BrPtype}
\br{a_{ij},b_{kl}}_{\varpi,\theta}=\frac12 \dgal{a,b}_{kj,il}
+\frac12 \fsgn_\alpha(\theta(i),\theta(j))\, \dgal{\varpi(a),b}_{k\theta(i),\theta(j)l} \,, \quad a,b\in A,
\end{equation}
is well-defined. Moreover, the associated map \eqref{Eq:Jac} satisfies
\begin{equation}
 \begin{aligned}
\label{Eq:JacType}
\Jac_{\br{-,-}_{\varpi,\theta}}&(a_{ij},b_{kl},c_{uv})
=\,\,\frac14\dgal{a,b,c}_{uj,il,kv} -\frac14 \dgal{a,c,b}_{kj,iv,ul} \\
&+\frac14 \fsgn_\alpha(\theta(i),\theta(j)) \, (\dgal{\varpi(a),b,c}_{u\theta(i),\theta(j)l,kv}
-\dgal{\varpi(a),c,b}_{k\theta(i),\theta(j)v,ul} )\\
&+\frac14 \fsgn_\alpha(\theta(k),\theta(l)) \, (\dgal{a,\varpi(b),c}_{uj,i\theta(k),\theta(l)v}
- \dgal{a,c,\varpi(b)}_{\theta(l)j,iv,u\theta(k)} )\\
&+ \frac14 \fsgn_\alpha(\theta(u),\theta(v)) \, (\dgal{a,b,\varpi(c)}_{\theta(v)j,il,k\theta(u)}
- \dgal{a,\varpi(c),b}_{kj,i\theta(u),\theta(v)l} ) \,.
 \end{aligned}
\end{equation}
\end{prop}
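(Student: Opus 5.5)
We will follow the strategy of \cite[\S3]{OS}, keeping track of the signs introduced by the choice of types. First, we define the operation on the untwisted coordinate ring. For fixed $b\in A$ and $k,l$, consider the assignment $a_{ij}\mapsto$ RHS of \eqref{Eq:BrPtype}. We check that it respects three kinds of relations in $\CC[\Rep(A,\alpha)]$. Linearity in $a$ is clear. The idempotent relations \eqref{IdemRep2} are respected by $A_0$-linearity of $\dgal{-,-}$. For the product rule $(aa')_{ij}=\sum_r a_{ir}a'_{rj}$, we use \eqref{Eq:inder} for the first term. For the second term, we write $\varpi(aa')=\varpi(a')\varpi(a)$ and use \eqref{Eq:inder} again. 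Here the relations \eqref{Eq:DefRelType2}--\eqref{Eq:DefRelType3}, combined with the multiplicativity $\fsgn_\alpha(i,r)\fsgn_\alpha(r,j)=\fsgn_\alpha(i,j)$ from \eqref{Eq:sgnTypePr}, let us re-express $\varpi(a')_{\theta(r)\theta(j)}$-type entries back in terms of $a'_{rj}$. The same argument in the second slot uses \eqref{Eq:outder}. This gives a biderivation on $\CC[\Rep(A,\alpha)]$ with values in the quotient by $I^{\varpi,\theta}$.

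Next, we show that the biderivation descends to $\Rep^{\varpi,\theta}(A,\alpha)$ and is antisymmetric there. The key computation is that the bracket of a generator $\varpi(a)_{ij}-\fsgn_\alpha(i,j)\,a_{\theta(j)\theta(i)}$ of $I^{\varpi,\theta}$ with any $b_{kl}$ vanishes modulo $I^{\varpi,\theta}$. Expanding via \eqref{Eq:BrPtype}, the term $\dgal{\varpi(a),b}_{kj,il}$ must cancel against the second term of $\br{a_{\theta(j)\theta(i)},b_{kl}}$, after using $\theta^2=\id$ on indices. Symmetrically, the term with $\varpi^2(a)=\fs(s)\fs(t)a$ (for $a\in e_sAe_t$, which suffices by $A_0$-linearity) must cancel against the first term. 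The factor $\fs(s)\fs(t)$ is exactly absorbed by the rule $\fsgn_\alpha(i,\theta(j))=\fs(\iota(j))\fsgn_\alpha(i,j)$, together with the observation that nonzero entries force $\iota(i)=s$ and $\iota(j)=t$.

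Antisymmetry $\br{a_{ij},b_{kl}}=-\br{b_{kl},a_{ij}}$ then follows in two parts. The first terms are handled by cyclic antisymmetry \eqref{Eq:cycanti}. For the second terms, we apply \eqref{Eq:BrTyped2} to move $\varpi$ from $a$ to $b$, and then use \eqref{Eq:DefRelType2} to rewrite $\varpi$-images of tensor factors as $\theta$-reindexed entries. The signs collapse thanks to \eqref{Eq:sgnTypePr}.

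For the Jacobiator \eqref{Eq:JacType}, it suffices to work on generators, since both sides are triderivations. We first compute $\br{b_{kl},c_{uv}}$ as a sum of products $x_{\cdot\cdot}y_{\cdot\cdot}$ with $x\otimes y$ equal to $\dgal{b,c}$ or $\dgal{\varpi(b),c}$. We then bracket this with $a_{ij}$ via the Leibniz rule, which produces eight types of terms. Next, we regroup the cyclic sum exactly as in the proof of \cite[(3.5)]{OS}: terms involving no $\varpi$ assemble into $\frac14\dgal{a,b,c}$ and $-\frac14\dgal{a,c,b}$, using \eqref{Eq:TripBr} and the equivalent form \eqref{Eq:TripBr2}. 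Terms with a single $\varpi$ on $a$, $b$, or $c$ assemble into the corresponding pairs. Terms with two $\varpi$'s must be converted to single-$\varpi$ terms, using \eqref{Eq:BrTyped2} and \eqref{Eq:DefRelType3} to remove one $\varpi$ at the cost of a $\theta$-reindexing and a sign.

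The main obstacle is precisely this last step. Because $\theta$ on matrices is not an involution and $\varpi^2\neq\id$, every such conversion introduces a factor $\fs(\iota(\cdot))$, and one must check that these combine to the single prefactors $\fsgn_\alpha(\theta(i),\theta(j))$, etc., displayed in \eqref{Eq:JacType}. My plan is to reduce everything to homogeneous elements $a\in e_sAe_t$, $b\in e_{s'}Ae_{t'}$, $c\in e_{s''}Ae_{t''}$. Then each entry that survives pins down the blocks of its indices, and the factors $\fs(s)\fs(t)$ coming from $\varpi^2$ match the factors $\fs(\iota(\cdot))$ coming from $\fsgn_\alpha(\cdot,\theta(\cdot))$ index by index. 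When all types coincide, this reduces to the computation of \cite{OS}, which serves as a consistency check.
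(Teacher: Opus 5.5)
Your proposal is correct and follows essentially the same route as the paper. Both arguments first verify compatibility with the product rule and with the generators \eqref{Eq:DefRelType2} of $I^{\varpi,\theta}$, where the factor from $\varpi^2(a)=\fs(s)\fs(t)\,a$ is absorbed via \eqref{Eq:sgnTypePr}; they then expand $\br{a_{ij},\br{b_{kl},c_{uv}}}$ into eight terms, convert the two terms containing both $\varpi(a)$ and $\varpi(b)$ into single-$\varpi(c)$ terms using the typed compatibility and the defining relations on homogeneous elements, and sum cyclically via \eqref{Eq:TripBr}. Your separate antisymmetry check for the $\varpi$-term, via \eqref{Eq:BrTyped2} and \eqref{Eq:DefRelType2}, is in fact more careful than the paper, which attributes antisymmetry on generators to \eqref{Eq:cycanti} alone.
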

\begin{rem} \label{Rem:TypeDegen}
 If all $s\in I$ have type $\Orm$, then \eqref{Eq:BrPtype} is \eqref{Eq:BrPO} and \eqref{Eq:JacType} is \eqref{Eq:pfqPO1} with $\phi=\varpi$.

\noindent  If all $s\in I$ have type $\Sp$, then \eqref{Eq:BrPtype} is \eqref{Eq:BrPsymp} and \eqref{Eq:JacType} is \eqref{Eq:pfqPsp1} with $\phi=\varpi$ and $\theta=\tau$ \eqref{Eq:TauSp}.
\end{rem}

\begin{proof}
We write $\br{-,-}:=\br{-,-}_{\varpi,\theta}$ throughout the proof.
This operation is antisymmetric on generators by \eqref{Eq:cycanti}, and it can be uniquely extended to an antisymmetric biderivation by the Leibniz rule to $\CC[\Rep^{\varpi,\theta}(A,\alpha)]$.
To be well-defined, we need to check that it is compatible with the relations
\begin{equation*}
 1_{ij}=\delta_{ij}, \quad (\lambda a)_{ij}=\lambda a_{ij}, \quad (a+c)_{ij}=a_{ij}+c_{ij}, \quad
 (ac)_{ij}=\sum_{1\leq u\leq N} a_{iu}c_{uj},
\end{equation*}
where $a,c\in A$, $\lambda\in \CC$, $1\leq i,j\leq N$,
together with \eqref{Eq:DefRelType2}.
For the first $3$ relations, this is obvious. For the fourth one, we compute thanks to the derivation rules \eqref{Eq:outder}-\eqref{Eq:inder}
\begin{align*}
& \br{(ac)_{ij},b_{kl}}=\frac12 \dgal{ac,b}_{kj,il}
+\frac12 \fsgn_\alpha(\theta(i),\theta(j))\, \dgal{\varpi(c)\varpi(a),b}_{k\theta(i),\theta(j)l} \\
&=\frac12 (a\ast\dgal{c,b}+\dgal{a,b}\ast c)_{kj,il}
+\frac12 \fsgn_\alpha(\theta(i),\theta(j))\, \left(\varpi(c)\ast\dgal{\varpi(a),b}+\dgal{\varpi(c),b}\ast\varpi(a)\right)_{k\theta(i),\theta(j)l}.
\end{align*}
Meanwhile, we obtain by making use of \eqref{Eq:DefRelType2} then \eqref{Eq:sgnTypePr},
\begin{align*}
& \sum_u \br{a_{iu}c_{uj},b_{kl}}
= \sum_u a_{iu}\br{c_{uj},b_{kl}}
+\sum_u \br{a_{iu},b_{kl}}  c_{uj} \\
=&\frac12 \sum_u (a_{iu} \dgal{c,b}_{kj,ul} + \fsgn_\alpha(\theta(i),\theta(u))\fsgn_\alpha(\theta(u),\theta(j))  \varpi(a)_{\theta(u)\theta(i)} \dgal{\varpi(c),b}_{k\theta(u),\theta(j)l} ) \\
&+\frac12\sum_u (\dgal{a,b}_{ku,il} c_{uj} + \fsgn_\alpha(\theta(u),\theta(j))\fsgn_\alpha(\theta(i),\theta(u)) \dgal{\varpi(a),b}_{k\theta(i),\theta(u)l}   \varpi(c)_{\theta(j)\theta(u)}) \\
=&\frac12 \sum_u a_{iu} \dgal{c,b}_{kj,ul}
+\frac12 \fsgn_\alpha(\theta(i),\theta(j)) \sum_u   \dgal{\varpi(c),b}_{k\theta(u),\theta(j)l}\varpi(a)_{\theta(u)\theta(i)} \\
&+\frac12 \sum_u \dgal{a,b}_{ku,il} c_{uj}
+\frac12 \fsgn_\alpha(\theta(i),\theta(j)) \sum_u  \varpi(c)_{\theta(j)\theta(u)} \dgal{\varpi(a),b}_{k\theta(i),\theta(u)l}\,.
\end{align*}
In particular, we find the desired equality
$\br{(ac)_{ij},b_{kl}} -  \sum_u \br{a_{iu}c_{uj},b_{kl}} =0$.

Finally, to check that the operation $\br{-,-} $ is compatible with \eqref{Eq:DefRelType2}, we compute
\begin{align*}
& \br{\varpi(a)_{ij} - \fsgn_\alpha(i,j) a_{\theta(j),\theta(i)},b_{kl}}  \\
=&\frac12 \dgal{\varpi(a),b}_{kj,il}
+\frac12 \fsgn_\alpha(\theta(i),\theta(j))\, \dgal{\varpi^2(a),b}_{k\theta(i),\theta(j)l} \\
&-\frac12 \fsgn_\alpha(i,j) \dgal{a,b}_{k\theta(i),\theta(j)l}
-\frac12  \fsgn_\alpha(i,j)\fsgn_\alpha(\theta^2(i),\theta^2(j))\, \dgal{\varpi(a),b}_{kj,il}
\end{align*}
Using \eqref{Eq:sgnTypePr}, the first and fourth term directly cancel out. As this bracket is non-trivially zero only when $\varpi(a)_{ij}\neq 0$, this let us assume that $a\in e_{\iota(j)}A e_{\iota(i)}$, and therefore by  Remark \ref{Rem:TypeInd}
$\varpi^2(a)=\fs(\iota(i))\fs(\iota(j))a$.
This last equality and \eqref{Eq:sgnTypePr} entail that the second and third terms cancel out.

\medskip

Next, we verify \eqref{Eq:JacType}.
Without loss of generality, we can assume that $a\in e_{\iota(i)}A e_{\iota(j)}$,
$b\in e_{\iota(k)}A e_{\iota(l)}$ and $c\in e_{\iota(u)}A e_{\iota(v)}$.
E.g. if $\{c\}\cap e_{\iota(u)}A e_{\iota(v)}=0$, one has that \eqref{Eq:JacType} trivially vanishes because the double Jacobiator \eqref{Eq:TripBr} is a derivation in its last argument for the outer bimodule structure on $A^{\otimes 3}$.
Thus, we are free to use \eqref{Eq:BrTyped2} with $a,b,c$ in any of the two arguments.

First, using Sweedler's notation, we compute using \eqref{Eq:BrPtype},
\begin{subequations}
 \begin{align}
&\br{a_{ij} , \br{ b_{kl},c_{uv} }}
=\frac12 \br{a_{ij} , \dgal{b,c}'_{ul}\dgal{b,c}''_{kv} }
+ \frac12 \fsgn_\alpha(\theta(k),\theta(l)) \, \br{a_{ij} , \dgal{\varpi(b),c}'_{u\theta(k)} \dgal{\varpi(b),c}''_{\theta(l)v}} \nonumber \\
=&\frac14 \dgal{a, \dgal{b,c}'}_{uj,il} \dgal{b,c}''_{kv}
+\frac14 \fsgn_\alpha(\theta(i),\theta(j)) \,\dgal{\varpi(a), \dgal{b,c}'}_{u\theta(i),\theta(j)l} \dgal{b,c}''_{kv} \label{JAC12}\\
&+ \frac14 \dgal{b,c}'_{ul} \dgal{a , \dgal{b,c}''}_{kj,iv}
+ \frac14 \fsgn_\alpha(\theta(i),\theta(j)) \dgal{b,c}'_{ul} \dgal{\varpi(a) , \dgal{b,c}''}_{k\theta(i),\theta(j)v}  \label{JAC34}\\
&+\frac14 \fsgn_\alpha(\theta(k),\theta(l)) \dgal{a, \dgal{\varpi(b),c}'}_{uj,i\theta(k)} \dgal{\varpi(b),c}''_{\theta(l)v}  \label{JAC5} \\
&+\frac14 \fsgn_\alpha(\theta(i),\theta(j))\fsgn_\alpha(\theta(k),\theta(l)) \,\dgal{\varpi(a), \dgal{\varpi(b),c}'}_{u\theta(i),\theta(j)\theta(k)} \dgal{\varpi(b),c}''_{\theta(l)v} \label{JAC6}\\
&+ \frac14 \fsgn_\alpha(\theta(k),\theta(l)) \dgal{\varpi(b),c}'_{u\theta(k)} \dgal{a , \dgal{\varpi(b),c}''}_{\theta(l)j,iv} \label{JAC7} \\
&+ \frac14 \fsgn_\alpha(\theta(i),\theta(j)) \fsgn(\theta(k),\theta(l)) \dgal{\varpi(b),c}'_{u\theta(k)} \dgal{\varpi(a) , \dgal{\varpi(b),c}''}_{\theta(l)\theta(i),\theta(j)v}\,.  \label{JAC8}
\end{align}
\end{subequations}
Recalling the notation $\dgal{a,d'\otimes d''}_L=\dgal{a,d'}\otimes d''$, we can write
\begin{equation*}
 \eqref{JAC12}=
\frac14 \, (\dgal{a, \dgal{b,c}}_L)_{uj,il,kv}
+ \frac14 \fsgn_\alpha(\theta(i),\theta(j)) \,(\dgal{\varpi(a), \dgal{b,c}}_L)_{u\theta(i),\theta(j)l,kv}\,.
\end{equation*}
Similarly, using the cyclic antisymmetry \eqref{Eq:cycanti}, one has
\begin{equation*}
 \eqref{JAC34}=
-\frac14\, (\dgal{a , \dgal{c,b}}_L)_{kj,iv,ul}
-\frac14 \fsgn_\alpha(\theta(i),\theta(j)) \,  (\dgal{\varpi(a) , \dgal{c,b}}_L)_{k\theta(i),\theta(j)v,ul}\,.
\end{equation*}
It is straightforward to compute
\begin{align*}
 \eqref{JAC5}&=
 \frac14 \fsgn_\alpha(\theta(k),\theta(l)) \, (\dgal{a, \dgal{\varpi(b),c}}_L)_{uj,i\theta(k),\theta(l)v} \,, \\
 \eqref{JAC7}&=
-\frac14 \fsgn_\alpha(\theta(k),\theta(l))\, (\dgal{a , \dgal{c,\varpi(b)}}_L)_{\theta(l)j,iv,u\theta(k)} \,.
\end{align*}
Then, we also need the $\varpi$-typed condition \eqref{Eq:BrTyped} to obtain
\begin{align*}
\eqref{JAC6}&=
\frac14 \fsgn_\alpha(\theta(i),\theta(j))\fsgn_\alpha(\theta(k),\theta(l)) \,(\dgal{\varpi(a), \dgal{\varpi(b),c}}_L)_{u\theta(i),\theta(j)\theta(k),\theta(l)v} \\
&=\frac14 \fsgn_\alpha(\theta(i),\theta(j))\fsgn_\alpha(\theta(k),\theta(l)) \,(\dgal{\varpi(a),\varpi^{\otimes 2} \dgal{b,\varpi^{-1}(c)}^{\circ}}_L)_{u\theta(i),\theta(j)\theta(k),\theta(l)v} \\
&=-\frac14 \fsgn_\alpha(\theta(i),\theta(j))\fsgn_\alpha(\theta(k),\theta(l)) \,(\varpi^{\otimes 3}\tau_{(12)}\dgal{a, \dgal{\varpi^{-1}(c),b}}_L)_{u\theta(i),\theta(j)\theta(k),\theta(l)v}\,.
\end{align*}
The defining relation \eqref{Eq:DefRelType2} of $\CC[\Rep^{\varpi,\theta}(A,\alpha)]$ and $\varpi^{-1}(c)=\fs(\iota(u))\fs(\iota(v))\varpi(c)$ entail
\begin{align*}
\eqref{JAC6}
&=-\frac14 \fsgn_\alpha(\theta(u),\theta(v)) \,(\dgal{a, \dgal{\varpi(c),b}}_L)_{kj,i \theta(u),\theta(v)l}
\end{align*}
after making the following simplification that follows from \eqref{Eq:sgnTypePr}
\begin{align*}
&\,\fsgn_\alpha(\theta(i),\theta(j))\fsgn_\alpha(\theta(k),\theta(l))\fs(\iota(u))\fs(\iota(v))\, \fsgn_\alpha(u,\theta(i))\fsgn_\alpha(\theta(j),\theta(k))\fsgn_\alpha(\theta(l),v) \\
&=\fs(\iota(u))\fs(\iota(v)) \fsgn_\alpha(u,v) = \fsgn_\alpha(\theta(u),\theta(v))\,.
\end{align*}
In the same way, we get
\begin{align*}
 \eqref{JAC8}
 &=-\frac14 \fsgn_\alpha(\theta(i),\theta(j)) \fsgn(\theta(k),\theta(l)) \,
 (\dgal{\varpi(a) , \dgal{c,\varpi(b)}}_L)_{\theta(l)\theta(i),\theta(j)v,u\theta(k)} \\
&=\frac14 \fsgn_\alpha(\theta(i),\theta(j)) \fsgn(\theta(k),\theta(l)) \,
( \varpi^{\otimes 3}\tau_{(12)}\,\dgal{a ,\dgal{b,\varpi^{-1}(c)}}_L)_{\theta(l)\theta(i),\theta(j)v,u\theta(k)} \\
&=\frac14 \fsgn_\alpha(\theta(u),\theta(v)) \,(\dgal{a, \dgal{b,\varpi(c)}}_L)_{\theta(v)j,il,k\theta(u)}
\end{align*}
after simplifying factors as
\begin{align*}
&\,\fsgn_\alpha(\theta(i),\theta(j))\fsgn_\alpha(\theta(k),\theta(l))\fs(\iota(u))\fs(\iota(v))\, \fsgn_\alpha(\theta(j),v)\fsgn_\alpha(\theta(l),\theta(i)) \fsgn_\alpha(u,\theta(k)) \\
&=\fs(\iota(u))\fs(\iota(v)) \fsgn_\alpha(u,v) = \fsgn_\alpha(\theta(u),\theta(v))\,.
\end{align*}
Gathering these expressions, we can write
\begin{align*}
&\br{a_{ij} , \br{ b_{kl},c_{uv} }}
\,=\,\frac14 (\dgal{a, \dgal{b,c}}_L)_{uj,il,kv} -\frac14 (\dgal{a, \dgal{c,b}}_L)_{kj,iv,ul} \\
&+\frac14 \fsgn_\alpha(\theta(i),\theta(j)) \, \left((\dgal{\varpi(a), \dgal{b,c}}_L)_{u\theta(i),\theta(j)l,kv}
-(\dgal{\varpi(a), \dgal{c,b}}_L)_{k\theta(i),\theta(j)v,ul} \right)\\
&+\frac14 \fsgn_\alpha(\theta(k),\theta(l)) \, \left((\dgal{a, \dgal{\varpi(b),c}}_L)_{uj,i\theta(k),\theta(l)v}
- (\dgal{a, \dgal{c,\varpi(b)}}_L)_{\theta(l)j,iv,u\theta(k)} \right)\\
&+ \frac14 \fsgn_\alpha(\theta(u),\theta(v)) \, \left((\dgal{a, \dgal{b,\varpi(c)}}_L)_{\theta(v)j,il,k\theta(u)}
- (\dgal{a, \dgal{\varpi(c),b}}_L)_{kj,i\theta(u),\theta(v)l} \right) \,.
\end{align*}
We easily deduce from this equality the expressions for $\br{b_{kl} , \br{ c_{uv},a_{ij} }}$ and $\br{c_{uv}, \br{a_{ij} , b_{kl} }}$ by permuting factors; summing everything up and using the definition of the triple bracket \eqref{Eq:TripBr} yields  \eqref{Eq:JacType}.
\end{proof}

\begin{lem} \label{Lem:ActType}
The action of $G=\prod_{s\in I} G_s$ (where $G_s=\Orm(\alpha_s)$ or $\Sp(\alpha_s)$ according to the type of $s$) on $\Rep^{\varpi,\theta}(A,\alpha)$
 leaves invariant  the antisymmetric biderivation $\br{-,-}_{\varpi,\theta}$ of Proposition \ref{Pr:BrType}.
\end{lem}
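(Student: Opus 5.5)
The plan is to verify invariance on generators, i.e. to show $g\cdot\br{a_{ij},b_{kl}}_{\varpi,\theta}=\br{g\cdot a_{ij},g\cdot b_{kl}}_{\varpi,\theta}$ for all $g\in G$. Both sides are biderivations in the transported sense, and $g$ acts by algebra automorphisms of $\CC[\Rep^{\varpi,\theta}(A,\alpha)]$, so generators suffice. It is convenient to package entries with matrix indices via bilinear forms. For $x,y,z,w\in\CC^N$ put $a(x,y):=\sum_{i,j}x_ia_{ij}y_j$, and for $d\in A^{\otimes2}$ put $d(x,y;z,w):=\sum x_k d'_{kj}y_j\,z_i d''_{il}w_l$. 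In this notation \eqref{Infgrp} reads $g\cdot a(x,y)=a(g^{-T}x,\,gy)$; I use the convention $(g\cdot\X(a))_{ij}=(g^{-1}\X(a)g)_{ij}$, which is equivalent.

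First consider the $\frac12\dgal{a,b}_{kj,il}$ part. This is the Van den Bergh term, which is already $\Gl_\alpha$-equivariant: contracting with $x,y,z,w$ gives $\frac12\dgal{a,b}(z,y;x,w)$ when $a$ is paired with $(x,y)$ and $b$ with $(z,w)$. Under $g$ the outer indices transform by $g^{-T}$ on the left and by $g$ on the right, consistently. The key point is that the pairing of the inner indices $j$ (from $a$) and $i$ (from $a$) across the two tensor factors still matches $g^{-T}$ against $g$, so this term is invariant for all of $\Gl_\alpha$, hence for $G$.

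Next consider the twisted term $\frac12\fsgn_\alpha(\theta(i),\theta(j))\dgal{\varpi(a),b}_{k\theta(i),\theta(j)l}$. Using Lemma \ref{lem:thetaType}, write $\fsgn_\alpha(\theta(i),\theta(j))\delta_{p,\theta(i)}\delta_{q,\theta(j)}$ in terms of entries of $\Theta$, which gives $\Theta_{pi}\Theta_{qj}$ up to the sign factors in \eqref{Eq:sgnTypePr}. Contracted with $x,y$, the twisted term then becomes $\frac12\dgal{\varpi(a),b}(z,\Theta' x;\Theta'' y,w)$ for suitable matrices $\Theta',\Theta''\in\{\Theta,\Theta^T\}$, to be determined from the lemma. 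Under $g$ one has $x\mapsto g^{-T}x$ and $y\mapsto gy$. Invariance then requires $\Theta'g^{-T}=g\,\Theta'$ and $\Theta''g=g^{-T}\Theta''$ (with the transposes landing in the right slots), i.e. exactly $g\Theta g^T=\Theta$, equivalently $g\Theta g^T\Theta^T=\Id_N$. This is the defining condition of $G$ recorded after \eqref{TypeIdeal}; note that $\Theta^T=\Theta^{-1}$, and $g$ is block diagonal, so it commutes with $\Theta^2$.

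The main obstacle is bookkeeping the signs $\fsgn_\alpha$ and the choice between $\Theta$ and $\Theta^T$ in each slot, since $\theta$ is not an involution when types are mixed. If the direct approach is too messy, I would instead argue more structurally. The action of $g$ on $\Rep(A,\alpha)$ preserves the ideal \eqref{TypeIdeal} precisely when $\theta(g^{-1}Xg)=g^{-1}\theta(X)g$, which again reduces to $g\Theta g^T\Theta^T=\Id$. One then computes $\br{a(x,y),b(z,w)}$ in the coordinate-free form and checks invariance, performing the verification block by block: each block $(\iota(i),\iota(j))$ is either of a single type, covered by the $\Orm$/$\Sp$ cases, or of mixed type, where the sign $\fs(\iota(i))\fs(\iota(j))=-1$ enters symmetrically on both sides.
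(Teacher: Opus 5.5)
Your proposal is correct and follows essentially the paper's route: a direct check on generators of \eqref{Eq:BrPtype}. The Van den Bergh term is automatically equivariant, and the twisted term is handled using the defining relation $g\Theta g^T\Theta^T=\Id_N$ of $G$ (the paper phrases this in indices as $(g^{-1})_{ij}=\fsgn_\alpha(\theta(i),\theta(j))\,g_{\theta(j),\theta(i)}$ together with \eqref{Eq:sgnTypePr}). The step you left open is settled by $\fsgn_\alpha(\theta(i),\theta(j))=\sgn_\alpha(i)\sgn_\alpha(j)$ and $\sgn_\alpha(i)\,\delta_{p,\theta(i)}=(\Theta^T)_{pi}$, so $\Theta'=\Theta''=\Theta^T$; both of your conditions then reduce to $g\Theta g^T=\Theta$, the second after inverting and using $\Theta^{-1}=\Theta^T$.
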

\begin{proof}
 Recall that any $g\in G$ satisfies $g^{-1}=\Theta g^T \Theta^T$ hence $(g^{-1})_{ij}=\fsgn_\alpha(\theta(i),\theta(j)) g_{\theta(j),\theta(i)}$. Using the $G$-action induced by \eqref{Infgrp}, we need to check
\begin{equation*}
 \br{g\cdot a_{ij},g\cdot b_{kl}}_{\varpi,\theta}
 =g\cdot \br{a_{ij},b_{kl}}_{\varpi,\theta}\,.
\end{equation*}
This is an easy computation based on \eqref{Eq:BrPtype} that uses \eqref{Eq:sgnTypePr}.
\end{proof}

\subsection{Main results for mixed types}

We start by generalizing Theorems \ref{Thm:BrPO} and \ref{Thm:BrSmp}.

\begin{thm}[Mixed Poisson case] \label{Thm:PType}
Under the assumptions of Proposition \ref{Pr:BrType}, if $\dgal{-,-}$ is a double Poisson bracket,
then the antisymmetric biderivation $\br{-,-}_{\varpi,\theta}$ on $\Rep^{\varpi,\theta}(A,\alpha)$ uniquely determined by
\eqref{Eq:BrPtype} is a Poisson bracket for which $G :=\prod_{s\in I} G_s$ acts by Poisson automorphisms.

\noindent Furthermore, if $\mu \in A$ is a moment map, it can be chosen (up to $\mu \mapsto \mu+c$ with $c\in A_0$) to satisfy
 \begin{equation}\label{mum-Type}
 \mu+\varpi(\mu)=0 \,;
\end{equation}
then $\X(\mu):\Rep^{\varpi,\theta}(A,\alpha) \to \g$ is a moment map for $\br{-,-}_{\varpi,\theta}$ after identification of $\g$ and its dual.
\end{thm}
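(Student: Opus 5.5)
The plan splits into the Poisson part and the moment map part, with the first almost free from Proposition \ref{Pr:BrType}.

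\textbf{Poisson property and invariance.} Proposition \ref{Pr:BrType} already shows that $\br{-,-}_{\varpi,\theta}$ is a well-defined antisymmetric biderivation. Formula \eqref{Eq:JacType} expresses its Jacobiator on generators $a_{ij},b_{kl},c_{uv}$ as a signed sum of matrix entries of double Jacobiators $\dgal{x,y,z}$, where $x,y,z\in\{a,b,c,\varpi(a),\varpi(b),\varpi(c)\}$. If $\dgal{-,-}$ is double Poisson, each of these vanishes, so the Jacobiator vanishes on generators. Because the Jacobiator of a biderivation is a triderivation, it then vanishes identically. $G$-invariance is Lemma \ref{Lem:ActType}.

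\textbf{Normalising the moment map.} I would repeat Proposition \ref{Pr:muselect} in typed form.
\begin{itemize}
\item Since $\mu_s\in e_sAe_s$, we have $\varpi^2(\mu_s)=\mu_s$ by the typed condition with $\fs(s)^2=1$.
\item Using \eqref{Eq:BrTyped2} with $s=t$, $\dgal{\varpi(\mu_s),b}=\varpi^{\otimes 2}(\dgal{\mu_s,\varpi^{-1}(b)})^\circ$.
\item Insert \eqref{mum}. For $b\in e_pAe_q$, $\varpi^{-1}(b)=\fs(p)\fs(q)\varpi(b)$. The term $\varpi(b)e_s\otimes e_s$ survives only when $\varpi(b)\in Ae_s$, i.e. $p=s$. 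Then $\varpi^{\otimes2}$ applied to it, and flipped, gives $e_s\otimes e_s\varpi^2(b)$, since $\varpi(\varpi(b)e_s)=e_s\varpi^2(b)$.
\item The signs $\fs(p)\fs(q)$ from $\varpi^{-1}$ and from $\varpi^2(b)=\fs(p)\fs(q)b$ cancel. This yields $\dgal{\varpi(\mu_s),b}=-\dgal{\mu_s,b}$.
\end{itemize}
So $-\varpi(\mu)$ is a moment map. By \cite[Lem.~3.3]{F3}, $-\varpi(\mu)=\mu+c$ with $c\in A_0$, and the shift $\mu\mapsto\mu+\tfrac12 c$ gives \eqref{mum-Type}. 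Here we use that $\varpi$ fixes $A_0$.

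\textbf{$\X(\mu)$ is a moment map.} First, \eqref{mum-Type} together with \eqref{Eq:DefRelType2} gives $\X(\mu)=-\Theta\X(\mu)^T\Theta^T$. This is the defining condition of $\g$, blockwise $\og$ or $\spg$. Next I would take the spanning family
\[F^{(s)}_{ij}=E_{ij}-\theta(E_{ji})=E_{ij}-\fsgn_\alpha(i,j)E_{\theta(j),\theta(i)},\qquad i,j\in\mathtt{R}_s,\]
and check \eqref{momap} on each $b_{kl}$, as in the proofs of Theorems \ref{Thm:BrPO} and \ref{Thm:BrSmp}.
\begin{itemize}
\item On the left, $\langle\mu,F^{(s)}_{ij}\rangle=\mu_{ji}-\fsgn_\alpha(i,j)\mu_{\theta(i),\theta(j)}$. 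Expand with \eqref{Eq:BrPtype} and substitute $\dgal{\varpi(\mu_s),b}=-\dgal{\mu_s,b}$ and \eqref{mum}.
\item On the right, $(F^{(s)}_{ij})_M(b_{kl})=[\X(b),F^{(s)}_{ij}]_{kl}$ by \eqref{InfAct}.
\end{itemize}

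\textbf{Main obstacle: the sign bookkeeping.} Within a single block $\mathtt{R}_s$, $\theta$ is an involution on indices, and $\fs(\iota(i))=\fs(\iota(j))=\fs(s)$. Hence by \eqref{Eq:sgnTypePr}, $\fsgn_\alpha(\theta(i),\theta(j))=\fsgn_\alpha(i,j)$ and $\fsgn_\alpha(\theta(j),\theta(i))=\fsgn_\alpha(i,j)$. With these identities the four terms pair off exactly as in \eqref{Eq:pfPS1}--\eqref{Eq:pfPS2}, with $\sgn_\alpha$ replaced by $\fsgn_\alpha$. Matching then uses $(be_s)_{ki}=b_{ki}$ and $\delta_{ki}=(e_s)_{ki}$ for indices in $\mathtt{R}_s$.
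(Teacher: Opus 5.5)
Your proposal is correct and follows the paper's proof essentially step by step. The three parts are the same: the Jacobiator formula \eqref{Eq:JacType} together with Lemma \ref{Lem:ActType}; the typed rerun of Proposition \ref{Pr:muselect} via $\dgal{\varpi(\mu_s),b}=-\dgal{\mu_s,b}$ and \cite[Lem.~3.3]{F3}; and a blockwise check of \eqref{momap} using $\fsgn_\alpha(\theta(i),\theta(j))=\fsgn_\alpha(i,j)$, which you treat uniformly for types $\Orm$ and $\Sp$.

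One small slip: the middle expression $E_{ij}-\theta(E_{ji})$ should be $E_{ij}-\theta(E_{ij})$, since by Lemma \ref{lem:thetaType} the former vanishes in type $\Orm$. This does not affect the argument, because the element you actually compute with, $E_{ij}-\fsgn_\alpha(i,j)E_{\theta(j),\theta(i)}$, is the correct one.
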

\begin{proof}
In view of \eqref{Eq:JacType}, the Poisson property $\dgal{-,-,-}=0$ of the double bracket implies that $\br{-,-}_{\varpi,\theta}$ is a Poisson bracket.
That $G$ acts by Poisson automorphisms is a consequence of Lemma \ref{Lem:ActType}.

Next, we check that $\X(\mu)$ is a moment map.
Notice that, in the $\varpi$-adapted case, Proposition \ref{Pr:muselect} can be rewritten to require \eqref{mum-Type}.
Indeed, it is based on the equality $\dgal{\varpi(\mu_s),b}=-\dgal{\mu_s,b}$ which can be proven as in \eqref{mum-b} by \eqref{Eq:BrTyped2} because $\mu_s \in e_s A e_s$ so that $\varpi^2(\mu_s)=\mu_s$.
Then, we can combine \eqref{Eq:thetaType} and  \eqref{mum-Type} to derive $\X(\mu)+\Theta \X(\mu)^T \Theta^T=0_N$, which amounts to say that $\X(\mu)$ is $\g$-valued.

Finally, it remains to verify \eqref{momap}.
If $s$ is of type $\Orm$, one needs to check the analogue of \eqref{Eq:MuFO}; its proof can be reproduced easily.
If $s$ is of type $\Sp$, one needs to check the analogue of \eqref{Eq:MuFSp}. Again this can be done without effort:
$\iota(i),\iota(j)$ are of type $\Sp$, so $\fsgn_\alpha(\theta(i),\theta(j))=\sgn_\alpha(i,j)$ in \eqref{Eq:BrPtype} and we can rewrite all the equalities after \eqref{Eq:MuFSp}.
\end{proof}

Then, we can extend  Theorems \ref{Thm:BrqPO} and \ref{Thm:BrqPsp}.

\begin{thm}[Mixed quasi-Poisson case] \label{Thm:qPType}
Under the assumptions of Proposition \ref{Pr:BrType}, if $\dgal{-,-}$ is a double quasi-Poisson bracket,
then the antisymmetric biderivation $\br{-,-}_{\varpi,\theta}$ on $\Rep^{\varpi,\theta}(A,\alpha)$ uniquely determined by
\eqref{Eq:BrPtype} is a quasi-Poisson bracket for the action of $G :=\prod_{s\in I} G_s$.

\noindent Furthermore, if $\Phi \in A^\times$ is a moment map, it can be chosen (up to $\Phi \mapsto c \Phi$ with $c\in A_0^\times$) to satisfy
\begin{equation}\label{phim-Type}
\Phi\, \varpi(\Phi)=1 \,;
\end{equation}
then $\X(\Phi):\Rep^{\varpi,\theta}(A,\alpha) \to G$ is a moment map for $\br{-,-}_{\varpi,\theta}$.
\end{thm}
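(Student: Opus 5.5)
The proof splits into two independent parts: the quasi-Poisson property, which is a Jacobiator identity, and the moment map property, which is a computation of one bracket against the group's invariant vector fields.

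\emph{Quasi-Poisson property.} By Proposition \ref{Pr:BrType}, the biderivation $\br{-,-}_{\varpi,\theta}$ is well-defined. By Lemma \ref{Lem:ActType}, it is $G$-invariant. It therefore remains to verify \eqref{Jac-qP} on triples of generators $a_{ij},b_{kl},c_{uv}$.

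The left-hand side is given by \eqref{Eq:JacType}. We may assume $a\in e_{\iota(i)}Ae_{\iota(j)}$, and similarly for $b$ and $c$. Substituting the quasi-Poisson identity \eqref{qPabc} into each of the eight terms turns them into explicit sums $\sum_s$ of products of three matrix entries.

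The Cartan trivector of $\g=\prod_s\operatorname{Lie}(G_s)$ is the sum of the blockwise expressions \eqref{Eq:CartO} for type $\Orm$ blocks and \eqref{Eq:CartSp} for type $\Sp$ blocks. Every summand of \eqref{qPabc} contains $e_s$ in all three tensor slots. Hence the contributions for a fixed vertex $s$ involve only the trivector $\psi^{\operatorname{Lie}(G_s)}$ of that block.

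The plan is to run the orthogonal computation of Theorem \ref{Thm:BrqPO} verbatim when $s$ has type $\Orm$, and the symplectic computation of Theorem \ref{Thm:BrqPsp} when $s$ has type $\Sp$. In each case the twisted terms are matched through the identity \eqref{Eq:pfqPsp3}, which needs to be generalised to mixed types. The generalisation I expect is
\[
\fsgn_\alpha(i_s,j_s)\,[\X(a),E_{\theta(j_s),\theta(i_s)}]_{pq}=\fsgn_\alpha(\theta(p),\theta(q))\,[\X(\varpi(a)),E_{\theta(p),\theta(q)}]_{j_si_s},
\]
for $i_s,j_s\in\mathtt{R}_s$. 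It should be derived from \eqref{Eq:DefRelType3} and \eqref{Eq:sgnTypePr}, using that $\varpi^2=\fs(s)\fs(t)$ on $e_sAe_t$ by Remark \ref{Rem:TypeInd}.

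\emph{Main obstacle.} The hard step is the sign bookkeeping in this identity when $p$ and $q$ lie in blocks of types different from that of $s$. Here the factor $\fs(\iota(p))\fs(\iota(q))$ appears and must cancel against the one coming from $\varpi^{-1}=\pm\varpi$. To control this, I would first reduce to the case where $a$ lies in a single block $e_{\iota(p)}Ae_{\iota(q)}$, and then check the four type combinations of $(\iota(p),\iota(q))$ separately. Once the identity holds, the eight-term matching is identical to the all-same-type proofs, because $\theta$ is an involution on indices.

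\emph{Moment map property.} First, replace $\Phi$ so that \eqref{phim-Type} holds. To do this, rerun the proof of Proposition \ref{Pr:Phiselect}, using \eqref{Eq:BrTyped2} with $\Phi_s\in e_sAe_s$, so that $\varpi^2(\Phi_s)=\Phi_s$ and the computation \eqref{Phim-tw} is unchanged. This yields that $\varpi(\Phi)^{-1}$ is a moment map, hence $\varpi(\Phi)^{-1}=c\Phi$ for some $c\in A_0^\times$ by \cite[Lem.~4.3]{F3}, and we rescale by $\sqrt{c}$.

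Next, \eqref{Eq:DefRelType2} gives $\X(\varpi(\Phi))=\Theta\X(\Phi)^T\Theta^T$. Since $\varpi$ fixes $e_s$ and is an anti-morphism, \eqref{phim-Type} gives $\Phi\varpi(\Phi)=1$ and hence also $\varpi(\Phi)\Phi=1$, because both are inverses in $A$. Therefore $\X(\Phi)$ is $G$-valued.

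Finally, \eqref{Gmomap} only involves functions $g_{i_sj_s}$ and indices inside the single block $\mathtt{R}_s$. On such indices $\fsgn_\alpha(\theta(i_s),\theta(j_s))$ reduces to $1$ in type $\Orm$ and to $\sgn_\alpha(i_s,j_s)$ in type $\Sp$. The verification is then literally the one in the second part of Theorem \ref{Thm:BrqPO} or of Theorem \ref{Thm:BrqPsp}, according to the type of $s$.
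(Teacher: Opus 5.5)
Your proposal follows the paper's proof essentially step by step. It uses the same splitting of \eqref{Eq:JacType} into vertex-wise pieces $\Jac^{(s)}$, matched against $\frac12\psi_M^{\og(\alpha_s)}$ or $\frac12\psi_M^{\spg(\alpha_s)}$ via exactly the identity \eqref{Eq:pfType5}, and the same adaptation of Proposition \ref{Pr:Phiselect} followed by a blockwise check of \eqref{Gmomap}.

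The sign bookkeeping you single out as the main obstacle is in fact immediate. The identity follows directly from \eqref{Eq:DefRelType3}, which is just a reindexing of \eqref{Eq:DefRelType2}, together with the symmetry and multiplicativity in \eqref{Eq:sgnTypePr}. No case split over the types of $\iota(p),\iota(q)$ and no use of $\varpi^{-1}=\pm\varpi$ is needed.
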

\begin{proof}
The $G$-invariance of the bracket is again a consequence of Lemma \ref{Lem:ActType}.
Showing that it satisfies the quasi-Poisson property \eqref{Jac-qP} depends on the type of each $s\in I$.
As the double bracket is quasi-Poisson, one has the defining identity \eqref{qPabc} of the triple bracket.
By trilinearity of the triple bracket, we have a decomposition
\[
 \dgal{-,-,-} = \sum_{s\in I} \dgal{-,-,-}^{(s)}\,,
\]
where $\dgal{-,-,-}^{(s)}:A^{\times 3}\to A^{\otimes 3}$ is the operation returning the $s$-th summand of \eqref{qPabc} when evaluated on the triple $(a, b, c)$. This allows to write thanks to \eqref{Eq:JacType},
\begin{equation} \label{Eq:pfType0}
 \begin{aligned}
 &\Jac_{\br{-,-}_{\varpi,\theta}}(a_{ij},b_{kl},c_{uv})
 =\sum_{s\in I} \Jac^{(s)}(a_{ij},b_{kl},c_{uv})\,, \quad \text{where }\\
\Jac^{(s)}&(a_{ij},b_{kl},c_{uv})
:=\,\,\frac14\dgal{a,b,c}^{(s)}_{uj,il,kv} -\frac14 \dgal{a,c,b}^{(s)}_{kj,iv,ul} \\
&+\frac14 \fsgn_\alpha(\theta(i),\theta(j)) \, (\dgal{\varpi(a),b,c}^{(s)}_{u\theta(i),\theta(j)l,kv}
-\dgal{\varpi(a),c,b}^{(s)}_{k\theta(i),\theta(j)v,ul} )\\
&+\frac14 \fsgn_\alpha(\theta(k),\theta(l)) \, (\dgal{a,\varpi(b),c}^{(s)}_{uj,i\theta(k),\theta(l)v}
- \dgal{a,c,\varpi(b)}^{(s)}_{\theta(l)j,iv,u\theta(k)} )\\
&+ \frac14 \fsgn_\alpha(\theta(u),\theta(v)) \, (\dgal{a,b,\varpi(c)}^{(s)}_{\theta(v)j,il,k\theta(u)}
- \dgal{a,\varpi(c),b}^{(s)}_{kj,i\theta(u),\theta(v)l} ) \,.
\end{aligned}
\end{equation}

The quasi-Poisson property will then be a consequence of the following equality (for each $s\in I$)
\begin{equation} \label{Eq:pfType1}
 \Jac^{(s)}(a_{ij},b_{kl},c_{uv})=\left\{
\begin{array}{ll}
 \frac12\psi_M^{\og(\alpha_s)} (a_{ij},b_{kl},c_{uv}) & s\text{ is of type }\Orm, \\ & \\
 \frac12\psi_M^{\spg(\alpha_s)} (a_{ij},b_{kl},c_{uv}) & s\text{ is of type }\Sp.
\end{array}
 \right.
\end{equation}
Firstly, assume that $s$ is of type $\Orm$.
Then, the RHS of \eqref{Eq:pfType1} is the $s$-th summand of \eqref{Eq:pfqPO2}.
Proceeding as in the proof of Theorem \ref{Thm:BrqPO} with notations therein, but using the relation
($j_s,k_s\in\mathtt{R}_s$ \eqref{Eq:sIndices})
\begin{align*}
&[\X(a),E_{j_sk_s}]_{il}= ((ae_s)_{i j_s} (e_s)_{k_sl}- (e_s)_{ij_s}(e_s a)_{k_sl})\\
=&\fsgn_\alpha(\theta(i),\theta(j_s)) \fsgn_\alpha(\theta(k_s),\theta(l)) \,
((e_s\varpi(a))_{\theta(j_s),\theta(i)} (e_s)_{\theta(l),\theta(k_s)}- (e_s)_{\theta(j_s),\theta(i)}(\varpi(a) e_s)_{\theta(l),\theta(k_s)})\\
=&\fsgn_\alpha(\theta(i),\theta(l)) \,
((e_s\varpi(a))_{j_s,\theta(i)} (e_s)_{\theta(l)k_s}- (e_s)_{j_s\theta(i)}(\varpi(a) e_s)_{\theta(l),k_s}) \,.
\end{align*}
Here, the first equality follows from \eqref{Eq:DefRelType3}, and the second equality from the fact that $s$ is of type $\Orm$.
This yields (where \eqref{Eq:pfqPO2}$^{(s)}_A$ denotes the summand corresponding to $s\in I$ in \eqref{Eq:pfqPO-A1}, etc.)
\small
\begin{align*}
 \eqref{Eq:pfqPO2}^{(s)}_A&=
 - \frac14 \fsgn_\alpha(\theta(k),\theta(l))\, \dgal{a,c,\varpi(b)}^{(s)}_{\theta(l)j,iv,u\theta(k)}, \quad
&&\eqref{Eq:pfqPO2}^{(s)}_B=
\frac14 \fsgn_\alpha(\theta(u),\theta(v)) \, \dgal{a,b,\varpi(c)}^{(s)}_{\theta(v)j,il,k\theta(u)}, \\
 \eqref{Eq:pfqPO2}^{(s)}_C&=
-\frac14 \dgal{a,c,b}^{(s)}_{kj,iv,ul} ,
 &&\eqref{Eq:pfqPO2}^{(s)}_D=
 \frac14 \fsgn_\alpha(\theta(i),\theta(j)) \, \dgal{\varpi(a),b,c}^{(s)}_{u\theta(i),\theta(j)l,kv}, \\
 \eqref{Eq:pfqPO2}^{(s)}_E&=
-\frac14 \fsgn_\alpha(\theta(i),\theta(j)) \, \dgal{\varpi(a),c,b}^{(s)}_{k\theta(i),\theta(j)v,ul},
&& \eqref{Eq:pfqPO2}^{(s)}_F=
\frac14\dgal{a,b,c}^{(s)}_{uj,il,kv} ,\\
 \eqref{Eq:pfqPO2}^{(s)}_G&=
-\frac14 \fsgn_\alpha(\theta(u),\theta(v)) \, \dgal{a,\varpi(c),b}^{(s)}_{kj,i\theta(u),\theta(v)l},
&&\eqref{Eq:pfqPO2}^{(s)}_H=
\frac14 \fsgn_\alpha(\theta(k),\theta(l)) \, \dgal{a,\varpi(b),c}^{(s)}_{uj,i\theta(k),\theta(l)v}.
\end{align*}
\normalsize
Gathering those terms and using \eqref{Eq:pfType0}, we have established \eqref{Eq:pfType1} in type $\Orm$.

Secondly, assume that $s$ of type $\Sp$.
In that case, we can rewrite \eqref{InfAct-sp} and after that the expressions for $\Psi^{(1)},\Psi^{(2)}$ simply by replacing $\tau$ with $\theta$ and $\sgn_\alpha$ with $\fsgn_\alpha$ (this requires \eqref{Eq:sgnTypePr}).
After relabelling indices, one can again identify $\Psi^{(1)}$ and $\Psi^{(2)}$ term by term.
We directly deduce,
\begin{equation*}
 \eqref{Psi1-000}^{(s)}+\eqref{Psi2-111}^{(s)}
=\frac14 \dgal{a,b,c}^{(s)}_{uj,il,kv}, \qquad
\eqref{Psi1-111}^{(s)}+\eqref{Psi2-000}^{(s)}
= -\frac14 \dgal{a,c,b}_{kj,iv,ul}^{(s)}.
\end{equation*}
For the other terms, we use \eqref{Eq:sgnTypePr} and \eqref{Eq:DefRelType3} to get the following analogue of \eqref{Eq:pfqPsp3}:
\begin{equation} \label{Eq:pfType5}
 \fsgn_\alpha(i_s,j_s) [\X(a),E_{\theta(j_s),\theta(i_s)}^{(s)}]_{pq}
 =  \fsgn_\alpha(\theta(p),\theta(q)) [\X(\varpi(a)),E_{\theta(p),\theta(q)}^{(s)}]_{j_si_s}\,.
\end{equation}
It is then easy to adapt the other calculations to deduce:
\begin{align*}
 \eqref{Psi1-100}^{(s)}+\eqref{Psi2-011}^{(s)}
 &=+\frac14 \fsgn_\alpha(\theta(i),\theta(j))\,\dgal{\varpi(a),b,c}_{u\theta(i),\theta(j)l,kv}^{(s)}, \\
\eqref{Psi1-011}^{(s)}+\eqref{Psi2-100}^{(s)}
&=-\frac14 \fsgn_\alpha(\theta(i),\theta(j))\,\dgal{\varpi(a),c,b}_{k\theta(i),\theta(j)v,ul}^{(s)}, \\
\eqref{Psi1-010}^{(s)}+\eqref{Psi2-101}^{(s)}
&=+\frac14 \fsgn_\alpha(\theta(k),\theta(l))\,\dgal{a,\varpi(b),c}_{uj,i\theta(k),\theta(l)v}^{(s)}, \\
\eqref{Psi1-101}^{(s)}+\eqref{Psi2-010}^{(s)}
&=-\frac14 \fsgn_\alpha(\theta(k),\theta(l))\,\dgal{a,c,\varpi(b)}_{\theta(l)j,iv,u\theta(k)}^{(s)}, \\
\eqref{Psi1-001}^{(s)}+\eqref{Psi2-110}^{(s)}
&=+\frac14 \fsgn_\alpha(\theta(u),\theta(v))\,\dgal{a,b,\varpi(c)}_{\theta(v)j,il,k\theta(u)}^{(s)}, \\
\eqref{Psi1-110}^{(s)}+\eqref{Psi2-001}^{(s)}
&=-\frac14 \fsgn_\alpha(\theta(u),\theta(v))\,\dgal{a,\varpi(c),b}_{kj,i\theta(u),\theta(v)l}^{(s)}.
\end{align*}
Using \eqref{Eq:pfType0} again, we can conclude for the type $\Sp$.

\medskip

Next, we check that $\X(\Phi)$ is a moment map.
Firstly, we can adapt Proposition \ref{Pr:Phiselect} to the $\varpi$-adapted case so as to require \eqref{phim-Type}.
This follows from the fact that we can derive \eqref{Phim-tw} with $\varpi$ in place of $\phi$.
Then,  \eqref{phim-Type} combined with \eqref{Eq:thetaType} yields $\X(\Phi)\Theta \X(\Phi)^T \Theta^T=\Id_N$, thus $\X(\Phi)$ is $G$-valued.

Secondly, it remains to verify \eqref{Gmomap}.
If $s$ is of type $\Orm$, one needs to check the analogue of \eqref{Eq:pfqPO5}; its proof can be reproduced easily.
If $s$ is of type $\Sp$, one needs to check the analogue of \eqref{Eq:pfqPsp5}; this can also be reproduced almost verbatim with the same justification as in the proof of Theorem \ref{Thm:PType}.
\end{proof}

\subsection{Additional results}

\subsubsection{A proof of Proposition \ref{Pr:RepP-Type} in NC geometry} \label{ss:NCproof}
As in the standard proof, there is nothing to do if $G_s=\Gl(\alpha_s)$. Otherwise, we make the following construction.
Firstly, $A^{\op}$ inherits a double Poisson bracket by Lemma \ref{lem:Op-P}.
By performing fusion of $A$ and $A^{\op}$ as in Example \ref{Ex:Fus}, we get a double Poisson bracket $\dgal{-,-}^-$ on $\overline{A}=A\ast_{A_0} A^{\op}$.
For any $a\in A$, we write $a\in \overline{A}$ for its inclusion under $A\hookrightarrow \overline{A}$, and
$\bar{a}\in \overline{A}$ for its inclusion under $A^{\op}\hookrightarrow \overline{A}$. Then, $\dgal{-,-}^-$ is uniquely determined by, cf. \eqref{Eq:dgalOp},
\begin{equation} \label{Eq:BrGenPf1}
 \dgal{a,b}^- = \dgal{a,b}, \quad \dgal{a,\bar{b}}^- = 0, \quad
 \dgal{\bar a,\bar b}^- = \overline{\dgal{a,b}''} \otimes \overline{\dgal{a,b}'}.
\end{equation}
Also, if $\mu\in A$ is a moment map, then by Lemma \ref{lem:Op-P} $\mu-\bar{\mu}\in \bar{A}$ is a moment map.

We now define a mapping $\fs$ \eqref{Eq:fs} encoding a choice of types.
If, in the notation of the statement of the proposition, $G_s=\Orm(\alpha_s)$ or\footnote{We only give a type to $s$ when $G_s=\Gl(\alpha_s)$ so as to construct a twisted representation space depending on a type. This is a purely artificial choice.} $\Gl(\alpha_s)$ we put $\fs(s)=+1$
while we put $\fs(s)=-1$ if  $G_s=\Sp(\alpha_s)$.
Since $A=\oplus_{s,t\in I} e_s A e_t$, we put
\[
 \varpi(a)=\fs(s)\fs(t)\bar{a}, \quad \varpi(\bar a)= a, \qquad a\in e_s Ae_t, \,\, s,t\in I,
\]
which we can extend to a typed anti-involution on $\overline{A}$ since $\bar{b}\cdot_{\op}\bar{a}=\overline{ab}$.
It is immediate to verify from \eqref{Eq:BrGenPf1} that $\dgal{-,-}^-$ is $\varpi$-adapted \eqref{Eq:BrTyped}.

Secondly, we consider the twisted representation space $\Rep^{\varpi,\theta}(\overline{A},\alpha)$ of mixed type.
Its coordinate ring is isomorphic to the one of $\Rep(A,\alpha)$ since by \eqref{Eq:DefRelType2},
$\bar{a}_{ij}=\fs(s)\fs(t)(a|\theta(E_{ij}))$ for all $a\in e_s Ae_t$ with $s,t\in I$.
By considering \eqref{Eq:BrGenPf1} in \eqref{Eq:BrPtype} (for the general mixed case of Theorem \ref{Thm:PType}) with $a,b\in A$,
one sees that the Poisson bracket reduces to the defining relation \eqref{Eq:BrGen}.
For the moment map, one has
\[
 Y_s=\X(\mu_s-\bar{\mu}_s)=\X(\mu_s) - \Theta\X(\mu_s)^T\Theta^T\,.
\]
If $G_s=\Orm(\alpha_s)$ or $G_s=\Orm(\alpha_s)$ (for $G_s$ as in the statement),
this reduces to the $\og(\alpha_s)$- or $\spg(\alpha_s)$-valued moment map in \eqref{Eq:MomapGen}.
 \qed

\subsubsection{Fusion for typed double quasi-Poisson brackets}

Let us mention that the fusion operation from \ref{ss:Fus} can be applied \emph{provided that} the fused idempotents $e_1,e_2$ are such that the corresponding indices $1,2$ have the same type, i.e. $\fs(1)=\fs(2)$.
In that case, the typed anti-involution $\varpi:A\to A$ descends to  $\varpi_f:A^f\to A^f$ by repeating the discussion leading to $\phi_f$ \eqref{Eq:phiExt}.
Moreover, $\varpi_f$ is also a typed anti-involution if we consider the mapping $\fs_f$ obtained from $\fs$ by omitting the index $2$ (or equivalently by omitting $1$).
One can then directly rewrite Proposition \ref{Pr:CompFus} in the typed case.
We also easily adapt Lemma \ref{Lem:Tw-Rep} as follows.

\begin{lem} \label{Lem:Type-Rep}
Assume that $\fs(1)=\fs(2)$, $\alpha_1=\alpha_2$ and let $\alpha^f=(\alpha_1,\alpha_3,\ldots,\alpha_{|I|})$.
Define the endomorphism $\theta_f$ on $\gl_{N-\alpha_1}$ as in \eqref{Eq:thetaType} with $\Theta=\diag(\Theta_1,\Theta_3,\ldots,\Theta_{|I|})$.
Consider the embedding $G_{\alpha^f}\hookrightarrow G_{\alpha}$ where the embedding on the first factor is diagonal
and on the other factors is the identity.
Then, there is a canonical isomorphism between $\Rep^{\varpi,\theta}(A,\alpha)$ and $\Rep^{\varpi_f,\theta_f}(A^f,\alpha^f)$ such that
the induced $G_{\alpha^f}$ action on $\Rep^{\varpi,\theta}(A,\alpha)$ is obtained by restriction from the $G_{\alpha}$-action.
\end{lem}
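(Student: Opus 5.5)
The plan is to adapt the proof of Lemma \ref{Lem:Tw-Rep}, which is the twisted analogue of \cite[\S7.10]{VdB1}, and to identify coordinate rings directly. Order the indices so that the blocks $\mathtt{R}_1,\mathtt{R}_2$ (both of size $\alpha_1$) come first. I first recall the untwisted statement, namely that $\Rep(A,\alpha)\simeq\Rep(A^f,\alpha^f)$. Given $\rho\in\Rep(A,\alpha)$, extend it to $A^{(1,2)}$ by sending $e_{12}$ and $e_{21}$ to the block matrices $E_{12}\otimes\Id_{\alpha_1}$ and $E_{21}\otimes\Id_{\alpha_1}$, which identify $\CC^{\alpha_2}$ with $\CC^{\alpha_1}$. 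Then restrict to $\epsilon A^{(1,2)}\epsilon=A^f$, acting on $\epsilon\CC^N\simeq\CC^{N-\alpha_1}$. The inverse map recovers $\rho$ on $A$ from the four kinds of generators \eqref{type1}--\eqref{type4}. For instance, $\rho(u)$ for $u\in e_2A\epsilon$ is read off from $\rho_f(e_{12}u)$, and the identification of $\CC^{\alpha_2}$ with $\CC^{\alpha_1}$ shifts row indices from $\mathtt{R}_2$ to $\mathtt{R}_1$. The group $\Gl_{\alpha^f}$ embeds diagonally on the first factor, since it must commute with $\rho(e_{12}),\rho(e_{21})$, and so the action is the restricted one.

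Next I check that this bijection matches the twisting conditions. Because $\fs(1)=\fs(2)$, we have $\Theta_1=\Theta_2$. Hence $\theta$ sends $\rho(e_{12})$ to $\Theta(E_{21}\otimes\Id)\Theta^T=E_{21}\otimes\Theta_1\Theta_1^T=E_{21}\otimes\Id=\rho(e_{21})$. So the extended representation $\tilde\rho$ on $A^{(1,2)}$ satisfies $\tilde\rho\circ\widetilde{\varpi}=\theta\circ\tilde\rho$ on $\Mat_2(\CC)$, using $\widetilde\varpi(e_{uv})=e_{vu}$. It satisfies the same identity on $A$ exactly when $\rho\in\Rep^{\varpi,\theta}(A,\alpha)$. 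Since both $\widetilde\varpi$ and $\theta$ are anti-multiplicative, the identity then holds on all of $A^{(1,2)}$. Restricting to $\epsilon(\cdot)\epsilon$, and noting that $\theta$ commutes with compression by $\rho(\epsilon)$ (block diagonal, fixed by $\theta$), this becomes $\rho_f\circ\varpi_f=\theta_f\circ\rho_f$.

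For the converse, suppose $\rho_f$ is twisted. Then for generators $a\in A$, the identity $\rho\circ\varpi=\theta\circ\rho$ is recovered from the formulas \eqref{Eq:phiExt} applied to $\epsilon a\epsilon$, $e_{12}(e_2a\epsilon)$, and so on. The one thing to watch is the typed sign: $\varpi^2=\fs(s)\fs(t)$ must be compatible with $\fs_f$. This works because $\fs(1)=\fs(2)$ means the idempotent $e_2$ carries the same sign as $e_1$. In coordinates this is the statement that generators \eqref{TypeIdeal} of $I^{\varpi,\theta}$ correspond to generators of $I^{\varpi_f,\theta_f}$ under the untwisted isomorphism of coordinate rings.

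The last point is equivariance. The subgroup $G_{\alpha^f}$ sits inside $G_\alpha$ diagonally, and a diagonal element $(g,g,\dots)$ with $g\Theta_1g^T\Theta_1^T=\Id$ lies in $G_\alpha$. Equivariance then follows from the untwisted case. I expect the main obstacle to be the sign bookkeeping relating $\fsgn_\alpha$ on $\mathtt{R}_2$ to $\fsgn_{\alpha^f}$ on $\mathtt{R}_1$. The intended argument is that $\theta$ acts on indices within a block, and the index shift $\mathtt{R}_2\to\mathtt{R}_1$ commutes with it because both blocks share the same type and size. Rather than tracking signs in coordinates, I would argue at the matrix level as above to avoid this.
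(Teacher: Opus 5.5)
Your proposal is correct and follows essentially the paper's route: the paper's proof only says this is the twisted analogue of \cite[\S7.10]{VdB1} and that one identifies coordinate rings. Your check that $\theta(\tilde\rho(e_{12}))=\tilde\rho(e_{21})$, via $\Theta_1=\Theta_2$ (which is where $\fs(1)=\fs(2)$ and $\alpha_1=\alpha_2$ enter), combined with the anti-multiplicativity of $\widetilde\varpi$ and $\theta$, is a faithful expansion of that one-liner. To get an isomorphism of affine schemes rather than just a bijection on $\CC$-points, run the same argument for representations $A\to\Mat_N(R)$ over an arbitrary commutative $\CC$-algebra $R$; it goes through verbatim.
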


\subsection{Typed double Poisson brackets from quivers}

\subsubsection{Case I} \label{sss:CaseI}

We adapt the discussion in \ref{sss:P-exmp} to encompass a choice of types. 
Consider the path algebra $\CC\overline{Q}$ of a quiver $Q=(Q_{\Upsilon,\Lambda},I,t,h)$ as in \ref{sss:P-exmp}.
Fix a choice of types for $\CC\overline{Q}$.
We modify the involutive anti-automorphism $\phi$ \eqref{Eq:PhiQ} as follows:
$\varpi$ is the anti-automorphism on $\CC\overline{Q}$
that fixes the idempotents and such that on arrows
\begin{subequations} \label{Eq:VarpiQ}
 \begin{align}
&\varpi(a)=\fs(t(a))\fs(h(a))\, a', \,\, \varpi(a')=a, \,\, \text{ for }a\in \Upsilon\setminus\Lambda,
&&\varpi(c)=\gamma_c \,c,\,\, \text{ for }c\in \Lambda ; \label{Eq:VarpiQa} \\
&\varpi(a^\ast)=\fs(t(a))\fs(h(a))\, {a'}^\ast, \,\, \varpi({a'}^\ast)=a^\ast, \,\, \text{ for }a\in \Upsilon\setminus\Lambda,
&&\varpi(c^\ast)=\gamma_c \,c^\ast,\,\, \text{ for }c\in \Lambda . \label{Eq:VarpiQb}
 \end{align}
\end{subequations}

\begin{prop} \label{Pr:VarpiQ}
The anti-automorphism $\varpi$ given by \eqref{Eq:VarpiQ} is a typed anti-involution.
Moreover, the double Poisson bracket on $\CC\overline{Q}$ canonically defined on a double quiver by Van den Bergh is $\varpi$-typed.
Finally, it admits the moment map $\mu\in \CC\overline{Q}$ given by \eqref{Eq:MomapQ}, which satisfies \eqref{mum-Type}.
\end{prop}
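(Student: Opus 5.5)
Everything reduces to checks on generators, because $\varpi$ is an anti-automorphism and both the typed-involution condition and \eqref{Eq:BrTyped} only need to be checked on generators (by the lemma adapting \cite[Lem.~4.1]{OS}). Throughout, write $\epsilon_a:=\fs(t(a))\fs(h(a))$ for $a\in\Upsilon\setminus\Lambda$, so that $\epsilon_a^2=1$.

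\emph{Typed anti-involution.} First, $\varpi$ fixes the idempotents and is well defined on the path algebra. This holds because $t(a')=h(a)$ and $h(a')=t(a)$, so $\varpi(a)\in e_{h(a)}\CC\overline{Q}e_{t(a)}$, as an anti-morphism requires; the same holds for the starred arrows. Next, I check $\varpi^2=\fs(s)\fs(t)\,\id$ on $e_sAe_t$ generator by generator.
\begin{itemize}
\item For $a\in\Upsilon\setminus\Lambda$ with $a\in e_{t(a)}Ae_{h(a)}$: $\varpi^2(a)=\epsilon_a\varpi(a')=\epsilon_a a$, as required.
\item For $a'\in e_{h(a)}Ae_{t(a)}$: $\varpi^2(a')=\varpi(a)=\epsilon_a a'$, again with the correct sign.
\item The starred arrows $a^\ast,{a'}^\ast$ behave identically.
\item For a loop $c\in\Lambda$ at a vertex $s$: $\varpi^2(c)=\gamma_c^2c=c=\fs(s)^2c$.
\end{itemize}
The condition is multiplicative: for $x\in e_sAe_r$ and $y\in e_rAe_t$ we get $\varpi^2(xy)=\varpi^2(x)\varpi^2(y)$, with sign $\fs(s)\fs(r)\fs(r)\fs(t)=\fs(s)\fs(t)$. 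Hence it extends from arrows to all paths, and then by linearity to all of $\CC\overline{Q}$.

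\emph{$\varpi$-typed bracket.} By \eqref{Eq:dbr-Q}, the only nonzero brackets among generators are $\dgal{a,a^\ast}$, $\dgal{a',{a'}^\ast}$ and their reverses, together with $\dgal{c,c^\ast}$ and $\dgal{c^\ast,c}$ for loops. I check \eqref{Eq:BrTyped} on each pair.
\begin{itemize}
\item For $(a,a^\ast)$: the left side is $\varpi^{\otimes2}(e_{h(a)}\otimes e_{t(a)})=e_{h(a)}\otimes e_{t(a)}$. The right side is $\epsilon_a^2\dgal{a',{a'}^\ast}^\circ=(e_{t(a)}\otimes e_{h(a)})^\circ$, which agrees.
\item For $(a',{a'}^\ast)$: both sides equal $e_{t(a)}\otimes e_{h(a)}$, and no sign appears.
\item For $(c,c^\ast)$: the factor $\gamma_c^2=1$ and $t(c)=h(c)$, so the check is immediate.
\item Mixed pairs such as $(a,{a'}^\ast)$ give $0=0$, since their images $(a',a^\ast)$ also bracket to zero. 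Reversed pairs follow by cyclic antisymmetry.
\end{itemize}
The one point needing care is that the two $\epsilon_a$ factors cancel in the pair $(a,a^\ast)$. This is why the sign is placed on both $a$ and $a^\ast$ rather than on $a$ alone. This sign bookkeeping is the only real obstacle.

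\emph{Moment map.} That \eqref{Eq:MomapQ} is a moment map is Van den Bergh's result \cite{VdB1}, independent of $\varpi$. For \eqref{mum-Type}, I compute the image of each commutator:
\[
\varpi([a,a^\ast])=\epsilon_a^2[{a'}^\ast,a']=-[a',{a'}^\ast],\qquad \varpi([a',{a'}^\ast])=[a^\ast,a]=-[a,a^\ast].
\]
For a loop, $\varpi([c,c^\ast])=\gamma_c^2[c^\ast,c]=-[c,c^\ast]$. Summing these gives $\varpi(\mu)=-\mu$.

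Alternatively, one could deduce the proposition from a typed version of Theorem \ref{Thm:NCBrComp}. The direct generator check above is shorter, so I would use it.
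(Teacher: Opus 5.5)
Your proof is correct and follows the same route as the paper. The paper also checks $\varpi^2(b)=\fs(t(b))\fs(h(b))\,b$ on arrows, then adapts the generator-by-generator verification of Proposition~\ref{Pr:Upsilon} (via \eqref{Eq:dbr-Q}) to obtain \eqref{Eq:BrTyped} and \eqref{mum-Type}; you simply write out the sign bookkeeping the paper leaves to the reader, namely $\epsilon_a^2=1$ and the multiplicativity of the typed condition along paths.
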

\begin{proof}
To show that $\varpi$ is a typed anti-involution, it suffices to check the defining property on generators.
This is easy to verify since by \eqref{Eq:VarpiQ}:
$\varpi^2(b) = \fs(t(b))\fs(h(b))\,b$ for all $b \in \overline{Q}$.
To verify \eqref{Eq:BrTyped} and the rest of the statement, it suffices to adapt of the proof of Proposition \ref{Pr:Upsilon}.
\end{proof}

After that, take a typed dimension vector $\alpha \in \N^I$.
By Theorem \ref{Thm:PType}, the twisted representation space $\Rep^{\varpi,\theta}(\CC\overline{Q},\alpha)$ corresponding to our choice of types inherits the Poisson bracket $\br{-,-}_{\varpi,\theta}$ \eqref{Eq:BrPtype}.
The moment map on that space adopts a block decomposition similar to  \eqref{Eq:MomapQ-expl}, namely
\begin{equation*} 
 \X(\mu_s)=\sum_{\substack{a\in \Upsilon\\t(a)=s}}  (\X(a)\X(a^\ast) - \X(a^\ast)^T\X(a)^T)
 + \sum_{\substack{a\in \Upsilon\\h(a)=s}}  (-\X(a^\ast)\X(a) + \X(a)^T\X(a^\ast)^T)
\end{equation*}
if $s$ has type $\Orm$, and
\begin{equation*} 
 \X(\mu_s)=\sum_{\substack{a\in \Upsilon\\t(a)=s}}  (\X(a)\X(a^\ast) - \Omega_{\alpha_s} \X(a^\ast)^T\X(a)^T\Omega_{\alpha_s})
 + \sum_{\substack{a\in \Upsilon\\h(a)=s}}  (-\X(a^\ast)\X(a) + \Omega_{\alpha_s}\X(a)^T\X(a^\ast)^T\Omega_{\alpha_s})
\end{equation*}
if $s$ has type $\Sp$.
When $\Lambda=\emptyset$, by a discussion analogous to \ref{ss:Untwist}, we get an identification
\[
 \Rep^{\varpi,\theta}(\CC\overline{Q},\alpha)
 \simeq \Rep(\CC\overline{\Upsilon},\alpha).
\]
Here, $G\subset \prod_s \Gl_{\alpha_s}$ acts by \eqref{Infgrp}, and
the Poisson bracket is the usual one~\cite{Nak} scaled by a factor $\frac12$.

\subsubsection{Case II}

Let $\Upsilon$ be a quiver and $\overline{\Upsilon}$ its double. (We are not using a ``quadruple'' $\overline{Q}$ as in \ref{sss:CaseI}.)
Assume that we can fix a choice of types $\fs$ on the vertex set such that the following holds:
\begin{equation} \label{Eq:CondMin}
 \forall a \in \Upsilon, \qquad \fs(t(a))\neq \fs(h(a)).
\end{equation}

In other words, any two vertices in $\Upsilon$ that are connected by an arrow must be assigned two different types.  
We introduce an anti-automorphism $\varpi_-: \CC\overline{\Upsilon} \to \CC\overline{\Upsilon}$ uniquely determined by  
\begin{equation}  \label{Eq:VarpiMin}
 \varpi_-(a)=-a^\ast, \,\, \varpi_-(a^\ast)=a, \,\, \text{ for }a\in \Upsilon.
\end{equation}

\begin{prop} \label{Pr:VarpiMin}
The anti-automorphism $\varpi_-$ given by \eqref{Eq:VarpiMin} is a typed anti-involution.
Moreover, the double Poisson bracket on $\CC\overline{\Upsilon}$ canonically defined on a double quiver by Van den Bergh is $\varpi_-$-typed.
Finally, it admits the moment map $\mu=\sum_{a\in \Upsilon}[a,a^\ast]\in \CC\overline{\Upsilon}$, which satisfies \eqref{mum-Type}.
\end{prop}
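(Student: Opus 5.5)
The plan is to check the three claims of Proposition \ref{Pr:VarpiMin} directly on generators. Each property (typed anti-involution, the $\varpi_-$-typed condition \eqref{Eq:BrTyped}, and \eqref{mum-Type}) is either multiplicative or, by the lemma following Remark \ref{Rem:TypeInd} (the adaptation of \cite[Lem.~4.1]{OS}), reduces to generators.

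\emph{Typed anti-involution.} Since $\varpi_-$ is defined as an anti-automorphism fixing the idempotents, we first confirm it is well defined. An arrow $a\in e_{t(a)}\CC\overline{\Upsilon}e_{h(a)}$ is sent to $-a^\ast\in e_{h(a)}\CC\overline{\Upsilon} e_{t(a)}$, which is consistent with an anti-morphism reversing the relations $a=e_{t(a)}ae_{h(a)}$. We then compute $\varpi_-^2(a)=\varpi_-(-a^\ast)=-a$ and $\varpi_-^2(a^\ast)=\varpi_-(a)=-a^\ast$. By \eqref{Eq:CondMin}, $\fs(t(a))\fs(h(a))=-1$, so $\varpi_-^2(b)=\fs(t(b))\fs(h(b))\,b$ for every $b\in\overline{\Upsilon}$. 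Because $\varpi_-^2$ is an algebra morphism and $\fs(s)\fs(t)$ is multiplicative along paths (the product telescopes), the identity extends to each block $e_sAe_t$.

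\emph{Typed bracket.} Van den Bergh's bracket is nonzero on pairs of arrows only for $\dgal{a,a^\ast}=e_{h(a)}\otimes e_{t(a)}$ and $\dgal{a^\ast,a}=-e_{t(a)}\otimes e_{h(a)}$, up to the overall sign convention of \eqref{Eq:dbr-Q}. For the pair $(a,a^\ast)$, the left side of \eqref{Eq:BrTyped} is $\varpi_-^{\otimes2}(e_{h(a)}\otimes e_{t(a)})=e_{h(a)}\otimes e_{t(a)}$. The right side is $\dgal{-a^\ast,a}^\circ=(e_{t(a)}\otimes e_{h(a)})^\circ=e_{h(a)}\otimes e_{t(a)}$, so the two agree. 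The pair $(a^\ast,a)$ follows either similarly or from cyclic antisymmetry. For pairs with $b\neq c^\ast$, including $(a,a)$ and $(a^\ast,a^\ast)$, both sides vanish, since $\varpi_-$ maps such pairs to pairs of the same kind. If the diagonal pairs $(a,a)$ and $(a^\ast,a^\ast)$ need separate checking, the one subtlety is that $\varpi_-(a)=-a^\ast$ forces the sign to be tracked, but both sides are still zero.

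\emph{Moment map.} We have $\varpi_-([a,a^\ast])=\varpi_-(a^\ast)\varpi_-(a)-\varpi_-(a)\varpi_-(a^\ast)=a(-a^\ast)-(-a^\ast)a=-[a,a^\ast]$. Summing over $a\in\Upsilon$ gives $\mu+\varpi_-(\mu)=0$, and $\mu$ is a moment map by \cite{VdB1}.

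The only genuine obstacle is sign bookkeeping: the overall sign convention of \eqref{Eq:dbr-Q} and the typed factor in \eqref{Eq:BrTyped}. Everything else is immediate.
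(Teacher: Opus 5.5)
Your proposal is correct and follows essentially the paper's route: use \eqref{Eq:CondMin} to get $\varpi_-^2(b)=-b=\fs(t(b))\fs(h(b))\,b$ on arrows, then check \eqref{Eq:BrTyped} and \eqref{mum-Type} on generators exactly as in Proposition \ref{Pr:Upsilon}. You spell out what the paper leaves implicit, namely the telescoping extension from arrows to the blocks $e_sAe_t$ and the explicit sign checks on the pairs $(a,a^\ast)$ and $(a^\ast,a)$.
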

\begin{proof}
Combining \eqref{Eq:CondMin} and \eqref{Eq:VarpiMin}, we get 
$\varpi_-^2(b) = -b =  \fs(t(b))\fs(h(b))\,b$ for any $b\in \overline{\Upsilon}$. 
The rest of the proof is again easily adapted from Proposition \ref{Pr:Upsilon}. 
\end{proof}
Take a typed dimension vector $\alpha \in \N^I$, and define $\theta$ as in \eqref{Eq:thetaType}.
Consider the corresponding twisted representation space $\Rep^{\varpi_-,\theta}(\CC\overline{Q},\alpha)$.
The defining relation \eqref{Eq:DefRelType2}  entails
\begin{subequations} \label{Eq:Min1}
 \begin{align}
   -(a^\ast)_{ji} = - \sgn_\alpha(j) a_{i,\theta(j)},  \quad \text{if } \fs(t(a))=+1,\, \fs(h(a))=-1 \,; \label{Eq:Min1a} \\
   -(a^\ast)_{lk} = - \sgn_\alpha(k) a_{\theta(k),l},  \quad \text{if } \fs(t(a))=-1,\, \fs(h(a))=+1 \,; \label{Eq:Min1b}
 \end{align}
\end{subequations}
where we considered the only pairs of indices such that these relations are not trivially vanishing, i.e. 
\begin{equation*}
 \sum_{\substack{s\in I\\s<t(a)}}\alpha_{s}+1 \leq i,k \leq \sum_{\substack{s\in I\\s\leq t(a)}}\alpha_{s}, \quad 
 \sum_{\substack{s\in I\\s<h(a)}}\alpha_{s}+1 \leq j,l \leq \sum_{\substack{s\in I\\s\leq h(a)}}\alpha_{s}.
\end{equation*}
(Recall that we identify $I$ and $\{1,\ldots,|I|\}$ to define the representation space.) This yields
\begin{equation} \label{Eq:Min2}
 \X(a^\ast) = \Theta \X(a)^T \Theta^T,
\end{equation}
for any $a\in \Upsilon$,
cf. \eqref{Eq:thetaType}. In particular, as varieties 
$\Rep^{\varpi_-,\theta}(\CC\overline{\Upsilon},\alpha)
 \simeq \Rep(\CC\Upsilon,\alpha)$. 
By Theorem \ref{Thm:PType}, $\Rep^{\varpi_-,\theta}(\CC\overline{\Upsilon},\alpha)$ inherits the Poisson bracket $\br{-,-}_{\varpi_-,\theta}$ \eqref{Eq:BrPtype} and the moment map $\X(\mu)$.
With the previous identification, we can write that the Poisson bracket is uniquely determined by ($a,b\in \Upsilon$)
\begin{align*}
 \br{a_{ij},b_{kl}}_{\varpi,\theta}&= -\frac12\delta_{ab} \,\sgn_\alpha(j) (e_{t(a)})_{ki} (e_{h(a)})_{\theta(j),l} , 
 \quad \text{if } \fs(t(a))=+1,\, \fs(h(a))=-1 \,; \\
 \br{a_{ij},b_{kl}}_{\varpi,\theta}&= -\frac12\delta_{ab} \,\sgn_\alpha(i) (e_{t(a)})_{k,\theta(i)} (e_{h(a)})_{jl} , 
 \quad \text{if } \fs(t(a))=-1,\, \fs(h(a))=+1 \,.
\end{align*}
Moreover, the moment map becomes $\X(\mu) = \sum_{a\in \Upsilon} [\X(a),\Theta \X(a)^T \Theta^T]$.

As a particular example, take $\Upsilon$ to be the $A_{2\ell}$ quiver. Fix a choice of types by alternating them: take $\Orm$ for odd indices and $\Sp$ for even indices of the vertices.
By considering the representation spaces, we get the (complexified) master phase spaces of the orthosymplectic gauge theories from \cite[\S5]{GW09}, which give these theories by performing Hamiltonian reduction.
(As explained in Footnote \eqref{ftnote:TypeO}, one can take $\mathrm{SO}(\alpha_s)$ instead of $\Orm(\alpha_s)$.)
For an in-depth geometric study related to other quivers, see \cite{Li19,Nak25}.

\subsubsection{Losev's almost commuting variety}

Consider the double $\overline{\Upsilon}$ of the quiver $\Upsilon$ of Example \ref{Exmp:Almost}
made of the four arrows $x,v,x^\ast,v^\ast$.
Put $\fs(0)=-1$ and $\fs(\infty)=1$.
There is a typed anti-involution $\varpi$ on $\CC\overline{\Upsilon}$ given by
$\varpi(x)=-x$, $\varpi(x^\ast)=-x^\ast$,  $\varpi(v)=v^\ast$ and $\varpi(v^\ast)=-v$.
One can check \eqref{Eq:BrTyped} (for Van den Bergh's double Poisson bracket) on the four generators, so that the double bracket is $\varpi$-typed.
The moment map is $\mu = [x,y]+[v,v^\ast]$.
Take the dimension vector $(n_0,n_\infty)=(2n,1)$ for some $n\geq 1$. One has
$$\Rep^{\varpi,\theta}(\CC \overline{\Upsilon},(2n,1))\simeq \{X,Y\in \spg_{2n}, \,\, V\in \CC^{2n} \}\,,$$
with $\spg_{2n}$-valued moment map
$\widetilde{\mu}:=[X,Y]+VV^T\Omega_{2n}^T$ by \eqref{Eq:DefRelType2}, and
Poisson structure, see \eqref{Eq:BrPtype},
\begin{equation*}
 \br{X_{ij},Y_{kl}}=\frac12 (\delta_{kj}\delta_{il}-\sgn_{2n}(i,j)\, \delta_{k,i+n}\delta_{j+n,l}), \quad
 \br{V_{i},V_{k}}=\frac12 \sgn_{2n}(i)\, \delta_{k,i+n}\,,
\end{equation*}
with indices understood modulo $2n$, cf. Remark \ref{Rem:Sympl}.
The scheme $\widetilde{\mu}^{-1}(0_{2n})$ has appeared as $X_{n}$ in Losev's work \cite{Los}, where it was shown to be irreducible, reduced complete intersection of dimension $2n^2 + 3n$. Furthermore, one has for its reduction
$\widetilde{\mu}^{-1}(0_{2n})/\!/\Sp_{2n}\simeq (\mathfrak{h}\times\mathfrak{h})/\!/W$ where $\mathfrak{h},W$ are the Cartan and Weyl group of $\spg_{2n}$.


\section{Fox pairings on Hopf algebras, after Massuyeau and Turaev} \label{Sec:MT}

In Section \ref{Sec:DqPoi}, we could consider the free Laurent algebra
$L_{g,r}$ thanks to its realisation as $\CC\pi_1(\Sigma,\ast)$ for $\Sigma$ a genus $g$ surface with $r+1$ boundary components and a marked point.
The construction of its double quasi-Poisson bracket is due to Massuyeau-Turaev \cite{MT14} and is based on the existence of a Fox pairing. In \cite{MT18}, this point of view was pursued to induce quasi-Poisson brackets on the representation algebra valued in a (graded) bialgebra. Here, we review the relevant parts of \cite{MT14,MT18} and then we explain the relation between their formalism (without grading, over $\CC$) and the results derived in Section \ref{Sec:DqPoi}.

\medskip

Fix a cocommutative Hopf algebra $\cH$, which is in particular involutive, i.e., the antipode $S$ satisfies $S^2=\id_\cH$.
Write the comultiplication as
$\Delta(a)=a_{[1]}\otimes a_{[2]}$ for $a\in \cH$, and by coassociativity we can put
\[
 \Delta^{k-1}(a)=a_{[1]}\otimes a_{[2]} \otimes \cdots \otimes a_{[k]}, \quad k\geq 2,
\]
where $\Delta^{k-1}$ is any iteration of $k-1$ copies of the comultiplication. 
(We warn the reader that this notation is different from the Sweedler's type notations used previously, though we can write $a_{[1]}\otimes a_{[2]}=\Delta(a)' \otimes \Delta(a)''$.)
Then we get the following usual identities involving the counit $\epsilon:\cH\to \CC$: 
$a=a_{[1]} \,\epsilon(a_{[2]})$, $\epsilon(a)1=a_{[1]}\,S(a_{[2]})$ and analogous formulas when acting on the first factor.
Note that the involutivity of $S$ entails  
$\Delta(S(a))=S(a_{[2]})\otimes S(a_{[1]})$.

A left, resp. right, Fox derivative is a $\CC$-linear map $\partial_L:\cH\to \cH$, resp. $\partial_R:\cH\to \cH$, satisfying
\begin{align*}
 \partial_L(ab) = \partial_L(a)\, \epsilon(b) + a \,\partial_L(b), \quad
 \text{resp.}\,\,\partial_R(ab) = \partial_R(a)\, b + \epsilon(a)\, \partial_R(b), \quad a,b\in \cH.
\end{align*}
A Fox pairing is a $\CC$-linear map  $\rho:\cH\times \cH \to \cH$ which is a left Fox derivative in its first argument, and a right Fox derivative in its second argument. 
One obtains from \cite[\S~5]{MT14} that the transpose 
\begin{equation*}
 \overline{\rho}: \cH\times \cH \to \cH, \quad \overline{\rho}(a,b)=S\big( \rho(S(b),S(a)) \big),
\end{equation*}
is also a Fox pairing. Moreover, $\overline{\overline{\rho}}=\rho$ because $\cH$ is involutive, and one has 
\begin{equation}
 \overline{\rho}(a,b)=a_{[1]}\, \rho(b_{[2]},a_{[2]}) \, b_{[1]}\,, \quad a,b\in \cH.
\end{equation}
A Fox pairing $\rho$ is symmetric if $\overline{\rho}=\rho$ and antisymmetric if $\overline{\rho}=-\rho$. Obviously, any Fox pairing can be decomposed in a unique sum of symmetric and antisymmetric parts.

\begin{prop}[\cite{MT14}, \S6.1]
 If $\rho$ is an antisymmetric Fox pairing, then $\dgal{-,-}_\rho : \cH\otimes \cH \to \cH\otimes \cH$,
\begin{equation} \label{Eq:dbr-Fox}
 \dgal{a,b}_\rho= b_{[1]} S(\rho(a_{[2]},b_{[2]})_{[1]}) a_{[1]} \otimes \rho(a_{[2]},b_{[2]})_{[2]}, \quad a,b\in \cH,
\end{equation}
is a double bracket. 
\end{prop}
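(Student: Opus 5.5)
Since $\dgal{-,-}_\rho$ is defined on all of $\cH\otimes\cH$ by the explicit formula \eqref{Eq:dbr-Fox}, it is automatically $\CC$-bilinear. What remains is to verify the two derivation rules \eqref{Eq:outder}--\eqref{Eq:inder} and cyclic antisymmetry \eqref{Eq:cycanti}. Since we work over $A_0=\CC$, there is no extra linearity condition beyond $\dgal{1,a}=0=\dgal{a,1}$. This follows at once because $\rho(1,b)=0=\rho(a,1)$ by the Fox derivative rules: for instance $\partial_L(1)=\partial_L(1\cdot1)=2\partial_L(1)$.

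\textbf{Right derivation rule.} I will verify \eqref{Eq:outder} first. Expand $\Delta(bc)=b_{[1]}c_{[1]}\otimes b_{[2]}c_{[2]}$ and use that $\rho(a_{[2]},-)$ is a right Fox derivative:
\[
\rho(a_{[2]},b_{[2]}c_{[2]})=\rho(a_{[2]},b_{[2]})\,c_{[2]}+\epsilon(b_{[2]})\,\rho(a_{[2]},c_{[2]}).
\]
In the first term, write $x=\rho(a_{[2]},b_{[2]})$. Then $\Delta(xc_{[2]})=x_{[1]}c_{[2]}\otimes x_{[3]}c_{[3]}$, and $S(x_{[1]}c_{[2]})=S(c_{[2]})S(x_{[1]})$. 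The factor $c_{[1]}S(c_{[2]})$ collapses to $\epsilon$ via the counit/antipode identities, leaving $\dgal{a,b}_\rho\, c$ (outer right action). In the second term, $\epsilon(b_{[2]})$ absorbs into $b_{[1]}$, giving $b\,\dgal{a,c}_\rho$. Cocommutativity is used freely here to reorder the Sweedler factors.

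\textbf{Left derivation rule.} For \eqref{Eq:inder}, expand $\Delta(bc)$ in the first slot and use the left Fox property:
\[
\rho(b_{[2]}c_{[2]},a_{[2]})=\rho(b_{[2]},a_{[2]})\,\epsilon(c_{[2]})+b_{[2]}\,\rho(c_{[2]},a_{[2]}).
\]
The first term yields $a_{[1]}S(y_{[1]})b\,c\otimes y_{[2]}$ with $y=\rho(b_{[2]},a_{[2]})$, which is $\dgal{b,a}_\rho\ast c$. In the second term, $S(b_{[2]}y_{[1]})=S(y_{[1]})S(b_{[2]})$ and $\Delta(b_{[2]}y)$ produces an extra $b$ on the second factor. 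After cancelling $S(b_{[2]})b_{[1]}$ (using cocommutativity to place the factors appropriately), this gives $b\ast\dgal{c,a}_\rho$.

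\textbf{Cyclic antisymmetry, the main obstacle.} This is the one step that uses $\overline\rho=-\rho$. By the transpose formula,
\[
\rho(a,b)=-\overline{\rho}(a,b)=-a_{[1]}\,\rho(b_{[2]},a_{[2]})\,b_{[1]}.
\]
I would substitute this into $\dgal{a,b}_\rho$, expand the coproduct of the product $a_{[1]}\,\rho(\cdot)\,b_{[1]}$, apply the antipode anti-multiplicatively, and cancel pairs $b\,S(b)$ and $S(a)\,a$ using cocommutativity and $S^2=\id$. The target is the expression
\[
-\bigl(a_{[1]}S(\rho(b_{[2]},a_{[2]})_{[1]})b_{[1]}\otimes\rho(b_{[2]},a_{[2]})_{[2]}\bigr)^\circ=-\dgal{b,a}_\rho^\circ .
\]
The difficulty is bookkeeping: the two tensor legs must end up swapped. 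I expect this to require rewriting $\dgal{a,b}_\rho$ in an equivalent form, namely $a_{[1]}\otimes b_{[1]}S(\rho_{[1]})$-type with $\rho$ moved to the other leg, via the identity $x_{[1]}\otimes x_{[2]}=\ldots$ combined with the transpose relation. If this becomes unwieldy, I would instead reduce to group-like elements: $\cH=\CC\pi$ suffices for the applications in the paper, and the general cocommutative case proceeds by the same Sweedler computation.
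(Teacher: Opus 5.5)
Your checks of bilinearity, of $\dgal{1,-}_\rho=0=\dgal{-,1}_\rho$, and of both derivation rules are correct; the paper itself gives no proof and only cites \cite{MT14}. The gap is cyclic antisymmetry, which you leave as a plan, and that plan rests on a wrong identity.

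The formula $\overline{\rho}(a,b)=a_{[1]}\,\rho(b_{[2]},a_{[2]})\,b_{[1]}$ displayed before the statement has lost an antipode. It is not even a left Fox derivative in $a$: for $y$ group-like it gives $\overline\rho(xy,b)=x_{[1]}y\,\rho(b_{[2]},x_{[2]})\,y\,b_{[1]}+x\,\overline\rho(y,b)$ instead of $\overline\rho(x,b)+x\,\overline\rho(y,b)$. Start instead from the definition $\overline\rho(a,b)=S(\rho(S(b),S(a)))$ and use both identities in \eqref{Eq:pfAntip0}, $S^2=\id$ and cocommutativity. One finds
\[
\overline{\rho}(a,b)=a_{[1]}\,S\big(\rho(b_{[2]},a_{[2]})\big)\,b_{[1]}.
\]
If you substitute the antipode-free version as you propose, the computation ends at $-S(w_{[1]})\otimes a_{[1]}w_{[2]}b_{[1]}$ with $w=\rho(b_{[2]},a_{[2]})$. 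The target is $-\dgal{b,a}_\rho^\circ=-w_{[2]}\otimes a_{[1]}S(w_{[1]})b_{[1]}$. The antipode sits on the wrong leg, which is why the legs would not swap, and no rewriting of $\dgal{a,b}_\rho$ fixes this. Reducing to group-like elements does not help: the mismatch persists there, and a general cocommutative $\cH$ such as $U(\g)$ is not spanned by group-likes. As a concrete check, on $\CC[z^{\pm1}]$ the Fox pairing with $\rho(z,z)=\frac12(1-z^2)$ produces \eqref{Eq:dqbr-z} through \eqref{Eq:dbr-Fox}. It satisfies $zS(\rho(z,z))z=-\rho(z,z)$ but $z\rho(z,z)z\neq-\rho(z,z)$.

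With the corrected transpose the step is short and needs no rearrangement. Antisymmetry gives $\rho(a_{[2]},b_{[2]})=-a_{[2]}S(w)b_{[2]}$ with $w=\rho(b_{[3]},a_{[3]})$. Expand the coproduct using $\Delta(S(w))=S(w_{[2]})\otimes S(w_{[1]})$ and $S^2=\id$:
\[
\dgal{a,b}_\rho=-b_{[1]}S(b_{[2]})\,w_{[2]}\,S(a_{[2]})a_{[1]}\otimes a_{[3]}S(w_{[1]})b_{[3]}=-w_{[2]}\otimes a_{[1]}S(w_{[1]})b_{[1]}=-\dgal{b,a}_\rho^\circ .
\]
Here $w=\rho(b_{[4]},a_{[4]})$ in the middle expression and $w=\rho(b_{[2]},a_{[2]})$ afterwards. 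The collapse uses $b_{[1]}S(b_{[2]})=\epsilon(b)1$ and $S(a_{[2]})a_{[1]}=\epsilon(a)1$, the latter by cocommutativity. The antipode inside $\overline\rho$, combined with $S^2=\id$, is what moves $S$ from the first tensor leg to the second, and this is the idea your proposal is missing.
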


Following \cite[\S9]{MT18} without grading, we associate to a Fox pairing $\rho$ an operation 
$\mathtt{F}_\rho : \cH^{\otimes 3} \to \cH^{\otimes 3}$,
\begin{align*}
&\mathtt{F}_\rho(a,b,c) \\
=& \, b_{[1]} S(\rho(a_{[2]},b_{[2]})_{[1]}) a_{[1]} \otimes
c_{[1]} S\big(\rho\big( \rho(a_{[2]},b_{[2]})_{[3]}  ,c_{[2]}\big)_{[1]}\big) \rho(a_{[2]},b_{[2]})_{[2]} \otimes
\rho\big( \rho(a_{[2]},b_{[2]})_{[3]}  ,c_{[2]}\big)_{[2]}
\end{align*}
Then, we form the following \emph{tritensor map} 
\begin{equation*}
 \| -,-,- \|_\rho = \sum_{s=0,1,2} \tau_{(123)}^{-s} \circ \mathtt{F}_\rho \circ \tau_{(123)}^s \,:\,
 \cH^{\otimes 3} \to \cH^{\otimes 3}. 
\end{equation*}

\begin{prop}[\cite{MT18}, \S9 \& App.~B] \label{Pr:Hopf1}
Let $\rho$ be an antisymmetric Fox pairing and  $\dgal{-,-}_\rho$ be the associated double bracket defined by
\eqref{Eq:dbr-Fox}. 
\begin{enumerate}
 \item If $ \| -,-,- \|_\rho$ identically vanishes, then $\dgal{-,-}_\rho$ is Poisson.
 \item If the quasi-Poisson Fox property\footnote{Let us warn the reader that there is an unimportant factor $\frac12$ between the quasi-Poisson conventions of Massuyeau-Turaev and the one of Van den Bergh that we follow.\label{Ft:ConvMT}} holds, i.e. if for any $x,y,z\in \cH$,
\begin{equation}
 \begin{aligned} \label{Eq:qP-Fox}
\| x,y,z \|_\rho \,=\,\frac14\Big(&1 \otimes y \otimes xz + yx \otimes 1 \otimes z + x \otimes zy \otimes 1 + y\otimes z \otimes x \\
&- 1\otimes zy \otimes x - y \otimes 1 \otimes xz - yx \otimes z \otimes 1 - x \otimes y \otimes z \Big)  
 \end{aligned}
\end{equation}
then $\dgal{-,-}_\rho$ is quasi-Poisson. 
\end{enumerate}
\end{prop}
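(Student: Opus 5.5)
The plan is to reduce both items to a single identity expressing the double Jacobiator \eqref{Eq:TripBr} of $\dgal{-,-}_\rho$ through the tritensor map. First, I would use that \eqref{Eq:dbr-Fox} is a double bracket, so its double Jacobiator is a triple bracket: cyclically symmetric in the sense of \eqref{Eq:TriAnti} and a derivation in each argument for suitable commuting bimodule structures on $\cH^{\otimes 3}$. The same has to be shown for $\|-,-,-\|_\rho$. Cyclic symmetry holds by construction, since it is a cyclic average of $\mathtt{F}_\rho$. Derivation-type behaviour follows from the left/right Fox rules of $\rho$ combined with the antisymmetry $\overline{\rho}=-\rho$.

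Second, I would establish the identity
\[
 \dgal{a,b,c} = \|a,b,c\|_\rho\,,
\]
possibly up to a harmless reordering or rescaling of tensor factors (cf. Footnote~\ref{Ft:ConvMT}). To do this, expand $\dgal{a,\dgal{b,c}_\rho}_{\rho,L}$ with \eqref{Eq:dbr-Fox} twice. The inner bracket produces $c_{[1]}S(\rho(b_{[2]},c_{[2]})_{[1]})b_{[1]}\otimes\rho(b_{[2]},c_{[2]})_{[2]}$. The outer bracket is then applied to a product of three factors, so it splits by the derivation rule \eqref{Eq:outder} into three pieces, one for each factor. Using coassociativity, cocommutativity, $S^2=\id$ and $\Delta\circ S=(S\otimes S)\circ\Delta^{\op}$, the pieces where the outer bracket hits $c_{[1]}$ or $b_{[1]}$ should cancel against the corresponding pieces of the two cyclically permuted terms in \eqref{Eq:TripBr}. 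This is the standard mechanism by which lower-order terms in a Jacobiator cancel under cyclic summation. The pieces where $\rho$ hits the output of $\rho$ should then assemble exactly into $\mathtt{F}_\rho$ and its cyclic conjugates. This step is the main obstacle: it is a long Sweedler bookkeeping computation, and the antisymmetry of $\rho$, in the form $\rho(a,b)=-a_{[1]}\rho(b_{[2]},a_{[2]})b_{[1]}$, has to be used at precisely the right places. I would first verify the identity on group-like elements, as in $\CC\pi_1$, where $\Delta(x)=x\otimes x$ collapses all coproducts, and then redo the computation in general.

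Third, I would conclude. For (1), the vanishing of $\|-,-,-\|_\rho$ gives the vanishing of $\dgal{-,-,-}$, so the bracket is Poisson. For (2), with $A_0=\CC$ (so $|I|=1$ and $e_s=1$), substitute \eqref{Eq:qP-Fox}. The eight terms on the right-hand side of \eqref{Eq:qP-Fox} must match the eight terms of \eqref{qPabc} after the same reindexing used in the identity above. Checking this is a direct term-by-term comparison, with $\frac14$ on both sides.
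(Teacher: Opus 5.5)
The overall architecture matches the paper: identify the double Jacobiator with a reordering of $\|-,-,-\|_\rho$, then read off (1) and (2). However, the identity itself, which you flag as the main obstacle, is left unproved, and the mechanism you sketch for it points the wrong way. The missing observation is that $\mathtt{F}_\rho$ is, by inspection of its formula, simply the iterated double bracket. Its last two tensor factors are exactly \eqref{Eq:dbr-Fox} evaluated on the pair $(\rho(a_{[2]},b_{[2]})_{[2]},c)$, whose coproduct is $\rho(a_{[2]},b_{[2]})_{[2]}\otimes\rho(a_{[2]},b_{[2]})_{[3]}$. Hence $\mathtt{F}_\rho(a,b,c)=\dgal{a,b}_\rho'\otimes\dgal{\dgal{a,b}_\rho'',c}_\rho$, that is, $\mathtt{F}_\rho=(\id_\cH\otimes\dgal{-,-}_\rho)\circ(\dgal{-,-}_\rho\otimes\id_\cH)$. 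Two applications of cyclic antisymmetry \eqref{Eq:cycanti} give $\mathtt{F}_\rho(a,b,c)=\tau_{(13)}\dgal{c,\dgal{b,a}_\rho}_{\rho,L}$. Since $\tau_{(13)}\tau_{(123)}^s\tau_{(13)}=\tau_{(123)}^{-s}$, comparison with \eqref{Eq:TripBr} yields $\|-,-,-\|_\rho=\tau_{(13)}\circ\dgal{-,-,-}_\rho\circ\tau_{(13)}$ on all of $\cH^{\otimes 3}$. This is the paper's proof. It needs no Leibniz splitting, and no cocommutativity or $S^2=\id$ beyond what already makes $\dgal{-,-}_\rho$ a double bracket. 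It also makes your Step 1 unnecessary: nothing is reduced to generators, so the triple-bracket properties of $\|-,-,-\|_\rho$ are never used.

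Your splitting does not lead there, because each unsplit term is already a single $\mathtt{F}_\rho$-term: $\dgal{a,\dgal{b,c}_\rho}_{\rho,L}=\tau_{(13)}\mathtt{F}_\rho(c,b,a)$. Your expectation that the outer pieces cancel cyclically is in fact correct; they pair off via \eqref{Eq:cycanti} and cocommutativity. But the middle pieces are then only what is left over, and they are \emph{not} termwise reorderings of $\mathtt{F}_\rho$-terms, so they cannot ``assemble exactly'' into them. For instance, take $\CC[z^{\pm1}]$ with $\rho(z,z)=\frac12(1-z^2)$, which produces \eqref{Eq:dqbr-z}. The middle piece of $\dgal{z,\dgal{z,z}}_L$ contains $\frac14\,z^2\otimes z^{-1}\otimes z^2$. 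By contrast, $\mathtt{F}_\rho(z,z,z)=\frac14(1\otimes z\otimes z^2-1\otimes z^2\otimes z+1\otimes 1\otimes z^3-1\otimes z^3\otimes 1)$ has no negative powers. So the middle pieces match $\|-,-,-\|_\rho$ only after cyclic summation. Establishing that amounts to adding the outer pieces back, which is just the unsplit identity above. Finally, the exact form of the identity is what fixes the reindexing in (2). Substitute $x=c$, $y=b$, $z=a$ in \eqref{Eq:qP-Fox} and apply $\tau_{(13)}$; this gives \eqref{qPabc} with $I=\{1\}$, $e_1=1$, and no rescaling enters.
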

\begin{proof}
One easily computes the equality
$\mathtt{F}_\rho(a,b,c) = \tau_{(13)}\circ \{\!\!\{ c , \dgal{b,a}_\rho \}\!\!\}_{\rho,L}$.
This gives with \eqref{Eq:TripBr},
\begin{align*}
 \| -,-,- \|_\rho &= \sum_{s=0,1,2} \tau_{(123)}^{-s} \circ \tau_{(13)}\circ 
(\dgal{-,-}_\rho \otimes \id_\cH) \circ (\id_\cH\otimes \dgal{-,-}_\rho) 
 \circ \tau_{(13)} \circ \tau_{(123)}^s  \\
 &= \tau_{(13)}\circ \dgal{-,-,-}_\rho \circ \tau_{(13)}.
\end{align*}
Thus, $\| -,-,- \|_\rho$ vanishes if and only if the double Jacobiator $\dgal{-,-,-}_\rho$ vanishes. 
For the second statement, it suffices to note that applying $\tau_{(13)}$ to \eqref{Eq:qP-Fox} with $x=c$, $y=b$ and $z=a$ we obtain \eqref{qPabc} in the case $I=\{1\}$ where $e_1=1$. 
\end{proof}

Next, let us relate these constructions to our setting.

\begin{prop} \label{Pr:Hopf2}
Let $\rho$ be an antisymmetric Fox pairing. Then the double bracket $\dgal{-,-}_\rho$ defined by \eqref{Eq:dbr-Fox}
is $S$-adapted.  
\end{prop}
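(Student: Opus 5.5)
We need to verify $S^{\otimes 2}(\dgal{a,b}_\rho)=\dgal{S(a),S(b)}_\rho^\circ$ for all $a,b\in\cH$. Note that $S$ is an involutive anti-automorphism of $\cH$ which is also a coalgebra anti-morphism, $\Delta(S(a))=S(a_{[2]})\otimes S(a_{[1]})$; by cocommutativity this may be rewritten $S(a_{[1]})\otimes S(a_{[2]})$. The plan is a direct computation from \eqref{Eq:dbr-Fox}, using only the antisymmetry $\overline{\rho}=-\rho$ in its definitional form $\rho(S(b),S(a))=-S(\rho(a,b))$. Applying $S$ to both sides of that identity (with $S^2=\id_\cH$) gives the working relation
\[
\rho(S(x),S(y)) = -S\big(\rho(y,x)\big),\qquad x,y\in \cH .
\]

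\emph{Step 1 (compute the right-hand side).} Use cocommutativity to write $S(a)_{[1]}\otimes S(a)_{[2]}=S(a_{[1]})\otimes S(a_{[2]})$, and similarly for $b$. Then
\[
\dgal{S(a),S(b)}_\rho = S(b_{[1]})\, S\big(r_{[1]}\big)\, S(a_{[1]}) \otimes r_{[2]}, \qquad r:=\rho(S(a_{[2]}),S(b_{[2]})) = -S\big(\rho(b_{[2]},a_{[2]})\big).
\]
Put $q:=\rho(b_{[2]},a_{[2]})$, so $r=-S(q)$ and $r_{[1]}\otimes r_{[2]}=-S(q_{[1]})\otimes S(q_{[2]})$, using cocommutativity once more. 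Since $S^2=\id$, this gives
\[
\dgal{S(a),S(b)}_\rho = -S(b_{[1]})\,q_{[1]}\,S(a_{[1]})\otimes S(q_{[2]}) .
\]

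\emph{Step 2 (compute the left-hand side).} With $p:=\rho(a_{[2]},b_{[2]})$ and $S$ an anti-homomorphism,
\[
S^{\otimes 2}\dgal{a,b}_\rho = S(a_{[1]})\,p_{[1]}\,S(b_{[1]})\otimes S(p_{[2]}).
\]
After flipping, Step 1 becomes $-S(q_{[2]})\otimes S(b_{[1]})q_{[1]}S(a_{[1]})$. Comparing the two expressions directly is not possible yet: the two sides differ by swapping the roles of $a$ and $b$ inside $\rho$, and by which tensor factor carries the multiplication.

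\emph{Step 3 (reconcile via the transpose formula).} This is the main obstacle. Use the identity $\overline{\rho}(x,y)=x_{[1]}\rho(y_{[2]},x_{[2]})y_{[1]}$ together with $\overline{\rho}=-\rho$, which gives $\rho(b,a)=-b_{[1]}\rho(a_{[2]},b_{[2]})a_{[1]}$. Substitute this, after iterated coproducts, into $q=\rho(b_{[2]},a_{[2]})$, and apply $\Delta$ using that $\Delta$ is an algebra map. The extra factors $b_{[\cdot]}$ and $a_{[\cdot]}$ that appear should then cancel against $S(b_{[1]})$ and $S(a_{[1]})$ via the antipode identity $x_{[1]}S(x_{[2]})=\epsilon(x)$ and the counit identities; coassociativity and cocommutativity are used to reorder the coproduct indices. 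What remains is to check that the result is exactly the flipped Step 2 expression. If the bookkeeping becomes unwieldy, the first fallback is to reduce to generators: double brackets are determined on generators and compatibility propagates by \cite[Lem.~4.1]{OS}. Group-like elements give a clean test case, since there $S(g)=g^{-1}$ and $\Delta g=g\otimes g$. However, a general $\cH$ need not be generated by group-likes, so the sequence of Hopf identities above remains the intended route.
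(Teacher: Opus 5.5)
Your Steps 1 and 2 are correct. Step 3, however, is where all the content lies, and you only announce it rather than carry it out. Worse, the identity you plan to use there is false.

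The transpose formula you quote, $\overline{\rho}(x,y)=x_{[1]}\rho(y_{[2]},x_{[2]})y_{[1]}$, is missing an antipode; it is misprinted that way in the text preceding the proposition. Applying the Fox rules to $S(x_{[1]})x_{[2]}=\epsilon(x)1$ and $y_{[1]}S(y_{[2]})=\epsilon(y)1$ gives $\rho(S(x),y)=-S(x_{[1]})\rho(x_{[2]},y)$ and $\rho(x,S(y))=-\rho(x,y_{[1]})S(y_{[2]})$. Hence $\rho(S(x),S(y))=S(x_{[1]})\rho(x_{[2]},y_{[1]})S(y_{[2]})$. From $\overline{\rho}(x,y)=S(\rho(S(y),S(x)))$ and cocommutativity one then gets
\[
\overline{\rho}(x,y)=x_{[1]}\,S\big(\rho(y_{[2]},x_{[2]})\big)\,y_{[1]}.
\]
The version without $S$ already fails for the Fox pairing $\rho(x,y)=(x-\epsilon(x))(y-\epsilon(y))$ on the group algebra of a free group.

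If you run Step 3 with your relation $\rho(b,a)=-b_{[1]}\rho(a_{[2]},b_{[2]})a_{[1]}$, the cancellations you describe leave $S(a_{[1]})S(p_{[2]})S(b_{[1]})\otimes p_{[1]}$. What you need is $S(a_{[1]})p_{[1]}S(b_{[1]})\otimes S(p_{[2]})$, with $p=\rho(a_{[2]},b_{[2]})$. For group-like $a,b$ and $\rho(a,b)=\sum_c\lambda_c c$, these are $\sum_c\lambda_c\,a^{-1}c^{-1}b^{-1}\otimes c$ and $\sum_c\lambda_c\,a^{-1}cb^{-1}\otimes c^{-1}$, so the bookkeeping cannot close.

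With the corrected relation $\rho(b,a)=-b_{[1]}S(\rho(a_{[2]},b_{[2]}))a_{[1]}$, your plan does work. You get $q=-b_{[2]}S(P)a_{[2]}$ with $P=\rho(a_{[3]},b_{[3]})$. After $S(b_{[1]})b_{[2]}\to\epsilon$ and, by cocommutativity, $a_{[2]}S(a_{[1]})\to\epsilon$, the flipped Step 1 expression becomes exactly the Step 2 expression.

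Notice, though, that antisymmetry is then used twice with opposite effects: once to swap the arguments of $\rho$ in Step 1, and once to swap them back in Step 3. The two uses cancel, and the only real input is $\rho(S(x),S(y))=S(x_{[1]})\rho(x_{[2]},y_{[1]})S(y_{[2]})$. That is precisely the paper's argument. It substitutes this identity directly into $\dgal{S(a),S(b)}_\rho^\circ$ and collapses with cocommutativity and the antipode and counit axioms. In particular, $S$-adaptedness holds for every Fox pairing, antisymmetric or not.

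Your generator fallback would not rescue the argument either, as you suspected: a general cocommutative $\cH$ (e.g.\ an enveloping algebra) need not be generated by group-likes.
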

\begin{proof}
We need to verify \eqref{Eq:BrComp} where $\phi=S$ is the antipode. We start by computing thanks to \eqref{Eq:dbr-Fox},
\begin{align*}
\dgal{S(a),S(b)}_\rho^\circ &= 
 \rho(S(a)_{[2]},S(b)_{[2]})_{[2]}  \otimes S(b)_{[1]} S(\rho(S(a)_{[2]},S(b)_{[2]})_{[1]}) S(a)_{[1]} \\
&=  \rho\big(S(a_{[1]}),S(b_{[1]})\big)_{[2]}  \otimes
S(b_{[2]}) S\big(\rho\big(S(a_{[1]}),S(b_{[1]})\big)_{[1]}\big) S(a_{[2]}) \\
&=S^{\otimes 2}\Big( S\big( \rho\big(S(a_{[1]}),S(b_{[1]})\big)_{[2]} \big) \otimes
a_{[2]} \rho\big(S(a_{[1]}),S(b_{[1]})\big)_{[1]} b_{[2]}\Big)\,.
\end{align*}
Thus, in view of \eqref{Eq:BrComp}, the last equality should be $S^{\otimes 2}$ applied to $\dgal{a,b}_\rho$.
We record the following equalities that are obtained as in the proof of \cite[Lem.~5.2]{MT14}:
\begin{equation} \label{Eq:pfAntip0}
  \rho(S(x),y) = - S(x_{[1]})\,\rho(x_{[2]},y), \quad
 \rho(x,S(y)) = - \rho(x,y_{[1]})\, S(y_{[2]}), \quad x,y\in \cH.
\end{equation}
In particular, we can write, 
\begin{equation} \label{Eq:pfAntip1}
 \begin{aligned}
 &\Delta\circ\rho(S(x),S(y))=
 \Delta(S(x_{[1]}) \,\rho(x_{[2]},y_{[1]})\, S(y_{[2]})) \\
 =&\, S(x_{[2]}) \, \rho(x_{[3]},y_{[1]})_{[1]}\, S(y_{[3]}) \otimes
 S(x_{[1]})\, \rho(x_{[3]},y_{[1]})_{[2]}\, S(y_{[2]})\,.
 \end{aligned}
\end{equation}
We then get
\begin{align*}
&S\big( \rho\big(S(a_{[1]}),S(b_{[1]})\big)_{[2]} \big) \otimes
a_{[2]} \rho\big(S(a_{[1]}),S(b_{[1]})\big)_{[1]} b_{[2]} \\
=&S\Big( S(a_{[1]})\, \rho(a_{[3]},b_{[1]})_{[2]}\, S(b_{[2]}) \Big) \otimes
a_{[4]} S(a_{[2]}) \, \rho(a_{[3]},b_{[1]})_{[1]}\, S(b_{[3]}) b_{[4]} \\
=&b_{[1]}\, S\big( \rho(a_{[2]},b_{[2]})_{[1]} \big) \, a_{[1]}  \otimes
a_{[3]} S(a_{[4]}) \, \rho(a_{[2]},b_{[2]})_{[2]}\, S(b_{[3]}) b_{[4]} \\
=&b_{[1]}\, S\big( \rho(a_{[2]},b_{[2]})_{[1]} \big) \, a_{[1]}  \otimes
  \rho(a_{[2]} \epsilon(a_{[3]}),b_{[2]} \epsilon(b_{[3]}))_{[2]} \\
=&b_{[1]}\, S\big( \rho(a_{[2]},b_{[2]})_{[1]} \big) \, a_{[1]}  \otimes
  \rho(a_{[2]} ,b_{[2]})_{[2]} ,
\end{align*}
where we used \eqref{Eq:pfAntip1} in the first equality, the cocommutativity of $\cH$ in the second equality, and then standard Hopf algebraic manipulations.
This is precisely $\dgal{a,b}_\rho$ by  \eqref{Eq:dbr-Fox}.
\end{proof}

\begin{cor} \label{Cor:Hopf1}
Let $\rho$ be an antisymmetric Fox pairing such that the tritensor map $\|-,-,-\|_\rho$ is identically vanishing.
Then, for any $N\geq 1$, the twisted representation space $\Rep^{\phi,\tau}(\cH,N)$ is equipped with a Poisson bracket uniquely determined by $\dgal{-,-}_\rho$ and $\phi=S$ using \eqref{Eq:BrPO} in the orthogonal case, or using the identities in Remark \ref{Rem:Sympl} (with $N=2n$) in the symplectic case.

\noindent If instead the tritensor map $\|-,-,-\|_\rho$ satisfies the quasi-Poisson Fox property \eqref{Eq:qP-Fox}, the same statement holds except that the induced bracket is \emph{quasi-}Poisson.
\end{cor}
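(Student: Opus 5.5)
The corollary follows by feeding the output of Proposition \ref{Pr:Hopf1} and Proposition \ref{Pr:Hopf2} into the main theorems of Sections \ref{Sec:DPoi}--\ref{Sec:DqPoi}. We work over $A_0=\CC$, so $|I|=1$ and the dimension vector is just $N$. We take $A=\cH$ and $\phi=S$. Since $\cH$ is cocommutative, hence involutive, $S$ is a $\CC$-linear anti-automorphism with $S^2=\id_\cH$. It trivially fixes the unique idempotent $e_1=1$, since $S(1)=1$. Hence $(\cH,S)$ is an involutive algebra in the sense of \ref{ss:Rep}.

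For the Poisson statement, we use part (1) of Proposition \ref{Pr:Hopf1}. The vanishing of $\|-,-,-\|_\rho$ makes $\dgal{-,-}_\rho$ a double Poisson bracket. By Proposition \ref{Pr:Hopf2}, this bracket is $S$-adapted, i.e.\ \eqref{Eq:BrComp} holds with $\phi=S$. The hypotheses of Theorem \ref{Thm:BrPO} (orthogonal case, $\tau=(-)^T$) and of Theorem \ref{Thm:BrSmp} (symplectic case, $N=2n$, $\tau=\Omega(-)^T\Omega^T$) are therefore met. These theorems give a Poisson bracket on $\Rep^{S,\tau}(\cH,N)$. It is uniquely determined on the generators $a_{ij}$ by \eqref{Eq:BrPO}, respectively \eqref{Eq:BrPsymp}. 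With $|I|=1$, the latter reduces to the four case-by-case formulas of Remark \ref{Rem:Sympl}. Uniqueness comes from the fact that the $a_{ij}$ generate the coordinate ring and a biderivation is determined by its values on generators.

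For the quasi-Poisson statement, we use part (2) of Proposition \ref{Pr:Hopf1}. The quasi-Poisson Fox property \eqref{Eq:qP-Fox} implies that $\dgal{-,-}_\rho$ satisfies \eqref{qPabc} with $I=\{1\}$, so $\dgal{-,-}_\rho$ is a double quasi-Poisson bracket. Proposition \ref{Pr:Hopf2} again supplies $S$-adaptedness. We then invoke Theorem \ref{Thm:BrqPO} or Theorem \ref{Thm:BrqPsp}, which give the quasi-Poisson bracket for the $\Orm_N$- or $\Sp_N$-action, defined by the same formulas.

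The only point needing care is normalisation, which is the main potential obstacle. Footnote \ref{Ft:ConvMT} warns of a factor $\frac12$ between the Massuyeau--Turaev and Van den Bergh conventions. We must make sure that \eqref{Eq:qP-Fox} is exactly \eqref{qPabc} after applying $\tau_{(13)}$. This is what the proof of Proposition \ref{Pr:Hopf1} asserts, so we simply reread both right-hand sides with $e_1=1$ to confirm that the coefficients $\frac14$ agree. All other steps are direct citations.
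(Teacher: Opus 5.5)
Your proposal is correct and follows the paper's own argument: combine Propositions~\ref{Pr:Hopf1} and~\ref{Pr:Hopf2} to get an $S$-adapted double (quasi-)Poisson bracket, then apply the $|I|=1$ results of Section~\ref{Sec:DPoi} or Theorems~\ref{Thm:BrqPO} and~\ref{Thm:BrqPsp}. Your extra checks are consistent with the paper. These are that $(\cH,S)$ is an involutive algebra, and that applying $\tau_{(13)}$ to \eqref{Eq:qP-Fox} with $(x,y,z)=(c,b,a)$ gives exactly \eqref{qPabc} with $e_1=1$ and the same coefficient $\frac14$.
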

\begin{proof}
Combining Propositions \ref{Pr:Hopf1} and \ref{Pr:Hopf2}, the double bracket $\dgal{-,-}_\rho$ is Poisson and $S$-adapted. 
Then the result follows from the discussion of Section \ref{Sec:DPoi} with $|I|=1$ (which boils down to \cite{OS}).

For the quasi-Poisson version of the result, one uses either Theorem \ref{Thm:BrqPO} ($\Orm_N$ case) or Theorem \ref{Thm:BrqPsp} ($\Sp_N$ case) to conclude.
\end{proof}

\medskip

Finally, we explain how the (quasi-)Poisson bracket of Corollary \ref{Cor:Hopf1} can equivalently be derived using the formalism of Massuyeau and Turaev \cite{MT18}.
We start with a small computation.

\begin{lem}
For the double bracket defined by  \eqref{Eq:dbr-Fox}, one has for any $a,b\in \cH$,
\begin{equation} \label{Eq:Antip4}
 \dgal{a,S(b)}_\rho =
 - S\big(\rho(a_{[2]},b_{[1]})_{[1]}\, \big)\, a_{[1]}  \otimes \rho(a_{[2]},S(b_{[1]}))_{[2]} S(b_{[2]})\,.
\end{equation}

\end{lem}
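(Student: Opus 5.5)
The plan is to substitute $S(b)$ directly into the defining formula \eqref{Eq:dbr-Fox}, and then simplify the first tensor factor with the antipode identity \eqref{Eq:pfAntip0} and standard Hopf-algebra manipulations. Throughout we use that $\cH$ is cocommutative and involutive, so the iterated coproduct legs may be permuted freely, and $\Delta(S(b))=S(b_{[2]})\otimes S(b_{[1]})$.

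\emph{Step 1.} From \eqref{Eq:dbr-Fox} and the formula for $\Delta(S(b))$,
\[
\dgal{a,S(b)}_\rho= S(b_{[2]})\, S\big(\rho(a_{[2]},S(b_{[1]}))_{[1]}\big)\, a_{[1]} \otimes \rho(a_{[2]},S(b_{[1]}))_{[2]} .
\]
The second tensor factor already has the shape $\rho(a_{[2]},S(b_{[1]}))_{[2]}$ appearing in \eqref{Eq:Antip4}. So all the work is in the first factor.

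\emph{Step 2.} Rewrite the first factor using the second identity of \eqref{Eq:pfAntip0}, namely $\rho(x,S(y))=-\rho(x,y_{[1]})S(y_{[2]})$. This gives
\[
\Delta\rho(a_{[2]},S(b_{[1]}))=-\rho(a_{[2]},b_{[1]})_{[1]}S(b_{[3]})\otimes \rho(a_{[2]},b_{[1]})_{[2]}S(b_{[2]}),
\]
after relabelling the legs of $b$ by cocommutativity. In the first factor we then obtain a product of the form
\[
S(b_{[\cdot]})\, S\big(\rho(\ldots)_{[1]}S(b_{[\cdot]})\big)=S(b_{[\cdot]})\,b_{[\cdot]}\,S(\rho(\ldots)_{[1]}),
\]
using that $S$ is an anti-morphism and $S^2=\id$. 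Choose the leg labels, by cocommutativity, so that the $b$-legs multiply as $S(b_{[i]})b_{[i+1]}$, which collapses to $\epsilon(b_{[i]})$ by the antipode axiom. Then absorb the counit back into the remaining legs.

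\emph{Step 3.} Re-package the output. The $b$-legs surviving Step 2 (one multiplying the second factor) are exactly what is needed to reassemble $\rho(a_{[2]},S(b_{[1]}))_{[2]}S(b_{[2]})$ in the second factor, so that \eqref{Eq:Antip4} follows with the overall sign $-1$. The cleanest way to organise this is to run Step 2 in reverse on the right-hand side of \eqref{Eq:Antip4}: expand it with \eqref{Eq:pfAntip0}, and check that both sides reduce to the same expression
\[
-S(\rho(a_{[2]},b_{[1]})_{[1]})\,a_{[1]}\otimes \rho(a_{[2]},b_{[1]})_{[2]}\,S(b_{[2]})\,S(b_{[3]})\cdots
\]
modulo counit contractions.

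The main obstacle is purely bookkeeping: tracking the Sweedler indices of $b$ through the two coproducts, and choosing the cocommutative relabelling so that the antipode contraction is legitimate. I would first do the computation for group-like $a,b$, where $\Delta(a)=a\otimes a$, as a sanity check on signs and ordering, and then write the general manipulation.
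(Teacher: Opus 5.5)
Your Steps 1 and 2 are the paper's own computation. You substitute $\Delta(S(b))=S(b_{[2]})\otimes S(b_{[1]})$ into \eqref{Eq:dbr-Fox}, rewrite $\Delta\rho(a_{[2]},S(b_{[1]}))$ with the second identity of \eqref{Eq:pfAntip0}, and cancel the pair $S(b_{[\cdot]})\,b_{[\cdot]}$ in the first leg by the antipode axiom. Note that this rewriting changes \emph{both} legs, as your own Step 2 display shows. So the Step 1 remark that the second factor ``already has the shape'' of the target and that ``all the work is in the first factor'' is not accurate. Carried through, Steps 1--2 give
\[
\dgal{a,S(b)}_\rho=-S\big(\rho(a_{[2]},b_{[1]})_{[1]}\big)\,a_{[1]}\otimes\rho(a_{[2]},b_{[1]})_{[2]}\,S(b_{[2]}),
\]
and the argument should stop there. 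Cocommutativity is not even needed: the first leg is $-S\big(\rho(a_{[2]},b_{[1]})_{[1]}S(b_{[3]})b_{[4]}\big)a_{[1]}$, and $S(b_{[3]})b_{[4]}=\epsilon(b_{[3]})$ directly.

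The genuine problem is Step 3. The surviving factor $\rho(a_{[2]},b_{[1]})_{[2]}S(b_{[2]})$ cannot be ``reassembled'' into $\rho(a_{[2]},S(b_{[1]}))_{[2]}S(b_{[2]})$. Reassembling would consume the only surviving $S(b_{[2]})$ and cost a sign. Moreover, you cannot undo \eqref{Eq:pfAntip0} on one Sweedler leg once the other leg has been contracted.

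Your proposed common normal form $\ldots\rho(a_{[2]},b_{[1]})_{[2]}S(b_{[2]})S(b_{[3]})\cdots$ ``modulo counit contractions'' also does not exist. The product $S(b_{[2]})S(b_{[3]})=S(b_{[3]}b_{[2]})$ is not a counit contraction; only $S(b_{[i]})b_{[i+1]}$ and $b_{[i]}S(b_{[i+1]})$ collapse.

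The underlying issue is that the printed right-hand side of \eqref{Eq:Antip4} is not a coherent Sweedler expression. It pairs the first leg of $\Delta\rho(a_{[2]},b_{[1]})$ with the second leg of $\Delta\rho(a_{[2]},S(b_{[1]}))$. It is a misprint, repeated in the last line of the paper's proof, for $\rho(a_{[2]},b_{[1]})_{[2]}S(b_{[2]})$; that corrected form is the one actually used in \eqref{Eq:pfAntip5}--\eqref{Eq:pfAntip6}.

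Your planned group-like sanity check would have exposed this. For group-like $a,b$ the left-hand side equals $-S(\rho(a,b)_{[1]})\,a\otimes\rho(a,b)_{[2]}\,b^{-1}$. Forcing the printed second leg through \eqref{Eq:pfAntip0} instead gives the opposite sign and an extra factor $b^{-1}$. So keep Steps 1--2, state the conclusion in the corrected form, and discard Step 3.
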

\begin{proof}
This is a simple computation:
\begin{align*}
\dgal{a,S(b)}_\rho&=
S(b)_{[1]} \, S(\rho(a_{[2]},S(b)_{[2]})_{[1]})\, a_{[1]}  \otimes
\rho(a_{[2]},S(b)_{[2]})_{[2]} \\
&=
 S\big(\rho(a_{[2]},S(b_{[1]}))_{[1]}\, b_{[2]}\big)\, a_{[1]} \otimes
\rho(a_{[2]},S(b_{[1]}))_{[2]} \\
&=  -
 S\big(\rho(a_{[2]},b_{[1]})_{[1]}\, S(b_{[2]})b_{[4]}\big)\, a_{[1]} \otimes
\rho(a_{[2]},S(b_{[1]}))_{[2]} S(b_{[3]}) \\
&=  -
 S\big(\rho(a_{[2]},b_{[1]})_{[1]}\big)\, a_{[1]} \otimes
\rho(a_{[2]},S(b_{[1]}))_{[2]} S(\epsilon(b_{[2]})b_{[3]}) \\
&=  -
S\big(\rho(a_{[2]},b_{[1]})_{[1]}\big)\, a_{[1]} \otimes
\rho(a_{[2]},S(b_{[1]}))_{[2]} S(b_{[2]})\,.
\end{align*}
Here we used \eqref{Eq:dbr-Fox} for the first equality,
$\Delta(S(b))=S(b_{[2]})\otimes S(b_{[1]})$ in the second equality,
\eqref{Eq:pfAntip0} in the third equality,
and the cocommutativity in the fourth equality.
\end{proof}

Given $a,b\in \cH$, one obtains from \cite[\S~8.3]{MT18} that an antisymmetric (ungraded) Fox pairing induces an antisymmetric biderivation (with the factor explained in Footnote~\ref{Ft:ConvMT}) satisfying
\begin{equation}  \label{Eq:pfAntip5}
\begin{aligned}
  \br{a_{ij} , b_{kl}} =& \frac12(b_{[1]} S(\rho(a_{[2]},b_{[2]})_{[1]})\, a_{[1]} )_{kj} \, (\rho(a_{[2]},b_{[2]})_{[2]})_{il} \\
 &- \frac12( S(\rho(a_{[2]},b_{[1]})_{[1]}) a_{[1]} )_{lj} \, (\rho(a_{[2]},b_{[1]})_{[2]} \, S(b_{[2]}))_{ik}\,.
\end{aligned}
\end{equation}
Meanwhile, our formula \eqref{Eq:BrPO} entails
\begin{equation}
\label{Eq:pfAntip6}
 \br{a_{ij} , b_{kl}} = \frac12(\dgal{a,b}_\rho)_{kj,il} + \frac12(\dgal{S(a),b}_\rho)_{ki,jl}
 = \frac12(\dgal{a,b}_\rho)_{kj,il} + \frac12(\dgal{a,S(b)}_\rho)_{lj,ik}
\end{equation}
after combining the relation \eqref{Eq:DefRelO} and the
$S$-compatibility of  $\dgal{-,-}_\rho$ to rewrite the second term.
In view of \eqref{Eq:dbr-Fox} and \eqref{Eq:Antip4}, this reproduces \eqref{Eq:pfAntip5}.

\begin{rem}
Adapting \eqref{Eq:pfAntip6} using \eqref{Eq:DefRelSp}--\eqref{Eq:BrPsymp} in the symplectic case ($N=2n$),
we obtain:
 \begin{equation}  \label{Eq:pfAntip7}
\begin{aligned}
  \br{a_{ij} , b_{kl}} =&
  \frac12(b_{[1]} S(\rho(a_{[2]},b_{[2]})_{[1]})\, a_{[1]} )_{kj} \, (\rho(a_{[2]},b_{[2]})_{[2]})_{il} \\
 &-\frac12 \sgn_{2n}(k,l)\, ( S(\rho(a_{[2]},b_{[1]})_{[1]}) a_{[1]} )_{\tau(l)j} \,
 (\rho(a_{[2]},b_{[1]})_{[2]} \,S(b_{[2]}))_{i\tau(k)}\,.
\end{aligned}
\end{equation}
Such a formula can also be derived\footnote{We thank G. Massuyeau for explaining this result to us.} by adapting \cite[\S8.3]{MT18} to the symplectic case.
\end{rem}

We should mention that Massuyeau and Turaev induced an operation on a representation algebra, denoted $\cH_B$,
which is defined in a slightly different way than $\CC[\Rep^{\phi,\tau}(\cH,N)]$.
We show in Proposition~\ref{Pr:RepHB} that these algebras can be identified.


\section{Modified Kontsevich system} \label{Sec:Ksys}

On the algebra of noncommutative Laurent polynomials $A:=\CC\langle u^{\pm 1},v^{\pm 1}\rangle$, consider the Kontsevich system
\begin{equation} \label{Eq:Konts}
 \frac{\dd u}{\dd t}=uv-uv^{-1}-v^{-1}, \quad
 \frac{\dd v}{\dd t}=vu^{-1}-vu+u^{-1}.
\end{equation}
Its integrability was studied by Wolf and Efimovskaya \cite{EW}.
Following \cite{Art15,Art17}, an Hamiltonian structure is obtained from the double quasi-Poisson bracket
of Example \ref{Ex:g1r0} written as ($v=x$ and $u=y$)
\begin{equation} \label{Eq:q-uv}
\begin{aligned}
 \dgal{v,v}&= \frac12(v^2\otimes 1 - 1 \otimes v^2), \quad
 \dgal{u,u}= -\frac12(u^2\otimes 1 - 1 \otimes u^2), \\
 \dgal{v,u}&= \frac12(uv\otimes 1 + 1\otimes vu - v \otimes u + u\otimes v),
\end{aligned}
\end{equation}
together with the Hamiltonian function $h:=H+w$, where
\begin{equation} \label{Eq:H-w}
 H=u+v+u^{-1}+v^{-1}, \qquad
 w= u^{-1}v^{-1}.
\end{equation}
Indeed, a tedious but straightforward computation yields
that the elements $(h^k)_{k\geq 1}$ pairwise commute under the Lie bracket $\br{-,-}_\sharp$ defined as in Remark
\ref{Rem:HoP}. (See \cite{MS} for more on integrable non-abelian ODEs.)
Going to the $N$-th representation space $\Rep(A,N)$, this induces that \eqref{Eq:Konts} seen as an ODE on $\Gl_N(\CC)\times \Gl_N(\CC)$ is Hamiltonian for $\tr(h)$ and, furthermore, the elements $(\tr(h^m))_{m\geq 1}$ are conserved quantities defining pairwise commuting flows.

The previous considerations can not be directly applied to twisted representation spaces, where one takes $\phi:A\to A$ to be the inverse mapping (when evaluated on monomials) as in Example \ref{Ex:g1r0}. Note that $\phi(H)=H$ but $\phi(w)=w^{-1}=vu$.
Given $a\in A$, the identity \eqref{Eq:BrPO-trLeft} implies that the flow of $\tr(a)$ on $\Rep^{\phi,\tau}(A,N)$ is given in types $\typB,\typD$ by (for any $b\in A$ and indices $k,l$)
\[
\br{\tr(h),b_{kl}}_{\phi,\Orm}=\frac12 (\mathrm{m} \circ (\dgal{h,b}+\dgal{\phi(h),b}))_{kl} .
\]
The same formula holds for $\br{-,-}_{\phi,\Sp}$.
Commutativity under $\br{-,-}_\sharp$ of the classes in $A/[A,A]$ of the elements $(h^m)_{m\geq 1}$ no longer guarantees that each $\tr(h^m) \in \CC[\Rep^{\phi,\tau}(A,N)]$ is a first integral of that new flow:
one now needs $\br{h,\phi(h)^m}_\sharp=0$ as well, and this equality does not always hold.
To rectify this issue, fix $\lambda\in \CC$ and introduce the $\phi$-invariant element (we use $H,w$ as in \eqref{Eq:H-w})
\begin{equation} \label{Eq:hphi}
 h_\lambda = \frac12 (\id_A+\phi)(H+2\lambda w) = H + \lambda (w + w^{-1})\,.
\end{equation}

\begin{thm} \label{Thm:mKonts}
For any fixed $\lambda\in \CC$, the (Hamiltonian) element $h_\lambda$  \eqref{Eq:hphi} defines the system
\begin{equation} \label{Eq:mKonts}
 \frac{\dd u}{\dd t}=uv-uv^{-1}-\lambda v^{-1}+\lambda uvu, \quad
 \frac{\dd v}{\dd t}=vu^{-1}-vu+\lambda u^{-1}-\lambda vuv,
\end{equation}
which satisfies:
\begin{enumerate}[label=\roman*)]
 \item $vuv^{-1}u^{-1}\in A$ is a first integral; \label{mK-item1}
 \item the class in $A/[A,A]$ of any $\phi$-invariant element $h_\lambda^m$ is a first integral,
and these pairwise commute under $\br{-,-}_\sharp$; \label{mK-item2}
\item The previous statements are also valid on the representation space $\Rep(A,N)$, $N\geq 1$, and on the twisted representation space $\Rep^{\phi,\tau}(A,N)$. That is, $\X(vuv^{-1}u^{-1})$ is a (matrix) first integral,
and the elements $(\tr(h_\lambda^m))_{m\geq 1}$ are first integrals that pairwise commute under the induced Poisson bracket on the (twisted) representation space.
\label{mK-item3}
\end{enumerate}
\end{thm}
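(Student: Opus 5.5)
The plan has three stages: obtain the equations of motion from double derivations, verify the two first-integral claims directly in $A$, and then push everything down to the representation spaces using Van den Bergh's construction together with Theorem \ref{Thm:BrqPO} and Theorem \ref{Thm:BrqPsp}.

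\textbf{Equations of motion.} The flow defined by $h_\lambda$ on $A$ is the derivation $b\mapsto \mathrm{m}\circ\dgal{h_\lambda,b}$, where the double bracket is \eqref{Eq:q-uv}. I compute it on the generators $u,v$ using the derivation rules \eqref{Eq:outder}--\eqref{Eq:inder}.
\begin{itemize}
\item The part coming from $H$ should reproduce the $\lambda$-free terms of \eqref{Eq:Konts}. This is already known from \cite{Art15,Art17}, where $h=H+w$ produces \eqref{Eq:Konts}; isolating $H$ gives $uv-uv^{-1}$ and $vu^{-1}-vu$.
\item The remaining terms come from $\mathrm{m}\circ\dgal{w,-}$ and $\mathrm{m}\circ\dgal{w^{-1},-}$, with $w=u^{-1}v^{-1}$ and $w^{-1}=vu$. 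I compute these using $\dgal{a^{-1},b}=-a^{-1}\ast\dgal{a,b}\ast a^{-1}$.
\item The $w$-part gives $-v^{-1}$ and $u^{-1}$, matching \eqref{Eq:Konts}. The $w^{-1}$-part should give $uvu$ and $-vuv$.
\end{itemize}
Adding these yields \eqref{Eq:mKonts}.

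\textbf{First integrals in $A$.} For \ref{mK-item1}, note that $vuv^{-1}u^{-1}$ is the moment map $\Phi=[x,y]_{\mathrm m}$ of Example \ref{Ex:g1r0}. By \eqref{Phim}, $\mathrm{m}\circ\dgal{\Phi,a}=\frac12(a\Phi-\Phi a+a\Phi-\Phi a)=[a,\Phi]$. Cyclic antisymmetry then gives $\mathrm{m}\circ\dgal{a,\Phi}=-\mathrm{m}\circ\dgal{\Phi,a}^{\circ}$. Since $\mathrm{m}\circ(d^\circ)$ agrees with $\mathrm{m}\circ d$ modulo commutator corrections, I use the explicit formula to get $\mathrm{m}\dgal{a,\Phi}=\Phi a-a\Phi$ up to the inner structure, which vanishes when $a$ is a commutator class. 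Hence the flow of the trace class $h_\lambda$ fixes $\Phi$.

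For \ref{mK-item2}, I need $\br{h_\lambda^m,h_\lambda^n}_\sharp=0$ in $A/[A,A]$. Via the Leibniz property of $\br{-,-}_\sharp$ on powers, this reduces to the single identity $\br{h_\lambda,h_\lambda^n}_\sharp=0$. I would prove that identity by the same tedious direct computation as in \cite{Art15,Art17}. The natural approach is to expand $h_\lambda=H+\lambda(w+w^{-1})$ and verify order by order in $\lambda$ that each mixed contribution is a sum of commutators. The key symmetry to exploit is $\phi(h_\lambda)=h_\lambda$ combined with \eqref{Eq:BrComp}. That combination gives $\br{\phi(a),\phi(b)}_\sharp=\br{a,b}_\sharp$ up to the sign induced by $\phi$ on classes, which halves the work: terms pair up under $\phi$. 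I expect this commutation check to be the main obstacle. I would try first to find a Lax-type or recursion argument, writing $h_\lambda^n$ through a generating function, rather than brute force.

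\textbf{Representation spaces.} For \ref{mK-item3}, I apply Van den Bergh's construction (for $\Rep(A,N)$) and Theorems \ref{Thm:BrqPO} and \ref{Thm:BrqPsp} (for $\Rep^{\phi,\tau}(A,N)$).
\begin{itemize}
\item By \eqref{Eq:BrPO-trLeft}, the flow of $\tr(h_\lambda)$ on matrix entries is $\frac12\X(\mathrm m(\dgal{h_\lambda,b}+\dgal{\phi(h_\lambda),b}))=\X(\mathrm m\dgal{h_\lambda,b})$, because $\phi(h_\lambda)=h_\lambda$. So the matrix flow is exactly \eqref{Eq:mKonts} evaluated on matrices.
\item Consequently $\X(vuv^{-1}u^{-1})$ is conserved, by the computation in \ref{mK-item1}.
\item By \eqref{Eq:BrPO-tr} and \eqref{Eq:BrPSymp-tr}, which hold in the quasi-Poisson setting as noted in the paper, $\br{\tr(h_\lambda^m),\tr(h_\lambda^n)}=\tr(\mathrm m\dgal{h_\lambda^m,h_\lambda^n})$, again using $\phi(h_\lambda^m)=h_\lambda^m$. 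This vanishes by \ref{mK-item2}.
\end{itemize}
The same argument applies on $\Rep(A,N)$ without the $\phi$ term.
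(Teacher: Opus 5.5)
Your derivation of \eqref{Eq:mKonts} and your reduction of iii) to i) and ii) match the paper. However, item ii), which is the substance of the theorem, is not proved, and the reduction you propose for it is invalid. There is no ``Leibniz property of $\br{-,-}_\sharp$ on powers'': $A/[A,A]$ carries no product. What the derivation rules \eqref{Eq:outder}--\eqref{Eq:inder} actually give is
\[
\frac{1}{k\ell}\br{h^k,h^\ell}_\sharp \equiv \dgal{h,h}'\,h^{k-1}\,\dgal{h,h}''\,h^{\ell-1} \pmod{[A,A]},
\]
with $h^{k-1}$ sitting \emph{between} the two legs of $\dgal{h,h}$. The case $k=1$ only controls $\mathrm{m}(\dgal{h,h})\,h^{\ell-1}$ and does not determine the terms with $k\geq 2$. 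So vanishing of $\br{h_\lambda,h_\lambda^n}_\sharp$ for all $n$ would not give pairwise commutativity; you must kill the whole two-parameter family, which is Lemma \ref{Lem:mKonts}.

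You also do not prove even the $k=1$ case. The computation of \cite{Art15,Art17} is for $H+w$ and does not transfer verbatim to $h_\lambda$, which contains $w^{-1}=vu$ and $\lambda^2$-terms. The $\phi$-symmetry only yields $\br{\phi(a),\phi(b)}_\sharp=\phi(\br{a,b}_\sharp)$, i.e. that $\br{h_\lambda^k,h_\lambda^\ell}_\sharp$ is $\phi$-invariant, not that it vanishes. What is missing is the explicit computation:
\begin{itemize}
\item write out $\dgal{H,H}$, $\dgal{H,w}$, $\dgal{w,w}$, and their $w^{-1}$-counterparts via $\dgal{w^{-1},a}=-w^{-1}\ast\dgal{w,a}\ast w^{-1}$ and $\dgal{a,w^{-1}}=-w^{-1}\dgal{a,w}w^{-1}$;
\item discard all terms of the form $b\otimes 1-1\otimes b$, which contribute zero modulo commutators;
\item cancel the terms proportional to $h_\lambda^{k+\ell-2}$ using $[v+u^{-1},H]=vu-uv+u^{-1}v^{-1}-v^{-1}u^{-1}$;
\item make the remaining sandwiched terms cancel by substituting $H+\lambda w=h_\lambda-\lambda w^{-1}$ and $H=(v+u^{-1})+(u+v^{-1})$.
\end{itemize}

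Item i) is also argued incorrectly, although the fix is one line. From \eqref{Phim} (with $|I|=1$) and \eqref{Eq:cycanti} one gets exactly
\[
\dgal{a,\Phi}=-\tfrac12\big(\Phi\otimes a-\Phi a\otimes 1+1\otimes a\Phi-a\otimes\Phi\big),
\qquad \text{so}\qquad \mathrm{m}\circ\dgal{a,\Phi}=0
\]
for \emph{every} $a\in A$. Hence $\frac{\dd\Phi}{\dd t}=\mathrm{m}\circ\dgal{h_\lambda,\Phi}=0$ in $A$. Your step replacing $\mathrm{m}(d^\circ)$ by $\mathrm{m}(d)$ modulo commutators led you to $\Phi a-a\Phi$. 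An identity valid only modulo $[A,A]$ would not suffice: it would give the Lax equation $\frac{\dd\Phi}{\dd t}=[\Phi,h_\lambda]$, under which $\X(\Phi)$ in item iii) is merely isospectral, not constant. Since your matrix first-integral claim in iii) rests on i), this must be corrected.
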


\begin{proof}[Proof of Theorem \ref{Thm:mKonts}]
Let us first derive \eqref{Eq:mKonts}.
By gathering \eqref{Eq:q-uv} and \eqref{Eq:H-w}, we compute
\begin{subequations} \label{Eq:HW-u}
 \begin{align}
\dgal{H,u}&=\frac12 \left( u\otimes H - H \otimes u - uH\otimes 1 + 1\otimes Hu \right) + uv\otimes 1- u\otimes v^{-1}; \\
\dgal{w,u}&=\frac12\left(-w\otimes u - u\otimes w - uw\otimes 1 + 1 \otimes wu\right).
 \end{align}
\end{subequations}
We then get $\dgal{w^{-1},u}=-w^{-1}\ast \dgal{w,u}\ast w^{-1}$, and we obtain the first equality in \eqref{Eq:mKonts} by expanding
$\frac{\dd u}{\dd t}:=\mathrm{m}\circ \dgal{h_\lambda,u}$ linearly using \eqref{Eq:hphi}.
Similarly, we find $\frac{\dd v}{\dd t}:=\mathrm{m}\circ \dgal{h_\lambda,v}$ after deriving
\begin{subequations} \label{Eq:HW-v}
 \begin{align}
\dgal{H,v}&=\frac12 \left( -v\otimes H + H \otimes v + vH\otimes 1 - 1\otimes Hv \right) - vu\otimes 1 +  v\otimes u^{-1}; \\
\dgal{w,v}&=\frac12\left(w\otimes v  - v\otimes w + vw\otimes 1 + 1 \otimes wv\right).
 \end{align}
\end{subequations}

We can now deduce the different items.
For \ref{mK-item1}, since $\Phi=vuv^{-1}u^{-1}$ is a moment map, \eqref{Phim} entails
$\mathrm{m}\circ \dgal{a,\Phi}=0$ for any $a\in A$.
(Thus, $\Phi^{-1}=uvu^{-1}v^{-1}$ is a first integral as in the original case \cite{EW}.)

For \ref{mK-item2}, the result follows from Lemma \ref{Lem:mKonts} below.

For \ref{mK-item3}, let us only consider the twisted orthogonal case, as the symplectic case and the ``untwisted'' type $\typA$ cases are completely analogous.
By \eqref{Eq:BrPO-trLeft} and the $\phi$-invariance of $h_\lambda^m$, one has for any $b\in A$,
\[
 \br{\tr(h_\lambda^m),b_{kl}}_{\phi,\Orm}=(\mathrm{m} \circ \dgal{h_\lambda^m,b})_{kl} .
\]
By taking $m=1$ we obtain $\dd\X(b)/\dd t=\X(\dd b/\dd t)$, so that we can write \eqref{Eq:mKonts} for $\X(u),\X(v)$ instead of $u,v$.
We get similarly $\br{\tr(h_\lambda^m),\X(\Phi)}_{\phi,\Orm}=0_N$ by reproducing the argument used for \ref{mK-item1}.
Then, one has for $m,n\geq 0$,
\[
 \br{\tr(h_\lambda^m),\tr(h_\lambda^n)}_{\phi,\Orm}=\tr(\br{h_\lambda^m,h_\lambda^n}_\sharp)
\]
which vanishes by \ref{mK-item2}.
\end{proof}

\begin{lem} \label{Lem:mKonts}
The classes in $(A/[A,A],\br{-,-}_\sharp)$ of the $\phi$-invariant elements $(h_\lambda^k)_{k\geq 0}$ span an abelian Lie subalgebra.
\end{lem}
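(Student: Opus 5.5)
Since $\br{-,-}_\sharp$ is bilinear and $\br{x,y}_\sharp = \mathrm{m}\circ\dgal{x,y}$ modulo $[A,A]$, it suffices to show that $\br{h_\lambda^k,h_\lambda^m}_\sharp=0$ in $A/[A,A]$ for all $k,m\geq 1$ (the case $k=0$ is trivial, since $1\in A_0$). The plan has three steps.

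\textbf{Step 1: reduce to one Lax-type identity.} By the left derivation rule \eqref{Eq:inder}, the class of $\mathrm{m}\circ\dgal{h^k,b}$ equals that of $k\,\mathrm{m}\circ\dgal{h,b}\ast h^{k-1}$. It is convenient to introduce the derivation $D_h:=\mathrm{m}\circ\dgal{h,-}$ twisted by powers of $h$. Concretely, one shows that modulo commutators
\[
\br{h^k,h^m}_\sharp \equiv k\,m\,\sum \mathrm{m}\big(\dgal{h,h}\ast h^{k-1}\big)\,h^{m-1}\text{-type terms}.
\]
Everything is thereby governed by the single element $\dgal{h,h}=\dgal{h_\lambda,h_\lambda}\in A\otimes A$. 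The standard mechanism, as for the original Kontsevich system in \cite{Art15,Art17}, is to prove that $\dgal{h,h}$ has the form
\[
\dgal{h,h}=X\otimes h-h\otimes X + \textstyle\sum_r\,(y_r\otimes z_r - z_r\otimes y_r)
\]
with terms that die after the cyclic contraction against powers of $h$. More precisely, I will aim to show that
\[
\mathrm{m}\big(h^{p}\ast\dgal{h,h}\ast h^{q}\big)\in[A,A]
\]
for all $p,q\geq 0$. Given this, $\br{h^k,h^m}_\sharp=0$ follows by expanding both arguments with the derivation rules.

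\textbf{Step 2: compute $\dgal{h_\lambda,h_\lambda}$.} Write $h_\lambda=H+\lambda(w+w^{-1})$ and use the formulas \eqref{Eq:HW-u} and \eqref{Eq:HW-v} together with the derivation rules. This gives $\dgal{H,H}$, $\dgal{H,w}$, $\dgal{w,w}$, and then $\dgal{\cdot,w^{-1}}$ via $\dgal{a,w^{-1}}=-w^{-1}\dgal{a,w}w^{-1}$. The $\lambda^0$ part is the known Kontsevich computation (for $H$ alone, or $H+w$), which I can import. The $\lambda^2$ part is $\dgal{w+w^{-1},w+w^{-1}}$. Since $w=u^{-1}v^{-1}$ and $\Phi^{\pm1}$ are moment maps, $w$ behaves like a single group-like generator, and I expect this part to reduce to the form $\frac12(w^2\otimes1-1\otimes w^2)$ and its analogues, which are cyclically trivial.

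\textbf{Step 3 (the main obstacle): the cross terms linear in $\lambda$.} These are $\dgal{H,w}+\dgal{H,w^{-1}}$ together with their cyclic transposes. Here the $\phi$-symmetrisation has to be used: apply bracket compatibility \eqref{Eq:BrComp} with $\phi(H)=H$ and $\phi(w)=w^{-1}$, so that $\dgal{H,w^{-1}}=(\phi\otimes\phi)(\dgal{H,w})^\circ$. The $\lambda$-linear part of $\dgal{h,h}$ then becomes $\Sigma+\phi^{\otimes2}(\Sigma)^\circ$, where $\Sigma=\dgal{H,w}-\dgal{w,H}^{\circ}$-type terms.

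The cyclic contraction with $h^p\otimes h^q$ commutes with this symmetrisation, because $h$ is $\phi$-invariant and $\mathrm{tr}\circ\phi$ is preserved on classes in $A/[A,A]$. So I would first try to show directly that $\mathrm{m}(h^p\ast\Sigma\ast h^q)$ plus its $\phi$-image is a commutator, by explicit reduction using the relations $uw=v^{-1}$ and $wv=u^{-1}$. If this fails term by term, I would fall back on an inductive argument: prove that $D_{h}(h)$ is a sum of commutators $[h,Y]$ for an explicit $Y$, via a Lax form $\dd h/\dd t=[Y,h]$ under the flow \eqref{Eq:mKonts}. Then $\br{h,h^m}_\sharp\equiv m\,\mathrm{tr}$-class of $[Y,h]h^{m-1}=0$, and extending to general $k$ is done by induction using the derivation property in the first slot, with the identity of Step 1.
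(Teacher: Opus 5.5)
There is a genuine gap, in two places.

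\textbf{The reduction in Step~1 targets the wrong quantity.} With the inner structure $a\ast d\ast b=d'b\otimes ad''$ you get $\mathrm{m}(h^p\ast\dgal{h,h}\ast h^q)=\dgal{h,h}'h^{p+q}\dgal{h,h}''=\frac{1}{p+q+1}\,\mathrm{m}\dgal{h^{p+q+1},h}$. So your criterion only encodes $\br{h^n,h}_\sharp=0$. Expanding \emph{both} slots with \eqref{Eq:inder} and \eqref{Eq:outder} gives
\[
\dgal{h^k,h^\ell}=\sum_{a+b=k-1}\ \sum_{c+d=\ell-1}h^c\dgal{h,h}'h^{b}\otimes h^{a}\dgal{h,h}''h^{d},
\qquad
\br{h^k,h^\ell}_\sharp\equiv k\ell\,\dgal{h,h}'\,h^{k-1}\,\dgal{h,h}''\,h^{\ell-1}.
\]
This has two independent insertions of powers of $h$, and nothing in your criterion controls $\ell\geq 2$. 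The Lax fallback has the same defect. A Lax form $\frac{\dd h}{\dd t}=[Y,h]$ for the flow of $h$ only yields $\br{h,h^m}_\sharp=0$. No induction on $k$ is available, because $\mathrm{m}\dgal{h^k,b}=k\,\dgal{h,b}'h^{k-1}\dgal{h,b}''$ is a different derivation, not determined by $\mathrm{m}\dgal{h,-}$.

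\textbf{Steps~2--3 cannot work as organised.} The $\lambda^0$, $\lambda^1$, $\lambda^2$ parts of $\dgal{h_\lambda,h_\lambda}$ do not contract to zero separately against powers of $h_\lambda$.
The known Kontsevich computation kills $\dgal{H,H}$ against powers of $H$, not of $h_\lambda$. Against $h_\lambda$ it leaves $Hh_\lambda^{k-1}(v+u^{-1})h_\lambda^{\ell-1}-(v+u^{-1})h_\lambda^{k-1}Hh_\lambda^{\ell-1}$. For $(k,\ell)=(2,1)$, which is inside your own criterion, this is congruent to $\lambda[v+u^{-1},H](w+w^{-1})$, a nonzero class for $\lambda\neq0$.
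Likewise $\dgal{w,w^{-1}}+\dgal{w^{-1},w}=w\otimes w^{-1}-w^{-1}\otimes w$ is not of the trivial shape $b\otimes 1-1\otimes b$. Under the two-sided contraction it leaves $\lambda^2(wh_\lambda^{k-1}w^{-1}h_\lambda^{\ell-1}-w^{-1}h_\lambda^{k-1}wh_\lambda^{\ell-1})$.
The $\phi$-symmetrisation does not reduce the work either. The contraction of $\phi^{\otimes2}(\Sigma)^\circ$ is $\phi$ of the contraction of $\Sigma$, but $\phi$ acts nontrivially on $A/[A,A]$ (it sends the class of $w$ to that of $w^{-1}$). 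Moreover, by Remark~\ref{Rem:Schedl} the non-invariant $H+\lambda w$ also has commuting powers, so $\phi$-invariance is not the mechanism.

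\textbf{What is missing is the explicit cancellation across orders in $\lambda$.} After computing $\dgal{H,H}$, $\dgal{H,w}$, $\dgal{w,w}$ and their $w^{-1}$ versions, the terms carrying $h_\lambda^{k+\ell-2}$ cancel via $[v+u^{-1},H]=vu-uv+u^{-1}v^{-1}-v^{-1}u^{-1}$. The rest regroups as
\[
(H+\lambda w)h_\lambda^{k-1}(v+u^{-1})h_\lambda^{\ell-1}-(v+u^{-1})h_\lambda^{k-1}(H+\lambda w)h_\lambda^{\ell-1}
+\lambda(u+v^{-1}+\lambda w)h_\lambda^{k-1}w^{-1}h_\lambda^{\ell-1}-\lambda w^{-1}h_\lambda^{k-1}(u+v^{-1}+\lambda w)h_\lambda^{\ell-1}.
\]
You then substitute $H+\lambda w=h_\lambda-\lambda w^{-1}$ twice, using $H=(v+u^{-1})+(u+v^{-1})$ in between. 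Each time, the $h_\lambda$ is absorbed into a neighbouring power and cycled away, and what remains cancels. Your proposal never carries out this computation, and it is the entire content of the lemma.
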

\begin{proof}
It suffices to check the vanishing for any $k,\ell\in \Z_{>0}$ of the following element in $A/[A,A]$:
\begin{equation} \label{Eq:hphi-hphi}
 \frac{1}{k\ell}\br{h_\lambda^k,h_\lambda^\ell}_\sharp =
 \dgal{h_\lambda,h_\lambda}' h_\lambda^{k-1} \dgal{h_\lambda,h_\lambda}'' h_\lambda^{\ell-1}\,.
\end{equation}
(This equality follows from \eqref{Eq:outder}-\eqref{Eq:inder} modulo commutators.)
For $H,w$ as in \eqref{Eq:H-w}, a routine computation based on \eqref{Eq:HW-u} and \eqref{Eq:HW-v} yields
\begin{align*}
\dgal{H,H}=&\,(uv-vu)\otimes 1 + 1 \otimes (v^{-1}u^{-1}-u^{-1}v^{-1}) + \frac12 \Big(1\otimes H^2 - H^2\otimes 1\Big) \\
&+ (v+u^{-1})H \otimes 1 - (v+u^{-1})\otimes H + H \otimes (v+u^{-1}) - 1\otimes H(v+u^{-1})\, ; \\
\dgal{H,w}=&\,\frac12 \Big(H\otimes w + w\otimes H - wH\otimes 1 - 1 \otimes Hw\Big) 
+1\otimes (u+  v^{-1})w - (v+ u^{-1})\otimes w\, ; \\
\dgal{w,w}=&\, \frac12 \Big(1\otimes w^2 - w^2\otimes 1\Big)\,.
\end{align*}
We also deduce from the last two equalities their counterparts for $w^{-1}$ in place of $w$ since
\[
 \dgal{w^{-1},a}=-w^{-1}\ast \dgal{w,a}\ast w^{-1}, \quad \dgal{a,w^{-1}}=-w^{-1} \dgal{a,w} w^{-1},
\]
for any $a\in A$. We can then write explicitly
\[
 \dgal{h_\lambda,h_\lambda} = \dgal{H+\lambda w+\lambda w^{-1},H+\lambda w+\lambda w^{-1}}\,.
\]
After obvious cancellations and using that pairs of terms of the form $b\otimes 1 - 1\otimes b$ ($b\in A$) cancel out when substituted in \eqref{Eq:hphi-hphi}, we obtain in $A/[A,A]$
\begin{align*}
\frac{1}{k\ell}\br{h_\lambda^k,h_\lambda^\ell}_\sharp =&
(uv-vu) h_\lambda^{k+\ell-2}+ (v^{-1}u^{-1}-u^{-1}v^{-1}) h_\lambda^{k+\ell-2} \\
&+\big((v+u^{-1})H-H(v+u^{-1})\big) h_\lambda^{k+\ell-2} \\
&+H h_\lambda^{k-1} (v+u^{-1}) h_\lambda^{\ell-1}
-(v+u^{-1}) h_\lambda^{k-1} H h_\lambda^{\ell-1} \\
&+\lambda\, w h_\lambda^{k-1} (v+u^{-1}) h_\lambda^{\ell-1}
-\lambda\, (v+u^{-1}) h_\lambda^{k-1} w h_\lambda^{\ell-1} \\
&+\lambda\, (u+v^{-1}) h_\lambda^{k-1} w^{-1} h_\lambda^{\ell-1}
-\lambda\, w^{-1} h_\lambda^{k-1} (u+v^{-1}) h_\lambda^{\ell-1} \\
&+\lambda^2\, w h_\lambda^{k-1} w^{-1} h_\lambda^{\ell-1}
-\lambda^2\, w^{-1} h_\lambda^{k-1} w h_\lambda^{\ell-1}
\end{align*}
The first two lines disappear because $[v+u^{-1},H]=-uv+vu-v^{-1}u^{-1}+u^{-1}v^{-1}$ thanks to \eqref{Eq:H-w}.
For the remaining terms, we use that $H+\lambda w=h_\lambda-\lambda w^{-1}$ by \eqref{Eq:hphi} and the definition of $H$ \eqref{Eq:H-w} to write modulo commutators,
\begin{align*}
\frac{1}{k\ell}\br{h_\lambda^k,h_\lambda^\ell}_\sharp =&
\,\left(H+\lambda w \right) h_\lambda^{k-1} (v+u^{-1}) h_\lambda^{\ell-1}
-(v+u^{-1}) h_\lambda^{k-1} \left(H+\lambda w \right) h_\lambda^{\ell-1} \\
&+\lambda\left(u+v^{-1}+\lambda w \right) h_\lambda^{k-1} w^{-1} h_\lambda^{\ell-1}
-\lambda w^{-1} h_\lambda^{k-1} \left(u+v^{-1}+\lambda w \right) h_\lambda^{\ell-1} \\
=&\,-\lambda w^{-1} h_\lambda^{k-1} (v+u^{-1}) h_\lambda^{\ell-1}
+\lambda (v+u^{-1}) h_\lambda^{k-1}w^{-1} h_\lambda^{\ell-1} \\
&+\lambda\left(u+v^{-1}+\lambda w \right) h_\lambda^{k-1} w^{-1} h_\lambda^{\ell-1}
-\lambda w^{-1} h_\lambda^{k-1} \left(u+v^{-1}+\lambda w \right) h_\lambda^{\ell-1} \\
=&-\lambda w^{-1} h_\lambda^{k-1} \left(H+\lambda w \right) h_\lambda^{\ell-1}
+\lambda\left(H+\lambda w \right) h_\lambda^{k-1} w^{-1} h_\lambda^{\ell-1} = 0\,.
\end{align*}
This proves the vanishing of \eqref{Eq:hphi-hphi}.
\end{proof}

\begin{rem} \label{Rem:IS}
The last item in Theorem \ref{Thm:mKonts} suggests that \eqref{Eq:mKonts} defines an integrable system when $u,v$ are both invertible, or both orthogonal, or both symplectic matrices.
Indeed, $\Rep(A,N)$ is the internally fused double of $\Gl_N$,
while $\Rep^{\phi,\tau}(A,N)$ is the one of $\Orm_N$ or $\Sp_N$ (due to Theorem \ref{Thm:qP-Double}).
In the $\Gl_N$-case, integrability can be established for generic $\lambda$ after quasi-Poisson reduction
to the $q$-Calogero-Moser space~\cite{Obl} (where $X,Y$ represent $v,u$) for $q\in \CC^\times$ not a root of unity,
\[
 \mathcal{C}_{N,q}:=\{(X,Y,V,W)\in \Gl_N^{\times 2} \times \CC^N \times (\CC^N)^\ast \mid
 XYX^{-1}Y^{-1}=q\Id_N+VW\}/\Gl_N\,.
\]
Note that $\mathcal{C}_{N,q}$ is an irreducible smooth variety of dimension $2N$, such that the matrix $X$ (or $Y$) is generically diagonalizable with pairiwse distinct eigenvalues.
Thus, $\tr(X+X^{-1}),\ldots,\tr((X+X^{-1})^N)$ are $N$ functionally idependent elements of $\CC[\mathcal{C}_{N,q}]$.
Then, the Poisson commuting elements $\{\tr(h_\lambda^m)\mid m\geq 1\}$ obtained from Theorem \ref{Thm:mKonts} give
the abelian Poisson subalgebra
\[
P_\lambda:=\CC[\tr\big(X+Y+X^{-1}+Y^{-1}+\lambda(XY+Y^{-1}X^{-1})\big)^m \mid m\geq 1]\subset \CC[\mathcal{C}_{N,q}]\,.
\]
after reduction. Write $P_\infty:=\CC[\tr\big(XY+(XY)^{-1}\big)^m \mid m\geq 1]$.
The automorphism $(X,Y,V,W)\mapsto (XY,X^{-1},V,W)$ of $\mathcal{C}_{N,q}$ and the previous discussion entail that $P_\infty$ has functional dimension $N$, hence it defines a Liouville integrable system on $\mathcal{C}_{N,q}$.
Thus, the same holds for $P_\lambda$ with generic $\lambda$.

More generally, performing quasi-Poisson reduction at a generic conjugacy class leads to a phase space of dimension strictly greater than $2N$. There, the abelian Poisson algebra $P_\lambda$ is no longer sufficiently large to define a Liouville integrable system and integrability in such cases needs to be investigated.
\end{rem}

\begin{rem} \label{Rem:Schedl}
It was suggested by T. Schedler that one can introduce a parameter in the last term of both equations in the original Kontsevich system \eqref{Eq:Konts}.
This corresponds to considering the (non-$\phi$-invariant) Hamiltonian $\tilde{h}_\lambda:=H+ \lambda w$.
The fact that the classes in $(A/[A,A],\br{-,-}_\sharp)$ of the elements $(\tilde{h}_\lambda^k)_{k\geq 0}$ span an abelian Lie subalgebra can be obtained similarly to the proof of Lemma \ref{Lem:mKonts}.
Then, we get a family of Poisson commuting functions on the internally fused double of $\Gl_N$ given by $\Rep(A,N)$ (this is \emph{not} true in the twisted case).
Arguing as in Remark \ref{Rem:IS}, one obtains that the subalgebra
\[
\tilde{P}_\lambda:=\CC[\tr\big(X+Y+X^{-1}+Y^{-1}+\lambda XY \big)^m \mid m\geq 1]\subset \CC[\mathcal{C}_{N,q}]\,,
\]
defines a Liouville integrable system on $\mathcal{C}_{N,q}$
for generic $\lambda$.
\end{rem}

\newpage


\appendix

\section{Some detailed calculations} \label{App:A}

\subsection{Cartan trivector in the symplectic case} \label{App:CartSP}

Recall that an element $M\in \spg(2n)$ has the form
\begin{equation*}
 M=\left(\begin{array}{cc} a&b\\c&-a^T \end{array}  \right), \quad
 a,b,c\in \gl(n),\,\, b=b^T,\,\,c=c^T\,.
\end{equation*}
A basis for $\spg(2n)$ is made of the $2n^2+n$ elements
\begin{equation}  \label{Eq:AppSP1}
\begin{aligned}
  F_{ij}^{(1)}&=E_{ij} - E_{n+j,n+i}, \qquad 1\leq i,j\leq n; \\
  F_{ij}^{(2)}&=E_{i,n+j} + E_{j,n+i}, \qquad 1\leq i\leq j\leq n; \\
  F_{ij}^{(3)}&=E_{n+i,j} + E_{n+j,i}, \qquad 1\leq i\leq j\leq n.
\end{aligned}
\end{equation}
Let us record the following identities relating \eqref{Eq:SpanSP} to that basis for $1\leq i,j\leq n$:
\begin{equation} \label{Eq:AppSPD1}
 F_{ij}=-F_{j+n,i+n}=F_{ij}^{(1)}, \quad
 F_{i,j+n}=F_{j,i+n}=F_{ij}^{(2)}, \quad
 F_{i+n,j}=F_{j+n,i}=F_{ij}^{(3)}.
\end{equation}
The dual basis of \eqref{Eq:AppSP1} under the trace pairing (i.e. $\langle \check{F}_{ij}^{(a)},  F_{kl}^{(b)}\rangle_{\spg(n)}=\delta_{ik}\delta_{jl}\delta_{ab}$) is given as follows:
\begin{equation}
\begin{aligned}  \label{Eq:AppSP2}
  \check{F}_{ij}^{(1)}&=\frac12 F_{ji}^{(1)}, \qquad 1\leq i,j\leq n; \\
  \check{F}_{ij}^{(2)}&=\frac12 F_{ij}^{(3)}, \quad 1\leq i< j\leq n;  \quad
  \check{F}_{ii}^{(2)}&=\frac14 F_{ii}^{(3)}, \quad 1\leq i\leq n;\\
  \check{F}_{ij}^{(3)}&=\frac12 F_{ij}^{(2)}, \quad 1\leq i< j\leq n;  \quad
  \check{F}_{ii}^{(3)}&=\frac14 F_{ii}^{(2)}, \quad 1\leq i\leq n.
\end{aligned}
\end{equation}
This trivially implies $[\check{F}_{ij}^{(2)},  \check{F}_{kl}^{(2)}]=0$, $[\check{F}_{ij}^{(3)},  \check{F}_{kl}^{(3)}]=0$.
We also compute\footnote{We express the Lie brackets of the dual basis $(\check{F}_{ij}^{(a)})$ in terms of the original basis $(F_{ij}^{(a)})$ so as to easily take the pairing with elements of the dual basis, cf. \eqref{Eq:Cartan3}.}
\begin{equation}  \label{Eq:AppSP3}
 [\check{F}_{kl}^{(1)},\check{F}_{uv}^{(1)}] =
 \frac14 \delta_{vk} F_{lu}^{(1)} - \frac14 \delta_{lu} F_{vk}^{(1)}, \qquad 1\leq k,l,u,v\leq n\,,
\end{equation}
and
\small
\begin{subequations}  \label{Eq:AppSP4}
 \begin{align}
[\check{F}_{kl}^{(2)},\check{F}_{uv}^{(3)}] &=
- \frac14 \delta_{lu} F_{vk}^{(1)} - \frac14 \delta_{ku} F_{vl}^{(1)}
- \frac14 \delta_{lv} F_{uk}^{(1)} - \frac14 \delta_{kv} F_{ul}^{(1)},
 && 1\leq k<l\leq n,\,1\leq u<v\leq n, \\
 [\check{F}_{kl}^{(2)},\check{F}_{uu}^{(3)}] &=
- \frac14 \delta_{lu} F_{uk}^{(1)} - \frac14 \delta_{ku} F_{ul}^{(1)},
 && 1\leq k<l\leq n,\,1\leq u\leq n, \\
 [\check{F}_{kk}^{(2)},\check{F}_{uv}^{(3)}] &=
- \frac14 \delta_{ku} F_{vk}^{(1)} - \frac14 \delta_{kv} F_{uk}^{(1)},
 && 1\leq k\leq n,\,1\leq u<v\leq n, \\
 [\check{F}_{kk}^{(2)},\check{F}_{uu}^{(3)}] &=
- \frac14 \delta_{ku} F_{uk}^{(1)} , && 1\leq k,u\leq n\,.
 \end{align}
\end{subequations}
\normalsize
We can deduce all remaining Lie brackets using the ad-invariance of the pairing. One can then write using \eqref{Eq:Cartan3}
(all indices are in $\{1,\ldots,n\}$)
\begin{equation}   \label{Eq:AppSP8}
 \begin{aligned}
 \psi^{\spg(2n)} &=\frac{1}{12} \sum_{i,j,k,l,u,v}
\langle \check{F}_{ij}^{(1)},  [\check{F}_{kl}^{(1)},\check{F}_{uv}^{(1)}]\rangle_{\spg(n)} \,
F_{ij}^{(1)}\wedge F_{kl}^{(1)} \wedge F_{uv}^{(1)} \\
 &\quad +\frac{1}{2} \sum_{i\leq j}\sum_{k\leq l}\sum_{u\leq v}
\langle \check{F}_{ij}^{(1)},  [\check{F}_{kl}^{(2)},\check{F}_{uv}^{(3)}]\rangle_{\spg(n)} \,
F_{ij}^{(1)}\wedge F_{kl}^{(2)} \wedge F_{uv}^{(3)}  \,.
 \end{aligned}
\end{equation}

\begin{prop}
The Cartan trivector $\psi^{\spg(2n)}$ can be written as \eqref{Eq:CartSp} with the spanning set \eqref{Eq:SpanSP}.
\end{prop}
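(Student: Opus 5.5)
The plan is to start from the basis expression \eqref{Eq:AppSP8} and evaluate the two sums explicitly using the brackets \eqref{Eq:AppSP3}--\eqref{Eq:AppSP4} and the duality \eqref{Eq:AppSP2}. The result is then rewritten through the dictionary \eqref{Eq:AppSPD1} as a sum over all indices $1\leq i,j,k\leq 2n$ of the spanning elements $F_{ij}$. Throughout, I will work in the tensor algebra and expand each wedge product $X\wedge Y\wedge Z$ as the full antisymmetrisation $\sum_{\sigma\in S_3}\sgn(\sigma)\,\cdots$. The claim is then that the antisymmetrisation of $\frac{1}{16}\sum F_{ij}\otimes F_{ki}\otimes F_{jk}$ equals the stated expression.

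\textbf{Step 1 (first sum).} From \eqref{Eq:AppSP3} and $\langle \check F^{(1)}_{ij},F^{(1)}_{ab}\rangle=\delta_{ia}\delta_{jb}$, the first line of \eqref{Eq:AppSP8} collapses to
\[
\tfrac{1}{48}\sum_{i,k,l}\big(F^{(1)}_{ik}\wedge F^{(1)}_{kl}\wedge F^{(1)}_{li}-F^{(1)}_{ki}\wedge F^{(1)}_{lk}\wedge F^{(1)}_{il}\big)\,.
\]
This is exactly the $\gl(n)$-type Cartan trivector built from the block $F^{(1)}$.

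\textbf{Step 2 (mixed sum).} I pair $\check F^{(1)}_{ij}$ with \eqref{Eq:AppSP4}, keeping track of the diagonal cases $k=l$ and $u=v$. I then symmetrise back to unrestricted sums over $k,l,u,v$ using $F^{(2)}_{kl}=F^{(2)}_{lk}$ and $F^{(3)}_{uv}=F^{(3)}_{vu}$. The factors $\frac14$ in \eqref{Eq:AppSP2} on the diagonal should exactly compensate the double counting, so the result is a single unrestricted sum of the form $c\sum_{k,l,u}F^{(1)}_{lk}\wedge F^{(2)}_{ku}\wedge F^{(3)}_{ul}$.

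\textbf{Step 3 (matching).} I expand the claimed expression \eqref{Eq:CartSp}. Each index in $\{1,\ldots,2n\}$ is written as $i$ or $i+n$ with $1\leq i\leq n$, which gives eight cases per index triple. Using \eqref{Eq:AppSPD1}, each $F$ converts into $\pm F^{(1)}$, $F^{(2)}$ or $F^{(3)}$. Triples where all indices are small, or all are large, give pure $F^{(1)}$ terms, and the relation $F_{j+n,i+n}=-F^{(1)}_{ij}$ makes these two contributions equal. Triples with mixed index positions give terms with one factor of each kind, always in the pattern $F^{(1)}F^{(2)}F^{(3)}$ up to ordering, because a block-off-diagonal $F$ forces its neighbours in the cyclic pattern $ij,ki,jk$. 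After antisymmetrising, I compare coefficients with Steps 1 and 2.

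\textbf{Main obstacle.} The main difficulty is the bookkeeping of normalisations in Step 2. This covers the diagonal terms with weight $\frac14$ versus $\frac12$, the factor $\frac12$ versus $\frac1{12}$ in front of the two sums in \eqref{Eq:AppSP8}, and the relation between the wedge product and the tensor antisymmetrisation. I will fix these by first checking the case $n=1$, where $\spg(2)\cong\mathfrak{sl}_2$ is small enough to compute directly and pins down the overall constant $\frac1{16}$. Once the normalisation is fixed there, the general $n$ case is a matching of index patterns.
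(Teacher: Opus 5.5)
Your route is the paper's own. You evaluate \eqref{Eq:AppSP8} with \eqref{Eq:AppSP3}--\eqref{Eq:AppSP4}, expand \eqref{Eq:CartSp} by sorting indices into $\{1,\dots,n\}$ and $\{n+1,\dots,2n\}$ via \eqref{Eq:AppSPD1}, and match the pure-$F^{(1)}$ sector and the mixed $F^{(1)}F^{(2)}F^{(3)}$ sector separately. The gap is in the coefficients, which are the whole content of this proposition: the one you write down is wrong, and the rest are deferred to a calibration that cannot determine them. In Step 1, pairing $\check F^{(1)}_{ij}$ with \eqref{Eq:AppSP3} gives $\frac14(\delta_{vk}\delta_{il}\delta_{ju}-\delta_{lu}\delta_{iv}\delta_{jk})$, so the first line of \eqref{Eq:AppSP8} is
\[
\tfrac{1}{48}\sum_{i,j,k}\big(F^{(1)}_{ij}\wedge F^{(1)}_{ki}\wedge F^{(1)}_{jk}-F^{(1)}_{ij}\wedge F^{(1)}_{jk}\wedge F^{(1)}_{ki}\big)=\tfrac{1}{24}\sum_{i,j,k}F^{(1)}_{ij}\wedge F^{(1)}_{ki}\wedge F^{(1)}_{jk}\,.
\]
Your display has the two cyclic patterns interchanged, so it equals minus this. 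Your reformulation of the target is also off by a constant. The element $T=\sum_{i,j,k}F_{ij}\otimes F_{ki}\otimes F_{jk}$ is invariant under cyclic permutation of the tensor factors, and swapping its last two factors gives $\sum F_{ij}\otimes F_{jk}\otimes F_{ki}$. Hence, with your convention for $\wedge$, the antisymmetrisation of $\frac1{16}T$ is three times the right-hand side of \eqref{Eq:CartSp}. The correct target is $\psi^{\spg(2n)}=\frac1{48}\sum_{i,j,k}F_{ij}\wedge F_{ki}\wedge F_{jk}$.

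The $n=1$ check can neither catch nor repair these errors. For $n=1$ the pure-$F^{(1)}$ sector vanishes identically, since it only contains $F^{(1)}_{11}\wedge F^{(1)}_{11}\wedge F^{(1)}_{11}=0$, and only the diagonal mixed term $F^{(1)}_{11}\wedge F^{(2)}_{11}\wedge F^{(3)}_{11}$ survives. So it says nothing about the constant in front of the first line of \eqref{Eq:AppSP8}. Nor does it fix the relative weight of off-diagonal versus diagonal mixed terms, which is exactly the $\frac14$ versus $\frac12$ issue you flag as the main obstacle. Moreover, calibrating on one case presupposes an $n$-independent proportionality, which is what has to be proved. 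So the computation must actually be carried out.

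Your compensation idea is correct. Extend $F^{(2)}_{kl},F^{(3)}_{kl}$ symmetrically to all $k,l$. Then $\sum_{k\le l}\check F^{(2)}_{kl}\otimes F^{(2)}_{kl}=\frac14\sum_{k,l}F^{(3)}_{kl}\otimes F^{(2)}_{kl}$, and likewise for $(3)$. Also $[F^{(3)}_{kl},F^{(2)}_{uv}]=-(\delta_{lu}F^{(1)}_{vk}+\delta_{kv}F^{(1)}_{ul}+\delta_{lv}F^{(1)}_{uk}+\delta_{ku}F^{(1)}_{vl})$, whose four terms contribute the same pattern. Hence the mixed sum equals $-\frac18\sum F^{(1)}_{ij}\wedge F^{(2)}_{jk}\wedge F^{(3)}_{ki}$, i.e.\ $c=-\frac18$. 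This matches the mixed-index part $\frac18\sum F^{(1)}_{ij}\wedge F^{(3)}_{ki}\wedge F^{(2)}_{jk}$ of \eqref{Eq:CartSp}; the paper does the same matching by splitting into $\psi_2,\dots,\psi_5$.

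You can also avoid the bookkeeping altogether. With $C=\sum_a\check F_a\otimes F_a$ one has $\psi^{\g}=\frac12[C_{12},C_{13}]$. Since $\tau$ preserves the trace form, $C^{\spg(2n)}=(P\otimes P)C^{\gl_{2n}}$ for the orthogonal projection $P(E_{ij})=\frac12F_{ij}$. Therefore $\psi^{\spg(2n)}=P^{\otimes 3}\psi^{\gl_{2n}}$ with $\psi^{\gl_{2n}}=\frac12\sum(E_{ij}\otimes E_{ki}\otimes E_{jk}-E_{ij}\otimes E_{jk}\otimes E_{ki})$, and this is exactly \eqref{Eq:CartSp}.
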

\begin{proof}
 We start by developing $\psi^{\spg(2n)}$ \eqref{Eq:AppSP8}.
We denote by $\psi_1$ the first line appearing in \eqref{Eq:AppSP8}, and by
$\psi_2$ (resp. $\psi_3$; $\psi_4$; or $\psi_5$) the second line appearing in \eqref{Eq:AppSP8}
for $k<l$ and $u<v$ (resp. $k<l$ and $u=v$; $k=l$ and $u<v$; $k=l$ and $u=v$).
 On the one hand, \eqref{Eq:AppSP3} yields
\begin{equation} \label{Eq:AppSPC1}
 \psi_1=\frac{1}{24}\sum_{i,j,k=1}^n
\,F_{ij}^{(1)}\wedge F_{ki}^{(1)} \wedge F_{jk}^{(1)}\,.
\end{equation}
On the other hand, \eqref{Eq:AppSP4} yields
\begin{equation}
 \begin{aligned} \label{Eq:AppSPC2}
\psi_2=& -\frac{1}{8} \sum_{j<k<i} F_{ij}^{(1)}\wedge F_{jk}^{(2)} \wedge F_{ki}^{(3)}
-\frac{1}{8} \sum_{i,j=1}^n\sum_{k<\min(i,j)} F_{ij}^{(1)}\wedge F_{kj}^{(2)} \wedge F_{ki}^{(3)} \\
& -\frac{1}{8} \sum_{i,j=1}^n\sum_{k>\max(i,j)} F_{ij}^{(1)}\wedge F_{jk}^{(2)} \wedge F_{ik}^{(3)}
-\frac{1}{8} \sum_{i<k<j} F_{ij}^{(1)}\wedge F_{kj}^{(2)} \wedge F_{ik}^{(3)}\,;
\end{aligned}
\end{equation}
then
\begin{align}
 \psi_3&= -\frac{1}{8} \sum_{j<i} F_{ij}^{(1)}\wedge F_{ji}^{(2)} \wedge F_{ii}^{(3)}
 -\frac{1}{8} \sum_{i<j} F_{ij}^{(1)}\wedge F_{ij}^{(2)} \wedge F_{ii}^{(3)}   \label{Eq:AppSPC3} \\
 \psi_4&= -\frac{1}{8} \sum_{j<i} F_{ij}^{(1)}\wedge F_{jj}^{(2)} \wedge F_{ji}^{(3)}
 -\frac{1}{8} \sum_{i<j} F_{ij}^{(1)}\wedge F_{jj}^{(2)} \wedge F_{ij}^{(3)}  \label{Eq:AppSPC4}\\
 \psi_5&= -\frac{1}{8} \sum_{i=1}^n F_{ii}^{(1)}\wedge F_{ii}^{(2)} \wedge F_{ii}^{(3)} \,. \label{Eq:AppSPC5}
\end{align}
By construction, $\psi^{\spg(2n)}=\psi_1+\psi_2+\psi_3+\psi_4+\psi_5$.
Using the spanning set \eqref{Eq:SpanSP}, let us now develop the RHS of \eqref{Eq:CartSp}, which we label $\phi$.
After introducing
\begin{equation*}
\phi_{ijk}:=F_{ij} \otimes F_{ki} \otimes F_{jk} - F_{ij} \otimes F_{jk} \otimes F_{ki}\,,
\end{equation*}
we can consider the splitting $\phi=\phi_\circ +\phi_{-}+\phi_+$ where
\small
\begin{align*}
 \phi_\circ&=\frac{1}{16}\left(\sum_{1\leq i,j,k\leq n} + \sum_{n+1\leq i,j,k\leq 2n} \right)  \phi_{ijk}, \\
 \phi_-&=\frac{1}{16}\left(\sum_{1\leq i\leq n}\sum_{n+1\leq j,k\leq 2n} + \sum_{1\leq j\leq n}\sum_{n+1\leq i,k\leq 2n} + \sum_{1\leq k\leq n}\sum_{n+1\leq i,j\leq 2n} \right)  \phi_{ijk}, \\
 \phi_+&=\frac{1}{16}\left(\sum_{n+1\leq i\leq 2n} \sum_{1\leq j,k\leq n}+ \sum_{n+1\leq j\leq 2n} \sum_{1\leq i,k\leq n} + \sum_{n+1\leq k\leq 2n} \sum_{1\leq i,j \leq n}\right)  \phi_{ijk}.
\end{align*}
\normalsize
We can write using \eqref{Eq:AppSPD1}
\begin{align*}
\phi_\circ=&\frac{1}{16}\sum_{1\leq i,j,k\leq n} \,
(F_{ij} \otimes F_{ki} \otimes F_{jk} - F_{ij} \otimes F_{jk} \otimes F_{ki}
- F_{ji} \otimes F_{ik} \otimes F_{kj} + F_{ji} \otimes F_{kj} \otimes F_{ik}) \\
=&\frac{1}{24}\sum_{i,j,k=1}^n
\,F_{ij}\wedge F_{ki}\wedge F_{jk} = \psi_1\,; \\
\phi_-=&\frac{1}{16}\sum_{1\leq i, j,k\leq n}
\big(
F_{i,j+n} \otimes F_{k+n,i} \otimes F_{j+n,k+n} - F_{i,j+n} \otimes F_{j+n,k+n} \otimes F_{k+n,i} \\
&\quad + F_{i+n,j} \otimes F_{k+n,i+n} \otimes F_{j,k+n} - F_{i+n,j} \otimes F_{j,k+n} \otimes F_{k+n,i+n} \\
&\quad + F_{i+n,j+n} \otimes F_{k,i+n} \otimes F_{j+n,k} - F_{i+n,j+n} \otimes F_{j+n,k} \otimes F_{k,i+n}
\big) \\
=&\frac{1}{16}\sum_{1\leq i, j,k\leq n}
F_{ij}\wedge F_{k+n,i}\wedge F_{j,k+n}\,.
\end{align*}
Analogous manipulations for $\phi_+$ give that expression, hence $\phi_+=\phi_-$.
We obtain in terms of \eqref{Eq:AppSP1},
\small
\begin{align*}
 &\phi_-+\phi_+=\frac18\sum_{i, j,k}
F_{ij}^{(1)}\wedge (\delta_{(k< i)} F^{(3)}_{ki}+\delta_{ik} F^{(3)}_{ii}+\delta_{(i<k)} F^{(3)}_{ik})\wedge (\delta_{(j< k)} F^{(2)}_{jk}+ \delta_{jk} F^{(2)}_{jj} +\delta_{(k<j)} F^{(2)}_{kj}) \\
&=\frac18\sum_{j<k<i} F_{ij}^{(1)}\wedge F^{(3)}_{ki} \wedge F^{(2)}_{jk}
+ \frac18\sum_{j<i} F_{ij}^{(1)}\wedge F^{(3)}_{ji} \wedge F^{(2)}_{jj}
+\frac18\sum_{i,j}\sum_{k<\min(i,j)} F_{ij}^{(1)}\wedge F^{(3)}_{ki} \wedge F^{(2)}_{kj} \\ 
&\quad +\frac18\sum_{j<i} F_{ij}^{(1)}\wedge F^{(3)}_{ii} \wedge F^{(2)}_{ji}
+\frac18\sum_{i} F_{ii}^{(1)}\wedge F^{(3)}_{ii} \wedge F^{(2)}_{ii}
+\frac18\sum_{i<j} F_{ij}^{(1)}\wedge F^{(3)}_{ii} \wedge F^{(2)}_{ij}  \\ 
&\quad +\frac18\sum_{i,j}\sum_{k>\max(i,j)} F_{ij}^{(1)}\wedge F^{(3)}_{ik} \wedge F^{(2)}_{jk}
+\frac18\sum_{i<j} F_{ij}^{(1)}\wedge F^{(3)}_{ij} \wedge F^{(2)}_{jj}
+\frac18\sum_{i<k<j} F_{ij}^{(1)}\wedge F^{(3)}_{ik} \wedge F^{(2)}_{kj}\,. 
\end{align*}
\normalsize
These terms are, in order of occurrence :
first term of \eqref{Eq:AppSPC2}, first term of \eqref{Eq:AppSPC4}, second term of \eqref{Eq:AppSPC2},  
first term of \eqref{Eq:AppSPC3}, \eqref{Eq:AppSPC5}, second term of \eqref{Eq:AppSPC3},  
third term of \eqref{Eq:AppSPC2}, second term of \eqref{Eq:AppSPC4}, and fourth term of \eqref{Eq:AppSPC2}.  
All in all, we obtained
$\phi_{-}+\phi_+=\psi_2+\psi_3+\psi_4+\psi_5$. Summing this equality with $\phi_\circ =\psi_1$
yields the claimed equality $\phi = \psi^{\spg(2n)}$.
\end{proof}

\subsection{Fusion and bracket-compatibility} \label{App:Fus}

We gather all the missing cases from the proof of Proposition \ref{Pr:CompFus} for the sake of conciseness.
We use the notation from \ref{ss:Fus}.
We want to check
\begin{equation} \label{Eq:FusCompBis}
 \phi_f^{\otimes 2}(\dgal{a,b})=\dgal{\phi_f(a),\phi_f(b)}^\circ, \qquad
 \phi_f^{\otimes 2}(\dgal{a,b}_{\fus}) = \dgal{\phi_f(a),\phi_f(b)}_{\fus}^\circ,
\end{equation}
for all kinds of generators $a,b\in A^f$. Recall that the extended anti-involution acts as \eqref{Eq:phiExt}.
In each case, we indicate the equations of \cite[Lem.~2.19]{F2} used to compute the double bracket $\dgal{-,-}_{\fus}$.

\medskip

\underline{$a=\epsilon t\epsilon$, $b=\epsilon \tilde{t}\epsilon$ of first kind.}
We use \cite[(2.14a)]{F2}:
\begin{align*}
\phi_f^{\otimes 2}(\dgal{t,\tilde{t}})
&= \phi^{\otimes 2}(\dgal{t,\tilde{t}})
=(\dgal{\phi(t),\phi(\tilde{t})})^\circ
= \dgal{\phi_f(t),\phi_f(\tilde{t})}^\circ \,, \\
\phi_f^{\otimes 2}( \dgal{t , \tilde{t}}_{\fus}) &= 0 = \dgal{\phi_f( t) , \phi_f(\tilde{t})}_{\fus}^\circ\,.
\end{align*}

\underline{$a=\epsilon t\epsilon$ of first kind, $b=e_{12}u \epsilon$ of second kind.}
We use \cite[(2.14b)]{F2} and \cite[(2.14c)]{F2}:
\begin{align*}
\phi_f^{\otimes 2}(\dgal{t,e_{12}u})
&= (\phi^{\otimes 2}(\dgal{t,u}))\ast e_{21}
=(\dgal{\phi(t),\phi(u)} e_{21})^\circ
= \dgal{\phi_f(t),\phi_f(e_{12}u)}^\circ ;
\\
\phi_f^{\otimes 2}( \dgal{t , e_{12} u}_{\fus})
&=\frac12 \left( e_1 \otimes \phi(u) e_{21} \phi(t) - \phi(t) e_1 \otimes  \phi(u) e_{21}\right) \\
&=\frac12 \left(\phi(u) e_{21} \phi(t) \otimes e_1 - \phi(u) e_{21} \otimes \phi(t) e_1 \right)^\circ
=\dgal{\phi_f( t) , \phi_f(e_{12} u)}_{\fus}^\circ\,.
\end{align*}

\underline{$a=\epsilon t\epsilon$ of first kind, $b= \epsilon v e_{21}$ of third kind.}
We use \cite[(2.14b)]{F2} and \cite[(2.14c)]{F2}:
\begin{align*}
\phi_f^{\otimes 2}(\dgal{t,v e_{21}})
&= e_{12}\ast(\phi^{\otimes 2}(\dgal{t,v}))
=(e_{12}\dgal{\phi(t),\phi(v)})^\circ
= \dgal{\phi_f(t),\phi_f(v e_{21})}^\circ ;
\\
\phi_f^{\otimes 2}( \dgal{t , v e_{21}}_{\fus})
&=\frac12 \left( \phi(t) e_{12} \phi(v) \otimes e_1 - e_{12} \phi(v) \otimes  e_1 \phi(t) \right) \\
&=\frac12 \left(e_1 \otimes \phi(t) e_{12} \phi(v) - e_1 \phi(t) \otimes e_{12} \phi(v) \right)^\circ
=\dgal{\phi_f(t) , \phi_f(v e_{21})}_{\fus}^\circ
\end{align*}

\underline{$a=\epsilon t\epsilon$ of first kind, $b=e_{12}w e_{21}$ of fourth kind.} This was covered in the main proof.

\underline{$a=e_{12}u \epsilon$, $b=e_{12}\tilde u \epsilon$ of second kind.}
We use \cite[(2.15b)]{F2} and  \cite[(2.16c)]{F2}:
\begin{align*}
\phi_f^{\otimes 2}(\dgal{e_{12}u,e_{12}\tilde u})
&= (\phi^{\otimes 2}(\dgal{u,\tilde u}))e_{21}\ast e_{21}
=(\dgal{\phi(u),\phi(\tilde u)} e_{21}\ast e_{21})^\circ
= \dgal{\phi_f(e_{12}u),\phi_f(e_{12}\tilde u)}^\circ ;
\\
\phi_f^{\otimes 2}( \dgal{e_{12} u,e_{12}\tilde u}_{\fus})
&=\frac12 \left( e_1 \otimes \phi(\tilde u) e_{21} \phi(u)e_{21} - \phi(u) e_{21} \phi(\tilde u)e_{21} \otimes e_1\right) \\
&=\frac12 \left(\phi(\tilde u) e_{21} \phi(u)e_{21} \otimes e_1 - e_1 \otimes \phi(u) e_{21} \phi(\tilde u)e_{21} \right)^\circ
=\dgal{\phi_f(e_{12} u) , \phi_f(e_{12} \tilde u)}_{\fus}^\circ .
\end{align*}

\underline{$a=e_{12}u \epsilon$ of second kind, $b= \epsilon v e_{21}$ of third kind.}
We use \cite[(2.15c)]{F2} and  \cite[(2.16b)]{F2}:
\begin{align*}
\phi_f^{\otimes 2}(\dgal{e_{12}u,v e_{21}})
&= e_{12}\ast(\phi^{\otimes 2}(\dgal{u,v}))e_{21}
=(e_{12}\dgal{\phi(u),\phi(v)} \ast e_{21})^\circ
= \dgal{\phi_f(e_{12}u),\phi_f(v e_{21})}^\circ ;
\\
\phi_f^{\otimes 2}( \dgal{e_{12}u,v e_{21}}_{\fus})
&=\frac12 \left(  \phi(u)e_{21} \otimes e_{12}\phi(v) e_{1} - e_{12}\phi(v)  \otimes e_1 \phi(u)e_{21}\right) \\
&=\frac12 \left( e_{12}\phi(v) e_{1}  \otimes \phi(u)e_{21} - e_1\phi(u)e_{21} \otimes e_{12}\phi(v) \right)^\circ
=\dgal{\phi_f(e_{12} u) , \phi_f(v e_{21})}_{\fus}^\circ .
\end{align*}

\underline{$a=e_{12}u \epsilon$ of second kind, $b= e_{12} w e_{21}$ of fourth kind.}
We use \cite[(2.15d)]{F2} and  \cite[(2.16d)]{F2}:
\begin{align*}
&\phi_f^{\otimes 2}(\dgal{e_{12}u,e_{12} w e_{21}})
= e_{12}\ast (\phi^{\otimes 2}(\dgal{u,w}))e_{21}\ast e_{21} \\
&\qquad \qquad =(e_{12}\dgal{\phi(u),\phi(w)} e_{21} \ast e_{21})^\circ
= \dgal{\phi_f(e_{12}u),\phi_f(e_{12}w e_{21})}^\circ ;
\\
&\phi_f^{\otimes 2}( \dgal{e_{12}u,e_{12} w e_{21}}_{\fus})
=\frac12 \left(e_1 \otimes e_{12}  \phi(w)e_{21} \phi(u) e_{21} - e_{12}\phi(w)e_{21}  \otimes e_1 \phi(u)e_{21}\right) \\
&=\frac12 \left(  e_{12}  \phi(w)e_{21} \phi(u) e_{21}  \otimes e_{1} - e_1\phi(u)e_{21} \otimes e_{12}\phi(w)e_{21} \right)^\circ
=\dgal{\phi_f(e_{12} u) , \phi_f(e_{12} w e_{21})}_{\fus}^\circ .
\end{align*}

\underline{$a=\epsilon v e_{21}$, $b= \epsilon \tilde v e_{21}$ of third kind.}
We use \cite[(2.16c)]{F2} and  \cite[(2.15b)]{F2}:
\begin{align*}
\phi_f^{\otimes 2}(\dgal{v e_{21},\tilde v e_{21}})
&= e_{12}\ast e_{12}(\phi^{\otimes 2}(\dgal{v,\tilde v}))
=(e_{12}\ast e_{12} \dgal{\phi(v),\phi(\tilde v)})^\circ
= \dgal{\phi_f(v e_{21}),\phi_f(\tilde v e_{21})}^\circ ;
\\
\phi_f^{\otimes 2}( \dgal{v e_{21},\tilde v e_{21}}_{\fus})
&=\frac12 \left(  e_{12}\phi(v)e_{12}\phi(\tilde v) \otimes  e_{1} - e_{1} \otimes e_{12}\phi(\tilde v)e_{12}\phi(v)\right) \\
&=\frac12 \left(  e_{1}  \otimes e_{12}\phi(v)e_{12}\phi(\tilde v)  - e_{12}\phi(\tilde v)e_{12}\phi(v)\otimes e_1 \right)^\circ
=\dgal{\phi_f(v e_{21}) , \phi_f(\tilde v e_{21})}_{\fus}^\circ .
\end{align*}

\underline{$a=\epsilon v e_{21}$ of third kind, $b= e_{12} w e_{21}$ of fourth kind.}
We use \cite[(2.16d)]{F2} and  \cite[(2.15d)]{F2}:
\begin{align*}
&\phi_f^{\otimes 2}(\dgal{v e_{21},e_{12} w e_{21}})
= e_{12}\ast e_{12}(\phi^{\otimes 2}(\dgal{v,w}))\ast e_{21} \\
&\qquad \qquad =(e_{12}\ast e_{12} \dgal{\phi(v),\phi(w)} e_{21})^\circ
= \dgal{\phi_f(v e_{21}),\phi_f(e_{12}w e_{21})}^\circ ;
\\
&\phi_f^{\otimes 2}( \dgal{v e_{21},e_{12} w e_{21}}_{\fus})
=\frac12 \left(  e_{12}\phi(v)e_{12}\phi(w)e_{21} \otimes  e_{1} - e_{12}\phi(v) e_{1} \otimes e_{12}\phi(w)e_{21}\right) \\
&=\frac12 \left(  e_{1}  \otimes e_{12}\phi(v)e_{12}\phi(w)e_{21}  - e_{12}\phi(w)e_{21} \otimes e_{12}\phi(v)e_1 \right)^\circ
=\dgal{\phi_f(v e_{21}) , \phi_f(e_{12} w e_{21})}_{\fus}^\circ .
\end{align*}

\underline{$a=e_{12} w e_{21}$, $b= e_{12} \tilde w e_{21}$ of fourth kind.}
We use  \cite[(2.17d)]{F2}:
\begin{align*}
\phi_f^{\otimes 2}(\dgal{e_{12}w e_{21},e_{12}\tilde w e_{21}})
=& e_{12}\ast e_{12}(\phi^{\otimes 2}(\dgal{w,\tilde w}))e_{21}\ast e_{21} \\
=&(e_{12}\ast e_{12} \dgal{\phi(w),\phi(\tilde w)} e_{21}\ast e_{21})^\circ
= \dgal{\phi_f(e_{12}w e_{21}),\phi_f(e_{12}\tilde w e_{21})}^\circ ;
\\
\phi_f^{\otimes 2}( \dgal{e_{12} w e_{21},e_{12}\tilde w e_{21}}_{\fus}) =&\, 0
=\dgal{\phi_f(e_{12} w e_{21}) , \phi_f(e_{12}\tilde w e_{21})}_{\fus}^\circ .
\end{align*}

There is no other case to check due to the  cyclic antisymmetry of the double bracket.

\subsection{Comparison of representation algebras}
\label{App:RepHB}

Consider the Hopf algebra $B=\CC[\Orm_N]$ or $\CC[\Sp_{N}]$ (for $N$ even).
As a commutative algebra, it admits the presentation
\begin{equation} \label{Eq:DefB}
  \begin{aligned}
    B&=\CC[t_{ij} \mid i,j=1,\ldots,N]/I_B, \\
I_B&:= \Big\langle\sum_{k=1}^N \sgn_N(\tau(k),j)\, t_{i\tau(k)}t_{\tau(j) k} - \delta_{ij} 1_B \Big| i,j=1,\ldots,N \Big\rangle.
  \end{aligned}
\end{equation}
Here, we use the notations of \ref{sss:Not-symp} for $\Sp_{N}$, while $\tau=\id$ and $\sgn_N \equiv 1$ for $\Orm_N$.
The coalgebra structure and the antipode are given by
\[
 \Delta_B(t_{ij})=\sum_{1\leq k\leq N} t_{ik}\otimes t_{kj}, \quad \epsilon_B(t_{ij})=\delta_{ij}, \quad
 s_B(t_{ij})= \sgn_N(i,j)\, t_{\tau(j),\tau(i)}\,. 
\]
Fix a cocommutative Hopf algebra $\cH$ as in Section~\ref{Sec:MT}.
The representation algebra $\cH_B$ of Massuyeau and Turaev \cite{MT18} is the unital commutative algebra generated by symbols $\{x_{b} \mid x\in \cH,\, b\in B\}$ subject to the relations
(for $x,y\in \cH$, $b,c\in B$, $\lambda \in \CC$)
\begin{subequations}
 \begin{align}
  &(\lambda x)_b=x_{\lambda b}=\lambda\, x_b,& \quad &(x+y)_b=x_b+y_b,& \quad x_{b+c}=x_b+x_c, \label{Eq:MT-a} \\
  &(xy)_b = x_{b_{[1]}} y_{b_{[2]}},& \qquad &(1_{\cH})_b =\epsilon_B(b)\, 1,& \label{Eq:MT-b} \\
  &x_{bc} = (x_{[1]})_b (x_{[2]})_c,& \qquad &x_{1_B} = \epsilon(x)\, 1,& \label{Eq:MT-c} \\
 &S(x)_b = x_{s_B(b)}.& \label{Eq:MT-d}
 \end{align}
\end{subequations}
Consider the algebra $\CC[\Rep^{S,\tau}(\cH,N)]$, cf. \ref{ss:Rep}, where $\tau$ depends on the type of $B$.
\begin{prop} \label{Pr:RepHB}
 The algebra $\CC[\Rep^{S,\tau}(\cH,N)]$  is isomorphic to $\cH_B$.
\end{prop}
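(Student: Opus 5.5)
We construct mutually inverse algebra homomorphisms between $\CC[\Rep^{S,\tau}(\cH,N)]$ and $\cH_B$, using the generators $x_{ij}$ (for $x\in\cH$) on one side and $x_{t_{ij}}$ on the other.

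\textbf{Map $\Psi:\CC[\Rep^{S,\tau}(\cH,N)]\to \cH_B$.} We set $\Psi(x_{ij}):=x_{t_{ij}}$ and check that the defining relations of the twisted representation space are respected.
\begin{itemize}
\item Linearity in $x$ follows from \eqref{Eq:MT-a}.
\item The matrix rule holds by \eqref{Eq:MT-b}: $(xy)_{t_{ij}}=\sum_k x_{t_{ik}}y_{t_{kj}}$, since $\Delta_B(t_{ij})=\sum_k t_{ik}\otimes t_{kj}$.
\item The unit relation holds because $(1_\cH)_{t_{ij}}=\epsilon_B(t_{ij})=\delta_{ij}$.
\item For the twisting relations \eqref{Eq:DefRelSp}, resp. \eqref{Eq:DefRelO}, we use \eqref{Eq:MT-d}. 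It gives $S(x)_{t_{ij}}=x_{s_B(t_{ij})}=\sgn_N(i,j)\,x_{t_{\tau(j)\tau(i)}}$, which is exactly the image of the generator $S(x)_{ij}-\sgn_N(i,j)\,x_{\tau(j)\tau(i)}$.
\end{itemize}

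\textbf{Map $\Xi:\cH_B\to \CC[\Rep^{S,\tau}(\cH,N)]$.} For each $x\in\cH$ we first define $x_b$ for all $b\in B$ by setting
\[
 x_{t_{i_1j_1}\cdots t_{i_mj_m}}:=(x_{[1]})_{i_1j_1}\cdots (x_{[m]})_{i_mj_m},\qquad x_{1_B}:=\epsilon(x).
\]
This already defines a map on the polynomial ring $\CC[t_{ij}]$. Since $\cH$ is cocommutative, the value does not depend on the ordering of the factors, so the map is well defined on monomials, and we extend it linearly. We then check that it descends to $B$, i.e. that it kills $I_B$. For the generator $\sum_k \sgn_N(\tau(k),j)\,t_{i\tau(k)}t_{\tau(j)k}-\delta_{ij}$, the image is
\[
\sum_k \sgn_N(\tau(k),j)\,(x_{[1]})_{i\tau(k)}(x_{[2]})_{\tau(j)k}-\delta_{ij}\epsilon(x).
\]
Applying the twisting relation to $x_{[2]}$ turns this into $\sum_k (x_{[1]})_{ik}\,S(x_{[2]})_{kj}$, up to bookkeeping of signs via \eqref{Eq:sgnProp}. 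By the matrix rule this equals $(x_{[1]}S(x_{[2]}))_{ij}=\epsilon(x)\delta_{ij}$, so the generator is killed. Multiples $b\,r$ with $r$ a generator of $I_B$ also vanish, because the assignment is multiplicative in $b$ by construction (relation \eqref{Eq:MT-c}). We then verify \eqref{Eq:MT-a}--\eqref{Eq:MT-d} in the target:
\begin{itemize}
\item \eqref{Eq:MT-a} and \eqref{Eq:MT-c} hold by construction.
\item \eqref{Eq:MT-b} reduces to the matrix rule on monomials. It uses that $\Delta$ is an algebra map and that $\Delta_B$ is multiplicative, which lets us handle a monomial $b$ factor by factor.
\item \eqref{Eq:MT-d} reduces on the generators $t_{ij}$ to the twisting relation. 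It extends to all of $B$ because $s_B$ is an algebra map ($B$ is commutative) and $\Delta(S(x))=S(x_{[1]})\otimes S(x_{[2]})$ by cocommutativity.
\end{itemize}

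\textbf{Inverse maps.} The two compositions are the identity on generators: $x_{ij}\mapsto x_{t_{ij}}\mapsto x_{ij}$. In the other direction, by \eqref{Eq:MT-c} the elements $x_{t_{ij}}$ generate $\cH_B$, so checking $x_{t_{ij}}\mapsto x_{ij}\mapsto x_{t_{ij}}$ suffices.

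\textbf{Expected obstacle.} The main difficulty is the well-definedness of $\Xi$ on $B$. It requires the careful index and sign computation showing that $I_B$ is killed, namely matching $\sum_k \sgn_N(\tau(k),j)\,t_{i\tau(k)}t_{\tau(j)k}$ with the matrix product of $x_{[1]}$ and $S(x_{[2]})$ through the defining relation \eqref{Eq:DefRelSp}. The orthogonal case of this computation is immediate.
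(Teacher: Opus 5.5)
Your proposal is correct and follows the same route as the paper. In one direction you use $x_{ij}\mapsto x_{t_{ij}}$. In the other you send $x_b$ to the product of the entries $(x_{[r]})_{i_rj_r}$ over the factors of a monomial $b$, and the key check is that the generators of $I_B$ are killed via $(x_{[1]}S(x_{[2]}))_{ij}=\epsilon(x)\delta_{ij}$.

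You also fill in several points the paper leaves to the reader. These are: independence of the ordering of factors (by cocommutativity), vanishing on all of $I_B$ by multiplicativity, the verification of \eqref{Eq:MT-a}--\eqref{Eq:MT-d} in the target, and the fact that the elements $x_{t_{ij}}$ generate $\cH_B$.
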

\begin{proof}
If we write $x_{ij}:=x_{t_{ij}}$ for the generators $t_{ij}\in B$,
the first two relations in \eqref{Eq:MT-a} together with \eqref{Eq:MT-b}  are precisely the relations defining $\CC[\Rep(\cH,N)]$,
while \eqref{Eq:MT-d} is just the corresponding generator of the ideal \eqref{TwIdeal} defining $\CC[\Rep^{S,\tau}(\cH,N)]$.
Thus, we get a morphism of algebras
\begin{equation*}
\Psi_1 : \CC[\Rep^{S,\tau}(\cH,N)] \longrightarrow \cH_B, \quad x_{ij} \mapsto x_{t_{ij}} \,.
\end{equation*}
In the other direction, we define a morphism of algebras
\begin{align*}
 \Psi_2 : &\, \cH_B \longrightarrow \CC[\Rep^{S,\tau}(\cH,N)], \\
 &x_{1_B}\mapsto \epsilon(x)\, 1, \qquad  x_{t_{ij}} \mapsto x_{ij}, \\
 &x_b=\sum \lambda_{i_1,\ldots, i_{\ell+1}}\,  (x_{[1]})_{i_1 i_2} (x_{[2]})_{i_2 i_3} \cdots  (x_{[\ell]})_{i_\ell i_{\ell+1}} \quad \text{if }
 b= \sum \lambda_{i_1,\ldots ,i_{\ell+1}}\, t_{i_1 i_2} t_{i_2 i_3} \cdots t_{i_\ell i_{\ell+1}}\,,
\end{align*}
where in the last case we use an arbitrary expression for writing $b$ in terms of the generators of $B$.
The map $\Psi_2$ is well-defined provided that $\Psi_2(x_b)=0$ whenever $b\in I_B$.
This holds because, if we denote $\mathtt{b}_{ij}$ the generator of $I_B$ in \eqref{Eq:DefB} corresponding to the pair of indices $(i,j)$,
\begin{align*}
\Psi_2(x_{\mathtt{b}_{ij}})&=
\sum_{k} \sgn_N(\tau(k),j)\, \Psi_2((x_{[1]})_{t_{i\tau(k)}})\Psi_2((x_{[2]})_{t_{\tau(j) k}}) - \delta_{ij} \Psi_2(x_{1_B}) \\
&=
\sum_{k} \sgn_N(\tau(j),k)\, (x_{[1]})_{i\tau(k)} (x_{[2]})_{\tau(j) k} - \delta_{ij}\, \epsilon(x) \\
&=
\sum_{k}  (x_{[1]})_{i\tau(k)} S(x_{[2]})_{\tau(k) j} - \delta_{ij} \epsilon(x) \\
&=(x_{[1]} S(x_{[2]}))_{ij} - \delta_{ij} \epsilon(x) = 0\,.
\end{align*}
where we used \eqref{Eq:sgnProp} in the second equality and \eqref{Eq:DefRelSp} (with $\phi=S$) in the third.
To conclude, it suffices to verify that $\Psi_1$ and $\Psi_2$ are inverses of each other.
\end{proof}


\end{document}